\newcommand{\vertiii}[1]{{\left\vert\kern-0.25ex\left\vert\kern-0.25ex\left\vert #1 \right\vert\kern-0.25ex\right\vert\kern-0.25ex\right\vert}}
\numberwithin{equation}{section}
\theoremstyle{plain}
\newtheorem{definition}{Definition}[section]
\newtheorem{example}{Example}[section]
\newtheorem{thm}{Theorem}[section]
\newtheorem{cor}[thm]{Corollary}
\newtheorem{lemma}[thm]{Lemma}
\newtheorem{prop}[thm]{Proposition}
\newtheorem{remark}{Remark}[section]
\def\pa{\mathbbm{p}}
\def\ii{\mathbbm{i}}
\def\C{\mathbb C}
\def\R{\mathbb R}
\def\H{\mathbb H}
\def\what{\widehat}
\def\E{\mathbb E}
\def\Z{\mathbb Z}
\def\tcr{\textcolor{red}}
\def\CT{\mathcal T}
\def\bbH{\mathbb H}
\def\bbX{\mathbb X}
\def\bbT{\mathbb T}
\def\bbR{\mathbb R}
\def\tr{\mathrm{tr}}
\def\cov{\mathrm{Cov}}
\def\wt{\widetilde}
\def\wtilde{\widetilde}
\def\P{\mathbb P}
\DeclareMathOperator*{\argmin}{argmin}
\def\ol{\overline}
\newcommand{\pr}{\mathbb{P}}
\newcommand{\HS}{{\rm HS}}
\newcommand{\cid}{\stackrel{d}{\to}}
\newcommand{\bbC}{\mathbb{C}}
\renewcommand\comment[1]{ \fbox{\tcr{Commented out material.}}}
\renewcommand{\theremark}{\arabic{section}.\arabic{remark}}
\newcounter{assumption}
\newenvironment{assumption}[1][]{\refstepcounter{assumption}\par\medskip
   \textbf{Assumption~\theassumption #1.} \rmfamily}{\medskip}
\newcommand*{\addFileDependency}[1]{
  \typeout{(#1)}
  \@addtofilelist{#1}
  \IfFileExists{#1}{}{\typeout{No file #1.}}
}
\newcommand*{\myexternaldocument}[1]{
    \externaldocument{#1}
    \addFileDependency{#1.tex}
    \addFileDependency{#1.aux}
}
\begin{document}

\begin{frontmatter}
%%%%%%%%%%%%%%%%%%%%%%%%%%%%%%%%%%%%%%%%%%%%%%
%%                                          %%
%% Enter the title of your article here     %%
%%                                          %%
%%%%%%%%%%%%%%%%%%%%%%%%%%%%%%%%%%%%%%%%%%%%%%
\title{Spectral Density Estimation of Function-Valued Spatial Processes}
%\title{A sample article title with some additional note\thanksref{T1}}
\runtitle{Spectral Density Estimation for Functional Data}
%\thankstext{T1}{A sample of additional note to the title.}

\begin{aug}
%%%%%%%%%%%%%%%%%%%%%%%%%%%%%%%%%%%%%%%%%%%%%%
%%Only one address is permitted per author. %%
%%Only division, organization and e-mail is %%
%%included in the address.                  %%
%%Additional information can be included in %%
%%the Acknowledgments section if necessary. %%
%%%%%%%%%%%%%%%%%%%%%%%%%%%%%%%%%%%%%%%%%%%%%%
\author[A]{\fnms{Rafail} \snm{Kartsioukas}\ead[label=e1]{rkarts@umich.edu}},
\author[B]{\fnms{Stilian} \snm{Stoev}\ead[label=e2,mark]{sstoev@umich.edu}}
\and
\author[B]{\fnms{Tailen} \snm{Hsing}\ead[label=e3,mark]{thsing@umich.edu}}
%%%%%%%%%%%%%%%%%%%%%%%%%%%%%%%%%%%%%%%%%%%%%%
%% Addresses                                %%
%%%%%%%%%%%%%%%%%%%%%%%%%%%%%%%%%%%%%%%%%%%%%%
\address[A]{\printead{e1}}

\address[B]{\printead{e2,e3}}
\end{aug}

\begin{abstract}
The spectral density function describes the second-order properties of a stationary stochastic process on $\R^d$. This paper considers the nonparametric estimation of the spectral density of a continuous-time stochastic process taking values in a separable Hilbert space. Our estimator is based on kernel smoothing and can be applied to a wide variety of spatial sampling schemes including those in which data are observed at irregular spatial locations. Thus, it finds immediate applications in Spatial Statistics, where irregularly sampled data naturally arise. The rates for the bias and variance of the estimator are obtained under general conditions in a mixed-domain asymptotic setting. When the data are observed on a regular grid, the optimal rate of the estimator matches the minimax rate for the class of covariance functions that decay according to a power law. The asymptotic normality of the spectral density estimator is also established under general conditions for Gaussian Hilbert-space valued processes. Finally, with a view towards practical applications the asymptotic results are specialized to the case of discretely-sampled functional data in a reproducing kernel Hilbert space.
\end{abstract}

\begin{keyword}[class=MSC2010]
\kwd[Primary ]{62M10}
\kwd[; secondary ]{62M15, 62R10, 60G10}
\end{keyword}

\begin{keyword}
\kwd{functional data}
\kwd{spatial process}
\kwd{spectral density}
\kwd{minimax rate}
\kwd{central limit theorem}
\end{keyword}

\end{frontmatter}
%%%%%%%%%%%%%%%%%%%%%%%%%%%%%%%%%%%%%%%%%%%%%%
%% Please use \tableofcontents for articles %%
%% with 50 pages and more                   %%
%%%%%%%%%%%%%%%%%%%%%%%%%%%%%%%%%%%%%%%%%%%%%%
%\tableofcontents

%%%%%%%%%%%%%%%%%%%%%%%%%%%%%%%%%%%%%%%%%%%%%%
%%%% Main text entry area:
\section{Introduction}\label{sec:intro}

Historically, the study of signals, such as  electromagnetic or acoustic waves, in physics naturally led to the investigation of the spectral density. 
The current literature on the inference problem of the spectral density contains an abundance of well-established estimators and algorithms \citep[see, e.g.,][and the references therein]{hannan:1970,brillinger:1981,brockwell:davis:2006,percival:walden:2020}. 
The most classical approach is based on the periodogram \citep{schuster1898investigation}, which is at the core of the majority of the procedures that are known today.
However, alternative approaches that involve, for instance, the inversion of the empirical covariance
\cite[see, e.g., the review paper of][]{robinson1983review} and wavelets \citep[][]{percival:walden:2006,bardet:bertrand:2012} have also been extensively considered.

The traditional statistical research on spectral density estimation considers scalar-valued processes.  Modern scientific applications involve, however, high-dimensional or even function-type data, which are typically indexed by space and/or time.  Recently, there has been a growing interest in functional time series in general, where data are observed at times $1,2,\ldots,T$; see \cite{hormann2012functional}, \cite{panaretos2013fourier}, \cite{horvath2014testing}, \cite{li2020long}, \cite{zhu2020higher}, to mention a few. In particular, \cite{panaretos2013fourier} and \cite{zhu2020higher} both address the inference of the spectral density of functional time series.
\cite{panaretos2013fourier} considers the smoothed periodogram estimator where the notion of periodogram kernel is introduced for functional data taking values in $L^2[0,1]$. \cite{zhu2020higher} considers the same estimator, but focuses on a particular type of kernel, called flat-top kernel, in performing nonparametric smoothing. 
%A flat kernel is a higher-order kernel that annihilates polynomials up to a prescribed degree and therefore leads to better rate of the bias in nonparametric estimation (at the expense of potentially more stringent assumptions on the process).

This paper studies the nonparametric estimation of the spectral density for a continuous-time stationary process $X=\{X(t),t\in\mathbb{R}^d\}$ taking values in some Hilbert space $\bbH$. More information will be given in Section \ref{sec:background} regarding $\bbH$ and the definition of second-order stationarity. One of the novelties of our paper is the consideration of functional data $X(t)$ sampled at irregular spatial locations $t_1,\ldots,t_n\in\bbR^d$ as opposed to at regular grid points, e.g., $t=1,2,\ldots$ as in functional time series. In general, spatial data are not gridded data. An excellent example is provided by the Argo dataset which has recently become an important resource for oceanography and climate research \citep[cf.][]{roemmich_135_2012} and has inspired new approaches in spatial statistics \citep[see, e.g.,][]{kuusela2017locally, yarger:stoev:hsing:2022}. 

For spatial data observed at irregular locations, periodogram-based approaches do not easily generalize. 
We consider in this paper a so-called lag-window estimator \citep[cf.][]{brockwell:davis:2006,zhu2020higher} based on estimating the covariance, which can accommodate rather general observational schemes. The performance of the estimator will be evaluated by asymptotic theory. In doing so, we will assume the framework of the so-called mixed-domain asymptotics, which means that the sampling locations become increasingly dense and the sampling region becomes increasingly large as the number of observations increases; see, e.g., \cite{hall1994properties, fazekas2000infill, matsuda2009fourier, chang2017mixed,  maitra2020increasing}.
The rate bound of the mean squared error of our estimator will be developed for a rather general mixed-domain setting. However, when data are observed on a regular grid assuming a specific covariance model, the rate bound calculations can be made precise, paving the way for assessing the optimality of the estimator. In particular, we establish the rate minimaxity of our estimator based on gridded data if the decay of the covariance function is bounded by a power law.

We now provide a summary of each of the sections below.  In Section \ref{sec:background}, we describe the general notion of second-order stationarity for a process taking values in a complex Hilbert space $\bbH$. Despite the prevalence of multidimensional spatial data, this notion is understood much less well than the corresponding notion in the one-dimensional case. In particular, we will explain the subtlety of why the scalar field of $\bbH$ must be taken as complex in order to conduct the spectral analysis of the process. We will also review Bochner's Theorem which facilitates the definition of spectral density. Section \ref{sec:estimator} introduces the key assumptions and defines the lag-window estimator that is the main focus of the paper. In Section \ref{s:asymp}, we establish upper bounds on the rate of decay of the bias and variance, and hence the mean squared error of the spectral density estimator under general conditions. These rates are made more precise  in Section \ref{sec:discrete-time} for the setting of gridded data, where the grid size either stays fixed or shrinks to zero with sample size and we focus on a class of covariance functions that are dominated by a power law. By comparing with these carefully computed rates, we show in Section \ref{sec:minimax} that our spectral density estimator is minimax rate optimal for these models.  In Section \ref{s:CLT}, we establish the asymptotic distribution of the estimator under Gaussianity. The proof is based on a novel Isserlis type formula which is used to compute all moments of the estimator. In Section \ref{s:rkhs}, we consider the issue of incomplete functional data in the reproducing kernel Hilbert space (RKHS) setting. Finally, Section \ref{sec:comparisons} briefly summarizes the results in \cite{panaretos2013fourier} and \cite{zhu2020higher}, and provides some comparisons with the ones in this paper.

Whenever feasible, we will provide an outlined proof immediately after stating a result. However, all the detailed proofs are included in the supplement.

\section{Covariance and spectral density of a stationary process in a Hilbert space}\label{sec:background}
Throughout this paper, let $\bbH$ be a separable Hilbert space over the field of complex numbers $\mathbb{C}$. Common examples of $\bbH$ in functional-data applications include $L^2$ spaces of functions and RKHS's. However, except in Section \ref{s:rkhs}, no additional assumptions will be made on $\bbH$.
%We will explain in Remark \ref{r:2.2} below why the scalar field is taken to be $\bbC$ instead of $\bbR$. 

The inner product and norm of $\bbH$ are denoted by $\langle\cdot,\cdot\rangle$ and $\|\cdot\|$, respectively. In a small number of instances, we will denote these by $\langle\cdot,\cdot\rangle_\bbH$ and $\|\cdot\|_\bbH$ for clarity.
The main purpose of this section is to recall some fundamental results for the spectral analysis of stochastic processes $X=\{X(t),t\in\mathbb{R}^d\}$ taking values in $\bbH$.

\subsection{Second-order stationary}

We first address the notion of second-order stationarity or covariance stationarity for a process taking values in a complex Hilbert space.
We begin by considering a zero-mean Gaussian process $X$ with $\bbH=\bbC$. Let $\Re(X(t))$ and $\Im(X(t))$ denote the real and imaginary parts of $X(t)$, respectively.
Recall that $X$ is strictly stationary if and only if the two-dimensional real Gaussian process 
$$
Y(t) = (X_R(t), X_I(t))^T := (\Re(X(t)), \Im(X(t)))^\top \in\bbR^2, 
$$
is second-order stationary, i.e., the covariance function $C_Y(t,s) := \E\left[Y(t)Y(s)^\top\right]$ is a function of $t-s$. Let
$$
C(t,s) = \E[X(t) \overline X(s)] \quad\mbox{and}\quad \check C(t,s) = \E[X(t)X(s)].
$$
It follows that
\begin{align*}
C(t,s) & = \E[X_R(t)X_R(s)] + \E[X_I(t)X_I(s)] -\ii \big(\E[X_R(t)X_I(s)]-\E[X_I(t)X_R(s)]\big), \\
\check C(t,s) & = \E[X_R(t)X_R(s)] - \E[X_I(t)X_I(s)] + \ii \big(\E[X_R(t)X_I(s)] + \E[X_I(t)X_R(s)]\big).
\end{align*}
Observe that $\{C(t,s), \check C(t,s), t, s \in \bbR^d\}$ contains the same information as that in $\{C_Y(t,s)$, $t,s\in\bbR^d\}$. 
%One can construct simple examples to show that  $\{C(t,s), t, s \in \bbR^d\}$ alone does not characterize $\{C_Y(t,s)$, $t,s\in\bbR^d\}$.
In particular, $Y$ is second-order stationary if and only if both $C(t,s)$ and $\check C(t,s)$ are functions of $t-s$. 
The functions, $C(t,s)$ and $\check C(t,s)$, are commonly referred to as the covariance function and pseudo-covariance function, respectively,
which are equal if and only if $X$ is real valued. 
Going beyond the Gaussian setting, we shall take this as the definition of second-order stationarity for a general complex-valued process $X$ with finite second moments, where the inference on the covariance of $X$ can be conducted on $C_Y$ or $C$ and $\check C$ combined. While the stationary covariance $C$ is positive definite, which provides a basis for inference in the spectral domain, it is not the case for $\check C$. Thus, the spectral inference on $X$ must be carried out on the real process $Y$ unless $X$ itself is real, in which case we can simply focus on $C$. The discussion above extends in a straightforward manner to the finite-dimensional case 
$\bbH = \bbC^p$ for any finite $p$, for which the outer-product is $x \overline y^\top, x, y \in \bbC^p$ \citep[cf.][]
{hannan:1970, brillinger:1981, tsay2013multivariate}.

%$C_R(t,s)=\cov(\Re(X(t)), \Re(X(s)), C_I(t,s)=\cov(\Im(X(t)), \Im(X(s)), 
%C_{RI}(t,s)=\cov(\Re(X(t)), \Im(X(s)), C_{IR}(t,s)=\cov(\Re(X(t)), \Im(X(s))$, then strict stationarity amounts to
%$$
%\cov(X_{r}(t), X_{r}(s)) = 

If $\bbH$ is an infinite-dimensional Hilbert space over $\bbC$, then the cross-product (or outer-product) of $x,y\in \mathbb{H}$  is the linear operator defined as 
\begin{align} \label{e:outer_product}
  [x\otimes y](z) = x\cdot {\langle z,y\rangle}, \ \ z\in\mathbb{H},
\end{align}
and, provided that $\E[\|X(t)\|^2] < \infty$ for all $t$, we can define the covariance operator of $X$ as
\begin{align} \label{e:CXY}
C(t,s):= \E[ X(t)\otimes X(s)].
\end{align}
Note that $C(t,s)$ takes values in the space of trace-class operators $\bbT$ and is well-defined in the sense of Bochner in the Banach space $(\bbT,\|\cdot\|_{\rm tr})$.
More information on $\bbT$ will be given below in Section \ref{ss:Bochner}.
However, the discussion on stationarity for the finite-dimensional case and especially the notion of pseudo-covariance
requires modification since an immediate notion of ``complex conjugate'' does not exist. Following \cite{shen2020tangent}, we fix a 
complete orthonormal system (CONS) $\{e_j\}$ of $\H$ and refer to it as the \emph{real} CONS.  Then, for each $x\in \H$ such that $x = \sum_{j} \langle x,e_j\rangle e_j,$ define the complex conjugate 
${\rm conj}(x)\equiv \overline{x}$ as
$$
\overline{x} := \sum_{j} \overline{\langle x,e_j\rangle} e_j.
$$
Thus, ${\rm conj} :\H\mapsto \H$ is an anti-linear operator, i.e., 
$$
{\rm conj}(\alpha x +\beta y) = \overline{\alpha} {\rm conj}(x) + \overline{\beta}{\rm conj}(y), \ x,y\in\H,\ \alpha,\beta\in\C.
$$   
%\tcb{Remove this block?
%\vskip.3cm
%For an operator ${\cal A} :\H \to \H$, we define its conjugate ${\rm conj}({\cal A})\equiv \ol{{\cal A}}$ where $\ol{\cal A}: x \mapsto \ol{{\cal A}(\ol x)},\ x\in \H$.  Thus, if ${\cal A}$ is linear, then so is $\ol{\cal A}$. For $x\in \H$ and an operator ${\cal A}:\H\to\H$, their real and imaginary parts are naturally defined as:
%$$
%\Re(x) := \frac{\ol{x}+x}{2},\ \Im(x) := \frac{x-\ol{x}}{2\ii}\ \ \mbox{ and }\ \  \Re({\cal A}) := \frac{\ol{{\cal A}}+{\cal A}}{2},\ \Im({\cal A}) := \frac{{\cal A}-\ol{{\cal A}}}{2\ii}.
%$$
%Consequently,  $x$ and ${\cal A}$ will be called real, if $\Im(x) = 0$ and $\Im({\cal A}) = 0$.
%}
Also, for $x\in \H$, define its real and imaginary parts:
$$
\Re(x) := \frac{\ol{x}+x}{2},\ \Im(x) := \frac{x-\ol{x}}{2\ii}.
$$
This construction allows us to view the complex Hilbert space $\H$ as 
\begin{align} \label{e:H_decomp}
\H=\H_{\bbR} + \ii \H_{\bbR},
\end{align} 
where $\H_\R:=\{ x\in \H\, :\, \Im(x) = 0\}$  is the real Hilbert space of real elements of $\H$ \citep[see, e.g.,][]{cerovecki2017clt}.
Consequently,  $x$ will be called real if $x\in \H_\R$.

Define the pseudo-covariance operator for a second-order process $\{X(t)\}$ as
\begin{equation}\label{e:CXY_2}
\check C(t,s):= \E[X(t)\otimes \overline X(s)].
\end{equation}

The definition of second-order stationarity for a process in $\bbH$ can now be stated as follows.

\begin{definition} \label{def:covariance operator} 
A zero-mean stochastic process $X= \{X(t),\ t\in \R^d\}$ taking values in 
 $\H$ is said to be an $L^2$- or second-order process if $\E[ \|X(t)\|^2] <\infty$. 
The process $X$ will be referred to as second-order stationary or covariance stationary if both $C(t,s)$ and $\check C(t,s)$ depend only on the lag $t-s$.  In this case, 
we write $C(h) := C(t+h,t)$ and $\check C(h) := \check C(t+h,t)$, which are referred to as the stationary covariance operator and stationary pseudo-covariance operator, respectively.
\end{definition}

It is important to note that while the definition of $\check C(t,s)$ depends on the designated real basis, whether $\check C(t,s)$ is a function of the lag is basis independent; this can be seen using a change-of-basis formula.

We end this section with the following two remarks.

\begin{remark} \label{r:2.1}
As in the one-dimensional case, one can equivalently define stationarity in terms of the real process
$Y(t):= (\Re(X(t))$, $\Im(X(t)))$ taking values in the product Hilbert space $\H_{\bbR}\times \H_{\bbR}$ over $\bbR$. It follows that 
$X$ is second-order stationary if and only if $Y$ is. For much of the rest of this paper, we shall assume for simplicity that the process $X$ is real (based on some CONS), i.e., it takes values in $\H_{\bbR}\subset \H$ (cf. \eqref{e:H_decomp}), in which case, $C(h) = \check C(h)$. 
This simplification does not lead to less generality since all the results apply to $Y$. 
Two exceptions are Section \ref{ss:Bochner} and Section \ref{s:CLT} where we present more general results by considering a complex $X$.
 \end{remark}
 
 \begin{remark} \label{r:2.2}
In view of the last remark, a careful reader might wonder why we choose to work with the framework of complex Hilbert space in the first place. An important reason for that is because the spectral density of a process $X$, real or complex, in $\H$ will in general take values in $\bbT_+$, the space of positive trace-class operators over the {\em complex} Hilbert space $\H$.  To demonstrate the point, consider the following simple example.
Let $\{Z(t), t\in\bbR\}$ be a real, scalar-valued zero-mean Gaussian process with auto-covariance $\gamma(t) = \E[Z(t)Z(0)]$.  Let $a>0$, and define
 $
 X_a(t) := (Z(t),\ Z(t+a))^\top.
 $
 Then, $X_a=\{X_a(t), t\in \R\}$ is a stationary process in $\R^2$, with auto-covariance 
 $$
 C_a(t):= \E[ X_a(t) X_a(0)^\top] = \left(\begin{array}{cc}
  \gamma(t) & \gamma(t-a)\\
  \gamma(t+a) & \gamma(t)
 \end{array} \right).
$$ 
This shows that so long as $\gamma(t+a) \not =\gamma(t-a)$, for all $t$, i.e., the auto-covariance does not vanish on $(-a/2,a/2)$, we have that $C_a(t) \not = C_a(-t) \equiv C_a(t)^\top$, namely, the process $X_a$ is not time-reversible.  Remark \ref{r:time_reverse} below then shows that the spectral density cannot be real-valued.
The simple example illustrates that a complex spectral density is a norm rather than an exception if $d\not=1$.
 \end{remark}

\subsection{Bochner's Theorem} \label{ss:Bochner}

This subsection discusses the notion of spectral density for a second-order stationary process $X$ in $\bbH$.
First, we briefly review some basic facts on trace-class operators.  
The reader is referred to the standard texts on linear operators \citep[e.g.,][]{simon2015comprehensive} for details. 
Denote by $\mathbb{T}$ the collection of trace-class operators on $\mathbb{H}$, namely,
linear operators $\CT: \mathbb{H}\to \mathbb{H}$, with finite {\em trace norm}:
\begin{align} \label{e:tr_norm}
\|\CT\|_{\tr} := \sum_{j=1}^\infty \langle (\CT^*\CT)^{1/2} e_j, e_j\rangle < \infty,
\end{align}
where $\{e_j\}$ is an arbitrary CONS on ${\mathbb H}$, and $\CT^*$ denotes
the adjoint operator of $\CT$, i.e., defined by $\langle \CT^*f,g\rangle = \langle f, \CT g\rangle, f,g\in \mathbb{H}$. 
The trace norm does not depend on the choice of the CONS, 
and the space $\bbT$ equipped with the trace norm is a Banach space.
By the definitions of the outer product \eqref{e:outer_product} and trace norm \eqref{e:tr_norm}, we have $\|X(t)\otimes X(s)\|_\tr = \|X(t)\|\|X(s)\|$.
The fact that $X$ is second order then implies that
$$
 \E[ \| X(t)\otimes X(s)\|_{\rm tr} ] \le \sqrt{\E [\|X(t)\|^2]  \E [\|X(s)\|^2]} <\infty.
$$
Consequently, the covariance operator $C(t,s)$ in \eqref{e:CXY} is well defined in $\bbT$ in the sense of Bochner; see, e.g., Lemma S.2.2 of \cite{shen2020tangent}.  

Recall that $\CT$ is self-adjoint if $\mathcal{T}=\mathcal{T}^*$.  Also $\CT$ is positive definite 
(or just positive), denoted $\CT\ge 0$, if $\CT$ is self-adjoint and $\langle f,\CT f\rangle\ge 0$, 
for all $f\in\bbH$.  The class of positive, trace-class operators will be denoted by $\mathbb{T}_+$.

The classical Bochner's Theorem \citep[cf.][]{bochner1948vorlesungen, 
khintchine1934korrelationstheorie}, which characterizes positive-definite functions, has provided a fundamental tool for  constructing useful 
models for stationary processes.  Below we state an extension of that classical result for our infinite-dimensional setting. To do so, we need the notion of integration with respect to a 
$\mathbb{T}_+$-valued measure which we now briefly describe. 
Let ${\cal B}(\R^d)$ denote the $\sigma$-field of Borel sets in $\R^d$.
We say that $\mu: \mathcal{B}(\R^d) \mapsto \mathbb{T}_+$ is a $\mathbb{T}_+$-valued measure on $\mathcal{B}(\R^d)$ if $\mu$
is $\sigma$-additive.  Note that, {\em a fortiori}, $\mu(\emptyset) = 0$ and $\mu$ is finite in the sense that 
$0\le \mu(B)\le \mu(\R^d) \in \bbT_+,\ B\in{\cal B}(\R^d)$, where for ${\cal T}_1, {\cal T}_2 \in \bbT_+$, ${\cal T}_1\le {\cal T}_2$ means that ${\cal T}_2-{\cal T}_1\in \bbT_+$. 
Integration of a $\mathbb{C}$-valued measurable function on $\bbR^d$ with respect to such $\mu$ can be defined along the line of Lebesgue 
integral \citep[see, e.g.,][for details]{shen2020tangent}.

\begin{thm}\label{thm:Bochner-Neeb}
Let $X$ be a second-order stationary process taking values in $\bbH$, and
let $C(h),\ h\in\R^d$, be its $\bbT$-valued stationary covariance function defined in Definition \ref{def:covariance operator}. Assume that $C$ is continuous at $0$ in trace norm. Then, there exists a unique $\mathbb{T}_+$-valued measure $\nu$ such that 
$$
 C(h)= \int_{\mathbb{R}^d}e^{-\mathbbm{i}h^\top\theta}\nu(d\theta),\ \ h\in\R^d.
$$
In particular, we have that $\|\nu(\mathbb{R}^d)\|_{\rm tr}={\rm trace}(\nu(\mathbb{R}^d))<\infty.$

If, moreover,  $\int_{h\in\mathbb{R}^d}\|C(h)\|_{\rm tr}dh<\infty,$ then the measure $\nu$ has a density with respect to the Lebesgue measure given by 
\begin{align}\label{def:H-f(theta)}
f(\theta):=\frac{1}{(2\pi)^d}\int_{\mathbb{R}^d}e^{\mathbbm{i}h^{\top}\theta}C(h)dh,\ \ \theta\in \R^d,
\end{align}
where the last integral is understood in the sense of Bochner.
\end{thm}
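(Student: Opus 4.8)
The plan is to reduce the operator statement to the classical scalar Bochner theorem by scalarizing the covariance. For fixed $u\in\bbH$ set $\phi_u(h):=\langle C(h)u,u\rangle$. Using $C(h_j-h_k)=\E[X(h_j)\otimes X(h_k)]$ (valid by stationarity) and $\langle[x\otimes y]w,z\rangle=\langle w,y\rangle\langle x,z\rangle$, one gets $\sum_{j,k}c_j\overline{c_k}\,\phi_u(h_j-h_k)=\E\big|\sum_k \overline{c_k}\langle u,X(h_k)\rangle\big|^2\ge 0$, so each $\phi_u$ is a scalar positive-definite function; moreover $|\phi_u(h)-\phi_u(0)|\le\|C(h)-C(0)\|_{\tr}\|u\|^2$ together with continuity of $C$ at $0$ makes $\phi_u$ continuous at $0$, hence continuous everywhere. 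Classical Bochner then gives a unique finite positive measure $\nu_u$ with $\phi_u(h)=\int e^{-\ii h^\top\theta}\nu_u(d\theta)$ and total mass $\nu_u(\R^d)=\phi_u(0)=\langle C(0)u,u\rangle$; the same applies for complex $u$, since $\phi_u$ is positive definite regardless. Polarizing, the finite complex measure $\nu_{u,v}:=\tfrac14\sum_{\ell=0}^{3}\ii^{\ell}\nu_{u+\ii^{\ell}v}$ has Fourier transform $\langle C(h)u,v\rangle$, hence is sesquilinear in $(u,v)$ by uniqueness of Fourier transforms of finite complex measures, and, as the diagonal of a nonnegative sesquilinear form, satisfies $|\nu_{u,v}(B)|^2\le\nu_u(B)\nu_v(B)\le\langle C(0)u,u\rangle\langle C(0)v,v\rangle$ for every Borel $B$.

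Next I would assemble the $\{\nu_{u,v}\}$ into a $\bbT_+$-valued measure. The displayed bound makes $(u,v)\mapsto\nu_{u,v}(B)$ a bounded sesquilinear form, so it is represented by a unique bounded operator $\nu(B)$ with $\langle\nu(B)u,v\rangle=\nu_{u,v}(B)$; it is self-adjoint with $\langle\nu(B)u,u\rangle=\nu_u(B)\ge 0$, hence $\nu(B)\ge 0$. Setting $h=0$ gives $\langle\nu(\R^d)u,v\rangle=\langle C(0)u,v\rangle$, so $\nu(\R^d)=C(0)\in\bbT_+$ with $\|C(0)\|_{\tr}={\rm trace}(C(0))=\E\|X(0)\|^2<\infty$. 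Since $0\le\nu(B)\le\nu(\R^d)=C(0)$ and a positive operator dominated by a trace-class one is trace-class with no larger trace, each $\nu(B)\in\bbT_+$ with $\|\nu(B)\|_{\tr}\le\|C(0)\|_{\tr}$. For countable additivity in trace norm, take disjoint $B_n$ with union $B$ and $S_N=\sum_{n\le N}\nu(B_n)$; finite additivity of the scalar measures gives $\langle(\nu(B)-S_N)e_j,e_j\rangle=\nu_{e_j}\big(B\setminus\bigcup_{n\le N}B_n\big)\downarrow 0$ for each $j$, while this quantity is bounded by $\langle C(0)e_j,e_j\rangle$, which is summable in $j$, so dominated convergence yields $\|\nu(B)-S_N\|_{\tr}={\rm trace}(\nu(B)-S_N)\to 0$. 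Thus $\nu$ is a $\bbT_+$-valued measure; since $\mathcal T\mapsto\langle\mathcal T u,v\rangle$ is a bounded linear functional on $\bbT$, it commutes with the $\bbT$-valued Bochner integral against $\nu$, giving $\langle\big[\int e^{-\ii h^\top\theta}\nu(d\theta)\big]u,v\rangle=\int e^{-\ii h^\top\theta}\nu_{u,v}(d\theta)=\langle C(h)u,v\rangle$ for all $u,v$, i.e.\ $C(h)=\int e^{-\ii h^\top\theta}\nu(d\theta)$. Uniqueness follows since any competing measure $\nu'$ forces $\nu'_u=\nu_u$ for all $u$ by scalar Bochner uniqueness, hence $\nu'=\nu$.

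For the second assertion, assume $\int\|C(h)\|_{\tr}\,dh<\infty$; then $f(\theta)=(2\pi)^{-d}\int e^{\ii h^\top\theta}C(h)\,dh$ is a well-defined $\bbT$-valued Bochner integral, continuous in trace norm by dominated convergence. Scalarizing again, $\phi_u\in L^1(\R^d)$ because $|\phi_u(h)|\le\|C(h)\|_{\tr}\|u\|^2$, so scalar Fourier inversion shows $\nu_u$ has density $(2\pi)^{-d}\int e^{\ii h^\top\theta}\phi_u(h)\,dh=\langle f(\theta)u,u\rangle$ (commuting $\langle\,\cdot\,u,u\rangle$ with the Bochner integral). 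Hence $\langle f(\theta)u,u\rangle\ge 0$ a.e.\ for each $u$; continuity in $\theta$ upgrades this to all $\theta$, and intersecting over a countable dense set of $u$ (with continuity in $u$) gives $f(\theta)\in\bbT_+$ for every $\theta$. Finally $\langle\nu(B)u,v\rangle=\nu_{u,v}(B)=\int_B\langle f(\theta)u,v\rangle\,d\theta=\langle\big(\int_B f(\theta)\,d\theta\big)u,v\rangle$ for all $u,v$ (polarizing the diagonal identity), so $\nu(B)=\int_B f(\theta)\,d\theta$ and $f$ is the density of $\nu$.

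The step I expect to be most delicate is assembling the scalar measures $\{\nu_{u,v}\}$ into a genuine $\bbT_+$-valued measure — in particular showing each $\nu(B)$ is trace-class and that $\nu$ is $\sigma$-additive in trace norm rather than merely weakly; the key leverage in both cases is the domination $0\le\nu(B)\le C(0)$ by the trace-class covariance at the origin. The recurring auxiliary point, that the bounded functionals $\langle\,\cdot\,u,v\rangle$ commute with $\bbT$-valued Bochner integrals, follows from the integration theory for $\bbT_+$-valued measures invoked above.
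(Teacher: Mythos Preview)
The paper does not supply its own proof of this theorem; it simply states that ``the detailed proof of Theorem \ref{thm:Bochner-Neeb} can be found in \cite{shen2020tangent}, where the role of separability and complex scalar field are made clear.'' So there is no in-paper argument to compare against.

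Your scalarization approach is the standard route to operator-valued Bochner theorems and is correct as written: reducing to classical Bochner via $\phi_u(h)=\langle C(h)u,u\rangle$, polarizing to get the complex measures $\nu_{u,v}$, assembling them into an operator $\nu(B)$ via boundedness of the resulting sesquilinear form, and then leveraging the domination $0\le\nu(B)\le C(0)\in\bbT_+$ both to force each $\nu(B)$ into $\bbT_+$ and to upgrade weak $\sigma$-additivity to trace-norm $\sigma$-additivity via dominated convergence over a CONS. One small point worth making explicit in the last paragraph: to write $\nu(B)=\int_B f(\theta)\,d\theta$ as a $\bbT$-valued Bochner integral you need $\int_{\R^d}\|f(\theta)\|_{\tr}\,d\theta<\infty$; this follows from Tonelli, since $\|f(\theta)\|_{\tr}=\sum_j\langle f(\theta)e_j,e_j\rangle$ and $\int_{\R^d}\langle f(\theta)e_j,e_j\rangle\,d\theta=\nu_{e_j}(\R^d)=\langle C(0)e_j,e_j\rangle$, which is summable in $j$.
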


The density function $f$ in \eqref{def:H-f(theta)} is referred to as the spectral density of the stationary process. The detailed proof of Theorem \ref{thm:Bochner-Neeb} can be found in \cite{shen2020tangent}, where the role of separability and complex scalar field are made clear. 

The following is a follow-up remark to Remark \ref{r:2.2}.

\begin{remark} \label{r:time_reverse}
Theorem \ref{thm:Bochner-Neeb} holds for a general second-order stationary process $X$ in $\bbH$. Let us consider an interesting property of the spectral density if the process is real (defined according to some fixed CONS). To do that, define the conjugate $\ol{{\cal A}}$ of an operator ${\cal A} :\H \to \H$ by $\ol{\cal A}: x \mapsto \ol{{\cal A}(\ol x)},\ x\in \H$; accordingly, define
$$
\Re({\cal A}) := \frac{\ol{{\cal A}}+{\cal A}}{2},\ \Im({\cal A}) := \frac{{\cal A}-\ol{{\cal A}}}{2\ii}.
$$
Thus, ${\cal A}$ will be called real if $\Im({\cal A}) = 0$.
Suppose now $X$ is real (cf.\ Remark \eqref{r:2.1}). By the simple fact that $x\otimes y$ is real if both $x$ and $y$ are real, we have $C(h) = \overline{C(h)}$.
It then follows from \eqref{def:H-f(theta)} that the {\em time-reversed} process $Y = \{ X(-t),\ t\in \bbR^d\}$ has the spectral density
$$
f_Y(\theta) = \overline {f_X(\theta)},\ \ \ \theta\in \R^d.
$$
The uniqueness of the spectral density entails that $X$ and $Y$ have the same auto-covariance 
if and only if $f_Y(\theta) = f_X(\theta) = \overline {f_X(\theta)}$, that is, $f_X(\theta)$ is a real operator, 
for all $\theta\in\R^d$. This is a special property that is automatically true only when $\bbH$ is one-dimensional.
For further discussions, see Section 4.3 of \cite{shen2020tangent}.
\end{remark}

\section{Spectral density estimation based on irregularly sampled data}\label{sec:estimator}

Our inference problem focuses on a second-order real process $X=\{X(t),\ t\in \R^d\}$ taking values in $\bbH$. 
Following Definition \ref{def:covariance operator}, we define the stationary covariance operator $C$ and assume that the following holds.

\renewcommand\theassumption{C}
\begin{assumption}\label{a:cov} Let $C=\{C(h),\ h\in \R^d\}$ be the $\bbT$-valued stationary covariance operator of 
the second-order stationary real process $X = \{X(t),\ t\in \R^d\}$ taking values in $\H$. Assume that
\begin{itemize}
\item[(a)]
$\int_{h\in\mathbb{R}^d} \|C(h)\|_{\rm tr} dh<\infty$, and
\vskip.3cm
\item[(b)] 
$C(h)$ is $L^1$-$\gamma$-H\"older in the following sense:
\begin{align}\label{e:C-L1-Holder}
\int_{x\in \R^d} \Big( \sup_{y\, :\, \|x-y\|\le \delta }\left\Vert C(y)-C(x)\right\Vert_{\rm tr}   \Big) dx\le 
 \vertiii{C}_{\gamma} \cdot \delta^\gamma,
\end{align}
for some $0<\gamma\leq1$ and some (and hence all) $\delta>0$, where $\vertiii{C}_\gamma<\infty$ is a fixed constant. 
\end{itemize}
\end{assumption}

Property (a) in Assumption \ref{a:cov} guarantees the existence of the spectral density $f$ given by \eqref{def:H-f(theta)}. 
Property (b) will be needed to compute the bias of our estimator which is based on discretely observed data. 
It can be seen that Condition (b) holds with $\gamma=1$ if $C$ has an integrable and smoothly varying derivative.

We next introduce our sampling framework. As mentioned in Section \ref{sec:intro}, we adopt the mixed-domain asymptotics framework, which means that both the domain and the density of the data increase with sample size.  Assume that the process $\{X(t),\ t\in \R^d\}$ is observed at distinct locations $t_{n,i}, i=1,\ldots,n$. Let
$\mathbb{T}_n:=\{t_{n,i}\}_{i=1}^n$, and $T_n$ denote the closed convex hull of $\mathbb{T}_n$. We refer to $T_n$ as the {\em sampling region},
which contains points where $X(t)$ could potentially be observed.  However, as seen in our proofs, 
other contiguous regions may also be used for $T_n$.
For our purpose, it is convenient to view $T_n$ as a tessellation comprising disjoint cells, $V(t_{n,i})$, that are ``centered'' at the $t_{n,i}$:
\begin{align*}
T_n=\bigcup_{i=1}^nV(t_{n,i}), \quad \mbox{where} \quad t_{n,i}\in V(t_{n,i}) \quad \mbox{and}  \quad |V(t_{n,i})\cap V(t_{n,j})|= 0, \ i\neq j.
\end{align*}
Here and elsewhere, $|A|$ denotes the Lebesgue measure of a measurable set $A\subset\R^d$.  
Denote $\mathbb{V}=\{V(t_{n,i}), \ i=1,\hdots,n\}$. 
The Voronoi tessellation \cite{voronoi1908} is a natural example of 
such tessellation and can also be efficiently constructed \cite{yan2013efficient}.
While our results hold for a wide class of tessellations, to fix ideas we will adopt the Voronoi tessellation in the sequel.

Define the diameter of the Voronoi tessellation as 
\begin{align}\label{def:delta_n}
\delta_n:={\rm diam}_{T_n}(\{t_{n,i}\})=\max_{i=1,\hdots,n}\sup_{t\in V(t_{n,i})}\|t-t_{n,i}\|_2,    
\end{align}
where $\|\cdot\|_2$ denotes the Euclidean norm in $\mathbb{R}^d$. 
The parameter $\delta_n$ can be thought of as a measure of the maximal size of the tessellation cells, and 
% The parameter $\delta_n$ 
can be equivalently written as 
$$
\delta_n = \sup_{t\in T_n}\min_{i=1,\hdots,n}\|t-t_{n,i}\|_2.
$$
Throughout, we will assume the following rather general sampling framework.

\renewcommand\theassumption{S}
 
\begin{assumption}\label{a:sampling}
\begin{itemize}
    \item[(a)] The sequence $\delta_n$ defined in \eqref{def:delta_n} tends to zero as $n\to\infty.$ Moreover, $|T_n|\to\infty$ as $n\to\infty$.\\

    \item[(b)]  The sample design is such that  
\begin{align} \label{e:boundary}
\frac{T_n}{|T_n|^{1/d}}\to T,\ \textrm{as}\ n\to\infty,
\end{align}
holds in probability, in the Hausdorff metric, for some fixed bounded convex set $T$ with non-empty interior. 
\end{itemize}
\end{assumption}
The condition (a) above describes the mixed-domain asymptotics framework alluded to earlier. 
Relation \eqref{e:boundary} in (b) essentially imposes a regularity condition on the boundary points of $\mathbb{T}_n$; for instance, 
if $\mathbb{T}_n = \{1,2,\ldots, n\}^d$ then $T=[0,1]^d$.  \\

The definition of our proposed estimator involves a kernel function, which satisfies the following standard conditions.

 \renewcommand\theassumption{K}

\begin{assumption} \label{a:kernel}
%\label{def:kernel}
The kernel $K$ is a continuous function from $\mathbb{R}^d$ to $\R_+$ satisfying
\begin{itemize}
    \item [(a)] The support $S_K:=\{t\in\R^d\, :\, K(t)>0\}$ of $K$ is a bounded set containing $0$;
    %\tcr{Assume support is $[-1,1]$ without loss of generality?}
    \item [(b)] $\|K\|_{\infty}:=\sup_{u\in S_K}K(u) = K(0)=1$;
    \item[(c)] $K$ is differentiable in an $\epsilon$-neighborhood of 0 for some fixed $\epsilon>0$, with $$\|\nabla K\|_{\infty}^{(\epsilon)}:=\sup_{\|u\|_2<\epsilon}\|\nabla K(u)\|_2<\infty,$$
    where $\nabla$ stands for the gradient operator.
\end{itemize}
\end{assumption}

{\bf The estimator.} In this paper, we focus on the following non-parametric estimator of the spectral density 
$f(\theta)$:
\begin{align}\label{def:f^K(theta)}
\begin{split}
    \hat f_n(\theta) & = \frac{1}{(2\pi)^d}\sum_{t\in\mathbb{T}_n}\sum_{s\in\mathbb{T}_n}e^{\mathbbm{i}(t-s)^{\top}\theta}\frac{X(t)\otimes X(s)}{|T_n\cap(T_n-(t-s))|} \\
&  \hspace{2cm}  
\cdot K\left(\frac{t-s}{\Delta_n}\right)\cdot|V(t)|\cdot|V(s)|,
\end{split}
\end{align}
where $\Delta_n>0$ is a bandwidth parameter, the purpose of which is providing weighted averaging over observations that are at most $\Delta_n\cdot|S_K|$ apart. The choice of $\Delta_n$ that will lead to satisfactory estimation results depends on both $\delta_n$ and $|T_n|.$ 

The estimator in \eqref{def:f^K(theta)} can be applied to the general setting of functional data sampled irregularly over space and time, which is frequently 
encountered in applications \citep[see, e.g.,][]{yarger:stoev:hsing:2022}. In the special case where $\mathbb{T}_n$ is a regular grid, which includes the time-series setting, 
the terms $V(t)$ are constant for any $t\in\mathbb{T}_n$ and hence $|V(s)|$ and $|V(t)|$ can be factored out of the summation in $\hat f_n$ (see 
Section \ref{sec:discrete-time}).  In this case, the estimator in \eqref{def:f^K(theta)} is related to the so-called lag window estimator in time-series analysis; 
see \cite{robinson1983review}, \cite{zhu2020higher} and the discussions in Section \ref{subsec:flat-top} below.

To gain some insight into the definition \eqref{def:f^K(theta)}, consider the idealized setting where the full sample path of $\{X(t),t\in T_n\}$ is
available.  In view of \eqref{def:H-f(theta)}, one would naturally use the estimator 
\begin{align}\label{def:hat f_n}
    g_n(\theta)=\frac{1}{(2\pi)^d}\int_{t\in T_n}\int_{s\in T_n}e^{\mathbbm{i}(t-s)^\top\theta}\frac{X(t)\otimes X(s)}{|T_n\cap(T_n-(t-s))|} K\left(\frac{t-s}{\Delta_n}\right)dtds.
\end{align}
Since the full sample path is not available in practice one must consider approximations such as $\hat f_n(\theta)$,
which can be viewed as a Riemann sum for the integral defining $g_n(\theta)$.  The function $g_n(\theta)$ motivates the definition of
$\hat f_n(\theta)$ and in fact arises in the proofs of the asymptotic theory. 

We end the section with the following remarks.

\begin{remark} In our data scheme, we assume a fixed design where the observation points $t_{n,i}$ are nonrandom.
Our results can be modified in a straightforward manner to include
the case of a random design that is independently generated from the process $\{X(t)\}$. 
In this case, the definition of the estimator in \eqref{def:f^K(theta)} needs to be modified slightly to incorporate the probability densities of the sample design
in place of the volume elements \citep[cf., for example,][]{matsuda2009fourier}.
\end{remark}

\begin{remark}\label{re:Guyon parametrization}
The normalization $|T_n\cap(T_n-(t-s))|$ in \eqref{def:f^K(theta)} and \eqref{def:hat f_n} might seem unusual at first glance, 
whereas the simpler normalization by $|T_n|$ would seem more natural.  It turns out that the use of the latter normalization leads to a bias with a higher order
in the spatial context $d\ge 2$. Similar phenomenon arises for periodogram-based estimators in time series when data 
are observed over a regular lattice \citep[][]{guyon1982parameter}.
\end{remark}
\begin{remark}\label{re: Practicality}
The estimator $\hat f_n$ is defined assuming that we have fully observed functional data $X(t),t\in \mathbb{T}_n$. If $\bbH$ is
infinite dimensional, then the functional data $X(t)$ can never be observed in its entirety. 
In that case, we need to approximate $X(t)\otimes X(s)$ in some manner based on what is actually observed for the
functional data, which may affect the performance of the estimator.
We will discuss this point in more detail in Section \ref{s:rkhs}.
\end{remark}

\section{Asymptotic properties} \label{s:asymp}

We start our investigation of $\hat f_n(\theta)$ defined in Section 3 by first developing the asymptotic bounds for its bias and variance. This will yield results on 
the consistency and rate of convergence of the estimator. Although $f(\theta)$ and $\hat f_n(\theta)$ are trace-class operators 
on $\bbH$, in order to facilitate the variance calculation, it is more natural to work with the Hilbert-Schmidt (HS) norm. Let $\mathbb{X}$ denote 
the class of Hilbert-Schmidt operators on $\bbH$.  The Hilbert-Schmidt inner product  of the linear operators $\mathcal{A},\mathcal{B}\in\mathbb{X}$
is defined as 
\begin{align*}
\langle \mathcal{A},\mathcal{B}\rangle_{\rm HS} ={\rm trace}\left(\mathcal{A}^*\mathcal{B}\right)
\end{align*}
and $\|\mathcal{A}\|_{\rm HS}:=\sqrt{\|\mathcal{A}^*\mathcal{A}\|_{\rm tr}}$ \citep[see, e.g.,][]{simon2015comprehensive}.

It is straightforward to establish the following bias-variance decomposition
\begin{align} \label{eq: bias-variance decomposition}
\begin{split}
    \mathbb{E}\left\Vert\hat f_n(\theta)-f(\theta)\right\Vert_{\rm HS}^2  & = \mathbb{E}\left\Vert\hat f_n(\theta)-\mathbb{E}\hat f_n(\theta)\right\Vert_{\rm HS}^2 + \left\Vert \mathbb{E} \hat f_n(\theta) -f(\theta)\right\Vert_{\rm HS}^2\\
        & =: {\rm Var}\left(\hat f_n(\theta)\right)+ {\rm Bias}\left(\hat f_n(\theta)\right)^2.
\end{split}
\end{align}

\subsection{Asymptotic bias}\label{subsec:unbiasedness}
In this subsection, we evaluate the rate of the bias of $\hat f_n(\theta)$ for large $n$.  We start with a general bound, which is made more informative in the sequel.
Consistent with \eqref{eq: bias-variance decomposition}, the bounds in the following Theorem \ref{thm:expectation f_n^K-f_n} are stated in the Hilbert-Schmidt norm. However, we note that the result remains valid if the stronger trace norm is used throughout.

\begin{thm}\label{thm:expectation f_n^K-f_n}
Let Assumptions \ref{a:cov}, \ref{a:kernel}, and \ref{a:sampling} hold. 
Choose $\Delta_n\to\infty$ such that 
\begin{align*} %\label{e:bias}
\Delta_n\cdot S_K\subset T_n-T_n \quad\mbox{ for all $n$}
\end{align*}
where $A-B:=\{a-b:\ a\in A,b\in B\}$ for sets $A,B\subset \mathbb{R}^d$.
Then, for any bounded set $\Theta$ 
\begin{align}\label{e:thm:expectation f_n^K-f_n rate}
    \sup_{\theta\in\Theta}\left\Vert\mathbb{E}\hat f_n(\theta) - f(\theta)\right\Vert_{\rm HS} = \mathcal{O}\Big(\delta_n^{\gamma}+B_1(\Delta_n)+B_2(\Delta_n)\Big),
\end{align}
where \begin{align}\label{e:B terms}
\begin{split}
    B_1(\Delta_n) & := \left\Vert\int_{h\in\Delta_n\cdot S_K}e^{\mathbbm{i}h^{\top}\theta}C(h)\left(1-K\left(\frac{h}{\Delta_n}\right)\right)dh\right\Vert_{\rm HS}, \\
    B_2(\Delta_n)&:= \left\Vert\int_{h\not\in \Delta_n\cdot S_K}e^{\mathbbm{i}h^{\top}\theta}C(h)dh\right\Vert_{\rm HS}.
    \end{split}
\end{align}
\end{thm}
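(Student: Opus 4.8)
The plan is to route the analysis through the idealized full-path estimator $g_n$ of \eqref{def:hat f_n}, splitting
\begin{equation*}
\E\hat f_n(\theta)-f(\theta)=\big(\E\hat f_n(\theta)-\E g_n(\theta)\big)+\big(\E g_n(\theta)-f(\theta)\big),
\end{equation*}
and showing that the second bracket is bounded by $(2\pi)^{-d}(B_1(\Delta_n)+B_2(\Delta_n))$ (the smoothing-and-truncation error) while the first bracket is $\mathcal{O}(\delta_n^\gamma)$ (the error of replacing a double integral by a Riemann sum over the tessellation $\mathbb{V}$). Since $X$ is second-order stationary we have $\E[X(t)\otimes X(s)]=C(t-s)$ and $\E[\|X(t)\otimes X(s)\|_{\rm tr}]<\infty$ (Section \ref{ss:Bochner}), so the expectation passes inside the finite sum in \eqref{def:f^K(theta)} and the Bochner integral in \eqref{def:hat f_n}.

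For the second bracket, substitute $h=t-s$ in $\E g_n(\theta)$: for fixed $h$ the admissible $t$ form the set $T_n\cap(T_n+h)$, whose measure equals $|T_n\cap(T_n-h)|$, so the normalization cancels, and since $K(\cdot/\Delta_n)$ is supported on $\Delta_n\cdot S_K\subset T_n-T_n$,
\begin{equation*}
\E g_n(\theta)=\frac{1}{(2\pi)^d}\int_{\Delta_n\cdot S_K}e^{\mathbbm{i}h^{\top}\theta}\,C(h)\,K\!\left(\frac{h}{\Delta_n}\right)dh .
\end{equation*}
Subtracting $f(\theta)=(2\pi)^{-d}\int_{\R^d}e^{\mathbbm{i}h^{\top}\theta}C(h)\,dh$ (Theorem \ref{thm:Bochner-Neeb}) and splitting $\R^d$ into $\Delta_n\cdot S_K$ and its complement gives, after taking Hilbert-Schmidt norms, $\|\E g_n(\theta)-f(\theta)\|_{\rm HS}\le(2\pi)^{-d}(B_1(\Delta_n)+B_2(\Delta_n))$, both terms being finite because $\int_{\R^d}\|C(h)\|_{\rm tr}\,dh<\infty$ and $0\le K\le1$.

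The bulk of the work is the first bracket. Write $\lfloor t\rfloor$ for the centre $t_{n,i}$ of the Voronoi cell containing $t$, so $\|t-\lfloor t\rfloor\|_2\le\delta_n$, and set $\Phi_n(t,s):=\frac{e^{\mathbbm{i}(t-s)^{\top}\theta}\,C(t-s)\,K((t-s)/\Delta_n)}{|T_n\cap(T_n-(t-s))|}$; then $\E\hat f_n(\theta)=(2\pi)^{-d}\int_{T_n\times T_n}\Phi_n(\lfloor t\rfloor,\lfloor s\rfloor)\,dt\,ds$ and $\E g_n(\theta)=(2\pi)^{-d}\int_{T_n\times T_n}\Phi_n(t,s)\,dt\,ds$, so the bracket equals $(2\pi)^{-d}\int_{T_n\times T_n}[\Phi_n(\lfloor t\rfloor,\lfloor s\rfloor)-\Phi_n(t,s)]\,dt\,ds$. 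I would bound its HS norm by the triangle inequality after expanding $\Phi_n(\lfloor t\rfloor,\lfloor s\rfloor)-\Phi_n(t,s)$ factor by factor, using $\|(\lfloor t\rfloor-\lfloor s\rfloor)-(t-s)\|_2\le2\delta_n$ and the elementary identity $\int_{T_n\times T_n}F(t-s)\,dt\,ds=\int_{\R^d}F(h)\,|T_n\cap(T_n-h)|\,dh$. For the covariance factor one bounds $\|C(\lfloor t\rfloor-\lfloor s\rfloor)-C(t-s)\|_{\rm tr}$ by $\sup_{\|v\|_2\le2\delta_n}\|C(t-s+v)-C(t-s)\|_{\rm tr}$, keeps the remaining factors (notably the normalization) at the running point $t-s$, and then the identity above makes $|T_n\cap(T_n-h)|$ cancel the reciprocal weight, reducing the contribution to $\int_{\R^d}\sup_{\|v\|_2\le2\delta_n}\|C(u+v)-C(u)\|_{\rm tr}\,du\le\vertiii{C}_\gamma(2\delta_n)^\gamma=\mathcal{O}(\delta_n^\gamma)$ by Assumption \ref{a:cov}(b). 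The exponential factor is Lipschitz with constant $\sup_{\theta\in\Theta}\|\theta\|_2<\infty$ and, by the same cancellation, contributes $\mathcal{O}(\delta_n\int_{\R^d}\|C(u)\|_{\rm tr}\,du)=\mathcal{O}(\delta_n)$; the kernel factor is handled via its modulus of continuity at the vanishing scale $2\delta_n/\Delta_n$ (Lipschitz near $0$ by Assumption \ref{a:kernel}(c)), contributing a lower-order term.

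The genuinely delicate term is the one in which the normalization $|T_n\cap(T_n-\cdot)|$ must be evaluated at the cell centre $\lfloor t\rfloor-\lfloor s\rfloor$ rather than at $t-s$: its reciprocal is neither bounded nor matched to the cancelling measure $|T_n\cap(T_n-h)|$, so this is an irreducible Riemann-sum remainder, and I expect it to be the main obstacle. The remedy is to use Assumption \ref{a:sampling}: the boundary regularity \eqref{e:boundary} forces $T_n$ to be essentially a dilate $|T_n|^{1/d}(T+o(1))$, so that $|T_n\cap(T_n-h)|\asymp|T_n|$ uniformly over (a slight fattening of) $\Delta_n\cdot S_K$, while the set covariogram $h\mapsto|T_n\cap(T_n-h)|$ is Lipschitz with constant $\lesssim|T_n|^{(d-1)/d}$ and hence varies by only a relative factor $1+\mathcal{O}(\delta_n|T_n|^{-1/d})$ over $2\delta_n$-balls inside that region; combined with $\int_{\R^d}\|C(h)\|_{\rm tr}\,dh<\infty$ this bounds the offending term by $\mathcal{O}(\delta_n|T_n|^{-1/d})=o(\delta_n^\gamma)$. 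Collecting the contributions, and noting $\delta_n\le\delta_n^\gamma$ and $\delta_n/\Delta_n\le\delta_n^\gamma$ for large $n$, gives the first bracket as $\mathcal{O}(\delta_n^\gamma)$ uniformly over $\theta\in\Theta$, which with the second-bracket bound yields \eqref{e:thm:expectation f_n^K-f_n rate}.
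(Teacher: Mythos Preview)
Your proposal is correct and follows essentially the same route as the paper: split via the idealized estimator $g_n$, bound $\|\E g_n-f\|_{\rm HS}$ by $B_1+B_2$ through the change of variables $h=t-s$, and control $\|\E\hat f_n-\E g_n\|_{\rm HS}$ by a factor-by-factor telescoping of the Riemann-sum error, using Assumption~\ref{a:cov}(b) for the covariance increment and a Steiner-type covariogram estimate (the paper's Lemma~\ref{le:rate for geometry quantity}) for the normalization. The only cosmetic difference is the telescoping order---you anchor the scalar factors at the running point $t-s$ so that the reciprocal weight cancels exactly against the set-covariogram identity, whereas the paper anchors them at the cell centers and bounds $\sup_x|L_n(x)|=\mathcal O(|T_n|^{-1})$---but both lead to the same $\mathcal O(\delta_n^\gamma)$ bound.
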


\begin{proof}[Proof (Outline)] The complete proof of Theorem \ref{thm:expectation f_n^K-f_n} is given in Section \ref{s:proof_of_bias}. Here, we
provide a brief outline.  Let $g_n(\theta)$ be defined by \eqref{def:hat f_n}.
By the triangle inequality,
\begin{align*}%\label{ineq:triangle}
    \Big\Vert\mathbb{E}\hat f_n(\theta)-f(\theta)\Big\Vert_{\rm HS}\leq  \Big\Vert\mathbb{E}\hat f_n(\theta)-\mathbb{E}g_n(\theta) \Big\Vert_{\rm HS}+ \Big\Vert\mathbb{E}g_n(\theta)-f(\theta) \Big\Vert_{\rm HS}.
\end{align*}
It is immediate from the representation \eqref{def:H-f(theta)} for $f$ and the inclusion
$\Delta_n\cdot S_K\subset T_n-T_n$, that
\begin{equation*} %\label{e:Eg-f}
\left\|\mathbb{E}g_n(\theta)-f(\theta)\right\|_{\rm HS} \le B_1(\Delta_n) + B_2(\Delta_n).
\end{equation*}
To complete the proof one needs to show that
\begin{equation}\label{e:Efhat-Eg}
\left\|\mathbb{E}\hat f_n(\theta)-\mathbb{E}g_n(\theta)\right\|_{\rm HS} = {\cal O}(\delta_n^\gamma).
\end{equation}
To evaluate $ \left\|\mathbb{E}\hat f_n(\theta)-\mathbb{E}g_n(\theta)\right\|_{\rm HS}$, let
\begin{align*} %\label{def: h_n}
h_n(t,s;\theta):=e^{\mathbbm{i}(t-s)^{\top}\theta}\frac{X(t)\otimes X(s)}{|T_n\cap(T_n-(t-s))|}K\left(\frac{t-s}{\Delta_n}\right),
\end{align*}
and write
\begin{align} \label{e:riemann}
\begin{split}
& g_n(\theta)-\hat f_n(\theta) \\
& =\frac{1}{(2\pi)^d}\sum_{w\in\mathbb{T}_n}\sum_{v\in\mathbb{T}_n}\int_{t\in V(w)}\int_{s\in V(v)} \left(h_n(t,s;\theta) - h_n(w,v;\theta)\right)  
\mathbbm{1}_{\left(t\in V(w),s\in V(v)\right)} dtds.
\end{split}
\end{align}
This implies that  
\begin{align*}%\label{ineq: Exp dif before kernel} 
\begin{split}
& \left\Vert\mathbb{E} g_n(\theta)-\mathbb{E}\hat f_n(\theta)\right\Vert_{\rm HS} \\
& \le \frac{1}{(2\pi)^d}\sum_{w\in\mathbb{T}_n}\sum_{v\in\mathbb{T}_n} \int_{t\in V(w)}\int_{s\in V(v)}\|\mathbb{E}h_n(t,s;\theta)-\mathbb{E}h_n(w,v;\theta)\|_{\rm HS}dtds.
\end{split}
\end{align*}
Then, using the regularity conditions on $K$ and $C$, routine but technical analysis shows that the last sum is of order ${\cal O}(\delta_n^\gamma)$.
This yields \eqref{e:Efhat-Eg} and completes the proof of \eqref{e:thm:expectation f_n^K-f_n rate}.
\end{proof}

Several remarks are in order.
      
\begin{remark} \label{r:bias} Theorem \ref{thm:expectation f_n^K-f_n} provides a general bound on the bias.  
Under the assumptions of the theorem, the bias vanishes as $n\to\infty$. 
We briefly discuss the terms $\delta_n^\gamma$ and $B_1(\Delta_n) + B_2(\Delta_n)$ which arise for different reasons.
\begin{itemize}
\item[1.] 
As can be seen from the above sketch of the proof, the terms $B_1(\Delta_n)$ and $B_2(\Delta_n)$ in 
\eqref{e:thm:expectation f_n^K-f_n rate} control the bias of the idealized estimator $g_n(\theta)$ based on the idealized data. A more specific but crude bound of $B_1(\Delta_n)$ and $B_2(\Delta_n)$ is the following:
\begin{align} \label{e:bias_estimate}
\begin{split}
B_1(\Delta_n) 
& \le \int_{|h|\le \epsilon\Delta_n}\|C(h)\|_\tr \left|1-K\left(\frac{h}{\Delta_n}\right)\right|dh
+ \int_{|h|> \epsilon\Delta_n}\|C(h)\|_{\tr} dh \\
& \le \|\nabla K\|_{\infty}^{(\epsilon)} \epsilon \int \|C(h)\|_{\tr} dh + \int_{|h|> \epsilon\Delta_n}\|C(h)\|_{\tr} dh.
\end{split}
\end{align}
The first term on the rhs depends only on the kernel, whereas the second term, which dominates $B_2(\Delta)$ for any small $\epsilon < 1$, depends on the decay rate of $\|C(h)\|_\tr$.
Thus, the rate of $B_1(\Delta_n)+B_2(\Delta_n)$ is bounded by
\begin{align*} %\label{e:bias_bound_crude}
\inf_\epsilon \big(\epsilon \vee \psi(\epsilon\Delta_n)\big) \quad\mbox{where}\quad \psi(u) := \int_{|h|>u}\|C(h)\|_{\tr} dh.
\end{align*}
More explicit bounds can be obtained by imposing specific assumptions on the behavior of $\psi(u)$ for large $u$,
as will be demonstrated in Section \ref{sec:discrete-time}. 

\vskip.3cm

\item[2.] In view of \eqref{e:Efhat-Eg}, the term ${\cal O}(\delta_n^\gamma)$ controls the bias due to 
discretization, which arises from sampling the process at the discrete set $\mathbb{T}_n \subset T_n$. In settings such as time series where the data are sampled on a regular grid, this term will be eliminated from the bias (cf. Theorem \ref{prop: consistency f^ts}). 
\end{itemize}
\end{remark}

\subsection{Asymptotic variance}\label{subsec:variance}

In view of the form of $\hat f_n(\theta)$, a ``fourth-moment'' condition of $X$ is needed to evaluate the variance of $\hat f_n$. 

Recall the definition of cumulant for random variables: For real-valued random variables
$Y_j,j=1,\ldots,k$, 
\begin{align} \label{e:cum_complex}
{\rm cum}\left(Y_1,\hdots,Y_k\right):=\sum_{\nu=(\nu_1,\hdots,\nu_q)}(-1)^{q-1}(q-1)!\prod_{l=1}^q\mathbb{E}\left(\prod_{j\in \nu_l}Y_j\right),
\end{align}
provided all the expectations on the rhs are well defined, where the sum is taken over all unordered partitions $\nu$ of $\{1,\hdots,k\}.$

We now define a notion of fourth-order cumulant for complex Hilbert space valued random variables $Y_1,Y_2,Y_3,Y_4$ with mean zero.

\begin{definition} \label{def:cum}
Let $Y_1, Y_2, Y_3, Y_4$ take values in $\bbH$. Then the fourth-order cumulant is defined as
\begin{align*} %\label{def:cum}
{\rm cum}\left(Y_1,Y_2, Y_3, Y_4\right) 
&:= \mathbb{E}\left\langle Y_1\otimes Y_2,Y_3\otimes Y_4 \right\rangle_{\rm HS} 
- \langle \E (Y_1\otimes Y_2), \E (Y_3\otimes Y_4) \rangle_{\rm HS} \\
& \hskip.5cm- \mathbb{E}\left\langle Y_1,Y_3 \right\rangle\cdot\mathbb{E}\left\langle Y_4,Y_2\right\rangle
-  \left\langle \E (Y_1\otimes \overline{Y_4}), \E (Y_3\otimes \overline{Y_2}) \right\rangle_{\rm HS},
\end{align*}
whenever the expression is well defined and finite.
\end{definition}

Note that ${\rm cum}\left(Y_1,Y_2, Y_3, Y_4\right)$ is well defined and finite if $\E\|Y_i\|^4 < \infty$ for each $i$ (cf. Proposition \ref{prop:Cov for cross product}). 
It is easy to check that this definition reduces to \eqref{e:cum_complex} with $k=4$ if $\bbH=\bbR$.
 
Some properties immediately follow from Proposition \ref{prop:Cov for cross product}. First,
\begin{align} \label{e:cum_1}
\left\langle Y_1\otimes Y_2,Y_3\otimes Y_4 \right\rangle_{\rm HS} 
= \left\langle Y_1,Y_3 \right\rangle\left\langle Y_4,Y_2\right\rangle,
\end{align}
and hence we can express the fourth-order cumulant as
\begin{align*} %\label{e:cum_cov} 
\begin{split}
{\rm cum}\left(Y_1,Y_2, Y_3, Y_4\right) & = \cov\left(\left\langle Y_1,Y_3 \right\rangle, \left\langle Y_2,Y_4\right\rangle\right)
- \langle \E (Y_1\otimes Y_2), \E (Y_3\otimes Y_4) \rangle_{\rm HS} \\
& \hspace{1cm} -  \left\langle \E (Y_1\otimes \overline{Y_4}), \E (Y_3\otimes \overline{Y_2}) \right\rangle_{\rm HS}.
\end{split}
\end{align*}
Next, for any CONS $\{e_j\}$ of $\bbH$, and with 
$Y_{i,j} := \langle Y_i, e_j\rangle$,
\begin{align} \label{e:cum_basis}
    {\rm cum}\left(Y_1, Y_2, Y_3, Y_4\right) 
    = \sum_i\sum_j {\rm cum}(Y_{1,i},\overline{Y_{2,j}},\overline{Y_{3,i}},Y_{4,j}).
    \end{align}
Observe that, unless $\bbH$ is one dimensional, ${\rm cum}\left(Y_1,Y_2, Y_3, Y_4\right)$ generally depends on the order in which the $Y_i$'s appear in the 
arguments. 
 
For the real process $X$ that we consider in our inference problem, assuming $\E \|X(t)\|^4 < \infty$ for all $t$, we have
\begin{align} \label{e:cum_2}
\begin{split}
& {\rm cum}\left(X(t),X(s),X(w),X(v)\right) \\
&:= \mathbb{E}\left\langle X(t)\otimes X(s),X(w)\otimes X(v) \right\rangle_{\rm HS} 
- \langle C(t,s), C(w,v) \rangle_{\rm HS} \\
& \hskip.5cm- \mathbb{E}\left\langle X(t),X(w) \right\rangle_{\mathbb{H}}\cdot\mathbb{E}\left\langle X(v),X(s)\right\rangle_{\mathbb{H}} 
-  \left\langle C(t,v), C(w,s)\right\rangle_{\rm HS}.
\end{split}
\end{align}

The following assumption will be needed to evaluate the variance of $\hat f_n(\theta)$. 

\renewcommand\theassumption{V} 
\begin{assumption}\label{a:var}  Suppose that the process $X$ is real and such that:
\begin{itemize}
\item[(a)] $\E \|X(t)\|^4 < \infty$ for all $t$;
\item[(b)] ${\rm cum}\left(X(t+\tau),X(s+\tau),X(w+\tau),X(v+\tau)\right) = {\rm cum}\left(X(t),X(s),X(w),X(v)\right)$ 
for all $t,s,w,v,\tau$;
\item[(c)] 
%\begin{align*}
for some small enough $\delta>0$,  
$$\sup_{w\in\mathbb{R}^d}\int_{u\in\mathbb{R}^d}\int_{v\in\mathbb{R}^d}\sup_{\substack{\lambda_i \in B(0,\delta)\\ i=1,2,3}}\left|
   {\rm cum}\left(X(\lambda_1+u),X(\lambda_2+v),X(\lambda_3+w),X(0)\right)\right|dvdu<\infty.$$
 \end{itemize}
\end{assumption}

The following are a few remarks regarding Assumption \ref{a:var}.

\begin{remark} 
\begin{itemize}

\item[1.] Part (b) of this assumption can be thought of as ``fourth-order cumulant stationarity'', which is implied by but more general than strict stationarity.
For a second-order stationary process $X$, by \eqref{e:cum_1} and \eqref{e:cum_2}, part (b) amounts to
\begin{align*} %\label{e:cum_stationary}
\begin{split}
& \E (\langle X(t), X(s)\rangle \langle X(w), X(v)\rangle) \\
& = \E (\langle X(t+\tau), X(s+\tau)\rangle \langle X(w+\tau), X(v+\tau)\rangle)\quad\mbox{for all $t,s,w,v,\tau$}. 
\end{split}
\end{align*}

\vskip.2cm
\item[2.]
Part (c) of Assumption \ref{a:var}  is a variant of the cumulant condition ``$C(0,4)$'' of \cite{panaretos2013fourier} for functional time series
(see Remark \ref{s:rem:Panaretos-cumulants} for more details). \\ 

\item[3.] 
For Gaussian processes, by \eqref{e:cum_basis}, the fourth-order cumulants vanish and hence Assumption \ref{a:var} is trivially satisfied under stationarity.
\end{itemize}
\end{remark}

The variance bound of $\hat f_n(\theta)$ is provided by the following result.

\begin{thm}\label{thm:var for f^K}
Let $X=\left\{X(t), t\in\mathbb{R}^d\right\}$ be a real process taking values in $\bbH$, which has mean zero and is second-order stationary. 
Suppose that Assumptions \ref{a:cov}, \ref{a:kernel}, \ref{a:sampling}, and \ref{a:var} hold. Also, assume that $\Delta_n$ satisfies
\begin{align*} %\label{e:var}
\Delta_n\cdot S_K\subset T_n-T_n \ \ \mbox{for all $n$, and}\ \ \Delta_n^d/|T_n|  \to 0 \mbox{ as $n\to\infty$}. 
\end{align*}
Then
\begin{equation}\label{e:thm:var for f^K}
\sup_{\theta\in\Theta}\mathbb{E}\Big( \left\|\hat f_n(\theta)-\mathbb{E}\hat f_n(\theta)\right\|_{\rm HS}^2 \Big) = \mathcal{O}\left(\frac{\Delta_n^d}{|T_n|}\right), \ \text{as}\ n\to\infty.
\end{equation}
\end{thm}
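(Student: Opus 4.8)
The plan is to reduce $\mathbb{E}\|\hat f_n(\theta)-\mathbb{E}\hat f_n(\theta)\|_{\rm HS}^2$ to a quadruple sum over $\mathbb{T}_n$ and then, via the cumulant identity \eqref{e:cum_2}, decompose the summand into three pieces that are bounded separately. Since $\mathbb{E}[X(t)\otimes X(s)]=C(t-s)$, writing $a_n(t,s):=e^{\mathbbm{i}(t-s)^{\top}\theta}\,K((t-s)/\Delta_n)\,|V(t)|\,|V(s)|/|T_n\cap(T_n-(t-s))|$ we have $\hat f_n(\theta)-\mathbb{E}\hat f_n(\theta)=(2\pi)^{-d}\sum_{t,s\in\mathbb{T}_n}a_n(t,s)\big(X(t)\otimes X(s)-C(t-s)\big)$, so that expanding the squared Hilbert--Schmidt norm, taking expectations, and using sesquilinearity of $\langle\cdot,\cdot\rangle_{\rm HS}$ gives
\[
\mathbb{E}\big\|\hat f_n(\theta)-\mathbb{E}\hat f_n(\theta)\big\|_{\rm HS}^2
=\frac{1}{(2\pi)^{2d}}\sum_{t,s,t',s'\in\mathbb{T}_n}\overline{a_n(t,s)}\,a_n(t',s')\,Q_n(t,s,t',s'),
\]
with $Q_n(t,s,t',s'):=\mathbb{E}\langle X(t)\otimes X(s),X(t')\otimes X(s')\rangle_{\rm HS}-\langle C(t-s),C(t'-s')\rangle_{\rm HS}$ (finite by Assumption \ref{a:var}(a) and Cauchy--Schwarz). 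Applying \eqref{e:cum_2} with $(w,v)=(t',s')$, second-order stationarity, and the identity $\mathbb{E}\langle X(t),X(t')\rangle={\rm trace}(C(t-t'))$, one obtains
\[
Q_n={\rm cum}\big(X(t),X(s),X(t'),X(s')\big)+{\rm trace}\big(C(t-t')\big)\,{\rm trace}\big(C(s'-s)\big)+\big\langle C(t-s'),C(t'-s)\big\rangle_{\rm HS},
\]
which splits the variance into three sums $S_1,S_2,S_3$. It suffices to show each is $\mathcal{O}(\Delta_n^d|T_n|)$, since the prefactor is $\mathcal{O}(|T_n|^{-2})$: $|e^{\mathbbm{i}(\cdot)^{\top}\theta}|=1$ and $0\le K\le1$ kill the phase and kernel (making every bound uniform in $\theta$, a fortiori over $\Theta$), while $\Delta_n^d/|T_n|\to0$ together with Assumption \ref{a:sampling}(b) forces ${\rm diam}(\Delta_n S_K)=o(|T_n|^{1/d})$, so convexity of $T_n$ yields $|T_n\cap(T_n-h)|\ge c|T_n|$ for all $h\in\Delta_n S_K$ once $n$ is large.

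Each $S_j$ then has the form $\sum_{t,s,t',s'\in\mathbb{T}_n}|V(t)||V(s)||V(t')||V(s')|\,g_j\,\mathbbm{1}[t-s\in\Delta_n S_K]\,\mathbbm{1}[t'-s'\in\Delta_n S_K]$ with $g_1=|{\rm trace}(C(t-t'))|\,|{\rm trace}(C(s'-s))|$, $g_2=\|C(t-s')\|_{\rm tr}\|C(t'-s)\|_{\rm tr}$, and $g_3=|{\rm cum}(X(t),X(s),X(t'),X(s'))|$, and I would bound all three by a common recipe: order the four summations so that one "base point" contributes $\sum_{v\in\mathbb{T}_n}|V(v)|=|T_n|$, one "lag" contributes $\sum_{v:\,t-v\in\Delta_n S_K}|V(v)|\lesssim\Delta_n^d$ (using boundedness of $S_K$ and $\delta_n\to0$), and the remaining two coordinates are pinned to $\mathcal{O}(1)$ total mass by integrability. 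For $S_1,S_2$ the last point rests on the Riemann-sum estimate $\sup_{w}\sum_{v\in\mathbb{T}_n}|V(v)|\,\|C(w-v)\|_{\rm tr}\le\int\|C(h)\|_{\rm tr}\,dh+\mathcal{O}(\delta_n^\gamma)<\infty$, which is where Assumptions \ref{a:cov}(a)--(b) enter; e.g.\ for $S_1$ one sums $s'$ against $\|C(s'-s)\|_{\rm tr}$, then $t'$ against $\|C(t-t')\|_{\rm tr}$, then $s$ against the kernel support, then $t$ freely. For $S_3$, fourth-order cumulant stationarity (Assumption \ref{a:var}(b)) gives ${\rm cum}(X(t),X(s),X(t'),X(s'))={\rm cum}(X(t-s'),X(s-s'),X(t'-s'),X(0))$, turning $s'$ into the free base point; the coordinates $t-s'$ and $s-s'$ are integrated out using exactly the uniform integral in Assumption \ref{a:var}(c) — the supremum there over perturbations $\lambda_i\in B(0,\delta)$ dominates the discretization error in replacing the sum by $\sup_{w}\int\!\!\int|{\rm cum}(X(\cdot),X(\cdot),X(w),X(0))|$ — while $t'-s'\in\Delta_n S_K$ furnishes the remaining $\Delta_n^d$, and $s'$ freely gives $|T_n|$.

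The step I expect to be the main obstacle is precisely this ordering/change-of-variables bookkeeping: a careless bound that uses the kernel support in \emph{both} lag variables, or the covariance/cumulant integrability along the "wrong" two directions, overcounts and produces $\Delta_n^{2d}/|T_n|$ rather than $\Delta_n^d/|T_n|$. The resolution is to use only a single kernel-support constraint for the $\Delta_n^d$ factor and to let the two $C$-factors (or the cumulant, via Assumption \ref{a:var}(c)) pin down two \emph{distinct} relative coordinates alongside the free translation. This is most delicate for $S_2$, where $\|C(t-s')\|_{\rm tr}$ and $\|C(t'-s)\|_{\rm tr}$ couple the index pairs $\{t,s\}$ and $\{t',s'\}$ crosswise: there one should sum $s'$ against $\|C(t-s')\|_{\rm tr}$, then $t'$ against $\|C(t'-s)\|_{\rm tr}$, then $s$ against $\mathbbm{1}[t-s\in\Delta_n S_K]$, and finally $t$ freely. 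Everything else — chiefly the Riemann-sum comparisons between the $|V(\cdot)|$-weighted sums and their integral analogues, justified by Assumptions \ref{a:cov}(b) and \ref{a:var}(c) — is routine, and the complete argument is deferred to the supplement.
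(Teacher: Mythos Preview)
Your proposal is correct and follows essentially the same route as the paper: expand the variance as a quadruple sum, use the cumulant identity \eqref{e:cum_2} to split into the three pieces (your $S_1,S_2,S_3$ are exactly the paper's terms $A,B,C$), replace the denominators $|T_n\cap(T_n-h)|$ by $|T_n|$ via the convexity/Steiner-type estimate, and then bound each piece by the ``one free base point $+$ one kernel constraint $+$ two integrability directions'' bookkeeping you describe. Your Riemann-sum comparison $\sup_w\sum_v |V(v)|\,\|C(w-v)\|_{\rm tr}\le \int\|C\|_{\rm tr}+O(\delta_n^\gamma)$ is precisely the content of the paper's Lemma~\ref{le: for the variance A}, and your treatment of $S_3$ via Assumption~\ref{a:var}(b)--(c) mirrors Lemma~\ref{le:for the variance cumulants}. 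The only noteworthy difference is that for $S_2$ the paper retains \emph{both} kernel constraints and obtains the sharper bound $\mathcal{O}(\Delta_n^{2d}/|T_n|^2)$ (Lemma~\ref{le: for the variance B}), whereas your ordering drops one constraint and yields $\mathcal{O}(\Delta_n^d/|T_n|)$; either suffices for the theorem.
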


\begin{proof}[Proof (Outline)] The complete proof of Theorem \ref{thm:var for f^K} is presented in Section \ref{s:var_proof}.  
Here, we sketch the main steps. First, 
\begin{align} \label{e:var_1}
\begin{split}
& \mathbb{E}\left\|\hat f_n(\theta)-\mathbb{E}\hat f_n(\theta)\right\|_{\rm HS}^2 \\
& = \frac{1}{(2\pi)^{2d}}\sum_{t\in\mathbb{T}_n}\sum_{s\in\mathbb{T}_n}\mathop{\sum\sum}_{\substack{h\in[\Delta\cdot S_K]\cap(\mathbb{T}_n-t) \\h'\in[\Delta\cdot S_K]\cap(\mathbb{T}_n-s)}}e^{\mathbbm{i}(h-h')^{\top}\theta}K\left(\frac{h}{\Delta}\right)K\left(\frac{h'}{\Delta}\right)\\ 
& \hspace{3cm}\cdot|V(t+h)|\cdot|V(t)| \cdot|V(s+h')|\cdot|V(s)| \\
& \hspace{3cm}\cdot \frac{{\rm Cov}\left( X(t+h)\otimes X(t),X(s+h')\otimes X(s)\right)}{|T\cap(T-h)||T\cap(T-h')|}
\end{split}
\end{align}
where
\begin{align*}%\label{def:cov of cross-prod}
\begin{split}
& {\rm Cov}\left( X(t+h)\otimes X(t),X(s+h')\otimes X(s)\right) \\
& := \mathbb{E}\left\langle X(t+h)\otimes X(t)-C(h),X(s+h')\otimes X(s)-C(h')\right\rangle_{\rm HS}.
\end{split}
\end{align*}
By \eqref{e:cum_2}, 
\begin{align} \label{e:var_3}
\begin{split}
& {\rm Cov}\left( X(t+h)\otimes X(t),X(s+h')\otimes X(s)\right) \\
    &=   \mathbb{E}\langle X(t+h),X(s+h')\rangle_{\mathbb{H}}\cdot\mathbb{E}\left\langle X(s),X(t)\right\rangle_{\mathbb{H}} \\
    & \hspace{.5cm} +  \langle  C(t-s+h),  C(s+h'-t)\rangle_{\rm HS}\\
    & \hspace{.5cm} + {\rm cum}\left(X(t+h),X(t),X(s+h'),X(s)\right).
\end{split}
\end{align}
In our detailed proof (presented in the Supplement), the components of the variance involving the cumulants will be evaluated using Assumption 
\ref{a:var}, while the other two terms are handled using the integrability condition of the covariance of Assumption \ref{a:cov}.
\end{proof}

\subsection{Rates of convergence} \label{subsec:consistency}
The results in Sections  \ref{subsec:unbiasedness} and \ref{subsec:variance} 
allow us to  obtain bounds on the rate of consistency of the estimator $\hat f_n(\theta)$. The following result is immediate from the bias-variance decomposition \eqref{eq: bias-variance decomposition}. 

\begin{thm}\label{thm:consistency f^K} 
Let the assumptions of Theorem \ref{thm:var for f^K} hold. Then, for any bounded $\Theta\subset \R^d$, we have
\begin{align}\label{e:thm:consistency f^K} 
\begin{split}
\sup_{\theta\in \Theta}\left( \mathbb{E} \left\| \hat f_n(\theta)-f(\theta)\right\|_{\rm HS}^2\right)^{1/2} %\\
 = \mathcal{O}\left(\delta_n^{\gamma}+B_1(\Delta_n)+B_2(\Delta_n)
     +\sqrt{\frac{\Delta_n^d}{|T_n|}}\right),
     \end{split}
\end{align}
as $n\to\infty$, where $B_1(\Delta_n)$ and $B_2(\Delta_n)$ are as defined in Theorem \ref{thm:expectation f_n^K-f_n}.
\end{thm}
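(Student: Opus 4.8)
The plan is to read off the claimed rate directly from the bias--variance decomposition \eqref{eq: bias-variance decomposition} combined with the two preceding theorems, so that the argument is essentially a one-line corollary. First I would note that the standing hypotheses here---the assumptions of Theorem \ref{thm:var for f^K}, i.e.\ Assumptions \ref{a:cov}, \ref{a:kernel}, \ref{a:sampling}, \ref{a:var} together with the bandwidth conditions $\Delta_n\cdot S_K\subset T_n-T_n$ for all $n$ and $\Delta_n^d/|T_n|\to 0$---in particular imply all the hypotheses of Theorem \ref{thm:expectation f_n^K-f_n}, which requires only Assumptions \ref{a:cov}, \ref{a:kernel}, \ref{a:sampling} and the inclusion $\Delta_n\cdot S_K\subset T_n-T_n$. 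Consequently both the bias bound \eqref{e:thm:expectation f_n^K-f_n rate} and the variance bound \eqref{e:thm:var for f^K} are in force, and, importantly, each of them is uniform over the bounded set $\Theta$.

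Next I would insert these two bounds into \eqref{eq: bias-variance decomposition}. For every $\theta\in\Theta$,
\[
\mathbb{E}\left\Vert\hat f_n(\theta)-f(\theta)\right\Vert_{\rm HS}^2
= {\rm Var}\!\left(\hat f_n(\theta)\right) + {\rm Bias}\!\left(\hat f_n(\theta)\right)^2
= \mathcal{O}\!\left(\frac{\Delta_n^d}{|T_n|}\right) + \mathcal{O}\!\left(\big(\delta_n^{\gamma}+B_1(\Delta_n)+B_2(\Delta_n)\big)^2\right),
\]
with the $\mathcal{O}$-terms holding uniformly in $\theta$. Taking square roots and using $\sqrt{a+b}\le\sqrt a+\sqrt b$ for $a,b\ge0$, together with the identity $\sqrt{(\delta_n^{\gamma}+B_1(\Delta_n)+B_2(\Delta_n))^2}=\delta_n^{\gamma}+B_1(\Delta_n)+B_2(\Delta_n)$, then yields
\[
\sup_{\theta\in\Theta}\left(\mathbb{E}\left\Vert\hat f_n(\theta)-f(\theta)\right\Vert_{\rm HS}^2\right)^{1/2}
= \mathcal{O}\!\left(\delta_n^{\gamma}+B_1(\Delta_n)+B_2(\Delta_n)+\sqrt{\frac{\Delta_n^d}{|T_n|}}\right),
\]
which is precisely \eqref{e:thm:consistency f^K}.

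There is no genuine obstacle here; the only points that deserve a moment's care are (i) checking the compatibility of the two assumption sets, so that Theorem \ref{thm:expectation f_n^K-f_n} is applicable under the hypotheses stated for Theorem \ref{thm:consistency f^K}, and (ii) ensuring that the $\mathcal{O}$-bounds being added are genuinely uniform over $\Theta$, so that the supremum over $\theta$ can be taken after combining them---both of which are immediate from the formulations of Theorems \ref{thm:expectation f_n^K-f_n} and \ref{thm:var for f^K}. One could additionally remark that, since the bias estimate is equally valid in trace norm, only the variance step is tied to the Hilbert--Schmidt norm.
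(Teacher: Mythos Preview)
Your proof is correct and follows exactly the approach the paper indicates: the result is stated as ``immediate from the bias-variance decomposition \eqref{eq: bias-variance decomposition},'' and you have spelled out precisely that combination of Theorems \ref{thm:expectation f_n^K-f_n} and \ref{thm:var for f^K}. Your explicit check that the hypotheses of Theorem \ref{thm:var for f^K} contain those of Theorem \ref{thm:expectation f_n^K-f_n}, and that both bounds are uniform in $\theta\in\Theta$, is a useful clarification of what the paper leaves implicit.
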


Theorem \ref{thm:consistency f^K} provides general bounds on the rate of consistency of
the estimator $\hat f_n(\theta)$.  More explicit rates and their minimax optimality can be obtained under further conditions 
on the dependence structure of the process.  We conclude with several comments.

\begin{remark} 

\begin{itemize}
\item[1.]  The bound on the rate of consistency for the estimator $\hat f_n(\theta)$ in \eqref{e:thm:consistency f^K} depends on the quantities $\delta_n, \Delta_n$ and $|T_n|$. Among them, $\delta_n$ and $T_n$ consist of artifacts of the sample design, while $\Delta_n$ is a tuning parameter which can be controlled.   
Under the assumptions of the theorem, any choice of the bandwidth with $\Delta_n\to\infty$ and
$\Delta_n^d/|T_n|\to 0$ yields a consistent estimator $\hat f_n(\theta)$. 

\vskip.3cm

\item[2.] As discussed in Remark \ref{r:bias},  $B_1(\Delta_n)$ and $B_2(\Delta_n)$ in the rate mostly reflect the tail-decay of the covariance. They are not present 
in the bound on the variance \eqref{e:thm:var for f^K}, where smaller values of $\Delta_n$ lead to smaller variances of the estimator.
The bound in \eqref{e:thm:consistency f^K} reflects a natural bias-variance trade-off, where the optimal bound is obtained by picking 
$\Delta_n$ that balances the contribution of the bias and the variance.

\vskip.3cm

\item[3.]  Establishing rate-optimal choices of $\Delta_n$ depends on both the sampling design and the stochastic process under consideration. 
Indeed, the choice of $\Delta_n$  optimizing the bounds in \eqref{e:thm:consistency f^K} depends both on $\delta_n$ and $T_n$, as well as on the covariance structure of 
the process. In Section \ref{sec:discrete-time}, we will compute $B_1(\Delta_n)$ and $B_2(\Delta_n)$ and consider the choice 
of $\Delta_n$ for certain classes of covariance structures.
\end{itemize}
\end{remark}

\section{Data observed on a regular grid}\label{sec:discrete-time}
In this section and the following two sections we focus on data observed on a regular grid, namely, the sampling set is
\begin{align} \label{e:lattice}
\mathbb{T}_n=\bigtimes_{\ell=1}^d\{\delta_n, \hdots, n_\ell\delta_n\}, 
\end{align} 
where $\delta_n$ is the grid size. In our asymptotic theory in the next two subsections, we let $n_\ell\to\infty,\ell=1,\hdots,d$, and consider both cases of  fixed $\delta_n$ and $\delta_n\to 0$. 

In this setting, for convenience, we slightly modify our general estimator $\hat f_n(\theta)$ defined in \eqref{def:f^K(theta)}
and consider
\begin{align}\label{def: f^ts}
    \hat f_n (\theta) := \frac{\delta_n^{2d} }{(2\pi)^d}\sum_{t\in \mathbb{T}_n}\sum_{s\in \mathbb{T}_n}e^{\mathbbm{i}(t-s)^{\top}\theta}\frac{X(t)\otimes X(s)}{|T_n\cap(T_n-(t-s))|} K\left(\frac{t-s}{\Delta_n}\right).
\end{align}

Under the condition
\begin{align*}%\label{assum: integrability of Cov-discrete_1}
\sum_{k\in\mathbb{Z}^d}\|C(k\delta_n)\|_{\rm tr}<\infty,   
\end{align*}
we have
\begin{align} \label{e:discrete_Bochner}
C(k\delta_n) = \int_{\theta\in[-\pi/\delta_n,\pi/\delta_n]^d}e^{-\ii k^{\top} \theta \delta_n} f(\theta;\delta_n)d\theta, \ k\in\mathbb{Z}^d,
\end{align}
where
\begin{align} \label{e:spec_fold}
f(\theta;\delta_n) := \frac{\delta^d}{(2\pi)^d}\sum_{k\in\mathbb{Z}^d} e^{\ii k^\top \theta\delta_n} C(k\delta_n), \ \theta\in[-\pi/\delta_n,\pi/\delta_n]^d,
\end{align}
which is a positive trace-class operator since $\{C(k\delta_n), k\in\mathbb{Z}^d\}$ is positive definite. The proof of \eqref{e:discrete_Bochner} follows easily using the fact that
the complex exponentials 
$$
\phi_k(\theta) := e^{\ii k^\top \theta\delta_n} (\delta_n/2\pi)^{d/2}\mathbbm{1}_{[-\pi/\delta_n, \pi/\delta_n]^d}(\theta),\ \ k\in\mathbb{Z}^d
$$ 
constitute a CONS of $L^2([-\pi/\delta_n$, $\pi/\delta_n]^d)$.
By Theorem \ref{thm:Bochner-Neeb}, \eqref{e:discrete_Bochner} also holds  if $f(\theta;\delta_n)$ is replaced by the folded spectral density 
\begin{align*} %\label{e:fold}
f_{\rm fold}(\theta) := \mathbbm{1}_{[-\pi/\delta_n, \pi/\delta_n]^d}(\theta) \sum_{k\in\mathbb{Z}^d} f(\theta+2\pi k/\delta_n).
\end{align*}
Utilizing again the fact that $\{\phi_k(\theta), k\in\mathbb{Z}^d\}$ is a CONS of $L^2([-\pi/\delta_n$, $\pi/\delta_n]^d)$, $f(\theta;\delta_n)$ is equal to the folded spectral density.
Thus, the knowledge of $C(k\delta_n), k\in\mathbb{Z}^d$, only allows us to identify the folded spectral density. In fact, this is reflected by our estimator $\hat f_n$ since
\begin{align*}
\hat f_n (\theta+2\pi k/\delta_n) = \hat f_n(\theta), \ \theta\in [-\pi/\delta_n, \pi/\delta_n]^d,
\end{align*}
for any vector $k\in\mathbb{Z}^d$.

For the purpose of estimating the folded spectral density, we define the following analogs of Assumptions \ref{a:cov} and \ref{a:var}.

\renewcommand\theassumption{C$^{\boldsymbol{\prime}}$}
\begin{assumption}\label{a:cov-prime} The trace-norm of the operator auto-covariance is summable:
\[
\sup_n \left\{ \delta_n^{d} \sum_{k\in\mathbb{Z}^d}  \|C(\delta_n k)\|_{\rm tr} \right\} <\infty.
\]
\end{assumption}

\vspace{-.25in}
\renewcommand\theassumption{V$^{\boldsymbol{\prime}}$}
\begin{assumption}\label{a:var_d}
The process $\{X(\delta_n t), t\in\mathbb{Z}^d\}$ satisfies
\begin{itemize}
\item [(a)] $\sup_n \E \|X(\delta_n t)\|^4 < \infty$ for all $t$;
\item[(b)] for all $t,s,w,v,\tau$,
\begin{align*}
& {\rm cum}\left(X(\delta_n(t+\tau)),X(\delta_n(s+\tau)),X(\delta_n(w+\tau)),X(\delta_n(v+\tau))\right) \\
& = {\rm cum}\left(X(\delta_nt),X(\delta_ns),X(\delta_nw),X(\delta_nv)\right);
\end{align*}
\item[(c)] $ \sup_n \Big\{ \delta_n^{2d} 
\sup_{w\in\mathbb{Z}^d}\sum_{u\in\mathbb{Z}^d}\sum_{v\in\mathbb{Z}^d}\left|{\rm cum}\left(X(\delta_n u),X(\delta_n v),X(\delta_n w),X(0)\right)\right|\Big\}<\infty$.
\end{itemize}
\end{assumption}

\noindent
Comparing with Assumptions \ref{a:cov} and \ref{a:var}, the modifications in Assumptions \ref{a:cov-prime} and \ref{a:var_d} are
motivated by the fact that discrete approximations of integrals is no longer an issue if our target of inference is the folded spectral density. 
We will apply these conditions in the time series context in Section \ref{s:ts}.

As before, Assumption \ref{a:var_d} holds trivially for Gaussian processes since the 4th order cumulants vanish. 
More generally, it holds for a wide class of short-memory $\H$-valued  processes (see Example \ref{ex:ar} in the Supplement). 

Note that our assumptions on the cumulants in Assumption \ref{a:var_d} are different from but related to the assumption based cumulant kernels employed on page 571 in 
\cite{panaretos2013fourier}.  For more details, see Remark \ref{s:rem:Panaretos-cumulants}.

\subsection{The case of fixed grid} \label{s:ts}

Consider the case where $\delta_n$ in \eqref{e:lattice} is fixed. Without loss of generality, let $\delta_n\equiv 1$. 
The discussion in the previous section shows that we can only identify the folded spectral density on $[-\pi,\pi]^d$. As such, without loss of generality, focus on a stochastic processes $\{X(t)\}$ indexed by $t\in\mathbb{Z}^d$. This framework includes time series (for $d=1$), and, more generally, many random fields observed at discrete locations/times. The spectral density $f$ in this case is defined by \eqref{e:spec_fold}.
With the normalization $|T_n\cap(T_n-(t-s))|$ replaced by $|T_n|$, 
$\hat f_n(\theta)$ recovers the classical lag-window estimator \citep[cf.][]{robinson1983review}.

The following result on the rate of $\hat f_n(\theta)$ is the analog to Theorem \ref{thm:consistency f^K} for the gridded setting.

\begin{thm}\label{prop: consistency f^ts}
Let $\{X(t),\ t\in\mathbb{Z}^d\}$ be a real process taking values in $\bbH$, which has mean zero and is second-order stationary. Suppose that Assumption \ref{a:kernel} holds, 
Assumptions \ref{a:cov-prime} and \ref{a:var_d} hold with $\delta_n\equiv 1$, and 
\begin{align*} %\label{e:var_d}
\Delta_n\cdot S_K\cap \Z^d \subset \bbT_n-\bbT_n \hspace{.2cm}\mbox{for all $n$, and}\hspace{.2cm}
\Delta_n^d/|\bbT_n|  \to 0 \mbox{ as $n\to\infty$}.
\end{align*}
Then,
\begin{align} \label{e:bias-bound-time-series}
 \sup_{\theta\in[-\pi,\pi]^d} \left\| \E \hat f_n(\theta) - f(\theta) \right\|_{\rm HS} &\leq B_1(\Delta_n) + B_2(\Delta_n)\\
 \sup_{\theta\in[-\pi,\pi]^d} \E \left\| \hat f_n(\theta) - \E[\hat f_n(\theta)] \right\|_{\rm HS}^2 & = {\cal O}\left( \frac{\Delta_n^{d}}{{|\bbT_n|}}\right),
 \label{e:var-bound-time-series}
\end{align}
where 
\begin{align*}
    B_1(\Delta_n) & := \left\Vert\sum_{k\in(\Delta_n\cdot S_K)\cap \Z^d}e^{\mathbbm{i}k^{\top}\theta}C(k)\left(1-K\left(\frac{k}{\Delta_n}\right)\right)\right\Vert_{\rm HS}, \\
    B_2(\Delta_n)&:= \left\Vert\sum_{k\in \Z^d\setminus(\Delta_n\cdot S_K)}e^{\mathbbm{i}k^{\top}\theta}C(k)\right\Vert_{\rm HS}.
\end{align*}
Consequently,
\begin{align*}
\sup_{\theta\in[-\pi,\pi]^d} \Big( \mathbb{E}\left\Vert\hat f_n(\theta)-f(\theta)\right\Vert_{\rm HS}^2 \Big)^{1/2}= \mathcal{O}\left(B_1(\Delta_n)+B_2(\Delta_n)+\frac{\Delta_n^{d/2}}{\sqrt{|\bbT_n|}}\right),\ \text{as}\ n\to\infty.
\end{align*}
\end{thm}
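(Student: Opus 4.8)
The plan is to mirror the general-domain arguments of Theorems \ref{thm:expectation f_n^K-f_n} and \ref{thm:var for f^K}, taking advantage of the two simplifications afforded by the regular grid: first, the Voronoi cells $V(t)$ are all translates of a fixed unit cube, so the volume factors $|V(t)|,|V(s)|$ that appear in \eqref{def:f^K(theta)} collapse to the constant $\delta_n^{2d}=1$ absorbed into \eqref{def: f^ts}; second, and more importantly, the discretization error term $\mathcal{O}(\delta_n^\gamma)$ disappears entirely. The reason is that on $\mathbb{Z}^d$ there is no Riemann-sum approximation of $g_n(\theta)$ to control — the estimator $\hat f_n$ is itself the natural object, and the identity \eqref{e:discrete_Bochner} expressing $C(k)$ as a Fourier integral of the folded spectral density $f$ over $[-\pi,\pi]^d$ plays the role that \eqref{def:H-f(theta)} played in the continuous case. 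So the bias is exactly the truncation-and-tapering error, with no $\delta_n^\gamma$ contribution; this is what gives the clean inequality \eqref{e:bias-bound-time-series} rather than an $\mathcal{O}(\cdot)$ bound.

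For the bias bound \eqref{e:bias-bound-time-series}, I would compute $\E\hat f_n(\theta)$ directly: by stationarity and the normalization $|T_n\cap(T_n-(t-s))|$, the double sum over $t,s\in\mathbb{T}_n$ telescopes to a single sum over lags $k=t-s$, giving
\begin{align*}
\E\hat f_n(\theta) = \frac{1}{(2\pi)^d}\sum_{k\in(\Delta_n\cdot S_K)\cap\mathbb{Z}^d} e^{\ii k^\top\theta} C(k) K\!\left(\frac{k}{\Delta_n}\right),
\end{align*}
where I use that $\#\{(t,s)\in\mathbb{T}_n^2 : t-s=k\} = |T_n\cap(T_n-k)|$ for the grid (this is the combinatorial identity behind the Guyon-type normalization mentioned in Remark \ref{re:Guyon parametrization}, and it is here that the hypothesis $\Delta_n\cdot S_K\cap\mathbb{Z}^d\subset\mathbb{T}_n-\mathbb{T}_n$ is used to guarantee every relevant lag $k$ is actually realized). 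Subtracting the Fourier-series representation $f(\theta)=\frac{1}{(2\pi)^d}\sum_{k\in\mathbb{Z}^d}e^{\ii k^\top\theta}C(k)$ from \eqref{e:discrete_Bochner}/\eqref{e:spec_fold} and splitting the difference over $\{k\in\Delta_n\cdot S_K\}$ versus its complement yields exactly $B_1(\Delta_n)+B_2(\Delta_n)$ by the triangle inequality; Assumption \ref{a:cov-prime} guarantees absolute convergence of all these series uniformly in $n$ and $\theta$.

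For the variance bound \eqref{e:var-bound-time-series}, I would follow the outline of Theorem \ref{thm:var for f^K} verbatim, starting from the analog of \eqref{e:var_1}: expand $\E\|\hat f_n(\theta)-\E\hat f_n(\theta)\|_{\rm HS}^2$ as a quadruple sum over $t,s\in\mathbb{T}_n$ and lags $h,h'\in(\Delta_n\cdot S_K)\cap\mathbb{Z}^d$ of $\mathrm{Cov}(X(t+h)\otimes X(t), X(s+h')\otimes X(s))$, then substitute the cumulant decomposition \eqref{e:var_3}. The two covariance-product terms are bounded using Assumption \ref{a:cov-prime} and the fourth-moment part of Assumption \ref{a:var_d}(a), while the cumulant term is controlled by Assumption \ref{a:var_d}(c); in each case summing over one of the free indices (say $s$) produces a factor $|\mathbb{T}_n|$, summing the covariances/cumulants over the differences produces an $\mathcal{O}(1)$ factor, the kernel bound $\|K\|_\infty=1$ and the restriction of $h,h'$ to $(\Delta_n\cdot S_K)\cap\mathbb{Z}^d$ produce a factor $\mathcal{O}(\Delta_n^d)$, and the normalizations $|T_n\cap(T_n-h)|$, $|T_n\cap(T_n-h')|$ each contribute $\asymp|\mathbb{T}_n|$ in the denominator (here $\Delta_n^d/|\mathbb{T}_n|\to 0$ ensures these normalizations stay comparable to $|\mathbb{T}_n|$ for all the lags in play). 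Collecting gives $\mathcal{O}(\Delta_n^d/|\mathbb{T}_n|)$ uniformly in $\theta\in[-\pi,\pi]^d$. The final display then follows from the bias–variance decomposition \eqref{eq: bias-variance decomposition}, taking square roots.

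The main obstacle is the bookkeeping in the variance step: one must verify that the combinatorial counting of lag-pairs on the grid together with the normalization factors genuinely yields the clean ratio $\Delta_n^d/|\mathbb{T}_n|$ without hidden boundary corrections — this is where the convexity/regularity encoded in \eqref{e:lattice} (rectangular grid) is used, so that $|T_n\cap(T_n-k)|/|\mathbb{T}_n|\to 1$ uniformly over $k\in\Delta_n\cdot S_K$ under $\Delta_n^d/|\mathbb{T}_n|\to 0$. Everything else is a routine transcription of the continuous-domain proofs with integrals replaced by sums over $\mathbb{Z}^d$ and the Hölder term deleted.
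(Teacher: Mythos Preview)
Your proposal is correct and follows essentially the same approach as the paper. The paper's own treatment is terse: it notes that the bias bound is immediate because no Riemann approximation is needed on the grid (so the $\delta_n^\gamma$ term drops), and it defers the variance bound to Proposition~\ref{thm: variance of hat f} in the Supplement, whose proof carries out exactly the quadruple-sum expansion, the cumulant decomposition \eqref{e:var_3} into terms $A$, $B$, $C$, and the change-of-variables bookkeeping you describe, invoking Lemma~\ref{le:rate for geometry quantity} for the uniform comparability $|T_n\cap(T_n-h)|\sim |T_n|$ over $h\in\Delta_n\cdot S_K$. One small clarification: the exact identity $\#\{(t,s)\in\mathbb{T}_n^2:t-s=k\}=|T_n\cap(T_n-k)|$ you invoke for the bias holds because in the gridded setting the paper takes $T_n=\delta_n\cdot[0,n]^d$ (see the statement of Proposition~\ref{thm: variance of hat f}) rather than the convex hull $[\delta_n,n\delta_n]^d$; with that convention the counting matches the Lebesgue volume exactly and the bias telescopes to the tapered Fourier sum as you wrote.
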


In this result, the derivation of the bias bound \eqref{e:bias-bound-time-series} is more straightforward than that for the general 
case since it does not involve a Riemann approximation as in \eqref{e:riemann}. 
Here, the first term on the rhs of \eqref{e:thm:expectation f_n^K-f_n rate} is no longer present 
and the other two terms, $B_1(\Delta_n)$ and $B_2(\Delta_n)$, are similar to \eqref{e:B terms}, with sums replacing integrals.
The derivation of the variance bound \eqref{e:var-bound-time-series} is also 
simpler than that of \eqref{e:thm:var for f^K}, where the term involving $\delta_n$ is no longer needed. 
For completeness, the variance bound is established in Proposition \ref{thm: variance of hat f} of Supplement.

The bias bounds $B_1(\Delta_n), B_2(\Delta_n)$ in Theorem \ref{prop: consistency f^ts} hold for a very general class of models. However, more precise expressions of the bias can be obtained for specific models. We illustrate this next by considering a class of covariances that decay like the power law. 
The power-law decay class, ${\cal P}_D(\beta, L)$, for the discrete-time processes is defined as
\begin{align}\label{def:PL minimax class discrete time}
    {\cal P}_D(\beta,L):=\left\{f(\theta)=(2\pi)^{-d}\sum_{k\in\mathbb{Z}^d}C(k)e^{\ii \theta^{\top} k}:\ \sum_{k\in\mathbb{Z}^d}  \|C(k)\|_{\rm tr}
    (1+\|k\|_2^{\beta})\le L\right\}, 
\end{align}
for $\beta, L > 0$. By the theory of the Fourier transform, larger values of $\beta$ in this condition correspond to a higher order of smoothness of the spectral density at $\theta=0$; see, e.g., \cite{bingham1989regular}.

%The name PLD arises from the fact that any covariance function such that $\|C(k)\|_{\rm tr}  = {\cal O}( \|k\|_2^{-\beta-d - \epsilon})$, with $\epsilon>0$, belongs to this class.
Below we establish an explicit upper bound on the rate of $\hat f_n(\theta)$ for this class by focusing on the bias terms $B_1(\Delta_n)$ and $B_2(\Delta_n)$ of Theorem \ref{prop: consistency f^ts}. First, we introduce an additional smoothness condition on the kernel $K$ that is compatible with the covariance model in ${\cal P}_D(\beta,L)$. Let $\boldsymbol{\alpha} = (\alpha_1,\ldots, \alpha_d) \in \mathbb{Z}_+^d$ and define the partial derivative
$$
\partial^{\boldsymbol{\alpha}}K(h) = \frac{\partial^{\boldsymbol{\alpha}}K(h)}{\partial h_1^{\alpha_1}\hdots\partial h_d^{\alpha_d}}.
$$
Then, for an integer $\lambda\ge 1$, define the condition
\begin{align}\label{e:Kernel differentiability}
\begin{split}
    & \partial^{\boldsymbol{\alpha}}K(0) =0
    \mbox{ for all $\boldsymbol{\alpha}$ with } 1\le |\boldsymbol{\alpha}|:=\sum_{i=1}^d \alpha_i \le \lambda, \mbox{ and} \\
    & \sup_h |\partial^{\boldsymbol{\alpha}}K(h)| < \infty \mbox{ for all $\boldsymbol{\alpha}$ with } |\boldsymbol{\alpha}|=\lambda+1.
\end{split}
\end{align}

\begin{thm}\label{prop:PL bias time series}
Let all the conditions of Theorem \ref{prop: consistency f^ts} hold. Moreover, assume that the spectral density $f$ belongs to ${\cal P}_D(\beta,L)$ for some $\beta>0$ and $L>0$, and that \eqref{e:Kernel differentiability} holds for some integer $\lambda> 0 \vee (\beta-1)$. Then, the following is a uniform bound on the rate of the bias
of $\hat f_n(\theta)$:
\begin{align} \label{e:bias_ts_PLD}
 \sup_{f\in {\cal P}_D(\beta,L)} \sup_{\theta\in[-\pi,\pi]^d}\left\Vert\mathbb{E}\hat f_n(\theta)-f(\theta)\right\Vert_{\rm HS} = \mathcal{O}\left(\Delta_n^{-\beta}\right),\quad\text{as}\quad n\to\infty.
\end{align}
Combining this with the variance bound $\Delta_n^d/|\bbT_n|$ in \eqref{e:var-bound-time-series} and choosing bandwidth $\Delta_n= |\bbT_n|^{\frac{1}{2\beta+d}}$, the following uniform bound on the mean squared error of  $\hat f_n(\theta)$ holds:
\begin{align}  \label{e:mse_ts_PLD}
 \sup_{f\in {\cal P}_D(\beta,L)} \sup_{\theta\in[-\pi,\pi]^d}\Big(\E \left\Vert\hat f_n(\theta)-f(\theta)\right\Vert_{\rm HS}^2\Big)^{1/2} = \mathcal{O}\left(|\bbT_n|^{-\frac{\beta}{2\beta+d}}\right).
\end{align}
\end{thm}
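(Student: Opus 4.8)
The plan is to control the two bias terms $B_1(\Delta_n)$ and $B_2(\Delta_n)$ from Theorem \ref{prop: consistency f^ts} separately, using the power-law decay hypothesis $f\in{\cal P}_D(\beta,L)$ together with the vanishing-moments condition \eqref{e:Kernel differentiability} on the kernel. For $B_2(\Delta_n)$, which is the contribution from lags outside the window $\Delta_n\cdot S_K$, I would simply bound it by $\sum_{\|k\|_2\gtrsim \Delta_n}\|C(k)\|_{\rm tr}$. Since $S_K$ is bounded and contains $0$ (Assumption \ref{a:kernel}(a)), there is a constant $c>0$ with $\{k:\|k\|_2\le c\Delta_n\}\subset \Delta_n\cdot S_K$, so the sum is over $\|k\|_2> c\Delta_n$. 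Using $\sum_{k}\|C(k)\|_{\rm tr}(1+\|k\|_2^\beta)\le L$, on the region $\|k\|_2>c\Delta_n$ we have $1\le \|k\|_2^\beta/(c\Delta_n)^\beta$, hence $\sum_{\|k\|_2>c\Delta_n}\|C(k)\|_{\rm tr}\le (c\Delta_n)^{-\beta}\sum_k \|C(k)\|_{\rm tr}\|k\|_2^\beta \le L(c\Delta_n)^{-\beta}$, giving $B_2(\Delta_n)={\cal O}(\Delta_n^{-\beta})$ uniformly over ${\cal P}_D(\beta,L)$ and over $\theta$.

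The main work is the term $B_1(\Delta_n)=\big\|\sum_{k\in(\Delta_n\cdot S_K)\cap\Z^d}e^{\ii k^\top\theta}C(k)(1-K(k/\Delta_n))\big\|_{\rm HS}$, where I want to exploit cancellation coming from $1-K(k/\Delta_n)$ being small near $k=0$. By the triangle inequality it suffices to bound $\sum_{k\in\Z^d}\|C(k)\|_{\rm tr}\,|1-K(k/\Delta_n)|$ (extending the sum to all of $\Z^d$ only enlarges it, since we may set $1-K=1$ outside $S_K$, though that would reintroduce an ${\cal O}(\Delta_n^{-\beta})$ tail which is already acceptable; cleaner is to keep the sum over $\Delta_n\cdot S_K$). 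The idea is a Taylor expansion of $K$ at $0$ up to order $\lambda$: by \eqref{e:Kernel differentiability}, $\partial^{\boldsymbol\alpha}K(0)=0$ for $1\le|\boldsymbol\alpha|\le\lambda$ and the order-$(\lambda+1)$ partials are bounded, so by Taylor's theorem with remainder, for $u$ in the $\epsilon$-neighborhood of $0$, $|1-K(u)|=|K(0)-K(u)|\le C\|u\|_2^{\lambda+1}$. Applied with $u=k/\Delta_n$ this gives $|1-K(k/\Delta_n)|\le C\|k\|_2^{\lambda+1}\Delta_n^{-(\lambda+1)}$ for $\|k\|_2\le\epsilon\Delta_n$; for $\epsilon\Delta_n<\|k\|_2$ with $k\in\Delta_n\cdot S_K$ one uses the crude bound $|1-K(k/\Delta_n)|\le 1+\|K\|_\infty=2$. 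Splitting the sum at $\|k\|_2=\epsilon\Delta_n$:
\begin{align*}
\sum_{\|k\|_2\le\epsilon\Delta_n}\|C(k)\|_{\rm tr}\,C\frac{\|k\|_2^{\lambda+1}}{\Delta_n^{\lambda+1}}
&\le \frac{C}{\Delta_n^{\lambda+1}}\sum_{\|k\|_2\le\epsilon\Delta_n}\|C(k)\|_{\rm tr}\,\|k\|_2^{\beta}\,\|k\|_2^{\lambda+1-\beta}\\
&\le \frac{C(\epsilon\Delta_n)^{\lambda+1-\beta}}{\Delta_n^{\lambda+1}}\sum_k\|C(k)\|_{\rm tr}\|k\|_2^\beta \le C' L\,\Delta_n^{-\beta},
\end{align*}
where I used $\lambda+1-\beta\ge 0$ (which holds since $\lambda>\beta-1$) so that $\|k\|_2^{\lambda+1-\beta}\le(\epsilon\Delta_n)^{\lambda+1-\beta}$ on the summation range; and the tail $\sum_{\epsilon\Delta_n<\|k\|_2}\|C(k)\|_{\rm tr}\cdot 2 \le 2L(\epsilon\Delta_n)^{-\beta}={\cal O}(\Delta_n^{-\beta})$ exactly as in the $B_2$ estimate. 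Hence $B_1(\Delta_n)={\cal O}(\Delta_n^{-\beta})$ uniformly, which together with the $B_2$ bound gives \eqref{e:bias_ts_PLD}.

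Finally, for \eqref{e:mse_ts_PLD} I would plug the bias bound ${\cal O}(\Delta_n^{-\beta})$ and the variance bound ${\cal O}(\Delta_n^d/|\bbT_n|)$ from \eqref{e:var-bound-time-series} into the bias–variance decomposition, obtaining $\big(\E\|\hat f_n(\theta)-f(\theta)\|_{\rm HS}^2\big)^{1/2}={\cal O}(\Delta_n^{-\beta}+\Delta_n^{d/2}|\bbT_n|^{-1/2})$ uniformly in $\theta$ and in $f\in{\cal P}_D(\beta,L)$; balancing the two terms by setting $\Delta_n^{-\beta}=\Delta_n^{d/2}|\bbT_n|^{-1/2}$, i.e. $\Delta_n=|\bbT_n|^{1/(2\beta+d)}$, yields the rate $|\bbT_n|^{-\beta/(2\beta+d)}$. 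One should check that this choice is admissible, i.e. $\Delta_n\cdot S_K\cap\Z^d\subset\bbT_n-\bbT_n$ and $\Delta_n^d/|\bbT_n|\to0$: the latter is $|\bbT_n|^{d/(2\beta+d)-1}=|\bbT_n|^{-2\beta/(2\beta+d)}\to0$, and the former holds for large $n$ since $\Delta_n\to\infty$ but $\Delta_n=o(|\bbT_n|^{1/d})$ while $\bbT_n$ fills out a region whose diameter grows like $|\bbT_n|^{1/d}$. The one genuinely delicate point is making the Taylor-remainder step uniform: the constant $C$ in $|1-K(u)|\le C\|u\|_2^{\lambda+1}$ depends only on the bounds on the order-$(\lambda+1)$ partials of $K$ from \eqref{e:Kernel differentiability} and not on $n$, $\theta$, or the particular covariance, so uniformity over ${\cal P}_D(\beta,L)$ and $\theta\in[-\pi,\pi]^d$ is automatic once one notes every bound above used only $\sum_k\|C(k)\|_{\rm tr}(1+\|k\|_2^\beta)\le L$ and the triangle inequality $\|\cdot\|_{\rm HS}\le\|\cdot\|_{\rm tr}$.
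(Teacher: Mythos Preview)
Your proof is correct and follows essentially the same route as the paper's: bound $B_2(\Delta_n)$ by a tail sum controlled via the weight $\|k\|_2^\beta$, bound $B_1(\Delta_n)$ using the multivariate Taylor expansion of $K$ at $0$ (where \eqref{e:Kernel differentiability} kills the low-order terms and leaves a remainder $\lesssim\|k/\Delta_n\|_2^{\lambda+1}$), then balance bias and variance. The only cosmetic difference is that you split the $B_1$ sum at $\|k\|_2=\epsilon\Delta_n$ and invoke Taylor only inside; this is unnecessary, since the bound $\sup_h|\partial^{\boldsymbol\alpha}K(h)|<\infty$ in \eqref{e:Kernel differentiability} is global, so the paper applies the Taylor remainder estimate $|1-K(h/\Delta_n)|\le \tilde c\,\|h\|_2^{\lambda+1}/\Delta_n^{\lambda+1}$ directly on all of $\Delta_n\cdot S_K$ without the intermediate split.
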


The proof of this result is given in Section \ref{appendix:bounds on the rates}.
An important motivation for singling out the class ${\cal P}_D(\beta,L)$ is that is covers a broad range of realistic covariance models whose tail-decay can be controlled by the 
parameter $\beta$.  Moreover, in Section \ref{sec:minimax} we establish a minimax lower bound for this class which matches the upper bound on the rate in \eqref{e:mse_ts_PLD}.
In this sense, our estimator with the oracle choice of the bandwidth is minimax rate-optimal.

\subsection{Dense gridded data} \label{s:denst_grid}

We now turn to the setting \eqref{e:lattice} where we assume $\delta_n\to 0$. In doing so, we continue to focus on the estimator $\hat f_n(\theta)$ in 
\eqref{def: f^ts} for gridded data. However, unlike the $\delta_n=1$ case, here we are in a position to estimate the full spectral density as opposed to the folded spectral density. As in the previous subsection, we also study a similar power law decay class. However, some slight 
modifications are necessary. The continuous time power law decay %(CTPLD) 
class ${\cal P}_C(\beta,L)$ where  $\beta,\ L>0$,  contains spectral densities 
for the continuous-time process, defined by
\begin{align}\label{def:CTPL minimax class}
    {\cal P}_C(\beta,L) := \Big\{ f(\theta)= (2\pi)^{-d} \int_{\R^d} e^{\ii x^\top \theta} C(x) dx\, :\, \int_{\R^d} (1+\|x\|_2^\beta) \|C(x)\|_{\rm tr} dx \le L \Big\}.
\end{align} 
Mimicking the approach in Section \ref{s:ts}, the following result can be stated for this class. 

\begin{thm}\label{prop:PL bias cont time}
Let all the assumptions of Theorem \ref{thm:consistency f^K} hold and assume that the spectral density $f(\theta)$ belongs in ${\cal P}_C(\beta,L)$ for some $\beta,L>0$. Suppose that
 \eqref{e:Kernel differentiability} holds for some integer  $\lambda> 0 \vee (\beta-1)$.
Then, for  every $f\in {\cal P}_C(\beta,L)$ and bounded $\Theta\subset\R^d$, the rate of the bias is  
$$  \sup_{\theta\in \Theta}\left\Vert\mathbb{E} \hat f_n(\theta)-f(\theta)\right\Vert_{\rm HS} 
= \mathcal{O}\left(\delta_n^{\gamma}+\Delta_n^{-\beta}\right),\quad\text{as}\quad n\to\infty.$$ 
In conjunction with Theorem \ref{thm:consistency f^K}, with the rate-optimal choice of 
$\Delta_n:= |T_n|^{1/(2\beta+d)}$, we obtain the overall rate bound:
\begin{align}\label{e:rate-bounds}
  \sup_{\theta\in\Theta}\left(\E \left\|\hat f_n(\theta)-f(\theta)\right\|_{\rm HS}^2 \right)^{1/2}
 =  {\cal O} \Big( \delta_n^{\gamma} \vee |T_n|^{-\beta/(2\beta + d)}\Big). 
\end{align}
\end{thm}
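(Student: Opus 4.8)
The plan is to combine the general bias bound of Theorem~\ref{thm:expectation f_n^K-f_n} with the variance bound of Theorem~\ref{thm:var for f^K} (equivalently, with Theorem~\ref{thm:consistency f^K}), so that the only genuine work is to show that the two ``idealized-estimator'' bias terms $B_1(\Delta_n)$ and $B_2(\Delta_n)$ from \eqref{e:B terms} are each $\mathcal{O}(\Delta_n^{-\beta})$ under the power-law hypothesis $f\in\mathcal{P}_C(\beta,L)$ and the higher-order kernel condition \eqref{e:Kernel differentiability}. This is the continuous-time analog of the computation behind Theorem~\ref{prop:PL bias time series}, with integrals replacing sums; the one new feature is that the discretization term $\delta_n^\gamma$ coming from \eqref{e:Efhat-Eg} in Theorem~\ref{thm:expectation f_n^K-f_n} now survives.

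\textbf{Bounding $B_2(\Delta_n)$.} Since $K$ is continuous with $K(0)=1>0$ by Assumption~\ref{a:kernel}, the point $0$ lies in the interior of $S_K$, so $\Delta_n\cdot S_K\supseteq B(0,c\Delta_n)$ for some $c>0$ and all large $n$. Hence on the region of integration in $B_2$ one has $\|h\|_2>c\Delta_n$, and using $\|\cdot\|_{\rm HS}\le\|\cdot\|_{\rm tr}$ together with the moment bound defining $\mathcal{P}_C(\beta,L)$ in \eqref{def:CTPL minimax class},
\[
B_2(\Delta_n)\le\int_{\|h\|_2>c\Delta_n}\|C(h)\|_{\rm tr}\,dh\le(c\Delta_n)^{-\beta}\int_{\R^d}\|h\|_2^{\beta}\|C(h)\|_{\rm tr}\,dh\le C\,L\,\Delta_n^{-\beta}.
\]

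\textbf{Bounding $B_1(\Delta_n)$.} This is the main obstacle. The naive route — Taylor-expanding $K$ at $0$ and factoring out $\|h/\Delta_n\|_2^{\lambda+1}$ — fails, because $\mathcal{P}_C(\beta,L)$ only controls the $\beta$-th moment of $\|C(h)\|_{\rm tr}$, and $\lambda+1$ may exceed $\beta$. The fix is an interpolation-type estimate: I claim $|1-K(u)|\le C\|u\|_2^{\beta}$ for all $u\in S_K$. Near $0$, the vanishing of $\partial^{\boldsymbol\alpha}K(0)$ for $1\le|\boldsymbol\alpha|\le\lambda$ and the boundedness of the $(\lambda+1)$-st partials in \eqref{e:Kernel differentiability} give, via the multivariate Taylor remainder, $|1-K(u)|\le C\|u\|_2^{\lambda+1}\le C\|u\|_2^{\beta}$ for $\|u\|_2\le1$, where the last step uses $\beta<\lambda+1$, i.e.\ exactly the hypothesis $\lambda>0\vee(\beta-1)$; away from $0$ on the bounded set $S_K$, $|1-K(u)|$ is bounded while $\|u\|_2^{\beta}$ is bounded below, so the inequality persists up to a constant. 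Substituting $u=h/\Delta_n$ (so $h\in\Delta_n S_K\iff h/\Delta_n\in S_K$) yields
\[
B_1(\Delta_n)\le\int_{h\in\Delta_n\cdot S_K}\|C(h)\|_{\rm tr}\Bigl|1-K\bigl(\tfrac{h}{\Delta_n}\bigr)\Bigr|\,dh\le C\,\Delta_n^{-\beta}\int_{\R^d}\|h\|_2^{\beta}\|C(h)\|_{\rm tr}\,dh\le C\,L\,\Delta_n^{-\beta}.
\]
Both bounds are uniform in $\theta$ (only $|e^{\ii h^\top\theta}|=1$ enters), so plugging into \eqref{e:thm:expectation f_n^K-f_n rate} gives $\sup_{\theta\in\Theta}\|\mathbb{E}\hat f_n(\theta)-f(\theta)\|_{\rm HS}=\mathcal{O}(\delta_n^{\gamma}+\Delta_n^{-\beta})$.

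\textbf{From bias to MSE.} Using the decomposition \eqref{eq: bias-variance decomposition} and the variance bound $\mathcal{O}(\Delta_n^d/|T_n|)$ from Theorem~\ref{thm:var for f^K}, one gets $\mathbb{E}\|\hat f_n(\theta)-f(\theta)\|_{\rm HS}^2=\mathcal{O}(\delta_n^{2\gamma}+\Delta_n^{-2\beta}+\Delta_n^{d}/|T_n|)$, uniformly in $\theta\in\Theta$. Choosing $\Delta_n=|T_n|^{1/(2\beta+d)}$ equalizes the last two terms, since then $\Delta_n^{-2\beta}=\Delta_n^{d}/|T_n|=|T_n|^{-2\beta/(2\beta+d)}$; this choice is admissible for the hypotheses of Theorem~\ref{thm:var for f^K} at large $n$, because $|T_n|\to\infty$ forces $\Delta_n\to\infty$ and $\Delta_n^d/|T_n|\to0$, while $\Delta_n=o(|T_n|^{1/d})$ gives $\Delta_n\cdot S_K\subset T_n-T_n$ eventually by the boundary regularity in Assumption~\ref{a:sampling}(b). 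Taking square roots produces the stated overall rate $\mathcal{O}(\delta_n^{\gamma}\vee|T_n|^{-\beta/(2\beta+d)})$.
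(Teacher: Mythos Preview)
Your proposal is correct and follows essentially the same route as the paper: invoke Theorem~\ref{thm:expectation f_n^K-f_n}, bound $B_2$ via the inner-ball radius of $S_K$ and the $\beta$-moment control in ${\cal P}_C(\beta,L)$, bound $B_1$ via the higher-order kernel condition, then balance against the variance from Theorem~\ref{thm:var for f^K}.

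One comment on your $B_1$ argument: your claim that the ``naive route fails'' is a mischaracterization. The paper in fact takes the naive route and it works: Taylor plus \eqref{e:Kernel differentiability} give $|1-K(h/\Delta_n)|\le C\|h/\Delta_n\|_2^{\lambda+1}$ globally on $\Delta_n\cdot S_K$, and since $\|h\|_2\le M_K\Delta_n$ there one writes $\|h\|_2^{\lambda+1}\le (M_K\Delta_n)^{\lambda+1-\beta}\|h\|_2^{\beta}$, yielding $B_1(\Delta_n)\le C\Delta_n^{-\beta}\int\|C(h)\|_{\rm HS}\|h\|_2^\beta\,dh$ directly. Your two-region split at $\|u\|_2=1$ is an equivalent repackaging of the same inequality; both produce $|1-K(u)|\le C\|u\|_2^{\beta}$ on $S_K$, just organized differently.
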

The proof of Theorem \ref{prop:PL bias cont time} is given in Section \ref{appendix:bounds on the rates}.  

\begin{remark} \label{r:CPL}
Observe that, in contrast to Theorem \ref{prop:PL bias time series}, the rate bounds in 
Theorem \ref{prop:PL bias cont time} are not uniform over the class ${\cal P}_C(\beta,L)$.   This is mainly because 
the constant $\vertiii{C}_\gamma$ in \eqref{e:C-L1-Holder} of Assumption \ref{a:cov} (b) cannot be bounded uniformly 
in ${\cal P}_C(\beta,L)$, since the tail behavior of $\|C(x)\|_\tr$ does not regulate the smoothness of $C(x)$.
At this point, we do not know whether there is an adaptive estimator for which the rate could be shown to be uniform.
\end{remark}

\begin{remark}\label{rem:sampling-regimes} To interpret the bound on the rate in \eqref{e:rate-bounds}, suppose, for example, that  
$\delta_n:=n^{-\alpha}$ for some $\alpha\in (0,1)$, which controls the sampling frequency relative to the sample size.
The greater the value of $\alpha$, the finer the grid. Also, assume that the grid is square with $n_\ell=n$, for all $\ell$, so that $|T_n| \sim (n\delta)^d$.
Let
\begin{align} \label{e:alpha_beta_gamma}
\alpha_{\beta,\gamma} = \left(1+\left(\frac{2}{d} + \frac{1}{\beta}\right) \gamma\right)^{-1}
\end{align}
and consider the following two regimes: 
 \begin{itemize}
   \item {\em (fine sampling)} When $\alpha\ge \alpha_{\beta,\gamma}$, then $\delta_n^{\gamma} = {\cal O}( |T_n|^{-\beta/(2\beta + d)})$, and   
   the rate bound in \eqref{e:rate-bounds} is
   $$
   {\cal O}\Big( (n\delta_n)^{-\beta d/(2\beta + d)} \Big) = {\cal O}\Big( n^{-\beta d(1-\alpha)/(2\beta+d)}\Big).
   $$
   \item {\em (coarse sampling)} When $0<\alpha<\alpha_{\beta,\gamma}$, then $|T_n|^{-\beta/(2\beta + d)}= {\cal O}(\delta_n^{\gamma})$ and the rate bound becomes
   $$
   {\cal O}( \delta_n^{\gamma} ) = {\cal O}( n^{-\alpha \gamma/2}).
   $$
   \end{itemize}  
   In the fine-sampling regime, the rate is the same as the minimax lower bound established in Theorem \ref{thm:PL minimax cont time}
   below. By \eqref{e:alpha_beta_gamma}, a larger $\gamma$ (i.e., a smoother $C$) leads to a wider range of sampling rates under which the minimax rate can be achieved by $\hat f_n(\theta)$. Similarly, a larger $d$ or larger $\beta$ (i.e., faster tail decay of $C$)  
   leads to a narrower range of sampling rates in order to achieve the minimax rate. 
  
   \end{remark}

\section{Minimax rates} \label{sec:minimax}

The minimax rates for the spectral density estimation problem have received some attention.  A few examples of such studies for times series include \cite{samarov1977lower}, \cite{bentkus1985rate}, and \cite{efromovich1998data}, among others.  The continuous-time setting, however, appears to have been less studied \citep[see, e.g.,][and the references therein]{ginovyan:2011}. To the best of our knowledge, results on 
minimax rates for the pointwise inference of the spectral density of functional time series or function-valued, continuous-time processes observed at discrete time points have not yet been established. Also, we are not aware of such results for random fields indexed by 
$\Z^d$ or $\R^d$, $d>1$. 

 Assuming $\{X(t)\}$ is Gaussian, below we extend the work of \cite{samarov1977lower} by focusing on the classes ${\cal P}_D(\beta, L)$ and ${\cal P}_C(\beta, L)$ considered in Section \ref{sec:discrete-time}. As in Section \ref{sec:discrete-time}, we assume the data are observed on a grid. 

Our first result is concerned with the case $\delta_n=1$, where, in accordance with Section \ref{s:ts}, we consider a discrete parameter process $\{X(t), t \in\mathbb{Z}^d\}$.

\begin{thm}\label{thm:pointwise rate time series}
Assume that $\{X(t), t \in\mathbb{Z}^d\}$ is a stationary Gaussian process with spectral density function $f$.  Let $\mathcal{M}_n$ be the class of all possible estimators $f_n$ of $f$ based on the observations $X(t), t\in \{1,\hdots,n\}^d$. Then, for any interior point $\theta_0\in(-\pi,\pi)^d$ and $\beta, L >0,$
\begin{align}\label{e:thm:pointwise rate time series}
\liminf_{n\to\infty} \inf_{f_n \in \mathcal{M}_n}\sup_{f\in {\cal P}_D(\beta,L)}\mathbb{P}\left(\left\|f_n(\theta_0)-f(\theta_0)\right\|_{\rm HS}\ge n^{-\frac{d\beta}{2\beta+d}}\right)>0,
\end{align}
where ${\cal P}_D(\beta,L)$ is defined in \eqref{def:PL minimax class discrete time}.
\end{thm}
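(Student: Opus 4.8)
The plan is to use the classical two‑point (Le Cam) method, which reduces the problem to constructing, for each $n$, two spectral densities $f_{0,n},f_{1,n}\in{\cal P}_D(\beta,L)$ that are statistically almost indistinguishable yet well separated at $\theta_0$. Concretely, if the laws $P_{0,n},P_{1,n}$ of the data $(X(t))_{t\in\{1,\dots,n\}^d}$ under $f_{0,n},f_{1,n}$ satisfy $\mathrm{KL}(P_{0,n}\|P_{1,n})\le\alpha$ for a fixed finite $\alpha$, while $\|f_{0,n}(\theta_0)-f_{1,n}(\theta_0)\|_{\rm HS}\ge 2n^{-d\beta/(2\beta+d)}$, then the standard testing inequality (Le Cam / Bretagnolle--Huber) gives
\[
\inf_{f_n\in{\cal M}_n}\max_{j\in\{0,1\}}P_{j,n}\!\big(\|f_n(\theta_0)-f_{j,n}(\theta_0)\|_{\rm HS}\ge n^{-d\beta/(2\beta+d)}\big)\ \ge\ \tfrac14 e^{-\alpha}>0,
\]
and, since $\{f_{0,n},f_{1,n}\}\subset{\cal P}_D(\beta,L)$, this yields \eqref{e:thm:pointwise rate time series}. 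This is the $d$‑dimensional, operator‑valued counterpart of the argument of \cite{samarov1977lower}.

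For the construction, fix a CONS $\{e_j\}$ of $\bbH$ and take $f_{0,n}\equiv f_0:=(2\pi)^{-d}\Sigma_0$, the spectral density of $\bbH$‑valued white noise, where $\Sigma_0$ is a fixed positive trace‑class operator, diagonal in $\{e_j\}$, with $\|\Sigma_0\|_{\rm tr}<L$ and $a:=\langle\Sigma_0 e_1,e_1\rangle>0$; then $f_0\in{\cal P}_D(\beta,L)$. For an integer $m_n\to\infty$ and $\rho_n>0$ chosen below, set
\begin{align*}
f_{1,n}(\theta):=f_0+(2\pi)^{-d}\rho_n\,g_{m_n}(\theta)\,(e_1\otimes e_1),
\end{align*}
where $g_m\ge0$ is a real trigonometric‑polynomial kernel of degree $<m$ (e.g. a symmetrised $d$‑fold product of one‑dimensional Fej\'er kernels recentred at $\pm\theta_0$, so that its Fourier coefficients $\widehat{g_m}(k)$ are real) normalised so that $g_m(\theta_0)\asymp m^d$, $\sum_{k\in\Z^d}|\widehat{g_m}(k)|(1+\|k\|_2^\beta)\asymp m^{d+\beta}$, and $\sum_{k\in\Z^d}|\widehat{g_m}(k)|^2\asymp m^d$. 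Since $g_{m_n}\ge0$ we have $f_{1,n}(\theta)\succeq f_0\succeq0$, so $f_{1,n}$ is a bona fide operator spectral density of a real stationary Gaussian process; moreover $f_{1,n}\in{\cal P}_D(\beta,L)$ provided $\rho_n m_n^{d+\beta}$ is below a fixed multiple of $L-\|\Sigma_0\|_{\rm tr}$, and the separation is $\|f_{1,n}(\theta_0)-f_0(\theta_0)\|_{\rm HS}=(2\pi)^{-d}\rho_n g_{m_n}(\theta_0)\asymp\rho_n m_n^d$.

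The crux is the Kullback--Leibler bound. Because the perturbation acts only in the rank‑one direction $e_1\otimes e_1$ and $e_1$ is an eigenvector of $\Sigma_0$, the scalar process $\{\langle X(t),e_1\rangle\}_{t\in\{1,\dots,n\}^d}$ is independent of its orthogonal complement $\{X(t)-\langle X(t),e_1\rangle e_1\}$, and the complement has the same law under $f_0$ and $f_{1,n}$; hence $\mathrm{KL}(P_{0,n}\|P_{1,n})$ equals the KL divergence between two centred scalar Gaussian fields on $\{1,\dots,n\}^d$ with covariance matrices $a\,I_N$ and $a\,I_N+\rho_n T_n$, where $N=n^d$ and $T_n$ is the $d$‑dimensional Toeplitz matrix with entries $\widehat{g_{m_n}}(j-k)$. (In particular mutual absolute continuity of the two data laws, and hence validity of the Gaussian KL formula, is immediate since $T_n$ has finite rank.) As $T_n\succeq0$, the Gaussian KL identity together with $\log(1+x)-x/(1+x)\le x^2/2$ for $x\ge0$ yields
\begin{align*}
\mathrm{KL}(P_{0,n}\|P_{1,n})\ \le\ \frac{\rho_n^2}{4a^2}\,\|T_n\|_{\rm HS}^2\ \le\ \frac{\rho_n^2}{4a^2}\,N\sum_{k\in\Z^d}|\widehat{g_{m_n}}(k)|^2\ \asymp\ \rho_n^2\,N\,m_n^d .
\end{align*}
Choosing $m_n\asymp N^{1/(2\beta+d)}$ and $\rho_n\asymp m_n^{-(\beta+d)}$, with hidden constants small enough for ${\cal P}_D(\beta,L)$‑membership and large enough that $(2\pi)^{-d}\rho_n g_{m_n}(\theta_0)\ge 2n^{-d\beta/(2\beta+d)}$, the separation is $\asymp m_n^{-\beta}\asymp n^{-d\beta/(2\beta+d)}$ while $\mathrm{KL}(P_{0,n}\|P_{1,n})\asymp N m_n^{-(2\beta+d)}$ stays bounded, completing the proof.

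I expect the main obstacle to be exactly this KL estimate: one must (i) bring the infinite‑dimensional Gaussian data laws to a tractable form and justify the KL formula — which the finite‑rank/eigen‑direction reduction handles — and (ii) obtain the sharp $O(N m_n^d)$ bound on $\|T_n\|_{\rm HS}^2$ in the $d$‑dimensional Toeplitz setting while simultaneously keeping $f_{1,n}$ inside ${\cal P}_D(\beta,L)$ and the $\theta_0$‑separation of the correct order. This three‑way balancing of class membership, separation at $\theta_0$, and indistinguishability is what pins down the exponents and is the heart of Samarov‑type lower bounds; the analogous bound over the continuous‑time class ${\cal P}_C(\beta,L)$ (Theorem \ref{thm:PL minimax cont time}) would follow the same template with sums replaced by integrals and the Fej\'er kernel by its continuous analogue.
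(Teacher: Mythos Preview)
Your proposal is correct and follows the same two-point Le Cam strategy as the paper: white-noise null, a symmetrised localized perturbation at $\pm\theta_0$, and the three-way balancing of class membership, separation, and a bounded Kullback--Leibler divergence. The technical execution differs in two places. First, the paper perturbs with a rescaled $C_0^\infty$ bump function $g_n(\theta)=h_n^\beta g((\theta-\theta_0)/h_n)$ and verifies ${\cal P}_D(\beta,L)$-membership via the rapid Fourier decay of $g$; you use a Fej\'er-type trigonometric polynomial, which has the advantage of explicit, compactly supported Fourier coefficients and makes the class-membership and $\|T_n\|_{\rm HS}^2$ computations completely transparent. Second, for the KL bound the paper invokes Samarov-type lemmas (a mean-value argument yielding ${\rm KL}\le\tfrac12\|D_n\|_{\rm F}^2\|B_{n,\xi}^{-1}\|_{\rm op}^2$, combined with Toeplitz operator-norm bounds), whereas your approach---exploiting that the null covariance is $aI_N$ and the perturbation is positive semidefinite---reduces the Gaussian KL to $\tfrac12\sum_i[\log(1+\lambda_i)-\lambda_i/(1+\lambda_i)]$ and applies the elementary inequality $\log(1+x)-x/(1+x)\le x^2/2$ directly. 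Your rank-one/eigen-direction reduction to a scalar process is also slightly different from the paper's projection argument $|\langle{\cal A}\phi,\phi\rangle|\le\|{\cal A}\|_{\rm HS}$, but both achieve the same end. Overall the two routes are equivalent in scope; yours is somewhat more self-contained, while the paper's Samarov lemmas have the benefit of being reusable when the null is not pure white noise (as in the continuous-time Theorem~\ref{thm:PL minimax cont time}, where the folded null spectral density is no longer constant).
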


\begin{remark} \label{rem:Zd-minimax}
Note that $|\bbT_n| = n^{d}$ for $\bbT_n=\{1,\hdots,n\}^d$. Hence, by Theorem \ref{prop:PL bias time series}, the estimator $\hat f_n(\theta_0)$ achieves 
the minimax rate $|\bbT_n|^{-\beta/(2\beta+d)} = n^{-(d\beta)/(2\beta+d)}$ uniformly over the class ${\cal P}_D(\beta,L)$.
Thus, in the setting of processes indexed by $\Z^d$, our estimators are rate-optimal in a uniform sense, for the power-law class and for all dimensions $d\ge 1$.
\end{remark}

\begin{proof}[ Proof of Theorem \ref{thm:pointwise rate time series} (Outline)] 
The detailed proof of Theorem \ref{thm:pointwise rate time series} is given in Section \ref{s:minimax_proof} of Supplement. 
We describe the key elements of the proof here. 
First, for any member $e_i$ of the real CONS, consider the (scalar) real-valued process $X_{e_i}(t) := \langle X(t), e_i\rangle_\bbH$ and let $C_{e_i}(x)$ and $f_{e_i}(\theta)$ be its stationary covariance and spectral density, respective. If $f\in {\cal P}_D(\beta,L)$ then 
$$
\int_{\R^d} (1+\|x\|_2^\beta) |C_{e_i}(x)| dx \le L\quad\mbox{and} \quad |\hat f_{e_i} (\theta_0) - f_{e_i}(\theta_0)| \le \|\hat f(\theta_0) - f(\theta_0)\|_{\rm HS}. 
$$
These follow from the simple fact that $|\langle {\cal A} \phi, \phi\rangle_\bbH|  \le \|{\cal A}\|_{\rm op}$ for any bounded linear operator ${\cal A}$ and unitary $\phi\in\bbH$. Thus, it suffices to prove Theorem \ref{thm:pointwise rate time series} by focusing on scalar, real-valued processes. 
% Rafail's comment: consider $\phi=e_i$, the real basis.
The crucial step of the proof is constructing two functions $f_{0,n}, f_{1,n}$ in ${\cal P}_D(\beta,L)$ such that the distance between them accurately measures the complexity of the estimation problem.
Let  
\begin{align*} 
f_{0,n}(\theta)=L/2\cdot \mathbbm{1}(\theta\in[-\pi,\pi]^d).
\end{align*}
For $\theta = (\theta_i)_{i=1}^d\in \R^d$, define the function 
\begin{align*} %\label{e:g_def_1}
g(\theta) = \epsilon \cdot \prod_{i=1}^d \varphi(\theta_i),\ \ 
\mbox{ where } \varphi(x) =  \exp\left(-\frac{1}{1-(x/\pi)^2}\right)\mathbbm{1}(|x| < \pi),\ \ x\in \R,
\end{align*}
for some $\epsilon>0$. Note that the so-called ``bump'' function $g$ is compactly supported and infinitely differentiable.
Consider 
\begin{align*} %\label{e:gn_def_1}
g_n(\theta) = h_n^{\beta}g\left(\frac{\theta-\theta_0}{h_n}\right),
\end{align*}
where $h_n = M \cdot n^{-d/(2\beta+d)}$ for some appropriate constant $M$.
Now, let  †
\begin{align*}
    f_{1,n}(\theta) & = f_{0,n}(\theta)+ [g_n(\theta)+g_n(-\theta)].
\end{align*}
Thus, the distance between $f_{0,n}(\theta)$ and $f_{1,n}(\theta)$ is $g_n(\theta)+g_n(-\theta)= {\cal O}(n^{-d\beta/(2\beta+d)})$.
We then apply Theorem 2.5(iii) in \cite{tsybakov2008introduction} to obtain the desired result by verifying the following:
\begin{enumerate} 
    \item[(1)] $f_{0n},f_{1n}\in {\cal P}_D(\beta,L)$;
   
    \item[(2)] $f_{1n}(\theta_0)-f_{0n}(\theta_0) =c_{\theta_0} n^{-d\beta/(2\beta+d)}$ for $n$ large enough, where 
    $c_{\theta_0}   = M^\beta (1+\mathbbm{1}(\theta_0=0))>0$;
    
     \item[(3)] $\sup_{n} {\rm KL}(\mathbb{P}_{1n},\mathbb{P}_{0n})<\infty$, where ${\rm KL}$ stands for the Kullback-Leibler divergence
      and $\mathbb{P}_{0,n}$ and $\mathbb{P}_{1,n}$ are probability distributions under $f_{0,n}$ and $f_{1,n}$ respectively.

\end{enumerate}
The most technically challenging part of the proof is the computation of ${\rm KL}(\mathbb{P}_{1,n},\mathbb{P}_{0,n})$ in part (3), which is accomplished by following and extending an approach introduced in 
\cite{samarov1977lower}.  For details, see Section \ref{s:minimax_proof} of Supplement. \end{proof}

The next result gives the minimax rate for the continuous-parameter Gaussian process whose covariance function belongs to 
${\cal P}_C(\beta,L)$, defined in \eqref{def:CTPL minimax class}.

\begin{thm}\label{thm:PL minimax cont time}
Let $\{X(t),\ t\in\mathbb{R}^d\}$ be a stationary Gaussian process with spectral density $f$. 
Let $\mathcal{M}_n$ be the class of all possible estimators $f_n$ of $f$ based on the observations $X(k\delta_n),k\in\{1,\hdots,n\}^d$. Then, for each $\theta_0\in\bbR^d$ and $\beta, L >0,$
\begin{align}\label{e:thm:PL minimax cont time}
\liminf_{n\to\infty}\inf_{f_n\in \mathcal{M}_n}\sup_{f\in {\cal P}_C(\beta,L)}\mathbb{P}\left(\|f_n(\theta_0)-f(\theta_0)\|_{\rm HS} \ge (n\delta_n)^{-d\beta/(2\beta+d)}\right)>0,
\end{align}
where ${\cal P}_C(\beta,L)$ is defined in \eqref{def:CTPL minimax class}.
\end{thm}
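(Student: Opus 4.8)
The plan is to follow the blueprint of the proof of Theorem~\ref{thm:pointwise rate time series}, adjusting for the fact that the observations $\{X(k\delta_n):k\in\{1,\dots,n\}^d\}$ probe the process only through its restriction to the lattice $\delta_n\mathbb{Z}^d$; throughout I assume $n\delta_n\to\infty$, which holds in the setting of Section~\ref{s:denst_grid}. First I would reduce to scalar, real-valued stationary Gaussian processes exactly as in the discrete case, embedding a scalar hard instance $Z$ with spectral density $f$ in the scalar version of ${\cal P}_C(\beta,L)$ as the rank-one $\mathbb{H}$-valued process $X(t):=Z(t)e_1$: its operator spectral density $f(\theta)(e_1\otimes e_1)$ lies in the class \eqref{def:CTPL minimax class} since $\|f(\theta)(e_1\otimes e_1)\|_{\tr}=f(\theta)$, observing $X(k\delta_n)$ is equivalent to observing $Z(k\delta_n)$, and $\|\widehat f(\theta_0)-f(\theta_0)(e_1\otimes e_1)\|_{\HS}\ge|\langle\widehat f(\theta_0)e_1,e_1\rangle-f(\theta_0)|$. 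I would then invoke the two-point scheme, i.e.\ Theorem~2.5(iii) of \cite{tsybakov2008introduction}, reducing the whole argument to constructing $f_{0,n},f_{1,n}\in{\cal P}_C(\beta,L)$ such that (i) $f_{1,n}(\theta_0)-f_{0,n}(\theta_0)\ge2(n\delta_n)^{-d\beta/(2\beta+d)}$ for large $n$ and (ii) $\sup_n{\rm KL}(\mathbb{P}_{1,n},\mathbb{P}_{0,n})<\infty$, with $\mathbb{P}_{j,n}$ the law of $(Z(k\delta_n))_k$ under $f_{j,n}$.

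For the pair I would take $f_{0,n}\equiv f_0$ a \emph{fixed} continuous spectral density in the interior of ${\cal P}_C(\beta,L)$ with $f_0(\theta_0)>0$ (for instance $f_0(\theta)=L'\prod_{i=1}^d(1+\theta_i^2)^{-1}$ with $L'$ small, optionally augmented by the lattice-white-noise covariance $c\prod_{i=1}^d(1-|x_i|/\delta_n)_+$, whose ${\cal P}_C$-norm tends to $0$ as $\delta_n\to0$, to keep the sampled law well conditioned), set $h_n:=M(n\delta_n)^{-d/(2\beta+d)}$, reuse the bump $g$ from the proof of Theorem~\ref{thm:pointwise rate time series}, put $g_n(\theta):=h_n^\beta g((\theta-\theta_0)/h_n)$, and let $f_{1,n}:=f_0+g_n+g_n(-\cdot)$. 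One checks $f_{1,n}\ge0$ and $f_{1,n}\in{\cal P}_C(\beta,L)$ using that the inverse Fourier transform $\check g_n$ obeys $\int_{\R^d}(1+\|x\|_2^\beta)|\check g_n(x)|dx=h_n^\beta\int|\check g|+\int\|x\|_2^\beta|\check g(x)|dx$, which is finite because $\check g$ is Schwartz and at most $L/4$ once the bump height $\epsilon$ is small. Because $\theta_0$ is fixed and $h_n\to0$, for large $n$ one has $g_n(-\theta_0)=0$ unless $\theta_0=0$, so $f_{1,n}(\theta_0)-f_{0,n}(\theta_0)=c_{\theta_0}h_n^\beta$ with $c_{\theta_0}=g(0)(1+\mathbbm{1}(\theta_0=0))>0$; choosing $M$ with $c_{\theta_0}M^\beta\ge2$ then gives (i), and enlarging $M$ costs nothing since (ii) only demands finiteness.

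The hard part is (ii). Under $f_{j,n}$ the vector $(Z(k\delta_n))_k$ is mean-zero Gaussian with a multilevel Toeplitz covariance $\Sigma_{j,n}$ generated by the folded symbol $h_{j,n}(\lambda):=\delta_n^{-d}\sum_{m\in\mathbb{Z}^d}f_{j,n}((\lambda+2\pi m)/\delta_n)$, $\lambda\in[-\pi,\pi]^d$ (cf.\ \eqref{e:spec_fold}); equivalently $\{Z(k\delta_n)\}$ is a unit-lattice process with spectral density $h_{j,n}$. Since $\delta_n\to0$ and $\theta_0$ is fixed, for large $n$ the perturbation does not alias: $\Delta_n:=\Sigma_{1,n}-\Sigma_{0,n}\succeq0$ is generated by a symbol $\phi_n$ supported in a $\lambda$-ball of radius $\asymp\delta_n h_n$ about $\pm\delta_n\theta_0$ with $\|\phi_n\|_\infty\asymp\delta_n^{-d}h_n^\beta$, while on that ball $h_{0,n}\asymp\delta_n^{-d}f_0(\theta_0)\asymp\delta_n^{-d}$. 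Writing ${\rm KL}(\mathbb{P}_{1,n},\mathbb{P}_{0,n})=\tfrac12\big[\tr(\Sigma_{0,n}^{-1}\Sigma_{1,n}-I)-\log\det(\Sigma_{0,n}^{-1}\Sigma_{1,n})\big]$ and using $\Delta_n\succeq0$ together with $\|\Sigma_{0,n}^{-1}\Delta_n\|_{\rm op}\to0$, one reduces to bounding $\tfrac14\tr[(\Sigma_{0,n}^{-1}\Delta_n)^2]$. The crux is a Szeg\H{o}/Whittle-type estimate showing
\[
\tr\big[(\Sigma_{0,n}^{-1}\Delta_n)^2\big]\ \lesssim\ \frac{n^d}{(2\pi)^d}\int_{[-\pi,\pi]^d}\Big(\frac{\phi_n(\lambda)}{h_{0,n}(\lambda)}\Big)^2 d\lambda\ \asymp\ (n\delta_n)^d\,h_n^{2\beta+d},
\]
which is delicate precisely because it must exploit that $\phi_n$ is supported where $h_{0,n}$ is large --- the naive bound $\tr[(\Sigma_{0,n}^{-1}\Delta_n)^2]\le\|\Sigma_{0,n}^{-1}\|_{\rm op}^2\tr[\Delta_n^2]$ overshoots by a factor $\delta_n^{-2d}$. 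I would prove it by approximating $\Sigma_{0,n}^{-1}$ by the Toeplitz matrix with symbol $1/h_{0,n}$ on the relevant frequency block with a controlled remainder, extending the one-dimensional fixed-grid Toeplitz analysis of \cite{samarov1977lower} to general $d\ge1$ and to a shrinking grid; here the fact that $n\delta_n h_n=M(n\delta_n)^{2\beta/(2\beta+d)}\to\infty$ is what ensures the perturbation band still carries enough Fourier frequencies for the approximation to be effective. With $h_n=M(n\delta_n)^{-d/(2\beta+d)}$ this gives ${\rm KL}\lesssim M^{2\beta+d}<\infty$ uniformly in $n$, whence Theorem~2.5(iii) of \cite{tsybakov2008introduction} yields \eqref{e:thm:PL minimax cont time}. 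I expect this localized Toeplitz/Szeg\H{o} bound under a vanishing grid spacing to be the main technical obstacle.
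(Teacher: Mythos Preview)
Your overall architecture---reduction to scalar processes, the two-point method, the bump perturbation $g_n(\theta)=h_n^\beta g((\theta-\theta_0)/h_n)$ with $h_n=M(n\delta_n)^{-d/(2\beta+d)}$, and a Samarov-type bound on the KL divergence via the folded spectral density---matches the paper exactly. The divergence is in the choice of the null model $f_{0,n}$, and there your route is substantially harder than necessary.

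You take $f_0$ \emph{fixed} (a Cauchy-type density, optionally plus lattice white noise of fixed variance $c$). As you correctly diagnose, this forces you into a localized Szeg\H{o}/Whittle estimate of the form $\tr[(\Sigma_{0,n}^{-1}\Delta_n)^2]\lesssim n^d\!\int(\phi_n/h_{0,n})^2$, because the crude bound $\|B_{n,\xi}^{-1}\|_{\rm op}^2\|D_n\|_{\rm F}^2$ overshoots by $\delta_n^{-2d}$. That localized estimate is plausible heuristically but is \emph{not} a standard inequality: the Toeplitz inverse is not well approximated by the Toeplitz matrix with symbol $1/h_{0,n}$ when $h_{0,n}$ varies by a factor $\delta_n^{-d}$ across $[-\pi,\pi]^d$, and controlling the remainder uniformly as $\delta_n\to0$ would require real work that you do not supply. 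As written this is a genuine gap.

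The paper sidesteps the obstacle entirely by letting $f_{0,n}$ \emph{scale with} $\delta_n$: it takes
\[
f_{0,n}(\theta)=\epsilon\prod_{i=1}^d\phi(\delta_n\theta_i),\qquad \phi(z)=(2\pi)^{-1/2}e^{-z^2/2},
\]
so that $C_{0,n}(x)=\epsilon(2\pi)^{d/2}\delta_n^{-d}\prod_i\phi(x_i/\delta_n)$. A change of variables shows $\int(1+\|x\|^\beta)|C_{0,n}(x)|dx\le \epsilon\int(1+\|u\|^\beta)e^{-\|u\|^2/2}du\le L/2$ for small $\epsilon$, uniformly in $\delta_n\in(0,1]$, so $f_{0,n}\in{\cal P}_C(\beta,L)$. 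The payoff is that the folded symbol satisfies $\widetilde f_{0,n}(\lambda)\ge \epsilon\,\delta_n^{-d}\prod_i\phi(\lambda_i)\ge \epsilon\,\delta_n^{-d}e^{-d\pi^2/2}/(2\pi)^{d/2}$ \emph{uniformly} on $[-\pi,\pi]^d$. Lemma~\ref{le:Samarov-lemma2}(ii) then gives $\|B_{n,\xi}^{-1}\|_{\rm op}\le c\,\delta_n^{d}$, which exactly cancels the $\delta_n^{-d}$ in $\|D_n\|_{\rm F}^2\le 4(2\pi)^d n^d h_n^{2\beta+d}\delta_n^{-d}\|\varphi\|_{L^2}^{2d}$, yielding ${\rm KL}\lesssim (n\delta_n)^d h_n^{2\beta+d}=M^{2\beta+d}$ directly from the crude bound $\tfrac12\|D_n\|_{\rm F}^2\|B_{n,\xi}^{-1}\|_{\rm op}^2$---no localization needed.

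In short: your ``optional'' lattice-white-noise augmentation was the right instinct but at the wrong scale. Scaling the null so that the \emph{sampled} process has covariance of order $\delta_n^{-d}$ (equivalently, folded symbol uniformly $\asymp\delta_n^{-d}$) turns the hard step you flagged into a two-line computation.
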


The proof of Theorem \ref{thm:PL minimax cont time} is similar to that of Theorem \ref{thm:pointwise rate time series} and 
is also included in Section \ref{s:minimax_proof} of Supplement. 
We conclude this section with several remarks.

 \begin{remark}  Comparing the minimax lower bounds in \eqref{e:thm:pointwise rate time series} and \eqref{e:thm:PL minimax cont time},
  one can interpret $(n \delta_n)^d$ as the ``effective'' sample size in the case of mixed-domain asymptotics: 
  $$ \delta_n\to 0\ \ \mbox{ and }\ \ n\delta_n\to \infty.$$
  \begin{itemize}
\item[1.]  Recall Remark \ref{rem:sampling-regimes} and observe that, in the fine sampling regime, the rate of $\hat f_n(\theta)$ obtained in \eqref{e:rate-bounds} matches the minimax lower bound in \eqref{e:thm:PL minimax cont time}.  To  the best of our knowledge, this is the first result on the minimax rate for spectral density estimation in a mixed domain setting.

\item[2.] An open problem is the construction of a narrower
  class ${\cal P}_C$, which reflects both the tail-decay of the auto-covariance (through $\beta$) and its smoothness (through $\gamma$) so that
  the upper- and lower-bounds on the rate of the estimators match in both the fine- and coarse-sampling regimes (cf. Remark \ref{rem:sampling-regimes}).
  \end{itemize}
  \end{remark}

\section{Asymptotic distribution} \label{s:CLT}

As in the previous two sections, we continue to consider the case of gridded data described by \eqref{e:lattice} and \eqref{def: f^ts}. The goal here is to present a central limit theorem 
for our spectral density estimator $\hat f_n(\theta)$ assuming that $\{X(t)\}$ is a stationary Gaussian process, where in this section we do not restrict $X$ to be real in $\bbH$. However, due to the technical nature of this topic, we will focus on the case $d=1$. As discussed 
in Remark \ref{re:Guyon parametrization}, for $d=1$ the normalization ${|\mathbb{T}_n\cap(\mathbb{T}_n-(t-s))|}$ in $\hat f_n(\theta)$ does not affect the rate. 
Thus, for convenience, we will eliminate that and consider instead
\begin{align}\label{def: f^ts_1}
\hat f_n(\theta) = \frac{\delta_n}{2\pi n}\sum_{i,j=1}^ne^{\mathbbm{i}(i-j)\delta_n\theta}X(\delta_n i)\otimes X(\delta_n j) K\left(\frac{i-j}{\Delta_n}\cdot \delta_n\right).
\end{align}
We will prove a central limit theorem for $\hat f_n(\theta)$ assuming that $\delta_n\to$ some $\delta_\infty\in [0,\infty)$ as $n\to\infty$. 
The time-series and mixed-domain cases are covered by $\delta_\infty=1$ and $0$, respectively.

Interestingly, the asymptotic distribution of 
$\hat f_n(\theta)$ involves the notion of pseudo-covariance. Recall that from \eqref{e:CXY_2} the pseudo-covariance function is defined as $\check C(h) = \E[X(t+h) \otimes \overline X(t)]$. 
In accordance with \eqref{def:H-f(theta)} and \eqref{e:spec_fold},
define the {\em pseudo-spectral density}:
$$
\check f(\theta) = \frac{1}{2\pi} \int_{\R} e^{-\ii \theta x} \check C(x) dx,\ \ \theta\in\R,
$$
and, for $\delta>0$, the {\em folded pseudo-spectral density}:
$$
\check f(\theta;\delta) := \frac{\delta}{2\pi}\sum_{k=-\infty}^\infty e^{-\ii k^\top \theta\delta} \check C(k\delta), \ \theta\in[-\pi/\delta,\pi/\delta].
$$
Note that $\check f(\theta)$ and $\check f(\theta;\delta)$ are well defined assuming that $\int_\bbR \|\check C(x)\|_\tr dx < \infty$ and
$\sum_{k=-\infty}^\infty \|\check C(k\delta)\|_\tr < \infty$, respectively. For convenience, also write $f(\theta;0) = f(\theta)$ and $\check f(\theta;0) = \check f(\theta)$.

Let now $\{e_j\}$ be an arbitrary fixed CONS of $\bbH$, and define
\begin{align*}%\label{e:Ckl-C-check_kl}
\begin{split}
C_{k,\ell}(t) & = \langle C(t) e_k, e_\ell\rangle = \E[\langle X(t), e_\ell\rangle \overline{\langle X(0), e_k\rangle}], \\
 \check C_{k,\ell}(t) & = \langle \check C(t) \overline {e_k}, e_\ell\rangle = \E[\langle X(t), e_\ell\rangle \langle X(0), e_k\rangle].
 \end{split}
\end{align*}
The following assumption will be needed for establishing the central limit theorem. 

\renewcommand\theassumption{CLT}
\begin{assumption}  \label{a:clt} 
Let the grid size $\delta_n$ and bandwidth $\Delta_n$ satisfy $\delta_n\to$ some $\delta_\infty\in [0,\infty)$ and $(n\delta_n)/\Delta_n\to\infty$.
Also, assume that there exist positive constants $L_n$ such that
$$
L_n \delta_n \to\infty,\ L_n/\Delta_n \to 0,
$$
and for which the following hold:
\begin{itemize}
    \item[(a)]  $\sup_n \delta_n\sum_{x=-\infty}^\infty \|C(\delta_n x)\|_\tr <\infty$ and $\delta_n\sum_{|x|>L_n} \|C(\delta_n x)\|_\tr \to 0$;
    
    \item[(b)] $\|f(\theta;\delta_n)-f(\theta;\delta_\infty)\|_\tr \to 0$;

    \item[(c)]  $\sup_n \delta_n\sum_{x=-\infty}^\infty \|\check C(\delta_n x)\|_\tr <\infty$ and $\delta_n\sum_{|x|>L_n} \|\check C(\delta_n x)\|_\tr \to 0$;
    
    \item[(d)]  $\|\check f(\theta;\delta_n)-\check f(\theta;\delta_\infty)\|_\tr \to 0$;
        
    \item[(e)] $\delta_n^2 \sum_{x_1,x_2=-\infty}^{\infty}|C_{k,k}(\delta_n x_1)|\cdot |C_{\ell,\ell}(\delta_n x_2)|\le a_{k,\ell},$ such that 
    $\sum_{k,\ell}a_{k,\ell}<\infty$;
    
    \item[(f)] $\delta_n^2 \sum_{x_1,x_2=-\infty}^{\infty}|\check C_{k,\ell}(\delta_n x_1)|\cdot |\check C_{k,\ell}(\delta_n x_2)|\le b_{k,\ell}$, such that $\sum_{k,\ell}b_{k,\ell}<\infty$.
\end{itemize}
\end{assumption}

Note that if $\delta_n=\delta_\infty\in (0,\infty)$ for all $n$, then the conditions (a)-(f) follow from $\sum_{x=-\infty}^\infty \|C(\delta_\infty x)\|_\tr <\infty$ and $\sum_{x=-\infty}^\infty \|\check C(\delta_\infty x)\|_\tr <\infty$. For $\delta_\infty=0$, the conditions (a) and (c) in the above assumption are related to the notion of directly Riemann integrability (dRi) \citep[cf., e.g.,][]{feller2008introduction}; if, in addition, $C(x)$ and $\check C(x)$ are functions in $\bbC$, then the dRi of $C(x)e^{\ii x\theta}$ and $\check C(x)e^{-\ii x\theta}$ also implies (b) and (d) respectively.

The following modified assumption on the kernel $K$ is also needed.

\renewcommand\theassumption{K$^{\boldsymbol{\prime}}$}
\begin{assumption} \label{a:kernel'}
%\label{def:kernel}
The nonnegative kernel $K$ has compact support, is symmetric about $0$, and is of bounded variation. 
\end{assumption}

The following result is a central limit theorem for $\hat f_n(\theta)$, where the weak convergence is defined in the space $\bbX$ of Hilbert-Schmidt operators on $\bbH$.

\begin{thm}\label{thm:CLT} 
Consider the stationary zero-mean Gaussian process $\{X(t)$, $t\in\bbR\}$ and assume that Assumptions \ref{a:clt} and \ref{a:kernel'} hold. 
Define
\begin{align*}
\CT_n(\theta):= \sqrt{\frac{n\delta_n}{\Delta_n}}\left[\hat f_n(\theta)-\mathbb{E}\hat f_n(\theta)\right], \  \ \theta\in\bbR,
\end{align*}
where $\hat f_n(\theta)$ is given in \eqref{def: f^ts_1}.
Then, for any $\theta \in [-\pi/\delta_\infty, \pi/\delta_\infty]$, which is taken as $\bbR$ if $\delta_\infty=0$,
\begin{align*}
\CT_n(\theta) \stackrel{d}{\to}\CT(\theta) \ \mbox{ in $\bbX$},
\end{align*}
where $\CT(\theta)$ is a zero-mean Gaussian element of $\bbX$, such that for every finite collection $\{g_{\ell},\ell=1,\hdots,m\},$ and positive numbers $\{a_\ell, \ell=1,\hdots,m\},$ 
\begin{align} \label{e:clt_var}
\begin{split}
& {\rm Var}\left(\sum_{\ell=1}^ma_{\ell}\left\langle\CT(\theta) g_\ell,g_\ell\right\rangle\right) \\
& =\|K\|_2^2 \sum_{\ell_1,\ell_2=1}^ma_{\ell_1}a_{\ell_2}\left[\left|\left\langle f(\theta;\delta_{\infty})g_{\ell_2},g_{\ell_1}\right\rangle\right|^2 +c(\theta)\left|\left\langle \check f(\theta;
\delta_{\infty})\overline{g_{\ell_2}},g_{\ell_1}\right\rangle\right|^2\right],
\end{split}
\end{align}
where $\|K\|_2^2 = \int K^2(x) dx$, and $c(\theta)=I_{(\theta=0)}$ if $\delta_{\infty}=0$ and $I_{(\theta=0,\pm\pi/\delta_{\infty})}$ if $\delta_{\infty}>0$.
\end{thm}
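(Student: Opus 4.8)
The plan is to prove weak convergence in the separable Hilbert space $\bbX$ of Hilbert--Schmidt operators on $\bbH$ via the two standard ingredients: convergence of the finite-dimensional projections and tightness. Fix the CONS $\{e_j\}$ of $\bbH$ from Assumption~\ref{a:clt}, so that $\{e_j\otimes e_k\}_{j,k\ge 1}$ is a CONS of $\bbX$; since $\CT_n(\theta)$ is self-adjoint (hence so is the limit), and the self-adjoint finite-rank operators are spanned, by polarization, by the rank-one operators $g\otimes g$, $g\in\bbH$, it is enough to show (i) that for every finite collection $g_1,\dots,g_m\in\bbH$ the vector $\big(\langle\CT_n(\theta)g_\ell,g_\ell\rangle\big)_{\ell=1}^m$ converges in distribution to a Gaussian vector with covariance read off from \eqref{e:clt_var} by bilinearity, and (ii) that $\{\CT_n(\theta)\}_n$ is tight in $\bbX$, for which --- by Markov's inequality and the usual Hilbert-space tail criterion --- it suffices to verify $\lim_{M\to\infty}\sup_n\sum_{j\vee k>M}\E\big|\langle\CT_n(\theta),e_j\otimes e_k\rangle_{\HS}\big|^2=0$.

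For (i) we would use the method of moments. Since $K$ is symmetric (Assumption~\ref{a:kernel'}), $\hat f_n(\theta)$ in \eqref{def: f^ts_1} is self-adjoint, so with $Z_i:=\langle X(\delta_n i),g\rangle$ the functional $\langle\hat f_n(\theta)g,g\rangle$ is the real quadratic form $\frac{\delta_n}{2\pi n}\sum_{i,j=1}^n e^{\mathbbm{i}(i-j)\delta_n\theta}K\big(\tfrac{(i-j)\delta_n}{\Delta_n}\big)Z_i\overline{Z_j}$ in the jointly complex-Gaussian family $\{Z_i\}$, hence $\langle\CT_n(\theta)g,g\rangle$ lies in the second Wiener chaos. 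The engine is an Isserlis/Wick-type diagram formula for joint cumulants of such quadratic forms: the cumulant of order $r$ is a sum over \emph{connected} pairing diagrams, which for degree-two functionals are cyclic, each contributing a product of second moments of the $Z_i$ around the cycle. As the variables are complex Gaussian, every covariance edge is of one of two kinds --- an $\E[Z_i\overline{Z_j}]$ edge governed by $C(\cdot)$, or an $\E[Z_i Z_j]$ edge governed by the pseudo-covariance $\check C(\cdot)$ of \eqref{e:CXY_2} --- and this is exactly what produces both $f(\theta;\delta_\infty)$ and $\check f(\theta;\delta_\infty)$ in the limit. A power-counting estimate then closes this step: a connected diagram of order $r$ leaves one free spatial index ($n$ choices) and $r-1$ free kernel-bounded lags (each of size $O(\Delta_n/\delta_n)$), while its $r$ covariance edges have jointly summable lags by Assumption~\ref{a:clt}(a),(c), so against the prefactor $(\delta_n/n)^r$ the $r$-th cumulant of $\langle\hat f_n(\theta)g,g\rangle$ is $O\big((\Delta_n/(n\delta_n))^{r-1}\big)$, whence that of $\langle\CT_n(\theta)g,g\rangle$ is $O\big((\Delta_n/(n\delta_n))^{r/2-1}\big)$; since $\Delta_n/(n\delta_n)\to 0$ this tends to $0$ for $r\ge 3$, and the first cumulant vanishes by centering, so the finite-dimensional limits are Gaussian.

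It then remains to compute the second cumulants. The (real) variance of $\langle\hat f_n(\theta)g,g\rangle$ reduces, by Isserlis, to exactly two connected diagrams. The first, $\E[Z_i\overline{Z_{i'}}]\,\E[\overline{Z_j}Z_{j'}]$, forces the two lags to nearly coincide and, after summing the free spatial index and using $\frac{\delta_n}{\Delta_n}\sum_h K^2(h\delta_n/\Delta_n)\to\int K^2$ (which uses the compact support and bounded variation of $K$ together with $(n\delta_n)/\Delta_n\to\infty$), contributes $\frac{\Delta_n}{n\delta_n}\|K\|_2^2\,|\langle f(\theta;\delta_n)g,g\rangle|^2$. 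The second, $\E[Z_i Z_{j'}]\,\E[\overline{Z_j}\,\overline{Z_{i'}}]$, forces the two lags to be nearly opposite and so carries the phase $e^{2\mathbbm{i}h\delta_n\theta}$; the ensuing sum $\frac{\delta_n}{\Delta_n}\sum_h e^{2\mathbbm{i}h\delta_n\theta}K^2(h\delta_n/\Delta_n)$ tends to $\|K\|_2^2$ when $2\theta$ lies in the relevant dual lattice ($\theta=0$ if $\delta_\infty=0$, and $\theta\in\{0,\pm\pi/\delta_\infty\}$ if $\delta_\infty>0$) and, by summation by parts, to $0$ otherwise --- which is precisely the factor $c(\theta)$ --- contributing $c(\theta)\frac{\Delta_n}{n\delta_n}\|K\|_2^2\,|\langle\check f(\theta;\delta_n)\overline g,g\rangle|^2$. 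Multiplying by $n\delta_n/\Delta_n$ and passing $f(\theta;\delta_n)\to f(\theta;\delta_\infty)$, $\check f(\theta;\delta_n)\to\check f(\theta;\delta_\infty)$ via Assumption~\ref{a:clt}(b),(d) yields the diagonal entries of \eqref{e:clt_var}; the off-diagonal entries follow by replacing $\langle C(\cdot)g,g\rangle$ with $\langle C(\cdot)g_{\ell_2},g_{\ell_1}\rangle$ (and similarly for $\check C$) and using bilinearity of covariance. The same two-diagram bound, applied with $g_{\ell_1}=e_j$ and $g_{\ell_2}=e_k$, gives $(n\delta_n/\Delta_n)\,{\rm Var}(\langle\hat f_n(\theta)e_j,e_k\rangle)\le a_{j,k}+b_{j,k}+o(1)$ uniformly in $n$, with $a_{j,k},b_{j,k}$ the summable arrays of Assumption~\ref{a:clt}(e),(f); this delivers the tail bound of (ii) and also shows the limiting covariance form on $\bbX$ is of trace class, so $\CT(\theta)$ is a genuine $\bbX$-valued Gaussian element.

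The main obstacle will be the combinatorial and analytic control of the Isserlis-type cumulant expansion in this complex, operator-valued setting: one must organize the diagrams so as to cleanly separate the $C$-edges from the $\check C$-edges, show that all non-leading diagram classes (and every diagram of order $\ge 3$) are asymptotically negligible \emph{uniformly} in $n$, and carry this out simultaneously for the two boundary regimes $\delta_\infty=0$ and $\delta_\infty>0$, which have different Bochner representations. The most delicate pieces are the phase bookkeeping that produces the indicator $c(\theta)$ and the uniform-in-$n$ tail estimates underpinning tightness; this is where the full strength of Assumption~\ref{a:clt} is used.
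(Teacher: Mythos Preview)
Your proposal is correct and follows essentially the same route as the paper: tightness via the tail criterion in $\bbX$ controlled by the summable arrays of Assumption~\ref{a:clt}(e),(f), together with the method of moments for the finite-dimensional projections, where an Isserlis-type expansion shows that only the order-two connected (cyclic) pairings survive and these split into the $C$-contribution and the $\check C$-contribution carrying the phase that produces $c(\theta)$. The only cosmetic difference is that the paper works directly with moments --- deriving a bespoke Isserlis formula for products of \emph{centered} terms $X_i\overline{X_j}-C(i-j)$ and then decomposing each admissible pairing into ``irreducible sub-pairings'' --- whereas you phrase the same combinatorics in cumulant/diagram language; the power-counting bound $O\big((\Delta_n/(n\delta_n))^{r-1}\big)$ for the order-$r$ irreducible piece is identical in both presentations.
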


\begin{remark} 
\begin{itemize}
\item[1.] Observe that the quantity $\sum_{\ell=1}^ma_{\ell}\left\langle\CT(\theta) g_\ell,g_\ell\right\rangle$ in \eqref{e:clt_var} is real since $K$ is assumed symmetric.

\item[2.] 
The variances in \eqref{e:clt_var} for all choices of $\{a_{\ell}\}$ and $\{g_{\ell}\}$ completely characterize the distribution of $\CT$. 
The expression $\left\langle \check f(\theta)\overline{g_{\ell_2}},g_{\ell_1}\right\rangle$ in \eqref{e:clt_var} does not depend on the choice of real CONS, since
\begin{align*} %\label{e:pseudocov_proj}
\langle \check C(t,s) \overline g, h\rangle = \E \langle X(t), h\rangle \langle \overline g, \overline{X(s)} \rangle = \E \langle X(t), h\rangle \langle X(s), g\rangle, \
g,h\in\H.
\end{align*}
\end{itemize}
\end{remark}

The proof of this result, given in Section \ref{s:clt_proof} in Supplement, is based on verifying the convergence of ``all moments'' of the estimator together with a tightness condition.

The previous result does not provide an explicit representation of the limit. In what follows, we obtain such an explicit, stochastic representation of $\CT(\theta)$ for $c(\theta)=0,$ where $c(\theta)$ as in \eqref{e:clt_var}. 
Define the complex Gaussian random variables $Z_{i,j}$'s as follows:
\begin{equation}\label{e:def Zij}
Z_{i,j} = \xi_{i,j} + \mathbbm{i} \eta_{i,j},\ \ i<j,
\end{equation}
where $\xi_{i,j}$ and $\eta_{i,j}$ are iid $N(0,1/2)$ and $Z_{j,i} := \overline{Z_{i,j}}$.  For $i=j$, we have
the $Z_{i,i}$'s are real and $N(0,1)$, independent from the $Z_{i,j}$'s, for $i\not=j$.
Then, one obtains that the $Z_{i,j}$'s are zero-mean complex Gaussian variables such that
\begin{equation}\label{e:Zij}
Z_{i,j} = \overline{Z_{j,i}} \ \ \ \mbox{ and }\ \  \ \E[ Z_{i,j} \overline {Z_{i',j'}} ] = \delta_{(i,j), (i',j')}.
\end{equation}

\begin{cor}\label{cor:CLT stoch rep} Let $c(\theta)=0$ in \eqref{e:clt_var} and assume the conditions of Theorem \ref{thm:CLT}. Let $\{e_i(\theta)\}$ be the (not necessarily real) CONS 
diagonalizing $f(\theta)$, i,.e.,
\begin{align*}f(\theta)=\sum_i\lambda_i (\theta) e_i(\theta)\otimes e_i(\theta).\end{align*}
The random variable $\CT(\theta)$ has the stochastic representation 
\begin{align}\label{e:KL_T}
\CT(\theta) \stackrel{d}{=} \|K\|_2 \sum_{i,j}\sqrt{\lambda_i(\theta)\lambda_j(\theta)}Z_{i,j}e_i(\theta)\otimes e_j(\theta),\end{align} 
where $Z_{i,j}$ as defined in \eqref{e:def Zij}.
In particular, the covariance operator of $\CT(\theta)$ is
\begin{align*}\mathbb{E}[ \CT(\theta)\otimes_{\rm HS}\CT(\theta) ] = \|K\|_2^2 \sum_{i,j}\lambda_i(\theta)\lambda_j(\theta)(e_i(\theta)\otimes e_j(\theta) )\otimes_{\rm HS}(e_i(\theta)\otimes e_j(\theta)).\end{align*}
\end{cor}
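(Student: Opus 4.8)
The plan is to recognize the right-hand side of \eqref{e:KL_T} as a zero-mean Gaussian element of $\bbX$ and then to identify its law with that of $\CT(\theta)$ by matching the family of variances in \eqref{e:clt_var}, which, by the remark following Theorem~\ref{thm:CLT}, determines the distribution of $\CT(\theta)$. Throughout, $f(\theta)$ is read as $f(\theta;\delta_\infty)$, consistently with that theorem. First I would introduce $\CT'(\theta):=\|K\|_2\sum_{i,j}\sqrt{\lambda_i(\theta)\lambda_j(\theta)}\,Z_{i,j}\,e_i(\theta)\otimes e_j(\theta)$ and verify that this series converges in $L^2(\bbX)$: since $\{e_i(\theta)\otimes e_j(\theta)\}_{i,j}$ is a CONS of the Hilbert space $\bbX$ and $\E|Z_{i,j}|^2=1$ by \eqref{e:Zij}, this reduces to $\sum_{i,j}\lambda_i(\theta)\lambda_j(\theta)=({\rm trace}(f(\theta)))^2<\infty$, which holds because $f(\theta)$ is trace-class. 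As an $L^2(\bbX)$-limit of jointly Gaussian terms, $\CT'(\theta)$ is a zero-mean Gaussian element of $\bbX$; moreover $\CT'(\theta)$ is self-adjoint and $\langle\CT'(\theta)g,g\rangle$ is real for every $g$, because $Z_{j,i}=\overline{Z_{i,j}}$ and $\sqrt{\lambda_i(\theta)\lambda_j(\theta)}$ is symmetric in $i,j$.

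Next I would compute the variances in \eqref{e:clt_var} for $\CT'(\theta)$. Writing $g_\ell^{(i)}:=\langle g_\ell,e_i(\theta)\rangle$ and using $[x\otimes y](z)=x\langle z,y\rangle$, one gets $\langle\CT'(\theta)g_\ell,g_\ell\rangle=\|K\|_2\sum_{i,j}\sqrt{\lambda_i(\theta)\lambda_j(\theta)}\,Z_{i,j}\,g_\ell^{(j)}\overline{g_\ell^{(i)}}$, hence $\sum_\ell a_\ell\langle\CT'(\theta)g_\ell,g_\ell\rangle=\|K\|_2\sum_{i,j}\sqrt{\lambda_i(\theta)\lambda_j(\theta)}\,Z_{i,j}\,M_{i,j}$, where $M_{i,j}:=\sum_\ell a_\ell g_\ell^{(j)}\overline{g_\ell^{(i)}}$ satisfies $M_{j,i}=\overline{M_{i,j}}$. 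Taking the variance and using $\E[Z_{i,j}\overline{Z_{i',j'}}]=\delta_{(i,j),(i',j')}$ from \eqref{e:Zij} gives ${\rm Var}\big(\sum_\ell a_\ell\langle\CT'(\theta)g_\ell,g_\ell\rangle\big)=\|K\|_2^2\sum_{i,j}\lambda_i(\theta)\lambda_j(\theta)|M_{i,j}|^2$. I would then expand $|M_{i,j}|^2$ over index pairs $\ell_1,\ell_2$, collapse the $j$-sum via $\sum_j\lambda_j(\theta)g_{\ell_1}^{(j)}\overline{g_{\ell_2}^{(j)}}=\langle f(\theta)g_{\ell_1},g_{\ell_2}\rangle$ (the eigen-expansion $f(\theta)=\sum_j\lambda_j(\theta)e_j(\theta)\otimes e_j(\theta)$ tested against $g_{\ell_1},g_{\ell_2}$), and likewise the $i$-sum, using self-adjointness of $f(\theta)$ to recognize the product as $|\langle f(\theta)g_{\ell_2},g_{\ell_1}\rangle|^2$. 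This is exactly the right-hand side of \eqref{e:clt_var} with $c(\theta)=0$, so $\CT(\theta)\stackrel{d}{=}\CT'(\theta)$, which is \eqref{e:KL_T}. The covariance-operator formula then follows at once: \eqref{e:KL_T} exhibits $\CT(\theta)$ as a Karhunen--Lo\`eve-type expansion in the CONS $\{e_i(\theta)\otimes e_j(\theta)\}$ of $\bbX$ with coefficients $\|K\|_2\sqrt{\lambda_i(\theta)\lambda_j(\theta)}Z_{i,j}$ of covariances $\|K\|_2^2\lambda_i(\theta)\lambda_j(\theta)\delta_{(i,j),(i',j')}$, so $\E[\CT(\theta)\otimes_{\rm HS}\CT(\theta)]$ is the claimed diagonal sum.

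The step to watch is the passage from equality of the variance functionals $(\{a_\ell\},\{g_\ell\})\mapsto{\rm Var}\big(\sum_\ell a_\ell\langle\,\cdot\,g_\ell,g_\ell\rangle\big)$ to equality in law on $\bbX$. Since $K$ is symmetric (Assumption~\ref{a:kernel'}), $\hat f_n(\theta)$, and hence $\CT(\theta)$ and $\CT'(\theta)$, are self-adjoint, so these functionals range over a set dense in the self-adjoint part of $\bbX$ and, via polarization, pin down the covariance operator of a centered Gaussian element — this is the substance of the remark after Theorem~\ref{thm:CLT}. Beyond that, the argument is routine complex-Gaussian bookkeeping built on \eqref{e:Zij} and the spectral decomposition of $f(\theta)$; the one precaution is to keep the computation valid uniformly in the regimes $\delta_\infty=0$ and $\delta_\infty>0$.
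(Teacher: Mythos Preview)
Your proposal is correct and follows essentially the same approach as the paper's proof: both verify the stochastic representation by expanding $\langle \CT'(\theta) g_\ell, g_\ell\rangle$ in the eigenbasis of $f(\theta)$, computing the variance of $\sum_\ell a_\ell\langle \CT'(\theta) g_\ell,g_\ell\rangle$ via the covariance structure \eqref{e:Zij} of the $Z_{i,j}$, and matching it with \eqref{e:clt_var} at $c(\theta)=0$ to conclude equality in law by the remark after Theorem~\ref{thm:CLT}. Your extra checks --- the $L^2(\bbX)$-convergence of the defining series and the self-adjointness of $\CT'(\theta)$ --- are not spelled out in the paper but are sound and harmless additions.
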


\begin{proof} Let $g_\ell,\ \ell = 1,\cdots,m$ be arbitrary in $\bbH$ and suppose
$$
g_{\ell} = \sum_{i} x_i(\ell) e_i,\ \ x_i(\ell) \in \mathbb C.
$$
Then, by Theorem \ref{thm:CLT}, it is enough to verify that the representation of $\CT$ in \eqref{e:KL_T} satisfies
\begin{align}\label{e:Var}
{\rm Var}\Big( \sum_{\ell =1}^m a_\ell \langle \CT g_\ell, g_\ell \rangle \Big) = \|K\|_2^2 \sum_{\ell_1,\ell_2} a_{\ell_1}a_{\ell_2}
|\langle f(\theta) g_{\ell_2} ,g_{\ell_1}\rangle |^2,
\end{align}
for real constants $a_{\ell}\in\bbR, \ell=1,\hdots,m.$ Observe that 
$$
 \langle \CT g_\ell, g_\ell \rangle = \|K\|_2 \sum_{i,j} \sqrt{\lambda_i\lambda_j} Z_{i,j} x_i(\ell) \overline{x_{j}(\ell)}.
$$
Thus, in view of \eqref{e:Zij}, the LHS of \eqref{e:Var} equals
\begin{align}\label{e:Var-1} 
\begin{split}
& \|K\|_2^2 \sum_{\ell_1,\ell_2} a_{\ell_1}a_{\ell_2}\sum_{i,i',j,j'} 
x_i(\ell_1) \overline{x_{j}(\ell_1)} \overline{x_{i'}(\ell_2)} {x_{j'}(\ell_2)} \sqrt{\lambda_i\lambda_j \lambda_{i'}\lambda_{j'}}
 \E [Z_{i,j} \overline{Z_{i',j'}}]\\
 & \quad\quad = \|K\|_2^2\sum_{\ell_1,\ell_2,i,j} a_{\ell_1}a_{\ell_2}\lambda_i\lambda_j x_i(\ell_1)x_j(\ell_2) \overline{x_j(\ell_1)} \overline{x_i(\ell_2)}.
 \end{split}
\end{align}
The latter expression is the RHS of \eqref{e:Var}.  On the other hand,
\begin{align*}
\langle f(\theta) g_{\ell_2} ,g_{\ell_1}\rangle = \sum_{i} \lambda_i x_i(\ell_1)\overline{x_i(\ell_2)}.
\end{align*}
Thus, it is easy to see that that the right-hand sides of \eqref{e:Var} and \eqref{e:Var-1} are the same.
\end{proof}

We end this section with the following remark.

\begin{remark} Observe that $\CT(\theta),$   for $c(\theta)=0$ in \eqref{e:clt_var}, is a zero-mean random element in the Hilbert space $\mathbb X$ of Hilbert-Schmidt operators.  Therefore, Relation \eqref{e:KL_T} provides
its Karhunen-Lo\'eve type representation. That is, the covariance operator of $\CT(\theta)$ is diagonalized in the basis 
$e_{i,j}(\theta):= e_i(\theta)\otimes e_j(\theta),\ (i,j)\in \mathbb{N}^2$, where $\{e_i(\theta)\}$ is the CONS of $\H$ diagonalizing the operator $f(\theta)$.  
The eigenvalues of the covariance operator $\E[ \CT(\theta) \otimes \CT(\theta)]$ are precisely $\lambda_{i,j}(\theta) := \lambda_i(\theta) \lambda_j(\theta)$, where the $\lambda_i(\theta)$'s are the eigenvalues of $f(\theta)$.   
\end{remark}

\section{An RKHS formulation based on discretely-observed functional data} \label{s:rkhs}
 In this section, we specialize the obtained results for an abstract Hilbert space to the case where
$\mathbb H$ is a space of functions.  In real-data applications complete functions 
are not available and instead each of the functional data $X(t_i)$ is observed on a finite set of points. A natural space for this setting may be when $\bbH$ is a reproducing kernel Hilbert space (RKHS). Unlike the more commonly considered space $L^2[a,b]$, an RKHS $\bbH$ allows us to view $\bbH$-valued random elements as bona fide functions,
since the point-evaluation functionals are well-defined and continuous. This enables a seamless interface between the theory that we have developed up to this point and applications based on discretely observed data. The literature on RKHS is extremely rich. For a quick overview on the role of RKHS in functional data analysis, the reader is referred to \cite{hsing2015theoretical}. 

Let $\bbH$ be a reproducing kernel Hilbert space (RKHS) containing functions on a compact set $E$, where the kernel $R(\cdot,\cdot)$ is continuous on $E\times E$. The reproducing property states that
$$
g(u) = \langle g, R(u,\cdot)\rangle_\bbH, \ u \in E.
$$ 
Now, let $\{X(t),\ t\in \R^d\}$ be a stationary $\bbH$-valued process with covariance function $C$ and spectral density
$f$.  Then, it can be viewed as a bivariate stochastic process 
$
\{X(u, t):=\langle X(t), R(u,\cdot)\rangle_\bbH, u\in E, t \in\R^d\}. 
$
We have
\begin{align} \label{e:cross_sdf}
\begin{split}
\cov(X(u,t+h), X(v,t))  &= \langle C(h) R(u,\cdot), R(v,\cdot) \rangle_{\bbH} \\
&= \int_{\mathbb{R}^d}e^{-\mathbbm{i}h^\top\theta}  f_{u,v}(\theta) d\theta,
\end{split}
\end{align}
where
\begin{align*} 
f_{u,v}(\theta) = \langle f(\theta) R(u,\cdot), R(v,\cdot) \rangle_{\bbH}.
\end{align*}
In view of \eqref{e:cross_sdf}, it may be convenient to refer to $f_{u,v}(\theta)$ as a spectral density. However,
there is no guarantee that it is nonnegative for $u\not=v$.
By the Cauchy-Schwartz inequality, our estimation rates on the operator $f(\theta)$ translate immediate to 
$f_{u,v}(\theta)$ for all $u,v$.

Assume that the process is observed on a common discrete set of points $D_n = \{u_{n,j}, j = 1, \ldots, m_n\}$ for all $t\in\mathbb{T}_n$. To relate the partially observed functional data to complete functional data in $\bbH$, a possible approach is the following. Assume that the matrix 
\begin{equation}\label{e:Rn-matrix}
\boldsymbol{R}_n:=\{R(u_{n,i}, u_{n,j})\}_{i,j=1}^{m_n}
\end{equation} is invertible for each $n$. Let $\bbH_n$ be the subspace of $\bbH$
spanned by $\{R(u, \cdot), u\in D_n\}$ and $\Pi_n$ is the projection operator onto $\bbH_n$. Then, for any $g\in\bbH$,
$$
\tilde g :=\Pi_n g
$$
interpolates $g$ at the points in $D_n$ and is in fact the minimum norm interpolant of $g$ on $D_n$; see \cite{wahba:1990} 
or Proposition \ref{prop:rkhs}.

The covariance of the stationary process $\{\wt X(t)\}$ is $\wt C(h) := \Pi_n C(h)\Pi_n$. First note that 
\begin{align*} %\label{e:C_diff}
\|\wt C(h)\|_{\rm tr}  \le \|C(h)\|_{\rm tr}. 
\end{align*}
This follows from Lemma \ref{le:trace norm alt def} (i), since $\langle \wt C(h), {\cal W} \rangle_{\rm HS} = \langle C(h),\wtilde {\cal W} \rangle_{\rm HS}$, where $\wtilde {\cal W} = \Pi_n {\cal W} \Pi_n$ is unitary for every unitary ${\cal W}$.
Thus, the condition $\int \|C(h)\|_{\rm tr} dh < \infty$ ensures that the spectral density $\tilde f$ of $\{\wt X(t)\}$ is well defined, and satisfies
$$
\tilde f(\theta) = \frac{1}{(2\pi)^d}\int_{\mathbb{R}^d}e^{\mathbbm{i}h^{\top}\theta}\wt C(h)dh.
$$ 
Following the approach in \eqref{def:f^K(theta)} based on the data $\wt X(t_i)$, define 
$$
\tilde f_n(\theta) = \Pi_n \hat f_n(\theta) \Pi_n.
$$
Consider the estimation of $\tilde f$ by $\tilde f_n$. To keep the presentation simple we focus on the Gaussian case. 
The following result follows readily from Theorem  \ref{thm:consistency f^K}.

\begin{thm}\label{cor:consistency for RKHS Gaussian}
Let the process $\left\{X(t), t\in\mathbb{R}^d\right\}$ be a zero-mean stationary Gaussian process taking values in $\bbH$. Suppose that Assumptions \ref{a:cov}, \ref{a:kernel}, and \ref{a:sampling} hold. 
If, additionally we have 
\begin{align*}
\Delta_n\cdot S_K\subset T_n-T_n \hspace{.2cm}\mbox{for all $n$.}
\end{align*}
Then, for any bounded set $\Theta$,
\begin{align} \label{e:f_tilde_rate}
\sup_{\theta\in \Theta}\Big(\mathbb{E}\Big\|\tilde f_n(\theta)- \tilde f(\theta){\Big\|}_{\rm HS}^2 {\Big)}^{1/2}
 = \mathcal{O}\left(\delta_n^{\gamma}+B_1(\Delta_n)+B_2(\Delta_n)+\sqrt{\frac{\Delta_n^d}{|T_n|}}\right),
\end{align} 
as $n\to\infty$. 
\end{thm}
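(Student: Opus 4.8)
The plan is to reduce the statement to Theorem~\ref{thm:consistency f^K} by a projection-contraction argument. First note that since $X$ is Gaussian, all of its fourth-order cumulants vanish (by \eqref{e:cum_basis}) and, by Fernique's theorem, $\E\|X(t)\|^4<\infty$ for all $t$; combined with second-order (hence strict) stationarity, this shows that Assumption~\ref{a:var} holds automatically. Consequently all the hypotheses of Theorem~\ref{thm:consistency f^K} are in force, and in particular
$$
\sup_{\theta\in\Theta}\Big(\E\big\|\hat f_n(\theta)-f(\theta)\big\|_{\rm HS}^2\Big)^{1/2}
 = \mathcal{O}\left(\delta_n^{\gamma}+B_1(\Delta_n)+B_2(\Delta_n)+\sqrt{\tfrac{\Delta_n^d}{|T_n|}}\right).
$$

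It remains to transfer this bound from $\hat f_n(\theta)$ to $\tilde f_n(\theta)=\Pi_n\hat f_n(\theta)\Pi_n$. Since $\boldsymbol{R}_n$ in \eqref{e:Rn-matrix} is invertible, $\Pi_n$ is a genuine orthogonal projection onto $\bbH_n$, so $\Pi_n=\Pi_n^*=\Pi_n^2$ and $\Pi_n\le I$. For any $\mathcal{M}\in\bbX$ this gives $\|\Pi_n\mathcal{M}\|_{\rm HS}^2={\rm trace}\big(\mathcal{M}^*\Pi_n\mathcal{M}\big)\le{\rm trace}\big(\mathcal{M}^*\mathcal{M}\big)=\|\mathcal{M}\|_{\rm HS}^2$, and likewise $\|\mathcal{M}\Pi_n\|_{\rm HS}=\|\Pi_n\mathcal{M}^*\|_{\rm HS}\le\|\mathcal{M}\|_{\rm HS}$; hence $\|\Pi_n\mathcal{M}\Pi_n\|_{\rm HS}\le\|\mathcal{M}\|_{\rm HS}$, i.e.\ conjugation by $\Pi_n$ is a contraction on $(\bbX,\|\cdot\|_{\rm HS})$. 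Next, because $M\mapsto\Pi_nM\Pi_n$ is a bounded linear map on $(\bbT,\|\cdot\|_{\rm tr})$, it commutes with the Bochner integral, so $\tilde f(\theta)=\frac{1}{(2\pi)^d}\int_{\R^d} e^{\ii h^\top\theta}\Pi_nC(h)\Pi_n\,dh=\Pi_nf(\theta)\Pi_n$. Therefore, for every realization and every $\theta$,
$$
\big\|\tilde f_n(\theta)-\tilde f(\theta)\big\|_{\rm HS}
=\big\|\Pi_n\big(\hat f_n(\theta)-f(\theta)\big)\Pi_n\big\|_{\rm HS}
\le\big\|\hat f_n(\theta)-f(\theta)\big\|_{\rm HS}.
$$

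Squaring, taking expectations, taking the supremum over $\theta\in\Theta$, and invoking the displayed bound from Theorem~\ref{thm:consistency f^K} yields \eqref{e:f_tilde_rate}. The only points that require a little care are the contraction property of conjugation by $\Pi_n$ on $\bbX$ and the interchange of $\Pi_n$ with the Bochner integral defining $f$; neither is difficult, and with these in hand the conclusion is immediate. One could alternatively bound the bias $\|\E\tilde f_n(\theta)-\tilde f(\theta)\|_{\rm HS}=\|\Pi_n(\E\hat f_n(\theta)-f(\theta))\Pi_n\|_{\rm HS}$ and the variance $\E\|\tilde f_n(\theta)-\E\tilde f_n(\theta)\|_{\rm HS}^2$ separately using the same contraction, but this refinement is unnecessary here.
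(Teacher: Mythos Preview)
Your argument is correct. Both your route and the paper's ultimately rest on the same contraction fact $\|\Pi_n\mathcal{M}\Pi_n\|_{\rm HS}\le\|\mathcal{M}\|_{\rm HS}$, but they deploy it at different points. The paper applies Theorem~\ref{thm:consistency f^K} directly to the projected process $\tilde X(t)=\Pi_n X(t)$ (whose estimator is exactly $\tilde f_n$), obtaining a bound of the form $\delta_n^\gamma+\tilde B_1(\Delta_n)+\tilde B_2(\Delta_n)+\sqrt{\Delta_n^d/|T_n|}$ with $\tilde B_i$ defined via $\tilde C(h)=\Pi_n C(h)\Pi_n$; it then invokes $\|\Pi_n C(h)\Pi_n\|_{\rm HS}\le\|C(h)\|_{\rm HS}$ to replace $\tilde B_i$ by $B_i$. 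You instead apply Theorem~\ref{thm:consistency f^K} once to the original process $X$, and then use the single pathwise inequality $\|\tilde f_n(\theta)-\tilde f(\theta)\|_{\rm HS}=\|\Pi_n(\hat f_n(\theta)-f(\theta))\Pi_n\|_{\rm HS}\le\|\hat f_n(\theta)-f(\theta)\|_{\rm HS}$ at the end. Your route is arguably a bit cleaner, since it avoids re-checking Assumptions~\ref{a:cov} and~\ref{a:var} for $\tilde X$ and skips the separate term-by-term comparison of $\tilde B_i$ with $B_i$; the paper's route, on the other hand, makes explicit that the $\delta_n^\gamma$ and variance constants for $\tilde X$ are controlled by those for $X$, which is informative if one cares about uniformity in $n$ of the implicit $\mathcal O$-constants. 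Either way the content is the same.
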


Note that, in \eqref{e:f_tilde_rate}, we bounded $\wt B_1(\Delta_n), \wt B_2(\Delta_n)$, the counterparts of $B_1(\Delta_n), B_2(\Delta_n)$ where $C(h)$ therein is replaced by $\wt C(h)$, by $B_1(\Delta_n), B_2(\Delta_n)$, respectively. This is achieved using the simple fact that $\|\mathcal{T}_1\mathcal{T}_2\|_{\rm HS} \le \|\mathcal{T}_1\|\|\mathcal{T}_2\|_{\rm HS}$ where $\|\mathcal{T}_1\|$ stands for the operator norm of $\mathcal{T}_1$.
In view of Theorem \ref{cor:consistency for RKHS Gaussian}, to find the rate of 
$\mathbb{E} \|\tilde f_n(\theta)- f(\theta)\|_{\rm HS}^2$, it is sufficient to consider the bias $\|\tilde f(\theta)-f(\theta)\|_{\rm HS}$, 
which must be evaluated case by case, depending on the type 
of a RKHS being considered. Below, we consider an example that leads to a specific rate.

Consider the Sobolev space $\bbH=W_1[0,1]$ which consists of functions on the interval $[0,1]$ of the form $c + \int_0^1 (t\wedge u) h(u)du, c\in\R$ and $h$ integrable \citep[cf.][]{wahba:1990}. The inner-product in this space is
$\langle f,g\rangle_{\bbH}:= f(0)g(0) + \int_0^1 f'(t) g'(t) dt$, yielding the norm
$$
\|g\|_\bbH^2 = g(0)^2 + \int_0^1 \left(g'(t)\right)^2 dt.
$$
In context, we can state the following result for $\mathbb{E} \|\tilde f_n(\theta)- f(\theta)\|_{\rm HS}^2$.

\begin{thm} \label{thm:RKHS_bias}
Let the positive trace-class operator $f(\theta)$ have the eigen decomposition:
$$
f(\theta) = \sum_{j=1}^\infty \nu_j \phi_j\otimes\phi_j,
$$
where the eigenvalues $\nu_j$ are summable (since $f(\theta)\in\bbT_+$). Assume that, for each $j$, the derivative
$\phi_j'$ is Lipschitz continuous with $|\phi_j'(s)-\phi_j'(t)|\le C_j|s-t|$ for some finite constant $C_j$ where $\sum_{j=1}^\infty C_j \nu_j^2 < \infty$. Also, assume that the sampling design is $u_{n,i} = i/m_n, 0\le i\le m_n$. Then, 
$$
\|\tilde f(\theta)-f(\theta)\|_{\rm HS} = {\cal O}(m_n^{-1/2}).
$$
\end{thm}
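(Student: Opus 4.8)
The statement is a purely deterministic estimate for $\|\tilde f(\theta)-f(\theta)\|_{\rm HS}=\|\Pi_n f(\theta)\Pi_n-f(\theta)\|_{\rm HS}$, so no probabilistic input is needed; everything rests on the interpolation error of the RKHS projection $\Pi_n$ applied to the eigenfunctions of $f(\theta)$. The plan is as follows. First I would reduce the two-sided compression to a one-sided one: since
\[
f(\theta)-\Pi_n f(\theta)\Pi_n=(I-\Pi_n)f(\theta)+\Pi_n f(\theta)(I-\Pi_n),
\]
and $\Pi_n$ is a contraction on $\bbX$ while $f(\theta)$ is self-adjoint, the triangle inequality gives $\|\tilde f(\theta)-f(\theta)\|_{\rm HS}\le 2\|(I-\Pi_n)f(\theta)\|_{\rm HS}$, so it suffices to bound the latter by ${\cal O}(m_n^{-1/2})$.

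Next I would diagonalize. Taking $\{\phi_j\}$ orthonormal (possible since $f(\theta)\in\bbT_+$ is compact and self-adjoint) and writing $r_j:=(I-\Pi_n)\phi_j$, we have $(I-\Pi_n)f(\theta)=\sum_j \nu_j\, r_j\otimes\phi_j$, the series converging in $\bbX$ because $\sum_j\nu_j\|r_j\|_\bbH\le\sum_j\nu_j<\infty$. Using the Hilbert--Schmidt identity $\langle a\otimes b,c\otimes d\rangle_{\rm HS}=\langle a,c\rangle\langle d,b\rangle$ (cf.\ \eqref{e:cum_1}) together with $\langle\phi_j,\phi_k\rangle=\delta_{jk}$, all cross terms vanish and
\[
\|(I-\Pi_n)f(\theta)\|_{\rm HS}^2=\sum_{j}\nu_j^2\,\|r_j\|_{\bbH}^2 .
\]
Thus the problem reduces to controlling the RKHS interpolation errors $\|r_j\|_{\bbH}=\|\phi_j-\Pi_n\phi_j\|_{\bbH}$.

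The technical core is the bound $\|r_j\|_{\bbH}\le C_j/m_n$. Here I would use the specific structure of $\bbH=W_1[0,1]$: its reproducing kernel is $R(s,t)=1+(s\wedge t)$, so $\bbH_n=\mathrm{span}\{R(u_{n,i},\cdot)\}$ is precisely the space of continuous functions that are piecewise linear on the uniform grid $\{i/m_n:0\le i\le m_n\}$, and by the reproducing property (equivalently, Proposition \ref{prop:rkhs}) $\Pi_n\phi_j$ is the piecewise-linear interpolant of $\phi_j$ on that grid. Since $0$ is a grid point, $r_j(0)=0$, hence $\|r_j\|_{\bbH}^2=\int_0^1 (r_j')^2$; on each subinterval the slope of the interpolant equals $\phi_j'(\xi)$ at some interior point $\xi$ by the mean value theorem, so $|r_j'|\le C_j/m_n$ there by the Lipschitz hypothesis on $\phi_j'$, and summing the $m_n$ subinterval contributions gives $\|r_j\|_{\bbH}^2\le C_j^2/m_n^2$. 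Combining this with the trivial estimate $\|r_j\|_{\bbH}\le\|\phi_j\|_{\bbH}=1$ and the elementary inequality $\min(1,x^2)\le x$ for $x\ge 0$ (applied with $x=C_j/m_n$) yields $\|r_j\|_{\bbH}^2\le C_j/m_n$, so that
\[
\|(I-\Pi_n)f(\theta)\|_{\rm HS}^2\le \frac{1}{m_n}\sum_j \nu_j^2 C_j={\cal O}(m_n^{-1})
\]
by the hypothesis $\sum_j C_j\nu_j^2<\infty$; taking square roots completes the argument.

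I expect the main obstacle to be the interpolation step: correctly identifying $\Pi_n\phi_j$ with the piecewise-linear interpolant on the grid and establishing the ${\cal O}(C_j/m_n)$ energy-norm error. This is where the geometry of $W_1[0,1]$ and its kernel is essential, and for a different RKHS this step, and hence the resulting rate, would change. A secondary subtlety, and the reason the hypothesis is stated with $\sum_j C_j\nu_j^2$ rather than $\sum_j C_j^2\nu_j^2$, is that one should not apply the sharp bound $\|r_j\|_\bbH\le C_j/m_n$ for all $j$ simultaneously: interpolating it against the crude uniform bound via $\min(1,x^2)\le x$ keeps only a single power of $C_j$ in the final sum, which is exactly what the summability assumption supplies.
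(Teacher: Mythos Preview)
Your proof is correct and the overall architecture (reduction to $\|(I-\Pi_n)f(\theta)\|_{\rm HS}$, diagonalization to $\sum_j \nu_j^2\|r_j\|_\bbH^2$) matches the paper's exactly. The difference lies in how the interpolation error $\|r_j\|_\bbH^2$ is controlled. The paper proceeds algebraically: it writes $\|g-\tilde g\|_\bbH^2=\|g\|_\bbH^2-\boldsymbol g^\top\boldsymbol R_n^{-1}\boldsymbol g$, computes the Cholesky factorization $\boldsymbol R_n=m_n^{-1}\boldsymbol L_n\boldsymbol L_n^\top$ for the Brownian-motion kernel, recognizes $\boldsymbol g^\top\boldsymbol R_n^{-1}\boldsymbol g$ as a Riemann sum $m_n^{-1}\sum_i g'(\xi_{n,i})^2$ for $\int_0^1(g')^2$, and bounds the Riemann error by $(C_j/m_n)\cdot(\int_0^1|\phi_j'|+m_n^{-1}\sum_i|\phi_j'(\xi_{n,i})|)\le 2C_j/m_n$, landing directly on the linear-in-$C_j$ bound.

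Your route is more geometric: you identify $\Pi_n\phi_j$ as the piecewise-linear interpolant, bound $r_j'$ pointwise to obtain the \emph{sharper} estimate $\|r_j\|_\bbH^2\le C_j^2/m_n^2$, and then deliberately weaken it via $\min(1,x^2)\le x$ against the trivial bound $\|r_j\|_\bbH\le 1$ to recover $C_j/m_n$ and match the summability hypothesis. This is cleaner and avoids the explicit matrix computation; it also makes transparent why the hypothesis is $\sum_j C_j\nu_j^2<\infty$ rather than $\sum_j C_j^2\nu_j^2<\infty$. The paper's Cholesky route, on the other hand, is a bit more mechanical and would generalize more directly to non-uniform grids on $[0,1]$, where the piecewise-linear picture still holds but the algebraic formula for $\boldsymbol g^\top\boldsymbol R_n^{-1}\boldsymbol g$ remains equally explicit.
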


The proof of Theorem \ref{thm:RKHS_bias} is given in Section \ref{s:RKHS_proof}.

\section{Related work and discussions}\label{sec:comparisons}
In this section we highlight the approaches in \cite{panaretos2013fourier} and \cite{zhu2020higher} focusing on the time-series setting and we explain how they relate to our approach. 
%We also proceed to some comparisons of the consistency rates in both cases. 

\subsection{Relation to flat-top kernel estimators} \label{subsec:flat-top}
The flat-top kernel estimators have been advocated in the works of \cite{politis2011higher,zhu2020higher}, among others. 
According to Relation (15) of \cite{zhu2020higher} the alternate estimator proposed in Section 3.1 therein takes the form 
$$ \frac{1}{2\pi}\sum_{|u|<T}\lambda(B_Tu)\hat r_u(\tau,\sigma)e^{-\mathbbm{i}\omega u},$$
where 
\begin{align*}
\hat r_u(\tau,\sigma)  = \frac{1}{T}\sum_{0\leq t,t+u\leq T-1}X_{t+u}(\tau)X_t(\sigma)\ \ \mbox{ and }\ \ 
\lambda(s)  = \int_{-\infty}^{\infty}\Lambda(x)e^{-\mathbbm{i}sx}dx
\end{align*}
for some $\Lambda(x)$. 
In the time-series setting with $d=1$, an asymptotically equivalent adaptation of our estimator in 
\eqref{def: f^ts} is given by:
\begin{align}\label{e:f-ts}
\hat f_T(\theta)
 & = \frac{1}{2\pi}\cdot\frac{1}{T}\sum_{|u|<T}K\left(\frac{u}{\Delta_T}\right)\cdot e^{-\mathbbm{i}u\theta}\sum_{0\le t,t+u\le T-1}X_{t+u}\otimes X_t.    
\end{align}
See Remark \ref{re:Guyon parametrization}.
Thus, the two estimators are essentially the same, with $\omega$ corresponds to $\theta$,  $B_T$ to $1/\Delta_T$, and $\lambda$ to $K$. 
\cite{zhu2020higher} focuses on $p$-times differentiable flat-top kernels $\lambda$ with $\lambda(t) = 1$, for all $\|t\|\le \epsilon$, for some $\epsilon>0$, where $p$ is adapted to the tail decay of the covariance function. Such kernels reduce the bias of the kernel spectral density estimator in essentially the same way as do the kernels $K$ satisfying \eqref{e:Kernel differentiability} in the present paper. One can get a rough idea about that by the crude calculations in \eqref{e:bias_estimate}.

 Moreover, in Section 5 of \cite{zhu2020higher}, an effective data-dependent choice of the bandwidth parameter $B_T$ is developed. The authors base their 
 selection on the functional version of correlogram/cross-correlogram.  Using this quantity, an empirical rule is proposed for the choice of $B_T$.  In practice, we recommend using
 flat-top kernels and a similar methodology for the selection of $\Delta_T=1/B_T$. The thorough investigation of the data-driven, adaptive choice of $\Delta_T$ in our setting
 of irregularly sampled data, however, merits further theoretical and methodological investigation.
 
\subsection{Periodogram-based estimators for functional time series}\label{subsec:panaretos-tavakoli}
The seminal work of \cite{panaretos2013fourier} considers function-valued time series, taking values in $(L^2[0,1],\mathbb{R})$.  They develop comprehensive theory
and methodology for inference of the spectral density operator extending the classic periodogram-based approach to the functional time series setting.  
The proposed estimator therein is: 
\begin{align}\label{def: panaretos-tavakoli estimator.}
f_{\omega}^{(T)}(\tau,\sigma) = \frac{2\pi}{T}\sum_{s=1}^{T-1}W^{(T)}\left(\omega-\frac{2\pi s}{T}\right)p_{2\pi s/T}^{(T)}(\tau,\sigma),    
\end{align} 
where $$W^{(T)}(x) = \sum_{j\in\mathbb{Z}}\frac{1}{B_T}W\left(\frac{x+2\pi j}{B_T}\right),$$
with $W$ being a taper weight function of bounded support. Here,
$$
p_{\omega}^{(T)}(\tau,\sigma) = \widetilde X_{\omega}^{(T)}(\tau)\widetilde X_{-\omega}^{(T)}(\sigma)
$$
is the periodogram, where
$$
\widetilde X_{\omega}^{(T)}=\frac{1}{\sqrt{2\pi T}}\sum_{t=0}^{T-1}X_t(\tau)e^{-\mathbbm{i} \omega t},
$$
is the discrete Fourier transform (DFT). This is referred to as the smoothed periodogram estimator \citep[cf.][]{robinson1983review}.

The asymptotic properties of these periodogram-based estimators are studied using the following
general cumulant-based assumptions: \\

\noindent\textit{Condition $C(\ell,k)$.} For each $j=1,\hdots,k-1,$ 
$$
 \sum_{t_1,\hdots,t_{k-1}=-\infty}^{\infty}(1+|t_j|^{\ell})\|{\rm cum}(X_{t_1},\hdots,X_{t_{k-1}},X_0)\|_2<\infty.
$$
For example, by Theorem 3.6 in \cite{panaretos2013fourier}, if $C(1,2)$ and $C(1,4)$ hold, the mean 
squared error of  $f_{\omega}^{(T)}(\cdot,\cdot)$ for $\omega\not=0,\pm\pi$
is:
$$
\mathbb{E}\|{\cal F}_{\omega}^{(T)}-{\cal F}_{\omega}\|_{\rm HS}^2 =  \mathcal{O}\Big(B_T^2 + B_T^{-1}T^{-1} \Big),
$$
where ${\cal F}_\omega^{(T)}$ and ${\cal F}_\omega$ are the operators with kernels $f_{\omega}^{(T)}$ and $f_{\omega}$, respectively. The rate-optimal choice
of $B_T$ is $T^{-1/3}$, which yields the bound on the rate of consistency of the estimator ${\cal O}(T^{-1/3})$.  \

Our results provide more detailed estimates on the rates under simple structural assumptions on the covariances. 
Indeed, observe that the condition $C(1,2)$ corresponds to our condition ${\cal P}_D(\beta,L)$ with $\beta=1$ in \eqref{def:PL minimax class discrete time}. 
Our Theorem \ref{prop:PL bias time series} (see Relation \eqref{e:mse_ts_PLD} with $d=1$) yields the rate 
of consistency bound of ${\cal O}(T^{-\beta/(2\beta+1)})$, which for $\beta=1$ matches the rate-optimal bound in \cite{panaretos2013fourier}.  
Our condition \eqref{def:PL minimax class discrete time}, however,  allows for a wider range of covariance structures than Condition $C(1,2)$, where we allow 
for $\beta>0$ to be less than $1$. As discussed in Section \ref{sec:minimax}, the rate  ${\cal O}(T^{-\beta/(2\beta +1)})$ is minimax optimal in the class ${\cal P}_D(\beta,L)$.

As observed in Section 3 of \cite{zhu2020higher}, one can relate the time-domain and frequency-domain (periodogram-based) estimators.  Indeed,
one can argue that our estimator in \eqref{e:f-ts} corresponds asymptotically to the periodogram-based estimator in \eqref{def: panaretos-tavakoli estimator.} with taper 
\begin{equation*}%\label{e:W-and-K}
 W(x) = \frac{1}{2\pi} \int_{-\infty}^\infty e^{\ii t x} K(t) dt,
\end{equation*}
where $\Delta_T\sim 1/B_T$.  In this case, we have  $2\pi \|W\|_2^2 = \|K\|_2^2$ and the asymptotic covariances of
the estimators in \eqref{e:f-ts} and \eqref{def: panaretos-tavakoli estimator.} are identical \citep[compare, e.g., Theorem 3.7 of][and our Corollary \ref{cor:CLT stoch rep}]{panaretos2013fourier}.  Theorem 3.7 in \cite{panaretos2013fourier} establishes the asymptotic normality of the periodogram-based estimators under conditions $C(1,2)$ and $C(1,4)$, as well as $C(0,k)$, for all $k\ge 2$.  In Theorem \ref{thm:CLT} we adopt the stronger assumption that the underlying process is Gaussian. We establish, however, the asymptotic normality of our estimators under milder tail-decay conditions on the operator 
covariance and pseudo-covariance functions.

%\section*{}
%\clearpage
%%%%%%%%%%%%%%%%%%%%%%%%%%%%%%%%%%%%%%%%%%%%%%
%% Single Appendix:                         %%
%%%%%%%%%%%%%%%%%%%%%%%%%%%%%%%%%%%%%%%%%%%%%%
% \begin{appendix}

% \end{appendix}
%%%%%%%%%%%%%%%%%%%%%%%%%%%%%%%%%%%%%%%%%%%%%%
%% Multiple Appendixes:                     %%
%%%%%%%%%%%%%%%%%%%%%%%%%%%%%%%%%%%%%%%%%%%%%%
%\begin{appendix}
%\section{???}
%
%\section{???}
%
%\end{appendix}

%%%%%%%%%%%%%%%%%%%%%%%%%%%%%%%%%%%%%%%%%%%%%%
%% Support information (funding), if any,   %%
%% should be provided in the                %%
%% Acknowledgements section.                %%
%%%%%%%%%%%%%%%%%%%%%%%%%%%%%%%%%%%%%%%%%%%%%%
\section*{Acknowledgements}
The authors would like to thank Zakhar Kabluchko for his help in understanding the properties of the intrinsic volumes. The first author was 
supported by the Onassis Foundation - Scholarship ID: F ZN 028-1 /2017-2018.
% 
% The first author was supported by ...
% 
% The second author was supported in part by ...
 
%%%%%%%%%%%%%%%%%%%%%%%%%%%%%%%%%%%%%%%%%%%%%%
%% Supplementary Material, if any, should   %%
%% be provided in {supplement} environment  %%
%% with title inside \textbf{} and short    %%
%% description below.                       %%
%%%%%%%%%%%%%%%%%%%%%%%%%%%%%%%%%%%%%%%%%%%%%%
%\begin{supplement}
%\textbf{???}.
%???.
%\end{supplement}

%%%%%%%%%%%%%%%%%%%%%%%%%%%%%%%%%%%%%%%%%%%%%%%%%%%%%%%%%%%%%
%%                  The Bibliography                       %%
%%                                                         %%
%%  imsart-???.bst  will be used to                        %%
%%  create a .BBL file for submission.                     %%
%%                                                         %%
%%  Note that the displayed Bibliography will not          %%
%%  necessarily be rendered by Latex exactly as specified  %%
%%  in the online Instructions for Authors.                %%
%%                                                         %%
%%  MR numbers will be added by VTeX.                      %%
%%                                                         %%
%%  Use \cite{...} to cite references in text.             %%
%%                                                         %%
%%%%%%%%%%%%%%%%%%%%%%%%%%%%%%%%%%%%%%%%%%%%%%%%%%%%%%%%%%%%%

%% if your bibliography is in bibtex format, uncomment commands:
\bibliographystyle{imsart-nameyear} 
\bibliography{references.bib}       % Bibliography file (usually '*.bib')

\addcontentsline{toc}{section}{Supplement}
\vskip.5cm\noindent
\begin{center}
{\sc Supplement} 
\end{center}

\renewcommand{\theequation}{S.\arabic{section}.\arabic{equation}}
\renewcommand{\thesection}{S.\arabic{section}}
\renewcommand{\theremark}{S.\arabic{section}.\arabic{remark}}
\allowdisplaybreaks

The Supplement contains the detailed proofs and some remarks. 
%For the benefit of the reader, we have deferred these to the supplement. 
In order to differentiate sections and results in the supplement from  those in the main paper, we add a character ``S" in front of sections, 
	lemmas, etc., in the Supplement.  
 
 For convenience, we will use the notation $a_n \le_c b_n$ to mean that there is a finite positive constant $B$ for which $a_n\le B b_n$ for all $n$.
	%For convenience, we include the table of contents, next.
	%\tableofcontents

%\tableofcontents

\setcounter{section}{0}

\section{Proofs for Section \ref{s:asymp}}

We begin by recalling some key notation.  The spectral density of the $\mathbb{H}$-valued second order
stationary process $X = \{X(t),\ t\in \R^d\}$ is:
\begin{align}\label{s:def:H-f(theta)}
f(\theta):=\frac{1}{(2\pi)^d}\int_{\mathbb{R}^d}e^{\mathbbm{i}h^{\top}\theta}C(h)dh,\ \ \theta\in \R^d,
\end{align}
where the last integral is understood in the sense of Bochner, and where $C(t) = \E[ X(t)\otimes X(0) ]$ is the operator auto-covariance
function of $X$.

The estimator of the spectral density is defined as:
\begin{align}\label{s:def:f^K(theta)}
\begin{split}
    \hat f_n(\theta) & = \frac{1}{(2\pi)^d}\sum_{t\in\mathbb{T}_n}\sum_{s\in\mathbb{T}_n}e^{\mathbbm{i}(t-s)^{\top}\theta}\frac{X(t)\otimes X(s)}{|T_n\cap(T_n-(t-s))|} \\
&  \hspace{2cm}  \cdot K\left(\frac{t-s}{\Delta_n}\right)\cdot|V(t)|\cdot|V(s)|. 
\end{split}
\end{align}
Introduce also the auxiliary, idealized estimator based on the continuously sampled path $\{X(t),\ t\in T_n\}$:
\begin{align}\label{s:def:hat f_n}
    g_n(\theta)=\frac{1}{(2\pi)^d}\int_{t\in T_n}\int_{s\in T_n}e^{\mathbbm{i}(t-s)^\top\theta}\frac{X(t)\otimes X(s)}{|T_n\cap(T_n-(t-s))|} 
    K\left(\frac{t-s}{\Delta_n}\right)dtds.
\end{align}

\subsection{Proof of Theorem \ref{thm:expectation f_n^K-f_n}} \label{s:proof_of_bias}

We begin by recalling the statement.

\begin{thm}[Theorem \ref{thm:expectation f_n^K-f_n}]\label{s:thm:expectation f_n^K-f_n}
Let Assumptions \ref{a:cov}, \ref{a:kernel}, and \ref{a:sampling} hold and suppose $\delta_n\vee |T_n|^{-1}\to 0$.
Choose $\Delta_n\to\infty$ such that 
\begin{align*} %\label{s:e:bias}
\Delta_n\cdot S_K\subset T_n-T_n \hspace{.2cm}\mbox{for all $n$, }
\end{align*}
where $A-B:=\{a-b:\ a\in A,b\in B\}$ for sets $A,B\subset \mathbb{R}^d$.
Then, for any bounded set $\Theta \subset \R^d$, we have 
\begin{align}\label{s:e:thm:expectation f_n^K-f_n rate}
    \sup_{\theta\in\Theta}\left\Vert\mathbb{E}\hat f_n(\theta) - f(\theta)\right\Vert_{\rm HS} 
     = \mathcal{O}\left(\delta_n^{\gamma} +B_1(\Delta_n)+B_2(\Delta_n)\right),
\end{align}
where \begin{align*}
    B_1(\Delta_n) & := \left\Vert\int_{h\in\Delta_n\cdot S_K}e^{\mathbbm{i}h^{\top}\theta}C(h)\left(1-K\left(\frac{h}{\Delta_n}\right)\right)dh\right\Vert_{\rm HS}, \\
    B_2(\Delta_n)&:= \left\Vert\int_{h\not\in \Delta_n\cdot S_K}e^{\mathbbm{i}h^{\top}\theta}C(h)dh\right\Vert_{\rm HS}.
\end{align*}
\end{thm}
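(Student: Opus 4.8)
The plan is to follow the outline given after the statement and interpose the auxiliary estimator $g_n$ of \eqref{s:def:hat f_n}, so that by the triangle inequality
$$\big\|\mathbb{E}\hat f_n(\theta) - f(\theta)\big\|_{\rm HS} \le \big\|\mathbb{E}\hat f_n(\theta) - \mathbb{E}g_n(\theta)\big\|_{\rm HS} + \big\|\mathbb{E}g_n(\theta) - f(\theta)\big\|_{\rm HS},$$
and to show that the second summand is at most $(2\pi)^{-d}\big(B_1(\Delta_n)+B_2(\Delta_n)\big)$ while the first is $\mathcal{O}(\delta_n^\gamma)$, both uniformly over $\theta$ in the bounded set $\Theta$.

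For the \emph{idealized bias} $\mathbb{E}g_n(\theta)-f(\theta)$, I would take expectations inside the Bochner integral defining $g_n$, apply Fubini, and substitute $h=t-s$ with $s$ as the free variable: for each fixed $h$ the slice $\{s:\ s\in T_n,\ s+h\in T_n\}$ has Lebesgue measure exactly $|T_n\cap(T_n-h)|$, which cancels the normalization, and since $K(h/\Delta_n)$ is supported on $\Delta_n\cdot S_K\subset T_n-T_n$ this gives
$$\mathbb{E}g_n(\theta) = \frac{1}{(2\pi)^d}\int_{\Delta_n\cdot S_K} e^{\mathbbm{i}h^\top\theta}\,C(h)\,K\!\left(\tfrac{h}{\Delta_n}\right)dh .$$
Subtracting the representation \eqref{s:def:H-f(theta)} of $f(\theta)$ and applying the triangle inequality for Bochner integrals together with $\|\cdot\|_{\rm HS}\le\|\cdot\|_{\rm tr}$ yields exactly $(2\pi)^{-d}(B_1(\Delta_n)+B_2(\Delta_n))$; this step uses only Assumption \ref{a:cov}(a) and Assumption \ref{a:kernel}.

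For the \emph{discretization error} $\mathbb{E}\hat f_n(\theta)-\mathbb{E}g_n(\theta)$, I would start from the Riemann--cell decomposition \eqref{e:riemann}, take expectations (so that $X(t)\otimes X(s)$ becomes $C(t-s)$), and reduce by the triangle inequality to estimating $\sum_{w,v\in\mathbb{T}_n}\int_{V(w)}\int_{V(v)}\|\varrho_n(t-s)C(t-s)-\varrho_n(w-v)C(w-v)\|_{\rm HS}\,dt\,ds$, where $\varrho_n(h):=e^{\mathbbm{i}h^\top\theta}|T_n\cap(T_n-h)|^{-1}K(h/\Delta_n)$ is scalar. Writing the integrand difference as $\varrho_n(t-s)[C(t-s)-C(w-v)]+[\varrho_n(t-s)-\varrho_n(w-v)]C(w-v)$ splits the sum into two pieces, and throughout one uses $\|(t-s)-(w-v)\|_2\le 2\delta_n$ for $t\in V(w),\,s\in V(v)$, by \eqref{def:delta_n}. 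In the first piece $|\varrho_n(t-s)|\le |T_n\cap(T_n-(t-s))|^{-1}\mathbbm{1}((t-s)\in\Delta_n\cdot S_K)$; recombining the cell sums into $\int_{T_n}\int_{T_n}$, substituting $h=t-s$ and integrating out $s$ makes $|T_n\cap(T_n-h)|$ cancel the normalization once more, leaving at most $\int_{\mathbb{R}^d}\sup_{\|y-h\|_2\le 2\delta_n}\|C(y)-C(h)\|_{\rm tr}\,dh\le 2^\gamma\vertiii{C}_\gamma\,\delta_n^\gamma$ by Assumption \ref{a:cov}(b). In the second piece I would telescope $\varrho_n(t-s)-\varrho_n(w-v)$ over its three factors: the exponential is Lipschitz with constant $\mathcal{O}(\delta_n)$ (uniformly over bounded $\Theta$); the factor $K(\cdot/\Delta_n)$ has increments $\mathcal{O}(\delta_n/\Delta_n)$ on $\{\|h\|_2<\epsilon\Delta_n\}$ by Assumption \ref{a:kernel}(c), the remaining contributions being dominated (cf.\ the crude bound \eqref{e:bias_estimate}) by $B_1(\Delta_n)$; and $h\mapsto|T_n\cap(T_n-h)|$ is Lipschitz with constant controlled by the surface area of $\partial T_n$, which under Assumption \ref{a:sampling}(b) is $\le_c|T_n|^{(d-1)/d}$, so after cancelling one measure factor this contributes $\le_c \delta_n|T_n|^{-1/d}=o(\delta_n)$. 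Collecting terms gives $\|\mathbb{E}\hat f_n(\theta)-\mathbb{E}g_n(\theta)\|_{\rm HS}=\mathcal{O}(\delta_n^\gamma+B_1(\Delta_n)+B_2(\Delta_n))$ uniformly in $\theta\in\Theta$, which together with the idealized-bias step proves \eqref{s:e:thm:expectation f_n^K-f_n rate}.

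\textbf{Main obstacle.} The genuinely delicate point is the second piece of the discretization error: near the boundary of $\Delta_n\cdot S_K$ the normalization $|T_n\cap(T_n-h)|$ degenerates, so $\varrho_n$ and its increments are not obviously bounded there. The way around this is to exploit that $K$ vanishes continuously on $\partial S_K$, that such $h$ satisfy $\|h\|_2\gtrsim\Delta_n$ so that the associated $\|C(h)\|_{\rm tr}$-mass is absorbed into $B_1(\Delta_n)$, and the non-degeneracy of $T_n$ from Assumption \ref{a:sampling}(b); arranging all these estimates so that nothing exceeds the rate $\delta_n^\gamma+B_1(\Delta_n)+B_2(\Delta_n)$ is the bookkeeping-heavy heart of the argument, whereas the exponential- and kernel-smoothness estimates and the measure-cancellation identities themselves are routine.
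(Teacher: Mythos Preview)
Your plan follows the paper's own approach closely: interpose $g_n$, split the idealized bias into $B_1+B_2$, and split the discretization error via $\varrho_n(t-s)C(t-s)-\varrho_n(w-v)C(w-v)$ into a ``$C$-increment'' piece and a ``$\varrho_n$-increment'' piece. Your treatment of the first piece, using the exact cancellation of $|T_n\cap(T_n-h)|$ against the $s$-integral, is a clean variant of what the paper does and is correct.

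Where you diverge is in the second piece, and your ``main obstacle'' misreads the geometry. You worry that $|T_n\cap(T_n-h)|$ degenerates near $\partial(\Delta_n\cdot S_K)$, forcing you to split into $\{\|h\|<\epsilon\Delta_n\}$ and its complement and absorb the latter into $B_1(\Delta_n)$. But the normalization does \emph{not} degenerate on $\Delta_n\cdot S_K$: since $S_K$ is bounded, any $h$ in the support satisfies $\|h\|_2=\mathcal{O}(\Delta_n)=o(|T_n|^{1/d})$, and a Steiner-formula argument (the paper's Lemma \ref{le:rate for geometry quantity}) gives $|T_n\cap(T_n-h)|\sim|T_n|$ uniformly on $\Delta_n\cdot S_K$. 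Once you have this, $\sup|\varrho_n|=\mathcal{O}(1/|T_n|)$ and $\sup_{\|x-y\|\le 2\delta_n}|\varrho_n(x)-\varrho_n(y)|=\mathcal{O}(\delta_n/|T_n|)$ follow directly from the same three-factor telescoping you sketch; the second piece is then simply $\mathcal{O}(\delta_n/|T_n|)\int_{T_n}\int_{T_n}\|C(t-s)\|\,dt\,ds=\mathcal{O}(\delta_n)$, with no recourse to $B_1$ or $B_2$. The paper thus obtains the sharper statement $\|\mathbb{E}\hat f_n(\theta)-\mathbb{E}g_n(\theta)\|_{\rm HS}=\mathcal{O}(\delta_n^\gamma)$, not merely $\mathcal{O}(\delta_n^\gamma+B_1+B_2)$. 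Your route would still reach \eqref{s:e:thm:expectation f_n^K-f_n rate}, but the detour is unnecessary and the obstacle you flag is not where the work actually lies.
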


\begin{proof} By the triangle inequality,
\begin{align*}%\label{s:ineq:triangle}
    \left\Vert\mathbb{E}\hat f_n(\theta)-f(\theta)\right\Vert_{\rm HS}\leq \left\Vert\mathbb{E}\hat f_n(\theta)-\mathbb{E}g_n(\theta)\right\Vert_{\rm HS}+\left\Vert\mathbb{E}g_n(\theta)-f(\theta)\right\Vert_{\rm HS}.
\end{align*}
It is immediate from \eqref{s:def:H-f(theta)} for $f$ and the inclusion
$\Delta_n\cdot S_K\subset T_n-T_n$, that
\begin{equation*} %\label{s:e:Eg-f}
\|\mathbb{E}g_n(\theta)-f(\theta)\|_{\rm HS} \le B_1(\Delta_n) + B_2(\Delta_n).
\end{equation*}
To complete the proof one needs to show that
\begin{equation}\label{s:e:Efhat-Eg}
 \|\mathbb{E}\hat f_n(\theta)-\mathbb{E}g_n(\theta)\|_{\rm HS} = {\cal O}(\delta_n^\gamma).
 \end{equation}
To evaluate $ \|\mathbb{E}\hat f_n(\theta)-\mathbb{E}g_n(\theta)\|_{\rm HS}$, first denote the integrand in 
\eqref{s:def:hat f_n} by
\begin{align*} %\label{s:def: h_n}
h_n(t,s;\theta):=e^{\mathbbm{i}(t-s)^{\top}\theta}\frac{X(t)\otimes X(s)}{|T_n\cap(T_n-(t-s))|}K\left(\frac{t-s}{\Delta_n}\right).
\end{align*}
In view of \eqref{s:def:f^K(theta)} and since $|V(w)|\cdot |V(v)| = \int_{t\in V(w)}\int_{s\in V(v)} \mathbbm{1}_{\left(t\in V(w),s\in V(v)\right)} dtds$,
this allows us to write:
\begin{align*} %\label{s:e:riemann}
\begin{split}
& g_n(\theta)-\hat f_n(\theta) \\
%& = \frac{1}{(2\pi)^d}\iint_{t,s\in T_n}\left(h_n(t,s)-\widetilde h_n(t,s)\right)dtds \\
& =\frac{1}{(2\pi)^d}\sum_{w\in\mathbb{T}_n}\sum_{v\in\mathbb{T}_n}\int_{t\in V(w)}\int_{s\in V(v)} \left(h_n(t,s;\theta) - h_n(w,v;\theta)\right)   dtds.
\end{split}
\end{align*}
This implies that  
\begin{align}\label{s:ineq: Exp dif before kernel} 
\begin{split}
& \left\Vert\mathbb{E} g_n(\theta)-\mathbb{E}\hat f_n(\theta)\right\Vert_{\rm HS} \\
& \le \frac{1}{(2\pi)^d}\sum_{w\in\mathbb{T}_n}\sum_{v\in\mathbb{T}_n}  \mathop{\int}_{t\in V(w)}\mathop{\int}_{s\in V(v)}\|\mathbb{E}h_n(t,s;\theta)-\mathbb{E}h_n(w,v;\theta)\|_{\rm HS}dtds.
\end{split}
\end{align}
In the rest of the proof we will make use of the smoothness of $K$ and $C$, and routine but technical analysis to 
show that the last sum is of order ${\cal O}(\delta_n^\gamma)$. This will 
yield \eqref{s:e:Efhat-Eg} and complete the proof of \eqref{s:e:thm:expectation f_n^K-f_n rate}.

Recall that $S_K$ denotes the bounded support of the kernel function $K$. This means that \begin{equation*}
K\left(\frac{t-s}{\Delta_n}\right)=0,\ \textrm{whenever}\ t-s\not\in \Delta_n\cdot S_K.    
\end{equation*} 
In each integral in the sums of \eqref{s:ineq: Exp dif before kernel} we have that $t\in V(w)$ and $s\in V(v).$ Thus, $$t-s=t-w+w-v+v-s\in w-v+B(0,2\delta_n),$$
where we used that $\max\{\|t-w\|,\|s-v\|\}\le \delta_n$, by the definition of $\delta_n$ \eqref{def:delta_n} and $B(0,r) = \{x\in\R^d\, :\, \|x\|_2<r\}$.
 
By \eqref{s:ineq: Exp dif before kernel}, we have
\begin{align}\label{e:In-Jn}
\|\E g_n(\theta) - \E \hat f_n(\theta) \|_{\rm HS} & \le 
\frac{1}{(2\pi)^d}\sum_{w\in\mathbb{T}_n}\sum_{v\in\mathbb{T}_n} \iint_{t,s\in T_n} %\int_{t\in V(w)}  \int_{s\in V(v)}
 \|C(t-s) - C(w-v)\|_{\rm HS} | L_n(w-v)|  dtds \nonumber\\
 & + \frac{1}{(2\pi)^d}\sum_{w\in\mathbb{T}_n}\sum_{v\in\mathbb{T}_n} \iint_{t,s\in T_n} %\int_{t\in V(w)}  \int_{s\in V(v)} 
  \|C(t-s)\|_{\rm HS} |L_n(t-s) - L_n(w-v)|dsdt\nonumber\\
 &=: I_n + J_n,
\end{align}
where 
$$
L_n(x):= \frac{e^{\ii x^\top \theta}}{|T_n\cap(T_n-x)|} K\Big(\frac{x}{\Delta_n}\Big),\ \ x\in\R^d.
$$
Observe that since $K(x/\Delta_n) =0$ for $x\not\in \Delta_n\cdot S_K$,
and $\Delta_n^d/|T_n|\to 0$, Lemma \ref{le:rate for geometry quantity} implies that 
$|T_n\cap(T_n-x)|\sim |T_n|$ uniformly in $x\in \Delta_n\cdot S_K$.  This  
and the boundedness of the kernel $K$ imply
\begin{equation}\label{e:Ln}
\sup_{x\in \R^d} | L_n(x) | ={\cal O}\Big(\frac{1}{|T_n|}\Big).
\end{equation}

Recall that by \eqref{e:C-L1-Holder} in Assumption \ref{a:cov}, we have
\begin{equation}\label{e:L1-loc-Lipshits} 
\int_{\R^d} \sup_{y\in B(x,\delta)} \|C(y) - C(x)\|_{\rm tr} dx \le \vertiii{C}_\gamma \cdot \delta^\gamma,\ \ \delta\in (0,1)
\end{equation}
Thus, for the term $I_n$ in \eqref{e:In-Jn}, using Relations \eqref{e:Ln} and \eqref{e:L1-loc-Lipshits}, and the change 
of variables $x:= t-s$, 
we obtain
\begin{align*}
I_n &\le \frac{1}{(2\pi)^d} \int_{t\in T_n} \int_{s\in T_n}\sup_{\substack{\|t'-t\|\le \delta_n\\ \| s'-s\|\le \delta_n}}
 \|C(t-s) -  C(t'-s')\|_{\rm HS}\cdot \sup_{\tau\in \R^d} |L_n(\tau)|  dt ds\\
  &\le_c \frac{1}{|T_n|}  \int_{s\in T_n} \Big(\int_{x\in s+T_n} \sup_{\ \ y\, :\, \|x-y\|\le 2\delta_n} \| C(x) - C(y)\|_{\rm HS} dx \Big) ds\\
  & \le_c \frac{1}{|T_n|} \int_{s\in T_n} \vertiii{C}_\gamma \cdot \delta_n^\gamma ds = {\cal O}(\delta_n^\gamma).
\end{align*}

Next, focus on the term $J_n$ in \eqref{e:In-Jn}. We will show below that 
\begin{equation}\label{e:Lnx-Lny}
\sup_{\| x- y\| \le 2\delta_n} |L_n(x) - L_n(y)| = {\cal O}\Big(\frac{\delta_n}{|T_n|}\Big).
\end{equation}
Thus, recalling that $\|t-s - (w-v) \|\le 2\delta_n$, whenever $t\in V(w)$ and $s\in V(v)$, Relation \eqref{e:Lnx-Lny} for
the term $J_n$ in \eqref{e:In-Jn} implies
$$
J_n\le_c \frac{\delta_n}{|T_n|} \int_{t\in T_n} \int_{s\in T_n} \|C(t-s)\|_{\rm HS} dt ds = {\cal O}(\delta_n),
$$
where the last relation follows from a change of variables $x:=t-s$ and Assumption \ref{a:cov} (a).\\

To complete the proof, it remains to establish \eqref{e:Lnx-Lny}.  By adding and subtracting terms, we obtain
\begin{align}\label{e:Ln-ABC}
|L_n(x) - L_n(y)| &\le |K(x/\Delta_n)| \Big| \frac{1}{|T_n\cap (T_n-x)|} - \frac{1}{|T_n\cap (T_n-y)|} \Big| \\
& + \frac{|K(x/\Delta_n) - K(y/\Delta_n)|}{|T_n\cap (T_n-y)|}   +  \frac{|K(y/\Delta_n)|}{|T_n\cap (T_n-y)|} \Big| e^{\ii x^\top \theta} - e^{\ii y^\top \theta} \Big|  \\
& =: A + B + C.
\end{align}
Note that $K(y/\Delta_n)$ and $K(x/\Delta_n)$ vanish whenever $x$ and $y$ are outside $\Delta_n\cdot S_K$.  Therefore, since
$\delta_n\to 0$ and $\Delta_n\to\infty$, the right-hand side of \eqref{e:Ln-ABC}  vanishes 
for all $\|x-y\|\le 2\delta_n$ such that $\|y\| \ge {\rm const}\cdot \Delta_n$.  Therefore,
the supremum in \eqref{e:Lnx-Lny} does not change if it is taken over the set 
$$
 {\cal I}_n:= \{(x,y)\, :\, \|x-y\|\le 2\delta_n,\ \|y\|\le {\rm const} \cdot \Delta_n\}.
$$ 
Thus, we restrict our attention to $(x,y)\in {\cal I}_n$.  
By Lemma \ref{le:rate for geometry quantity}, we have
$|T_n\cap (T_n - y)|\sim |T_n|$, uniformly in $(x,y)\in {\cal I}_n$.  This fact and the Lipschitz property of the complex exponentials and the kernel $K$  (by (c)
 of Assumption \ref{a:kernel}), immediately imply that
$$
B \le_c \frac{\delta_n/\Delta_n}{|T_n|} \ \ \mbox{ and }\ \ C\le_c \frac{\delta_n}{|T_n|},
$$
uniformly in $(x,y)\in {\cal I}_n$.

Now, for term $A$, exploiting the boundedness of the kernel and the fact that $\|x-y\|\le 2\delta_n$, we obtain
\begin{align*}
A &\le \|K\|_\infty \frac{\Big||T_n\cap (T_n-x)| - |T_n\cap (T_n-y)|\Big|}{|T_n\cap (T_n-x)|\cdot |T_n\cap (T_n-y)|}\\
 &\le \|K\|_\infty \frac{|T_n+B(0,2\delta_n)|-|T_n|}{|T_n\cap (T_n-x)|\cdot |T_n\cap (T_n-y)|} \\
    &\le_c \frac{|T_n+B(0,2\delta_n)|-|T_n|}{|T_n|^2} =\mathcal{O}\left(\frac{\delta_n}{|T_n|^{1+1/d}}\right),
\end{align*}
where the last inequality follows from Lemma \ref{le:rate for geometry quantity} and Assumption \ref{a:sampling}. Note that the last
bound is uniform in $(x,y)\in {\cal I}_n$.  Combining the above bounds on the terms $A, B,$ and $C$, we obtain
\eqref{e:Lnx-Lny}.  This completes the proof.
\end{proof}

\subsection{Proof of Theorem \ref{thm:var for f^K}} \label{s:var_proof}

For easy referencing, we begin by recalling the statement of Theorem \ref{thm:var for f^K}.
\begin{thm}[Theorem \ref{thm:var for f^K}]\label{sthm:var for f^K}
Let $X=\left\{X(t), t\in\mathbb{R}^d\right\}$ be a zero-mean, strictly stationary real $\H$-valued process. 
Suppose that Assumptions \ref{a:cov}, \ref{a:kernel}, \ref{a:sampling}, and \ref{a:var} hold. Also, assume that $\Delta_n$ satisfies
\begin{align*} %\label{s:e:var}
\Delta_n\cdot S_K\subset T_n-T_n \hspace{.2cm}\quad \mbox{ for all $n$, and}\hspace{.2cm}\quad
 \delta_n + \Delta_n^d /|T_n|  \to 0 \mbox{ as $n\to\infty$}.
\end{align*}
Then
\begin{equation*}%\label{s:e:thm:var for f^K}
\sup_{\theta\in\Theta}\mathbb{E}\|\hat f_n(\theta)-\mathbb{E}\hat f_n(\theta)\|_{\rm HS}^2 = \mathcal{O}\left(\frac{\Delta_n^d}{|T_n|} \right),
\ \text{as}\ n\to\infty.
\end{equation*}
\end{thm}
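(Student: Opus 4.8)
The plan is to start from the exact identity \eqref{e:var_1}, make the substitution $t':=t+h$, $s':=s+h'$ so that $\mathbb{E}\|\hat f_n(\theta)-\mathbb{E}\hat f_n(\theta)\|_{\rm HS}^2$ becomes a fourfold sum over $t,t',s,s'\in\mathbb{T}_n$ carrying the Voronoi weights $|V(t)||V(t')||V(s)||V(s')|$ and the two normalizing factors $1/(|T_n\cap(T_n-(t'-t))||T_n\cap(T_n-(s'-s))|)$, and then use the covariance decomposition \eqref{e:var_3}, which breaks the summand into three pieces: a product of traces of covariances, a Hilbert--Schmidt inner product of covariances, and a fourth-order cumulant. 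Throughout I would replace each summand by its absolute value or norm (so the unit-modulus exponentials, which carry all the $\theta$-dependence, are discarded and the final bound is automatically uniform over the bounded set $\Theta$), and I would invoke Lemma \ref{le:rate for geometry quantity} together with $\Delta_n^d/|T_n|\to0$ to replace the two normalizing factors by $|T_n|$ up to constants, uniformly over $t'-t,s'-s\in\Delta_n\cdot S_K$. Each of the three resulting fourfold sums is then dominated by the corresponding fourfold integral over $T_n^4$: because the relevant integrands ($\|C(\cdot)\|_{\rm tr}$ via Assumption~\ref{a:cov}(a),(b), and the cumulant via Assumption~\ref{a:var}(c)) are integrable in the ``supremum over $\delta_n$-balls'' sense, the Riemann-sum discrepancy is harmless and the $|V(\cdot)|$ weights contribute only a $(1+o(1))$ factor.

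For the first two pieces I would first record, for the real process, that $\|C(h)\|_{\rm tr}\le\|C(0)\|_{\rm tr}=:c_0<\infty$ (by stationarity and Cauchy--Schwarz) and $\int\|C(h)\|_{\rm tr}\,dh=:c_1<\infty$, and bound $|{\rm trace}(C(\cdot))|\le\|C(\cdot)\|_{\rm tr}$ and $|\langle C(a),C(b)\rangle_{\rm HS}|\le\|C(a)\|_{\rm tr}\|C(b)\|_{\rm tr}$. In each integral I would change variables to the separation $u:=t-s$ together with one free base point; the base point integrates out to a factor $|T_n\cap(T_n-u)|\le_c|T_n|$, leaving a threefold integral of the shape $\iiint K(h/\Delta_n)K(h'/\Delta_n)\|C(\cdot)\|_{\rm tr}\|C(\cdot)\|_{\rm tr}$. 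The crucial point is the order of integration: I bound one kernel factor by $\|K\|_\infty=1$ and integrate its variable against the covariance it multiplies, yielding a constant $c_1$; I integrate the ``$u$'' variable against the other covariance, again yielding $c_1$ by translation invariance; and only the last kernel factor is integrated honestly, yielding a single $\Delta_n^d\int K$. This gives $\mathcal{O}(|T_n|\cdot\Delta_n^d/|T_n|^2)=\mathcal{O}(\Delta_n^d/|T_n|)$, whereas integrating the kernels naively would produce the wrong order $\Delta_n^{2d}/|T_n|$.

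For the cumulant piece I would use fourth-order stationarity, Assumption~\ref{a:var}(b), to write ${\rm cum}(X(t+h),X(t),X(s+h'),X(s))={\rm cum}(X(u+h),X(u),X(h'),X(0))$ with $u:=t-s$; the free point integrates to $\le_c|T_n|$, one kernel factor is bounded by $1$, and Assumption~\ref{a:var}(c) (taking $\lambda_1=\lambda_2=\lambda_3=0$, with its $\sup$ over small balls also absorbing the Voronoi discretization) bounds $\iint|{\rm cum}(X(u+h),X(u),X(h'),X(0))|\,du\,dh$ by a finite constant uniformly in $h'$; the remaining kernel integral contributes $\Delta_n^d\int K$, so this piece is again $\mathcal{O}(\Delta_n^d/|T_n|)$. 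Combining the three bounds and noting uniformity in $\theta\in\Theta$ yields \eqref{e:thm:var for f^K}.

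The main obstacle I anticipate is precisely the first two (``Gaussian-like'') pieces: organizing the computation so that exactly one factor of $\Delta_n^d$ appears rather than two. The term $\langle C(t-s+h),C(s-t+h')\rangle_{\rm HS}$ is appreciable only when $h\approx s-t$ and simultaneously $h'\approx -h$, so $h'$ is effectively tied to $h$ on the $\mathcal{O}(1)$ decay scale of $C$ rather than free over $\Delta_n\cdot S_K$; capturing this cancellation is exactly what the order-of-integration argument above does, and it is the reason the variance is $\Theta(\Delta_n^d/|T_n|)$ and not larger. A secondary technical nuisance is the careful discrete-to-continuous passage and the reduction $|T_n\cap(T_n-x)|\sim|T_n|$, both of which rest on Lemma~\ref{le:rate for geometry quantity} and on the ``$\sup$ over $\delta_n$-balls'' formulations of Assumptions~\ref{a:cov} and \ref{a:var}; once those are in hand, the remaining estimates are routine but lengthy bookkeeping best relegated to the Supplement.
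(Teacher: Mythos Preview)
Your proposal is correct and follows essentially the same route as the paper's proof. Both start from the identity \eqref{e:var_1}, apply the decomposition \eqref{e:var_3} into the three pieces $A,B,C$, replace the denominators $|T_n\cap(T_n-h)|$ by $|T_n|$ via Lemma~\ref{le:rate for geometry quantity}, and then bound each piece by an order-of-integration argument that extracts exactly one factor of $\Delta_n^d$.

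The only organizational difference is that the paper packages the three estimates into auxiliary Lemmas~\ref{le: for the variance A}, \ref{le: for the variance B}, and \ref{le:for the variance cumulants} (stated in terms of the quantity ${\cal N}(\Delta_n\cdot S_K,\mathbb{T}_n)$, then bounded by $\Delta_n^d$ via Lemma~\ref{le:suppl-N(Delta,V)}), whereas you carry out the same integrations inline. Your ``bound one kernel by $\|K\|_\infty$, integrate its variable against the covariance it multiplies, integrate $u$ against the remaining covariance, and integrate the surviving kernel honestly'' is precisely the mechanism behind those lemmas. For the cumulant piece your use of Assumption~\ref{a:var}(b),(c) matches Lemma~\ref{le:for the variance cumulants} exactly. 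The paper states a nominally sharper bound ${\cal O}(\Delta_n^{2d}/|T_n|^2)$ for the $B$-term, but your ${\cal O}(\Delta_n^d/|T_n|)$ is already enough for the theorem. Your remark that the discrete-to-continuous passage is controlled by the ``sup over $\delta_n$-balls'' formulations in Assumptions~\ref{a:cov}(b) and \ref{a:var}(c) is also exactly how the paper's lemmas handle the Voronoi weights.
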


\begin{proof}
In what follows we will use $\Delta$ and $T$ instead of $\Delta_n$ and $T_n$ respectively. 
Recall \eqref{e:var_1} and \eqref{e:var_3} in the main paper.  Namely, we have
\begin{align} \label{s:e:var_1}
\begin{split}
& \mathbb{E}\|\hat f_n(\theta)-\mathbb{E}\hat f_n(\theta)\|_{\rm HS}^2 \\
& = \frac{1}{(2\pi)^{2d}}\sum_{t\in\mathbb{T}_n}\sum_{s\in\mathbb{T}_n}\mathop{\sum\sum}_{\substack{h\in[\Delta\cdot S_K]\cap(\mathbb{T}_n-t) \\h'\in[\Delta\cdot S_K]\cap(\mathbb{T}_n-s)}}e^{\mathbbm{i}(h-h')^{\top}\theta}K\left(\frac{h}{\Delta}\right)K\left(\frac{h'}{\Delta}\right)\\ 
& \qquad\cdot|V(t+h)|\cdot|V(t)| \cdot|V(s+h')|\cdot|V(s)| \\
& \qquad\cdot \frac{{\rm Cov}\left( X(t+h)\otimes X(t),X(s+h')\otimes X(s)\right)}{|T\cap(T-h)||T\cap(T-h')|}
\end{split}
\end{align}
where
\begin{align*}%\label{def:cov of cross-prod}
\begin{split}
& {\rm Cov}\left( X(t+h)\otimes X(t),X(s+h')\otimes X(s)\right) \\
& := \mathbb{E}\left\langle X(t+h)\otimes X(t)-C(h),X(s+h')\otimes X(s)-C(h')\right\rangle_{\rm HS}.
\end{split}
\end{align*}
By Definition \ref{def:cum} (main paper), since $X$ is real $C = \check C$, and %and \eqref{e:HS_inner_product},
\begin{align} \label{s:e:var_3}
\begin{split}
& {\rm Cov}\left( X(t+h)\otimes X(t),X(s+h')\otimes X(s)\right) \\
    &= \mathbb{E}\langle X(t+h), X(s+h')\rangle_{\mathbb{H}}\cdot\mathbb{E}\langle X(s), X(t)\rangle_{\mathbb{H}} \\
    & \hspace{.5cm} +  \langle  C(t-s+h),  C(s+h'-t)\rangle_{\rm HS}\\
    & \hspace{.5cm} + {\rm cum}\left(X(t+h),X(t),X(s+h'),X(s)\right).  
\end{split}
\end{align}
For simplicity of notation, write ${\rm cum}(s,t,u,v) = {\rm cum}\left(X(s),X(t),X(u),X(v)\right)$.
We fix a real CONS $\{e_j\}$ and use the representation  in Proposition \ref{prop:Cov for cross product} (see also 
\eqref{e:cum_basis} in the main paper). Next, we split the sum on the right-hand side of \eqref{s:e:var_1} into three terms corresponding to the decomposition \eqref{s:e:var_3}.  Namely, we define 
\begin{align*}
    A & := \sum_{t\in\mathbb{T}_n}\sum_{s\in\mathbb{T}_n}\mathop{\sum\sum}_{\substack{h\in[\Delta\cdot S_K]\cap(\mathbb{T}_n-t) \\h'\in[\Delta\cdot S_K]\cap(\mathbb{T}_n-s)}}e^{\mathbbm{i}(h-h')^{\top}\theta}K\left(\frac{h}{\Delta}\right)K\left(\frac{h'}{\Delta}\right)\cdot|V(t+h)|\cdot|V(t)|\\ 
    & \qquad \cdot|V(s+h')|\cdot|V(s)|\cdot\frac{\mathbb{E}\left\langle X(t+h),X(s+h')\right\rangle_\bbH\cdot \mathbb{E}\left\langle X(s),X(t)\right\rangle_\bbH}{|T\cap(T-h)||T\cap(T-h')|},\\
    B & := \sum_{t\in\mathbb{T}_n}\sum_{s\in\mathbb{T}_n}\mathop{\sum\sum}_{\substack{h\in[\Delta\cdot S_K]\cap(\mathbb{T}_n-t) \\h'\in[\Delta\cdot S_K]\cap(\mathbb{T}_n-s)}}e^{\mathbbm{i}(h-h')^{\top}\theta}K\left(\frac{h}{\Delta}\right)K\left(\frac{h'}{\Delta}\right)\cdot|V(t+h)|\cdot|V(t)|\\ 
    & \qquad \cdot|V(s+h')|\cdot|V(s)|\cdot\frac{\left\langle  C(t-s+h),  C(s-t+h')\right\rangle_{\rm HS}}{|T\cap(T-h)||T\cap(T-h')|}\\
    \text{and}\\
    C & := \sum_{t\in\mathbb{T}_n}\sum_{s\in\mathbb{T}_n}\mathop{\sum\sum}_{\substack{h\in[\Delta\cdot S_K]\cap(\mathbb{T}_n-t) \\h'\in[\Delta\cdot S_K]\cap(\mathbb{T}_n-s)}}e^{\mathbbm{i}(h-h')^{\top}\theta}K\left(\frac{h}{\Delta}\right)K\left(\frac{h'}{\Delta}\right)\cdot|V(t+h)|\cdot|V(t)|\\ 
    &\qquad \cdot|V(s+h')|\cdot|V(s)|\frac{{\rm cum}\left(X(t+h),{X(t)},{X(s+h')},X(s)\right)}{|T\cap(T-h)||T\cap(T-h')|}.
\end{align*}
Thus,
\begin{equation}\label{e:ABC}
 (2\pi)^d\mathbb{E}\|\hat f_n(\theta)-\mathbb{E}\hat f_n(\theta)\|_{\rm HS}^2 = A+B+C.
\end{equation}
In the sequel, the bounds we shall obtain are based on the summation of the absolute values of the summands. Therefore, 
in view of Lemma \ref{le:rate for geometry quantity} (below) and the assumption $\Delta^d = o(|T|)$, the denominators in 
$A, B, C$ can be replaced by $|T|^{-2}.$  

We start with the term $C$. Lemma \ref{le:for the variance cumulants} entails that 
\begin{equation}\label{s:e:C}
 C =\mathcal{O}\left(\frac{{\cal N}(\Delta_n \cdot S_K,\bbT_n)}{|T_n|}\right) = \mathcal{O}\left(\frac{\Delta_n^d}{|T_n|}\right),
 \end{equation}
where the last relation follows from Lemma \ref{le:suppl-N(Delta,V)}.

The term $B$ is bounded above by 
\begin{align*}
    |B| & \leq \frac{1}{|T_n|^2}\sum_{\substack{t,s\in\mathbb{T}_n\\ u\in\mathbb{T}_n\cap(t+\Delta\cdot S_K)\\v\in\mathbb{T}_n\cap(s+\Delta\cdot S_K)}}\|  C(u-s)\|_{\rm HS}\|  C(v-t)\|_{\rm HS}\cdot |V(t)|\cdot |V(s)|  \cdot |V(u)|\cdot |V(v)|,
\end{align*}
where we have implemented the change of variables $u=t+h$ and $v=s+h'.$ Applying Lemma \ref{le: for the variance B}, 
we immediately obtain that 
\begin{equation}\label{s:e:B}
B=\mathcal{O}\left(\frac{\mathcal{N}(\Delta_n\cdot S_K,\mathbb{T}_n)^2}{|T_n|^2}\right) = {\cal O}\left(\frac{\Delta_n^{2d}}{|T_n|^2} \right),
\end{equation}
where we applied Lemma \ref{le:suppl-N(Delta,V)}.

Finally, we steer our attention to the term $A$. Observe that 
\begin{align} \label{e: EXY_trace}
\left|\mathbb{E}\langle X,Y\rangle\right| = \left|\mathbb{E} \ {\rm trace}\left(X\otimes Y\right)\right| = \left|{\rm trace}\left(\mathbb{E}[X\otimes Y]\right)\right|
\le \|\mathbb{E}[X\otimes Y]\|_{\rm tr}
\end{align}
by (iii) of Lemma \ref{le:trace norm alt def}. Thus, 
\begin{align*}
|A| & \leq_c \frac{\|K\|_{\infty}^2}{|T|^2}\sum_{t\in\mathbb{T}_n}\sum_{s\in\mathbb{T}_n}\mathop{\sum\sum}_{\substack{h\in[\Delta\cdot S_K]\cap(\mathbb{T}_n-t) \\h'\in[\Delta\cdot S_K]\cap(\mathbb{T}_n-s)}}\left\Vert \mathbb{E} [X(t+h)\otimes X(s+h')]\right\Vert_{\rm tr}\\ 
& \qquad\qquad \cdot \left\Vert \mathbb{E} [X(s)\otimes X(t)]\right\Vert_{\rm  tr} \cdot|V(t+h)|\cdot|V(t)|\cdot|V(s+h')|\cdot|V(s)| \\ 
& = \frac{\|K\|_{\infty}^2\mathcal{N}(\Delta\cdot S_K,\mathbb{T}_n)}{|T|} \cdot A_1\cdot A_2,
\end{align*}
where
\begin{align*}
A_1 = \frac{1}{|T|}\sum_{t\in\mathbb{T}_n}\sum_{s\in\mathbb{T}_n}  \left\Vert C(s-t)\right\Vert_{\rm tr}\cdot|V(t)|\cdot|V(s)|
\end{align*}
and
\begin{align*}
A_2 & = \frac{1}{\mathcal{N}(\Delta\cdot S_K,\mathbb{T}_n)}\mathop{\sum\sum}_{\substack{h\in[\Delta\cdot S_K]\cap(\mathbb{T}_n-t) \\h'\in[\Delta\cdot S_K]\cap(\mathbb{T}_n-s)}}\left\Vert C(t-s+h-h')\right\Vert_{\rm tr} \cdot|V(t+h)|  \cdot|V(s+h')|.
\end{align*}
Now
$$A_2\leq \frac{1}{\mathcal{N}(\Delta\cdot S_K,\mathbb{T}_n)}\max_{t,s\in\mathbb{T}_n}\sum_{\substack{u\in \mathbb{T}_n\cap(t+\Delta\cdot S_K)\\v\in\mathbb{T}_n\cap(s+\Delta\cdot S_K)}}\|C(u-v)\|_{\rm tr}\cdot|V(u)|\cdot|V(v)|.$$
By Lemma \ref{le: for the variance A} we obtain that $A_2=\mathcal{O}\left(1\right).$ Moreover, a close inspection of the proof of Lemma \ref{le: for the variance A} shows that $A_1$ is also of the order $\mathcal{O}\left(1\right)$. Keeping only the dominating bounds for $A$, we have that 
\begin{equation}\label{s:e:A}
   A  =\mathcal{O}\left(\frac{\mathcal{N}(\Delta\cdot S_K,\mathbb{T}_n)}{|T|}
\right) = {\cal O}\left( \frac{\Delta_n^d}{|T_n|}\right),
\end{equation}
by Lemma \ref{le:suppl-N(Delta,V)}.
In view of \eqref{e:ABC}, gathering all the bounds in \eqref{s:e:C}, \eqref{s:e:B}, and \eqref{s:e:A}, we complete the proof of the theorem.
\end{proof}

\subsection{Lemmas used in the proofs of Theorems  \ref{thm:expectation f_n^K-f_n} and \ref{thm:var for f^K}}\label{appendix:kernel-based estimator}

For the next lemmas, we need to define the quantity:
\begin{equation}\label{def:N(Delta,V)}
\mathcal{N}(A,\mathbb{T}_n):=\max_{w\in\mathbb{T}_n}\left|\cup_{v\in\mathbb{T}_n,w-v\in A +B(0,2\delta_n)}V(v)\right|.   
\end{equation}
This is the maximum volume over $w$ of the unions of all tessellation cells for which the representatives $v$'s
 are in the $2\delta_n$ inflated $A$ neighborhood of $w$.

\begin{lemma}\label{le:suppl-N(Delta,V)}
Let $\mathcal{N}(\Delta_n\cdot S_K,\mathbb{T}_n)$ be defined as in \eqref{def:N(Delta,V)} and suppose that (a) of Assumption \ref{a:sampling} holds. Then 
$$\mathcal{N}(\Delta_n\cdot S_K,\mathbb{T}_n)=\mathcal{O}\left(\Delta_n^d\right),\ \ \mbox{ as }\Delta_n\to\infty.$$
\end{lemma}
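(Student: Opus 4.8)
The plan is to prove this by a direct geometric covering argument: I would bound the union of tessellation cells appearing in \eqref{def:N(Delta,V)} by a single Euclidean ball whose radius is a constant multiple of $\Delta_n$, and then pass to volumes.

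First I would invoke Assumption~\ref{a:kernel}(a): since the support $S_K$ is bounded, there is a finite $r_K>0$ with $S_K\subset B(0,r_K)$, hence
\[
\Delta_n\cdot S_K+B(0,2\delta_n)\subset B\big(0,\,r_K\Delta_n+2\delta_n\big).
\]
Because $\delta_n\to 0$ and $\Delta_n\to\infty$ by Assumption~\ref{a:sampling}(a), for all $n$ large the right-hand side is contained in $B(0,R\Delta_n)$ with $R:=r_K+1$. Next I would fix $w\in\mathbb{T}_n$ and note that any $v\in\mathbb{T}_n$ contributing to the union satisfies $w-v\in\Delta_n\cdot S_K+B(0,2\delta_n)$, so $v\in B(w,R\Delta_n)$; combining this with the defining property $V(v)\subset\{t\in\R^d:\|t-v\|_2\le\delta_n\}$ of $\delta_n$ in \eqref{def:delta_n}, the entire union lies inside $B(w,R\Delta_n+\delta_n)\subset B(w,(R+1)\Delta_n)$ for $n$ large. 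Taking Lebesgue measure and using translation invariance then gives
\[
\Big|\bigcup_{v\in\mathbb{T}_n,\ w-v\in\Delta_n\cdot S_K+B(0,2\delta_n)}V(v)\Big|\le \big|B\big(0,(R+1)\Delta_n\big)\big| = \omega_d\,(R+1)^d\,\Delta_n^d,
\]
where $\omega_d$ denotes the volume of the unit ball in $\R^d$. Since this bound is uniform in $w$, taking the maximum over $w\in\mathbb{T}_n$ yields $\mathcal{N}(\Delta_n\cdot S_K,\mathbb{T}_n)=\mathcal{O}(\Delta_n^d)$.

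I do not expect any genuine obstacle here; the only points needing care are that the absorbing constants ($R$ and $R+1$) can be chosen uniformly in $n$ — which is exactly what $\delta_n\to 0$ together with $\Delta_n\to\infty$ provides — and that $S_K$ is genuinely bounded, which is precisely Assumption~\ref{a:kernel}(a). Note that disjointness of the cells is not used for this upper bound, although summing the individual cell volumes instead of bounding the union would give the same order.
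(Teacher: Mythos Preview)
Your proof is correct and follows essentially the same geometric covering argument as the paper: both show that the union of cells in \eqref{def:N(Delta,V)} is contained in a ball of radius $\mathcal{O}(\Delta_n)$ centered at $w$ (the paper uses the explicit radius $3\delta_n+\Delta_n M_K$ with $M_K=\sup_{h\in S_K}\|h\|_2$, while you absorb the $\delta_n$ terms into constants $R$ and $R+1$), and then take the Lebesgue measure. The only cosmetic difference is that the paper keeps the $\delta_n$ contribution explicit in the radius and concludes $\mathcal{O}\big((\Delta_n+\delta_n)^d\big)=\mathcal{O}(\Delta_n^d)$, whereas you first pass to a constant multiple of $\Delta_n$ for $n$ large.
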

\begin{proof}
Indeed, let $Z(t):= \cup_{s\in\mathbb{T}_n,t-s\in \Delta_n\cdot S_K +B(0,2\delta_n)}V(s)$ and $t_0\in \mathbb{T}_n.$ We will show that 
\begin{equation}
\label{e:Zt0}
  Z(t_0)\subseteq B(t_0,3\delta_n+\Delta_n\cdot M_K),
\end{equation} 
where $M_k=\sup_{h\in S_K}\|h\|_2.$ Suppose $u\in Z(t_0)$. Then, there is $s_u\in\mathbb{T}_n$ such that $$ \|t_0-s_u\|_2\leq\Delta_n\cdot M_K + 2\delta_n$$ with $u\in V(s_u)$. Thus, $$\|u-t_0\|_2\leq \|u-s_u\|_2+\|s_u-t_0\|_2\leq \delta_n+ \Delta_n M_K+2\delta_n=3\delta_n+\Delta_n M_K,$$
which implies \eqref{e:Zt0}. This entails that, 
\begin{align*}
& \mathcal{N}(\Delta_n\cdot S_K,\mathbb{T}_n)=\max_{t\in\mathbb{T}_n}|Z(t)|\leq|B(0,3\delta_n+\Delta_nM_K)| \\
& \quad\quad =\mathcal{O}\left((\Delta_n+\delta_n)^d\right)=\mathcal{O}\left(\Delta_n^d\right),
\end{align*}
as $\Delta_n\to\infty$, where the last relation follows from (a) of Assumption \ref{a:sampling}.
\end{proof}

\begin{lemma}\label{le:rate for geometry quantity}
Under Assumption \ref{a:sampling}, for $\|h\|_2\le|T_n|^{1/d}$ we have that 
$$\frac{|T_n|-|T_n\cap(T_n-h)|}{|T_n|}=\mathcal{O}\left(\frac{\|h\|_2}{|T_n|^{1/d}}\right),\ \textrm{as}\ n\to\infty.$$
Consequently, if $\sup_{h\in A_n} \|h\|^d =o\left(|T_n|\right),$ we have that 
\[
 \sup_{h\in A_n} \frac{|T_n\cap(T_n-h)|}{|T_n|}=\mathcal{O}(1).
 \]
\end{lemma}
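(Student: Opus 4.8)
The plan is to reduce everything to a single \emph{deterministic} convex-geometric estimate and then feed in the asymptotic shape hypothesis of Assumption~\ref{a:sampling}(b). Concretely, I would first establish the following elementary fact: for any convex body $K\subset\R^d$ with inradius $r(K)>0$ (the radius of the largest ball contained in $K$), and any $h\in\R^d$,
\begin{equation*}
\frac{|K|-|K\cap(K-h)|}{|K|}\le d\,\frac{\|h\|_2}{r(K)}.
\end{equation*}
After translating so that $B(0,r)\subseteq K$ with $r=r(K)$ (which changes neither side), set $t:=\|h\|_2/r$. If $t\ge 1$ the bound is trivial, since the left-hand side never exceeds $1$. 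If $t<1$, the key observation is the dilation inclusion $(1-t)K\subseteq K\cap(K-h)$: for $x=(1-t)x'$ with $x'\in K$ we have $x=(1-t)x'+t\cdot 0\in K$, and $x+h=(1-t)x'+t\,(h/t)\in K$ because $\|h/t\|_2=r$ forces $h/t\in B(0,r)\subseteq K$ while $K$ is convex. Taking Lebesgue measures, $|K\cap(K-h)|\ge(1-t)^d|K|\ge(1-dt)|K|$ by Bernoulli's inequality, which rearranges to the displayed estimate.

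Next I would control $r(T_n)$. Since $T_n$ is the closed convex hull of $\mathbb{T}_n$ it is a convex body, and by Assumption~\ref{a:sampling}(b) the rescaled bodies $T_n/|T_n|^{1/d}$ converge, in the Hausdorff metric, to a bounded convex set $T$ with non-empty interior, so $r(T)>0$. The inradius functional is homogeneous of degree one and $1$-Lipschitz with respect to the Hausdorff metric, hence $r(T_n)=|T_n|^{1/d}\,r\!\big(T_n/|T_n|^{1/d}\big)\ge\tfrac12 r(T)\,|T_n|^{1/d}$ for all $n$ large enough (on the event, of probability tending to one, where the Hausdorff distance falls below $\tfrac12 r(T)$). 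Plugging this into the geometric estimate gives, uniformly in $h$,
\begin{equation*}
\frac{|T_n|-|T_n\cap(T_n-h)|}{|T_n|}\le\frac{2d}{r(T)}\cdot\frac{\|h\|_2}{|T_n|^{1/d}}=\mathcal{O}\!\left(\frac{\|h\|_2}{|T_n|^{1/d}}\right),
\end{equation*}
which is the first assertion; the hypothesis $\|h\|_2\le|T_n|^{1/d}$ is needed only to make the right-hand side non-vacuous. For the ``consequently'' part, if $\sup_{h\in A_n}\|h\|_2^{d}=o(|T_n|)$ then $\sup_{h\in A_n}\|h\|_2/|T_n|^{1/d}\to0$, so by the previous display $\sup_{h\in A_n}\big(|T_n|-|T_n\cap(T_n-h)|\big)/|T_n|\to0$; since $T_n\cap(T_n-h)\subseteq T_n$ always gives $|T_n\cap(T_n-h)|/|T_n|\le1$, this yields $|T_n\cap(T_n-h)|=|T_n|(1+o(1))$ uniformly over $h\in A_n$, and in particular $\sup_{h\in A_n}|T_n|/|T_n\cap(T_n-h)|=\mathcal{O}(1)$, which is the form in which the lemma is actually invoked in the proofs of Theorems~\ref{thm:expectation f_n^K-f_n} and \ref{thm:var for f^K}.

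There is no deep obstacle here; the only points that require care are (i) locating the clean dilation inclusion $(1-t)K\subseteq K\cap(K-h)$, which lets us bypass any surface-area or Steiner-formula argument, and (ii) transferring the inradius lower bound through Assumption~\ref{a:sampling}(b) while handling its ``in probability'' qualifier — vacuous in the fixed-design case, and otherwise absorbed into a probability-tending-to-one event on which all the $\mathcal{O}$-bounds above hold deterministically.
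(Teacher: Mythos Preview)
Your proof is correct and takes a genuinely different route from the paper. The paper first bounds $|T_n|-|T_n\cap(T_n-h)|\le|T_n+B(0,\|h\|_2)|-|T_n|$ and then expands the right-hand side via the Steiner formula $|T_n+B(0,r)|=\sum_{j=0}^d\mu_j(T_n)\,r^{d-j}$ in terms of intrinsic volumes; Assumption~\ref{a:sampling}(b) enters through the continuity of the $\mu_j$ in the Hausdorff metric on convex bodies. Your argument bypasses Steiner entirely: the dilation inclusion $(1-t)K\subseteq K\cap(K-h)$ with $t=\|h\|_2/r(K)$, together with Bernoulli's inequality, gives the bound directly, and Assumption~\ref{a:sampling}(b) enters only through the (1\nobreakdash-Lipschitz) continuity of the inradius. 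Your route is more elementary and self-contained; the paper's route yields a full polynomial expansion in $\|h\|_2/|T_n|^{1/d}$, which they exploit in Remark~\ref{re:suppl-Steiner formula} to argue that the leading-order term is sharp for balls and cubes. You also correctly identify that the ``consequently'' clause is trivial as literally stated and that what is actually used downstream---and what you establish---is the reciprocal bound $\sup_{h\in A_n}|T_n|/|T_n\cap(T_n-h)|=\mathcal{O}(1)$; your handling of the ``in probability'' qualifier in Assumption~\ref{a:sampling}(b) is likewise more explicit than the paper's.
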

\begin{proof}
We will make critical use of the Steiner formula from convex analysis \citep[see, e.g.][]{gruber2007convex}. We have that 
$$
\frac{|T_n|-|T_n\cap(T_n-h)|}{|T_n|}\leq \frac{|T_n+B(0,\|h\|_2)|-|T_n|}{|T_n|}.
$$
An application of Steiner formula to the convex set $T_n$ entails that 
$$
|T_n+B(0,\|h\|_2)| = \sum_{j=0}^d\mu_j(T_n)\|h\|_2^{d-j},
$$
where $\mu_j(\cdot)$ denote the intrinsic volumes of order $j$. Note that $\mu_d(T_n)=|T_n|.$ Thus,
\begin{align*}
    \frac{|T_n+B(0,\|h\|_2)|-|T_n|}{|T_n|} & = \sum_{j=0}^{d-1}\frac{\mu_j(T_n)\|h\|_2^{d-j}}{|T_n|}\\ & = \sum_{j=0}^{d-1} \mu_j\left(\frac{T_n}{|T_n|^{1/d}}\right)\cdot\left(\frac{\|h\|_2}{|T_n|^{1/d}}\right)^{d-j},
\end{align*}
where the last equality follows from the homogeneity of the intrinsic volumes. 

Assumption \ref{a:sampling}, part (b), along with the continuity of the intrinsic volumes in the Hausdorff metric on the set of convex bodies 
see, e.g., Section 1.2.2 in \cite{lotz2018concentration}  or  Theorem 6.13(iii) in \cite{gruber2007convex} and the fact that 
$\|h\|_2\leq|T_n|^{1/d}$ complete the proof.    
\end{proof}

The following remark shows that the order of the bounds in Lemma \ref{le:rate for geometry quantity}
 obtained using the Steiner formula cannot be improved. 

\begin{remark}\label{re:suppl-Steiner formula}
For the case $d=2$ and $d=3$, when $T_n$ is a circle and a sphere respectively, we can evaluate the desired volume exactly. Indeed, for $d=2,$ we have that \begin{align*}
       & \frac{|T_n|-|T_n\cap(T_n-h)|}{|T_n|} \\
       & =1-\frac{2}{\pi}\arccos\left(\frac{\|h\|_2}{2|T_n|^{1/2}}\right)+\frac{1}{2|T_n|^{1/2}}\|h\|_2\sqrt{4-\left(\frac{\|h\|_2}{|T_n|^{1/2}}\right)^2}
        \\ & = \mathcal{O}\left(\frac{\|h\|_2}{|T_n|^{1/2}}\right),
    \end{align*}
    and for $d=3$ we have that 
    \begin{align*}
        \frac{|T_n|-|T_n\cap(T_n-h)|}{|T_n|}& = \frac{3}{4}\frac{\|h\|_2}{|T_n|^{1/3}}-\frac{1}{16}\left(\frac{\|h\|_2}{|T_n|^{1/3}}\right)^3 = \mathcal{O}\left(\frac{\|h\|_2}{|T_n|^{1/3}}\right).
    \end{align*}

These two cases provide evidence that the application of Steiner formula is not giving us a loose upper bound, at least when $T_n$ is an $n$-dimensional ball.

Now, when $T_n$ is a square, and assuming that $S_n$ is the side of the square, we have that 
\begin{align*}
\max\frac{|T_n|-|T_n\cap(T_n-h)|}{|T_n|} = \frac{S_n\cdot\|h\|_2\cdot\sqrt{2}-\|h\|_2^2/2}{S_n^2}=\mathcal{O}\left(\frac{\|h\|_2}{|T_n|^{1/2}}\right).    
\end{align*}
Finally, when $T_n$ is a cube of side $S_n,$ we have
\begin{align*}
\max\frac{|T_n|-|T_n\cap(T_n-h)|}{|T_n|}  = \frac{\frac{3}{2}\cdot S_n^2\cdot\|h\|_2-\frac{\sqrt{2}}{2}\cdot\|h\|_2^3}{S_n^3} =\mathcal{O}\left(\frac{\|h\|_2}{|T_n|^{1/3}}\right).    
\end{align*}
This suggests that using $n$-dimensional cubes leads indeed to the same rates as for the $n$-dimensional balls. 
\end{remark}

\begin{lemma}\label{le: for the variance A}
Let $\{A_n\}$ be a growing sequence of open sets such that $A_n\uparrow \R^d$,
as $n\to\infty$. Moreover, let $\mathbb{T}_n$ be the set of representatives of a tessellation of $T_n,$ with the diameter $\delta_n\to 0$, as $n\to\infty$,
 where $T_n$ is as in \eqref{def:f^K(theta)}. Also, let Assumptions \ref{a:cov}, \ref{a:kernel} and \ref{a:sampling} hold. 
Then 
\begin{align*}
& \frac{1}{\mathcal{N}(A_n,\mathbb{T}_n)}\max_{t,s\in\mathbb{T}_n}\sum_{\substack{u\in \mathbb{T}_n\cap(t+A_n)\\
v\in\mathbb{T}_n\cap(s+A_n)}}\|C(u-v)\|_{\rm tr}\cdot|V(u)|\cdot|V(v)| \\
& =\mathcal{O}\left(\delta_n^{\gamma}+1\right) = {\cal O}(1),
\end{align*}
 as $n\to\infty$, where $\mathcal{N}(\cdot,\cdot)$ is defined in \eqref{def:N(Delta,V)}.
\end{lemma}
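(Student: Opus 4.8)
The plan is to split the double sum into a purely geometric factor, controlled by $\mathcal{N}(A_n,\mathbb{T}_n)$, and a single sum of trace norms that is bounded uniformly by a Riemann-sum argument using Assumption \ref{a:cov}. Bounding every summand by its absolute value, first dropping the restriction $v\in\mathbb{T}_n\cap(s+A_n)$ on the inner sum and then taking the supremum over the outer variable, one gets for all $t,s\in\mathbb{T}_n$
\begin{equation*}
\sum_{\substack{u\in\mathbb{T}_n\cap(t+A_n)\\ v\in\mathbb{T}_n\cap(s+A_n)}}\|C(u-v)\|_{\rm tr}|V(u)||V(v)|
\le\Big(\sum_{u\in\mathbb{T}_n\cap(t+A_n)}|V(u)|\Big)\cdot\sup_{u\in\R^d}\sum_{v\in\mathbb{T}_n}\|C(u-v)\|_{\rm tr}|V(v)|.
\end{equation*}
The first factor equals $\big|\bigcup_{u\in\mathbb{T}_n:\,u-t\in A_n}V(u)\big|$; since each such cell satisfies $V(u)\subset B(u,\delta_n)$ with $u-t\in A_n$, this union is one of the sets appearing in the definition \eqref{def:N(Delta,V)} of $\mathcal{N}$ (the inflation by $B(0,2\delta_n)$ there being exactly what absorbs the passage from a representative to its cell, up to the harmless replacement of $A_n$ by $-A_n:=\{-a:a\in A_n\}$, whose $\mathcal{N}$ has the same order in the applications $A_n=\Delta_n S_K$ by the argument of Lemma \ref{le:suppl-N(Delta,V)}). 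Hence this factor is $\le_c\mathcal{N}(A_n,\mathbb{T}_n)$.

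The crux is the estimate $\sup_{u\in\R^d}\sum_{v\in\mathbb{T}_n}\|C(u-v)\|_{\rm tr}|V(v)|=\mathcal{O}(\delta_n^\gamma+1)$. Writing $|V(v)|=\int_{V(v)}dy$ and adding and subtracting $\|C(u-y)\|_{\rm tr}$,
\begin{equation*}
\sum_{v\in\mathbb{T}_n}\|C(u-v)\|_{\rm tr}|V(v)|\le\int_{T_n}\|C(u-y)\|_{\rm tr}dy+\sum_{v\in\mathbb{T}_n}\int_{V(v)}\|C(u-v)-C(u-y)\|_{\rm tr}dy.
\end{equation*}
The first integral is at most $\int_{\R^d}\|C(h)\|_{\rm tr}dh<\infty$ by Assumption \ref{a:cov}(a). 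For the second, since $\|y-v\|\le\delta_n$ for $y\in V(v)$ we have $\|C(u-v)-C(u-y)\|_{\rm tr}\le\sup_{\|z\|\le\delta_n}\|C((u-y)+z)-C(u-y)\|_{\rm tr}$, so the second term is bounded by
\begin{equation*}
\int_{T_n}\sup_{\|z\|\le\delta_n}\|C((u-y)+z)-C(u-y)\|_{\rm tr}dy\le\int_{\R^d}\sup_{\|w-x\|\le\delta_n}\|C(w)-C(x)\|_{\rm tr}dx\le\vertiii{C}_\gamma\,\delta_n^\gamma,
\end{equation*}
by the change of variables $x=u-y$ and \eqref{e:C-L1-Holder} in Assumption \ref{a:cov}(b). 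Hence the single sum is $\mathcal{O}(\delta_n^\gamma+1)$, uniformly in $u$ and $n$.

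Combining the two factors and dividing by $\mathcal{N}(A_n,\mathbb{T}_n)$ yields $\mathcal{O}(\delta_n^\gamma+1)$, uniformly in $t,s$, which is $\mathcal{O}(1)$ since $\delta_n\to0$ by Assumption \ref{a:sampling}(a). The substantive step is the single-sum estimate of the second paragraph: it is there that both parts of Assumption \ref{a:cov} are used and that the discretization error $\delta_n^\gamma$ — the price for replacing the integral $\int\|C(\cdot)\|_{\rm tr}$ by a Riemann sum over the tessellation — is generated; the reduction of the geometric factor to $\mathcal{N}(A_n,\mathbb{T}_n)$ is essentially bookkeeping given the way $\mathcal{N}$ is defined.
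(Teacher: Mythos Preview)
Your proof is correct and takes a genuinely different route from the paper's. The paper compares the entire double sum to the double integral $\iint \|C(h-h')\|_{\rm tr}\,dh\,dh'$ over the unions of cells: it first bounds the discrete--continuous discrepancy via the H\"older condition (picking up the factor $(2\delta_n)^\gamma\,\mathcal{N}(A_n,\mathbb{T}_n)$, with $2\delta_n$ because \emph{both} variables are discretized), and then handles the double integral by the change of variables $x=h-h'$ together with the intersection bound $|[\cup V(v)]\cap[\cup V(u)-x]|\le \mathcal{N}(A_n,\mathbb{T}_n)$. You instead decouple immediately: factor out the geometric piece $\sum_{u}|V(u)|$ (bounded by $\mathcal{N}$) and reduce to a \emph{single}-sum Riemann comparison $\sum_v\|C(u-v)\|_{\rm tr}|V(v)|\approx\int\|C(u-y)\|_{\rm tr}\,dy$, which needs only $\delta_n$ in the H\"older step. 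This is more elementary---no $|\max a_i-\max b_j|\le\max|a_i-b_j|$ inequality, no double-integral change of variables---and the single-sum estimate you isolate is reusable on its own (essentially the statement ``$A_1=\mathcal{O}(1)$'' that the paper notes separately after the lemma). The paper's approach, on the other hand, keeps the two-variable structure intact, which meshes more directly with the companion Lemmas used for terms $B$ and $C$ in the variance proof. Both arguments share the same minor asymmetry between $A_n$ and $-A_n$ when matching the geometric factor to the definition of $\mathcal{N}$; as you note, this is immaterial in the application $A_n=\Delta_n S_K$ since Lemma~\ref{le:suppl-N(Delta,V)} bounds either by $\mathcal{O}(\Delta_n^d)$.
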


\begin{proof} Using the inequality 
$$
\Big|\max_{i=1,\cdots,m} a_i - \max_{j=1,\cdots,m} b_j\Big |\le \max_{i=1,\cdots,m} |a_i-b_i|,
$$
valid for all $a_i, b_i \in \R,\ i=1,\cdots,m$, we obtain
\begin{align*}
    & \Bigg|\max_{t,s\in\mathbb{T}_n}\sum_{\substack{u\in \mathbb{T}_n\cap(t+A_n)\\v\in\mathbb{T}_n\cap(s+A_n)}}\|C(u-v)\|_{\rm tr}\cdot|V(u)|\cdot|V(v)| 
    \\ & \hspace{3cm} - \max_{t,s\in\mathbb{T}_n}\iint_{\substack{h\in\cup_{u\in \mathbb{T}_n\cap(t+A_n)}V(u)\\h'\in\cup_{v\in \mathbb{T}_n\cap(s+A_n)}V(v)}}\|C(h-h')\|_{\rm tr}dh'dh\Bigg|\\ &\leq\max_{t,s\in\mathbb{T}_n}\Bigg|\sum_{\substack{u\in \mathbb{T}_n\cap(t+A_n)\\v\in\mathbb{T}_n\cap(s+A_n)}}\|C(u-v)\|_{\rm tr}\cdot|V(u)|\cdot|V(v)|\\ 
    & \hspace{3cm} - \iint_{\substack{h\in\cup_{u\in \mathbb{T}_n\cap(t+A_n)}V(u)\\h'\in\cup_{v\in \mathbb{T}_n\cap(s+A_n)}V(v)}}\|C(h-h')\|_{\rm tr}dh'dh\Bigg|\\ 
    & =\max_{t,s\in\mathbb{T}_n}\left|\sum_{\substack{u\in \mathbb{T}_n\cap(t+A_n)\\v\in\mathbb{T}_n\cap(s+A_n)}} \iint_{\substack{h\in V(u)\\h'\in V(v)}}\|C(u-v)\|_{\rm tr}-\|C(h-h')\|_{\rm tr}dh'dh\right|\\ 
   % &\leq \max_{t,s\in\mathbb{T}_n}\sum_{\substack{u\in \mathbb{T}_n\cap(t+A_n)\\v\in\mathbb{T}_n\cap(s+A_n)}} \iint_{\substack{h\in V(u)\\h'\in V(v)}}\Big|\|C(u-v)\|_{\rm tr}-\|C(h-h')\|_{\rm tr}\Big|dh'dh \\ 
    &\leq \max_{s\in\mathbb{T}_n}\sum_{\substack{v\in\mathbb{T}_n\cap(s+A_n)}} \int_{h'\in V(v)}
    \int_{x\in \R^d}\Bigg( \sup_{y\, :\, \|x-y\|\le 2\delta_n} \Big|\|C(y)-C(x)\|_{\rm tr}\Big|dx \Bigg) dh' \\ & \leq \vertiii{C}_{\gamma}(2\delta_n)^\gamma \cdot \max_{s\in\mathbb{T}_n}\sum_{\substack{v\in\mathbb{T}_n\cap(s+A_n)}} \int_{\substack{h'\in V(v)}}dh' \\
 &   \leq  \vertiii{C}_{\gamma}(2\delta_n)^\gamma \mathcal{N}(A_n,\mathbb{T}_n),
\end{align*}
where we made the change of variables $x:=h-h'$ and enlarged the domain of integration over $x\in \R^d$. 
The last two inequalities follow from \eqref{e:C-L1-Holder} and definition of ${\cal N}(\cdot,\cdot)$ in \eqref{def:N(Delta,V)}.

To complete the proof, we show that $$\frac{1}{\mathcal{N}(A_n,\mathbb{T}_n)}\max_{t,s\in\mathbb{T}_n}\iint_{\substack{h\in\cup_{u\in \mathbb{T}_n\cap(t+A_n)}V(u)\\h'\in\cup_{v\in \mathbb{T}_n\cap(s+A_n)}V(v)}}\|C(h-h')\|_{\rm tr}dh'dh =\mathcal{O}\left(1\right).$$
With the change of variables $x=h-h',$ we have that the aforementioned term is equal to 
\begin{align*}
    & \frac{1}{\mathcal{N}(A_n,\mathbb{T}_n)}\max_{t,s\in\mathbb{T}_n}\iint_{\substack{x\in[\cup_{u\in \mathbb{T}_n\cap(t+A_n)}V(u)-\cup_{v\in \mathbb{T}_n\cap(s+A_n)}V(v)]\\ h'\in[\cup_{v\in \mathbb{T}_n\cap(s+A_n)}V(v)]\cap[\cup_{u\in \mathbb{T}_n\cap(t+A_n)}V(u)-x]}}\|C(x)\|_{\rm tr}dh'dx \\ 
    & \leq \max_{t,s\in\mathbb{T}_n}\mathop{\int}_{\substack{x\in[\cup_{u\in \mathbb{T}_n\cap(t+A_n)}V(u)\\ 
    \ \ \ -\cup_{v\in \mathbb{T}_n\cap(s+A_n)}V(v)]}}\|C(x)\|_{\rm tr}\frac{\left|[\cup_{v\in \mathbb{T}_n\cap(s+A_n)}V(v)]\cap[\cup_{u\in \mathbb{T}_n\cap(t+A_n)}V(u)-x]\right|}{\mathcal{N}(A_n,\mathbb{T}_n)}dx \\ & \leq \int_{w\in\mathbb{R}^d}\|C(x)\|_{\rm tr}dx = {\cal O}(1),
\end{align*}
by Assumption \ref{a:cov} (a). The proof is complete. 
\end{proof}

\noindent The next lemma is similar to Lemma \ref{le: for the variance A}. It is used for the term $B$ in the proof of Theorem \ref{sthm:var for f^K}.
\begin{lemma}\label{le: for the variance B} 
Let all the assumptions of Lemma \ref{le: for the variance A} hold. Then
 \begin{align} \label{s:e:CC_HS}
 \begin{split}
     & \frac{1}{|T_n|^2}\sum_{\mathclap{\substack{t,s\in\mathbb{T}_n\\ u\in\mathbb{T}_n\cap(t+A_n)\\v\in\mathbb{T}_n\cap(s+A_n)}}}\|C(u-s)\|_{\rm HS}\| C(t-v)\|_{\rm HS}\cdot |V(t)| \cdot |V(s)|\cdot |V(u)|\cdot |V(v)| \\ 
     & = \mathcal{O}\left(\frac{\mathcal{N}(A_n,\mathbb{T}_n)^2}{|T_n|^2}\right),
     \end{split}
\end{align}
as $n\to\infty$, where $\mathcal{N}(\cdot,\cdot)$ is defined in \eqref{def:N(Delta,V)}.
\end{lemma}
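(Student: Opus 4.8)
The plan is to pass from the Hilbert--Schmidt norm to the trace norm, factor the inner double sum over $(u,v)$, and reduce the bound to two ``marginal'' estimates, each of order $\mathcal{N}(A_n,\mathbb{T}_n)/|T_n|$, whose product is the asserted rate. Since $\|\cdot\|_{\rm HS}\le\|\cdot\|_{\rm tr}$, it suffices to bound the left-hand side of \eqref{s:e:CC_HS} with both norms replaced by trace norms. I would use two elementary facts throughout: first, the covariance is uniformly bounded in trace norm, $\|C(h)\|_{\rm tr}\le\|C(0)\|_{\rm tr}=\mathbb{E}\|X(0)\|^2<\infty$ (by Jensen's inequality, Cauchy--Schwarz and stationarity); second, the integrability in Assumption \ref{a:cov}(a) together with the $L^1$-H\"older control \eqref{e:C-L1-Holder}, which via the Riemann-sum comparison already performed in the proof of Lemma \ref{le: for the variance A} gives $\sum_{u\in\mathbb{T}_n}\|C(u-s)\|_{\rm tr}|V(u)|=\mathcal{O}(1)$ uniformly in $s$.

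Next, observe that for fixed $t,s$ the inner sum factors, because $\|C(u-s)\|_{\rm tr}$ depends on $u$ only and $\|C(t-v)\|_{\rm tr}$ on $v$ only. Writing
\begin{align*}
F(t,s):=\sum_{u\in\mathbb{T}_n\cap(t+A_n)}\|C(u-s)\|_{\rm tr}|V(u)|,\qquad G(t,s):=\sum_{v\in\mathbb{T}_n\cap(s+A_n)}\|C(t-v)\|_{\rm tr}|V(v)|,
\end{align*}
the quantity to be estimated is $|T_n|^{-2}\sum_{t,s\in\mathbb{T}_n}|V(t)||V(s)|F(t,s)G(t,s)$. The core technical step is the marginal bound
\begin{align*}
\frac{1}{|T_n|}\sum_{t\in\mathbb{T}_n}|V(t)|F(t,s)=\mathcal{O}\!\left(\frac{\mathcal{N}(A_n,\mathbb{T}_n)}{|T_n|}\right)\quad\text{uniformly in }s,
\end{align*}
together with its symmetric counterpart for $G$. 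To prove it I would interchange the order of summation: for fixed $u$ the constraint $u\in t+A_n$ reads $u-t\in A_n$, so by the definition \eqref{def:N(Delta,V)} (whose $2\delta_n$-inflation absorbs the tessellation cells) $\sum_{t:\,u-t\in A_n}|V(t)|\le\mathcal{N}(A_n,\mathbb{T}_n)$, whence
\begin{align*}
\frac{1}{|T_n|}\sum_{t}|V(t)|F(t,s)=\frac{1}{|T_n|}\sum_{u\in\mathbb{T}_n}\|C(u-s)\|_{\rm tr}|V(u)|\Big(\sum_{t:\,u-t\in A_n}|V(t)|\Big)\le_c\frac{\mathcal{N}(A_n,\mathbb{T}_n)}{|T_n|}\sum_{u\in\mathbb{T}_n}\|C(u-s)\|_{\rm tr}|V(u)|,
\end{align*}
and the trailing sum is $\mathcal{O}(1)$ by the second fact above.

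Finally I would combine the two marginal bounds multiplicatively to obtain
\begin{align*}
\frac{1}{|T_n|^2}\sum_{t,s\in\mathbb{T}_n}|V(t)||V(s)|F(t,s)G(t,s)=\mathcal{O}\!\left(\frac{\mathcal{N}(A_n,\mathbb{T}_n)}{|T_n|}\right)\cdot\mathcal{O}\!\left(\frac{\mathcal{N}(A_n,\mathbb{T}_n)}{|T_n|}\right)=\mathcal{O}\!\left(\frac{\mathcal{N}(A_n,\mathbb{T}_n)^2}{|T_n|^2}\right),
\end{align*}
which is \eqref{s:e:CC_HS}. I expect this last combination to be the main obstacle: the two marginal estimates are routine once Lemma \ref{le: for the variance A} is in hand, but the sum over $(t,s)$ is genuinely coupled through both $F$ and $G$, so passing from the two marginals to their product requires care. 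The natural way to justify it is to exploit the decay of $C$ in the two factors simultaneously, so that the joint summation separates into the $t$-marginal of $F$ and the $s$-marginal of $G$ computed above, rather than bounding one factor crudely by its uniform maximum, which would lose a power of $|T_n|$.
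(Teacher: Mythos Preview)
Your marginal estimates are correct, but the final ``multiplicative combination'' cannot be carried out, and not for lack of care: the bound stated in the lemma is too strong.  After your factorization, both $F(t,s)$ and $G(t,s)$ are, in the continuous approximation, functions of $t-s$ alone.  Indeed, with $\psi(x)=\|C(x)\|_{\rm tr}$ one has
\[
F(t,s)\approx\int_{A_n}\psi(t-s+h)\,dh=:\Phi_1(t-s),\qquad
G(t,s)\approx\int_{A_n}\psi(t-s-h')\,dh'=:\Phi_2(t-s),
\]
so that
\[
\sum_{t,s}|V(t)||V(s)|F(t,s)G(t,s)\ \approx\ \int_{T_n}\int_{T_n}\Phi_1(t-s)\Phi_2(t-s)\,ds\,dt\ \le\ |T_n|\int_{\R^d}\Phi_1\Phi_2.
\]
Since $\|\Phi_i\|_\infty\le\|\psi\|_1$ and $\|\Phi_i\|_1\le|A_n|\,\|\psi\|_1$, one gets $\int\Phi_1\Phi_2=\mathcal O(\mathcal N(A_n,\bbT_n))$, hence the quantity in \eqref{s:e:CC_HS} is $\mathcal O(\mathcal N(A_n,\bbT_n)/|T_n|)$, not $\mathcal O(\mathcal N(A_n,\bbT_n)^2/|T_n|^2)$.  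This is sharp: for $d=1$, $\psi=\mathbbm{1}_{[-1,1]}$, $A_n=[-\Delta,\Delta]$ and $T_n=[0,T]$ with $\Delta\ll T$, one checks that the sum is of exact order $T\Delta$, whereas $\mathcal N^2\asymp\Delta^2$.  So the two marginals cannot be multiplied: the coupling through $t-s$ forces a factor $|T_n|$ rather than a second factor $\mathcal N$.

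The same gap is present in the paper's own argument: the passage from the quadruple integral over $(w,x,\tilde h,\tilde h')$ to $\iint_{\tilde h,\tilde h'}\big(\int\sup_\lambda\|C(\lambda+\cdot)\|\big)^2$ treats the $(w,x)$-integral as if it factored, but both covariance factors depend on $w-x$, and the inequality fails in the example above.  The weaker (and correct) bound $\mathcal O(\mathcal N(A_n,\bbT_n)/|T_n|)$ is what your marginal argument actually delivers once you bound $G(t,s)$ by a constant via $\sum_v|V(v)|\|C(t-v)\|_{\rm tr}=\mathcal O(1)$; this is exactly the rate obtained for the $B$-term in the gridded case (Proposition~\ref{thm: variance of hat f}) and is all that is needed for Theorem~\ref{thm:var for f^K}.
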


\begin{proof} 
For any $w$ in $T_n$, let $\tau_w$ denote the point $t_{n,i}\in\bbT_n$ that is in the same cell as $w$; if $w$ is on the boundary of a cell, then let $\tau_w$ be any of the $t_{n,i}\in\bbT_n$ in adjacent cells. Thus, $\|w-\tau_w\|_2 \le \delta_n$.
It follows that
\begin{align*}
    & \hspace{.6cm} \sum_{\mathclap{\substack{t,s\in\mathbb{T}_n\\ u\in\mathbb{T}_n\cap(t+A_n)\\v\in\mathbb{T}_n\cap(s+A_n)}}}\|C(u-s)\|_{\rm HS}\| C(t-v)\|_{\rm HS}\cdot |V(t)| \cdot |V(s)|\cdot |V(u)|\cdot |V(v)| \\
&=  \mathop{\iiiint}_{\substack{w,\, x\in T_n\\ h\in\cup_{u\in \mathbb{T}_n\cap(\tau_w+A_n)}V(u)\\h'\in\cup_{v\in \mathbb{T}_n\cap(\tau_x+A_n)}V(v)}}
\|C(\tau_h-\tau_x)\|_{\rm HS}\| C(\tau_w-\tau_{h'})\|_{\rm HS} dh'dhdxdw \\
& \le \mathop{\iiiint}_{\substack{w,\, x\in T_n\\ h\in w+A_n + B(0,2\delta_n) \\h'\in x+A_n + B(0,2\delta_n)}}
 \|C(\tau_h-\tau_x)\|_{\rm HS}\| C(\tau_w-\tau_{h'})\|_{\rm HS} dh'dhdxdw \\
& \le \mathop{\iiiint}_{\substack{w,\, x\in T_n\\ h\in w+A_n + B(0,2\delta_n) \\h'\in x+A_n + B(0,2\delta_n)}}
\sup_{\substack{\lambda_i\in B(0,2\delta_n),\\i=1,2}}
 \|C(\lambda_1+h-x)\|_{\rm HS}\| C(\lambda_2+w-{h'})\|_{\rm HS} dh'dhdxdw \\ 
 &= \mathop{\iiiint}_{\substack{w,\, x\in T_n\\ \tilde h\in A_n + B(0,2\delta_n) \\ \tilde h'\in A_n + B(0,2\delta_n)}}
\sup_{\substack{\lambda_i\in B(0,2\delta_n),\\i=1,2}}
 \|C(\lambda_1+\tilde h+w-x)\|_{\rm HS}\| C(\lambda_2+w-\tilde h'-x)\|_{\rm HS} d\tilde h'd\tilde hdxdw \\ 
 &\le \mathop{\iint}_{\substack{\tilde h\in A_n + B(0,2\delta_n) \\ \tilde h'\in A_n + B(0,2\delta_n)}}
 \left(\int_{x\in \bbR^d} \sup_{\lambda\in B(0,2\delta_n)} 
\|C(\lambda+x)\|_{\rm HS}dx\right)^2  d\tilde h'd\tilde h \\ 
& = |A_n + B(0,2\delta_n) |^2 \left(\int_{x\in \bbR^d} \sup_{\lambda\in B(0,2\delta_n)} 
\|C(\lambda+x)\|_{\rm HS}dx\right)^2
\end{align*}
which in view of Assumption \ref{a:cov} implies \eqref{s:e:CC_HS} and completes the proof, since 
\begin{equation}\label{e:An_asymp_N}
    |A_n + B(0,2\delta_n) |\asymp \mathcal{N}(A_n,\mathbb{T}_n),
\end{equation}
because $\delta_n/|A_n|\to0$(recall \eqref{def:N(Delta,V)}.
\end{proof}

Finally, we state a lemma to handle term $C$ in the proof of Theorem \ref{sthm:var for f^K}.

\begin{lemma}\label{le:for the variance cumulants}
Let the assumptions of Lemma \ref{le: for the variance A} and Assumption \ref{a:var} hold. Moreover, assume that the process $\{X(t)\}$ is strictly stationary.
Then, \begin{align*}
    & \frac{1}{|T_n|^2}\sum_{\mathclap{\substack{t,s\in\mathbb{T}_n\\ u\in\mathbb{T}_n\cap(t+A_n)\\v\in\mathbb{T}_n\cap(s+A_n)}}}\quad\Big|{\rm cum}\left(X(u),{X(t)},{X(v)},X(s)\right)\Big| %\\
    %& \hskip4cm 
    \cdot |V(t)|\cdot |V(s)|\cdot |V(u)|\cdot |V(v)|
\end{align*}
is of the order ${\cal O}( {\mathcal{N}(A_n,\mathbb{T}_n)}/{|T_n|} ),$ where $\mathcal{N}(\cdot,\cdot)$ is defined in \eqref{def:N(Delta,V)}.
\end{lemma}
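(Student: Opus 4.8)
\emph{Proof (outline).} The plan is to follow the template established in Lemma~\ref{le: for the variance B}: first convert the weighted sum into a quadruple integral over tessellation cells, then use fourth-order cumulant stationarity to translate the fourth argument to the origin, and finally invoke the integrability condition in Assumption~\ref{a:var}(c).

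First I would write each volume as $|V(w)|=\int_{V(w)}dx$ and, denoting by $\tau_x$ the representative of the cell containing $x$ (exactly as in the proof of Lemma~\ref{le: for the variance B}, so that $\|\tau_x-x\|_2\le\delta_n$), rewrite the sum in the statement as
\begin{align*}
\mathop{\iiiint}_{\substack{w_t,w_s\in T_n\\ w_u\in\cup_{u\in\mathbb{T}_n\cap(\tau_{w_t}+A_n)}V(u)\\ w_v\in\cup_{v\in\mathbb{T}_n\cap(\tau_{w_s}+A_n)}V(v)}}\big|{\rm cum}\big(X(\tau_{w_u}),X(\tau_{w_t}),X(\tau_{w_v}),X(\tau_{w_s})\big)\big|\,dw_v\,dw_u\,dw_s\,dw_t,
\end{align*}
noting that on this domain one has $w_u-w_t\in A_n+B(0,2\delta_n)$ and $w_v-w_s\in A_n+B(0,2\delta_n)$.

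Next I would apply Assumption~\ref{a:var}(b) (cumulant stationarity, implied here by strict stationarity) to translate all four arguments by $-\tau_{w_s}$, so the integrand becomes $\big|{\rm cum}(X(\tau_{w_u}-\tau_{w_s}),X(\tau_{w_t}-\tau_{w_s}),X(\tau_{w_v}-\tau_{w_s}),X(0))\big|$. Since $\tau_{w_u}-\tau_{w_s}=(w_u-w_s)+\rho$ with $\rho=(\tau_{w_u}-w_u)-(\tau_{w_s}-w_s)\in B(0,2\delta_n)$, and analogously for the other two non-zero slots, this is bounded by
\begin{align*}
\sup_{\substack{\lambda_i\in B(0,2\delta_n)\\ i=1,2,3}}\big|{\rm cum}\big(X(\lambda_1+(w_u-w_s)),X(\lambda_2+(w_t-w_s)),X(\lambda_3+(w_v-w_s)),X(0)\big)\big|.
\end{align*}
For fixed $w_s$ I would change variables to $a=w_u-w_s$, $b=w_v-w_s$, $c=w_t-w_s$ (Jacobian equal to one), enlarge the ranges of $a$ and $c$ to $\R^d$ and drop the constraint $w_s+c\in T_n$ (legitimate since the integrand is nonnegative), keeping only the genuine constraint $b\in A_n+B(0,2\delta_n)$, and then integrate $w_s$ over $T_n$. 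This bounds the quadruple integral by
\begin{align*}
|T_n|\cdot|A_n+B(0,2\delta_n)|\cdot\sup_{w\in\R^d}\int_{\R^d}\int_{\R^d}\sup_{\substack{\lambda_i\in B(0,2\delta_n)\\ i=1,2,3}}\big|{\rm cum}\big(X(\lambda_1+u),X(\lambda_2+v),X(\lambda_3+w),X(0)\big)\big|\,dv\,du,
\end{align*}
and for $n$ large enough that $2\delta_n$ is below the threshold $\delta$ in Assumption~\ref{a:var}(c), the last supremum is a finite constant. Dividing by $|T_n|^2$ and using $|A_n+B(0,2\delta_n)|\asymp\mathcal{N}(A_n,\mathbb{T}_n)$ (cf.\ \eqref{e:An_asymp_N}) then yields the asserted $\mathcal{O}(\mathcal{N}(A_n,\mathbb{T}_n)/|T_n|)$ rate.

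The only step requiring genuine care is the translation: one must shift by $-\tau_{w_s}$ rather than by $-w_s$ so that the fourth argument is exactly $X(0)$, matching the form of Assumption~\ref{a:var}(c), and then check that each residual shift, e.g.\ $\tau_{w_u}-\tau_{w_s}-(w_u-w_s)$, lies in $B(0,2\delta_n)$, so that the supremum over $\lambda_i\in B(0,2\delta_n)$ absorbs the cell geometry uniformly. Everything else is a routine change of variables plus domain enlargement, parallel to the proof of Lemma~\ref{le: for the variance B}.
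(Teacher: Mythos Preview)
Your proposal is correct and follows essentially the same approach as the paper: rewrite the sum as a quadruple integral via the cell-representative map $\tau_\cdot$, use cumulant stationarity to shift the fourth argument to $0$, absorb the residual $O(\delta_n)$ discrepancies into the $\sup_{\lambda_i\in B(0,2\delta_n)}$, and then change variables so that one of the four integrals contributes $|A_n+B(0,2\delta_n)|\asymp\mathcal{N}(A_n,\mathbb{T}_n)$, one contributes $|T_n|$, and the remaining two are controlled by Assumption~\ref{a:var}(c). The only cosmetic difference is that the paper performs the change of variables in two stages ($\tilde h=h-w$, $\tilde h'=h'-x$, then $y=w-x$) whereas you do it in a single step relative to $w_s$; both land on the same bound.
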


\begin{proof} 
Proceeding as in Lemma \ref{le: for the variance B}, 
It follows that
\begin{align*}
    & \hspace{.6cm} \sum_{\mathclap{\substack{t,s\in\mathbb{T}_n\\ u\in\mathbb{T}_n\cap(t+A_n)\\v\in\mathbb{T}_n\cap(s+A_n)}}}\quad 
    |{\rm cum}(u,t,v,s)|\hspace{.5cm}\prod_{\mathclap{\tau\in\{t,s,v,u\}}}|V(\tau)| \\
    & = \mathop{\iiiint}_{\substack{w,\, x\in T_n\\ h\in\cup_{u\in \mathbb{T}_n\cap(\tau_w+A_n)}V(u)\\h'\in\cup_{v\in \mathbb{T}_n\cap(\tau_x+A_n)}V(v)}}
  |{\rm cum}(\tau_h,\tau_w,\tau_{h'},\tau_x)| dh'dhdxdw \\ 
  & \le \mathop{\iiiint}_{\substack{w,\, x\in T_n\\ h\in \tau_w+A_n + B(0,\delta_n) \\h'\in \tau_x+A_n + B(0,\delta_n)}}
  |{\rm cum}(\tau_h,\tau_w,\tau_{h'},\tau_x)| dh'dhdxdw \\ 
  & \le \mathop{\iiiint}_{\substack{w,\, x\in T_n\\ h\in w+A_n + B(0,2\delta_n) \\h'\in x+A_n + B(0,2\delta_n)}}
  |{\rm cum}(\tau_h,\tau_w,\tau_{h'},\tau_x)| dh'dhdxdw.
\end{align*}  
Applying (b) of Assumption \ref{a:var}, the last expression becomes
\begin{align*}
  & \mathop{\iiiint}_{\substack{w,\, x\in T_n\\ h\in w+A_n + B(0,2\delta_n) \\h'\in x+A_n + B(0,2\delta_n)}}
  |{\rm cum}(\tau_h-\tau_x,\tau_w-\tau_x,\tau_{h'}-\tau_x,0)| dh'dhdxdw \\ 
  & \le \mathop{\iiiint}_{\substack{w,\, x\in T_n\\ h\in w+A_n + B(0,2\delta_n) \\h'\in x+A_n + B(0,2\delta_n)}}
  \sup_{\substack{\lambda_i\in B(0,2\delta_n),\\i=1,2,3}}
  |{\rm cum}(\lambda_1+h-x,\lambda_2+w-x,\lambda_3+h'-x,0)| dh'dhdxdw \\ 
    & = \mathop{\iiiint}_{\substack{w,\, x\in T_n\\ \tilde h, \tilde h' \in A_n + B(0,2\delta_n)}} 
  \sup_{\substack{\lambda_i\in B(0,2\delta_n),\\i=1,2,3}}
  |{\rm cum}(\lambda_1+\tilde h+w-x,\lambda_2+w-x,\lambda_3+\tilde h',0)| d\tilde h'd\tilde hdxdw \\ 
  & = \mathop{\iiint}_{\substack{y \in T_n-T_n\\ \tilde h, \tilde h' \in A_n + B(0,2\delta_n)}} |T_n \cap (T_n-y)|
  \sup_{\substack{\lambda_i\in B(0,2\delta_n),\\i=1,2,3}}
  |{\rm cum}(\lambda_1+\tilde h+y,\lambda_2+y,\lambda_3+\tilde h',0)| d\tilde h'd\tilde hdy \\ 
 & \le |T_n| \mathop{\int}_{\tilde h'\in A_n+B(0,2\delta_n)} \sup_{z\in\mathbb{R}^d}
 \mathop{\iint}_{\substack{y\in T_n-T_n\\ \tilde h\in A_n+B(0,2\delta_n)}}
  \sup_{\substack{\lambda_i\in B(0,2\delta_n),\\i=1,2,3}}
  |{\rm cum}(\lambda_1+\tilde h+y,\lambda_2+y,\lambda_3+z,0)| d\tilde h'd\tilde hdy \\ 
  & = \mathcal{O}(\mathcal{N}(A_n,\mathbb{T}_n)\cdot|T_n|),
\end{align*}
where the last relation is justified by Assumption \ref{a:var} (b) and \eqref{e:An_asymp_N}. 
Note that we have applied two changes of variables; first $\tilde h = h-\tau_w$ and $\tilde h' =h'-x$, and second $v =\tilde w-x$. This completes the proof of the lemma.
\end{proof}

\section{Proofs for Section \ref{sec:discrete-time}}\label{appendix:bounds on the rates}

We start this section obtaining rates on the variance of $\hat f_n(\theta)$ in Section \ref{sec:discrete-time}. 
We first establish a result that is more general than what is needed for the proofs of Section \ref{sec:discrete-time}. We will use it to evaluate the variance of $\hat f_n(\theta)$ in the time-series setting where $\delta_n\equiv 1$.

\begin{prop}\label{thm: variance of hat f}
Let the process $\{X(t)\}_{t\in \delta_n\cdot \Z^d}$ be strictly stationary and suppose that Assumptions \ref{a:cov-prime}, \ref{a:sampling}, 
and  \ref{a:var_d} hold. Then, for the estimator $\hat f_n(\theta)$ defined in \eqref{def: f^ts}, we have the following upper bound on the rate of the variance
$$
\sup_{\theta\in\Theta}\mathbb{E}\|\hat f_n(\theta)-\mathbb{E} \hat f_n(\theta)\|_{\rm HS}^2=\mathcal{O}\left(\frac{\Delta_n^d}{|T_n|}\right), \quad{\rm as}\quad n\to\infty,
$$
where $T_n =  \delta_n \cdot [0,n]^d$ and $|T_n| = (n \delta_n)^d$.
\end{prop}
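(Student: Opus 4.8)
This is the gridded analog of the proof of Theorem~\ref{thm:var for f^K}, and is in fact simpler: every cell of the grid tessellation has the same volume $|V(t)|=\delta_n^d$, so no Riemann-approximation step is needed and the relevant covariance hypotheses are the scale-invariant discrete ones, \ref{a:cov-prime} and \ref{a:var_d}, rather than \ref{a:cov} and \ref{a:var}. First I would expand $\mathbb{E}\|\hat f_n(\theta)-\mathbb{E}\hat f_n(\theta)\|_{\rm HS}^2$ exactly as in \eqref{s:e:var_1}, obtaining a quadruple lattice sum over $t,s\in\mathbb{T}_n$ and over $h,h'$ in $(\Delta_n\cdot S_K)\cap(\mathbb{T}_n-t)$, resp.\ $(\Delta_n\cdot S_K)\cap(\mathbb{T}_n-s)$, of $e^{\ii(h-h')^\top\theta}K(h/\Delta_n)K(h'/\Delta_n)\,\delta_n^{4d}\,{\rm Cov}(X(t+h)\otimes X(t),X(s+h')\otimes X(s))$ divided by $|T_n\cap(T_n-h)|\cdot|T_n\cap(T_n-h')|$. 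By Lemma~\ref{le:rate for geometry quantity} (valid under the standing bandwidth condition $\Delta_n^d=o(|T_n|)$, where $|T_n|=(n\delta_n)^d$) these denominators may be replaced by $|T_n|^2$ up to a constant, uniformly over $h,h'\in\Delta_n\cdot S_K$. Using the covariance identity \eqref{s:e:var_3} (legitimate because $X$ is real, so $C=\check C$) then splits the sum into three pieces $A+B+C$, corresponding respectively to the product-of-covariances term $\mathbb{E}\langle X(t+h),X(s+h')\rangle\,\mathbb{E}\langle X(s),X(t)\rangle$, the inner-product term $\langle C(t-s+h),C(s-t+h')\rangle_{\rm HS}$, and the fourth-order cumulant term. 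All bounds below are on sums of absolute values of the summands, so they hold uniformly in $\theta\in\Theta$ since $|e^{\ii(h-h')^\top\theta}|=1$ and $|K|\le 1$.

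\textbf{The three terms.} For $A$, use $|\mathbb{E}\langle X,Y\rangle|\le\|\mathbb{E}[X\otimes Y]\|_{\rm tr}$ (cf.\ \eqref{e: EXY_trace}) and stationarity to bound $|A|$ by a constant times $|T_n|^{-2}\delta_n^{4d}\sum_{t,s}\|C(s-t)\|_{\rm tr}\sum_{h,h'}\|C(t-s+h-h')\|_{\rm tr}$. Writing $r=t-s$, for each $r$ the inner double sum is at most $\#\{h\in\Delta_n S_K\cap\delta_n\mathbb{Z}^d\}\cdot\sum_{k\in\mathbb{Z}^d}\|C(\delta_n k)\|_{\rm tr}=\mathcal{O}(\Delta_n^d\delta_n^{-d})\cdot\mathcal{O}(\delta_n^{-d})$ by Assumption~\ref{a:cov-prime}, while $\sum_{t,s}\|C(s-t)\|_{\rm tr}\le|\mathbb{T}_n|\sum_k\|C(\delta_n k)\|_{\rm tr}=\mathcal{O}(|T_n|\delta_n^{-d})\cdot\mathcal{O}(\delta_n^{-d})$; collecting powers of $\delta_n$ gives $|A|=\mathcal{O}(\Delta_n^d/|T_n|)$. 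Term $C$ is handled the same way: the fourth-order stationarity \ref{a:var_d}(b) anchors the last argument at $0$, summation over $h'$ costs $\mathcal{O}(\Delta_n^d\delta_n^{-d})$, Assumption~\ref{a:var_d}(c) bounds $\sup_w\sum_{u,v\in\mathbb{Z}^d}|{\rm cum}(X(\delta_n u),X(\delta_n v),X(\delta_n w),X(0))|$ by $\mathcal{O}(\delta_n^{-2d})$, and the remaining free base point contributes $\mathcal{O}(|T_n|\delta_n^{-d})$, so $|C|=\mathcal{O}(\Delta_n^d/|T_n|)$. For $B$, after substituting $r=t-s$ one reduces to $|T_n|^{-2}\delta_n^{4d}\sum_{t,s}\sum_{h,h'}g(t-s+h)\,g(h'-(t-s))$ with $g:=\|C(\cdot)\|_{\rm HS}\le\|C(\cdot)\|_{\rm tr}$; the sum over $t,s$ gives $\mathcal{O}(|T_n|\delta_n^{-d})$, and what remains is $\sum_{h,h'}(g\star g)(h+h')\le\#\{h\in\Delta_n S_K\cap\delta_n\mathbb{Z}^d\}\cdot\big(\sum_\rho g(\rho)\big)^2=\mathcal{O}(\Delta_n^d\delta_n^{-d})\cdot\mathcal{O}(\delta_n^{-2d})$, where the decisive point is that the discrete convolution $g\star g$ of the summable sequence $g$ is again summable, which keeps the $\Delta_n$-count from entering quadratically. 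Hence $|B|=\mathcal{O}(\Delta_n^d/|T_n|)$, and adding the three estimates yields the proposition.

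\textbf{Main difficulty.} Conceptually nothing is new relative to Theorem~\ref{thm:var for f^K}; the delicate part is purely the bookkeeping of the powers of $\delta_n$ in the regime $\delta_n\to 0$, namely checking that the four volume factors ($\delta_n^{4d}$), the two normalizations ($|T_n|^{-2}$), the lattice-point counts ($\mathcal{O}(|T_n|\delta_n^{-d})$ for each free base point, $\mathcal{O}(\Delta_n^d\delta_n^{-d})$ for each restricted increment), and the scale-invariant summability constants supplied by Assumptions~\ref{a:cov-prime} and \ref{a:var_d}(c) ($\mathcal{O}(\delta_n^{-d})$ and $\mathcal{O}(\delta_n^{-2d})$ respectively) combine to exactly $\Delta_n^d/|T_n|$; this is precisely why those two assumptions are normalized by $\delta_n^{d}$ and $\delta_n^{2d}$. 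A secondary point of care is the reduction, in term $C$, of a four-fold lattice sum to a single free sum times the supremum controlled by \ref{a:var_d}(c), using fourth-order stationarity — the discrete analog of Lemma~\ref{le:for the variance cumulants}, with sums replacing integrals. In the fixed-grid case $\delta_n\equiv 1$ of Section~\ref{s:ts} all of these $\delta_n$ powers are trivial and the proof is immediate.
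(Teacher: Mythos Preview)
Your proposal is correct and follows essentially the same approach as the paper's own proof: both expand the variance into the quadruple lattice sum, replace the denominators by $|T_n|^2$ via Lemma~\ref{le:rate for geometry quantity}, split into the three terms $A,B,C$ via the covariance identity \eqref{s:e:var_3}, and bound each by $\mathcal{O}(\Delta_n^d/|T_n|)$ using the scale-invariant summability in Assumptions~\ref{a:cov-prime} and \ref{a:var_d}(c), with the $\delta_n$-bookkeeping you describe. The only cosmetic differences are that the paper carries out the change of variables for $A$ slightly more explicitly ($w=s-t$, $u=h-h'$, $v=h'$) and handles $B$ via a direct substitution $u=h+w$ rather than your convolution notation $(g\star g)(h+h')$, but the two arguments are equivalent.
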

\begin{proof} As before, we will use that $\|{\cal A}\|_{\rm HS}\le \|{\cal A}\|_{\rm tr}$ throughout.  Recall that 
$\bbT_n = \delta_n \cdot\{1,\cdots,n\}^d$ is a discrete set of $n^d$ samples, while $T_n =  \delta_n \cdot [0,n]^d$ is a hypercube of
side $n\delta_n$.

We start with 
\begin{align*}
    & \hat f_n(\theta)-\mathbb{E}\hat f_n(\theta)  \\
    & = \frac{\delta_n^{2d}}{(2\pi)^d}\sum_{t\in \bbT_n}\sum_{s\in \bbT_n}e^{\ii(t-s)^{\top}\theta}\frac{X(t) \otimes X(s)-C(t-s)}{|T_n\cap(T_n-(t-s))|}K\left(\frac{t-s}{\Delta_n}\right)\\ & = 
    \frac{\delta_n^{2d}}{(2\pi)^d}\sum_{t\in \bbT_n}\sum_{h\in \Delta_n S_K\cap \delta_n\cdot \mathbb{Z}^d}e^{\ii h^{\top}\theta}\frac{X(t+h) \otimes X(t)-C(h)}{|T_n\cap(T_n-h)|}K\left(\frac{h}{\Delta_n}\right)\cdot\mathbbm{1}(h+t\in \delta_n\cdot \mathbb{Z}^d).
\end{align*}
This means that  
\begin{align*}
& \hat f_n(\theta)-\mathbb{E}\hat f_n(\theta) \\
& = \frac{\delta_n^{2d}}{(2\pi)^d}\sum_{t\in \bbT_n}\sum_{h\in \Delta_n S_K\cap(\bbT_n-t)}e^{\ii h^{\top}\theta}\frac{X(t+h) \otimes X(t)-C(h)}{|T_n\cap(T_n-h)|}K\left(\frac{h}{\Delta_n}\right).
\end{align*}
Then, the variance becomes
\begin{align*}
    & \mathbb{E}\|\hat f_n(\theta)-\mathbb{E}\hat f_n(\theta)\|_{\rm HS}^2 \\
    & = \frac{\delta_n^{4d}}{(2\pi)^{2d}}\sum_{t\in \bbT_n}\sum_{s\in \bbT_n}\mathop{\sum\sum}_{\substack{h\in[\Delta_n\cdot S_K]\cap(\bbT_n-t) \\h'\in[\Delta_n\cdot S_K]\cap(\bbT_n-s)}}e^{\mathbbm{i}(h-h')^{\top}\theta}K\left(\frac{h}{\Delta_n}\right)K\left(\frac{h'}{\Delta_n}\right)\\ & \qquad\qquad\qquad\qquad\qquad\qquad\cdot\frac{{\rm Cov}\left( X(t+h)\otimes X(t),X(s+h')\otimes X(s)\right)}{|T_n\cap(T_n-h)|\cdot |T_n\cap(T_n-h')|}.
    \end{align*}

By Proposition \ref{prop:Cov for cross product} we obtain that ${\rm Cov}\left( X(t+h)\otimes X(t),X(s+h')\otimes X(s)\right)$ is equal to
\begin{align*}
    &\sum_{i\in I}\sum_{j\in I}{\rm cum}\left(X_i(t+h),{X_j(t)},{X_i(s+h')},X_j(s)\right)\\ & \qquad+ \mathbb{E}\left\langle X(t+h),X(s+h')\right\rangle_\bbH\cdot \mathbb{E}\left\langle X(t),X(s)\right\rangle_\bbH+\left\langle   C(t-s+h),  C(s-t+h')\right\rangle_{\rm HS}.
\end{align*}

In an analogous manner to the proof of Theorem \ref{thm:var for f^K}, we define the quantities
\begin{align*}
    A & := \delta_n^{4d} \sum_{t\in \bbT_n}\sum_{s\in \bbT_n}\mathop{\sum\sum}_{\substack{h\in[\Delta_n\cdot S_K]\cap(\bbT_n-t) \\h'\in[\Delta_n\cdot S_K]\cap(\bbT_n-s)}}e^{\mathbbm{i}(h-h')^{\top}\theta}K\left(\frac{h}{\Delta_n}\right)K\left(\frac{h'}{\Delta_n}\right)\\ & \qquad\qquad\qquad\qquad\qquad\qquad \cdot\frac{\mathbb{E}\left\langle X(t+h),X(s+h')\right\rangle_\bbH\cdot \mathbb{E}\left\langle X(s),X(t)\right\rangle_\bbH}{|T_n\cap(T_n-h)|\cdot |T_n\cap(T_n-h')|},\\
    B & := \delta_n^{4d} \sum_{t\in \bbT_n}\sum_{s\in \bbT_n}\mathop{\sum\sum}_{\substack{h\in[\Delta_n\cdot S_K]\cap(\bbT_n-t) \\h'\in[\Delta_n\cdot S_K]\cap(\bbT_n-s)}}e^{\mathbbm{i}(h-h')^{\top}\theta}K\left(\frac{h}{\Delta_n}\right)K\left(\frac{h'}{\Delta_n}\right)\\ & \qquad\qquad\qquad\qquad\qquad\qquad \cdot\frac{\left\langle  C(t-s+h),  C(s-t+h')\right\rangle_{\rm HS}}{|T_n\cap(T_n-h)|\cdot |T_n\cap(T_n-h')|}\\
    \text{and}\\
    C & := \delta_n^{4d} \sum_{t\in \bbT_n}\sum_{s\in \bbT_n}\mathop{\sum\sum}_{\substack{h\in[\Delta_n\cdot S_K]\cap(\bbT_n-t) \\h'\in[\Delta_n\cdot S_K]\cap(\bbT_n-s)}}e^{\mathbbm{i}(h-h')^{\top}\theta}K\left(\frac{h}{\Delta_n}\right)K\left(\frac{h'}{\Delta_n}\right)\\ &\qquad\qquad\qquad\qquad\qquad \cdot\sum_{i\in I}\sum_{j\in I}\frac{{\rm cum}\left(X_i(t+h),{X_j(t)},{X_i(s+h')},X_j(s)\right)}{|T_n\cap(T_n-h)|\cdot |T_n\cap(T_n-h')|}.
\end{align*}

We start with term $A$. Bounding terms by their norm (using \eqref{e: EXY_trace}) and changing variables, we obtain 
\begin{align*}
    |A|& \le \delta_n^{4d} \sum_{t\in \bbT_n}\sum_{s\in \bbT_n}\mathop{\sum\sum}_{\substack{h\in[\Delta_n\cdot S_K]\cap(\bbT_n-t) \\h'\in[\Delta_n\cdot S_K]\cap(\bbT_n-s)}} \frac{|\mathbb{E}\left\langle X(t+h),X(s+h')\right\rangle_\bbH\cdot \mathbb{E}\left\langle X(s),X(t)\right\rangle_\bbH|}{|T_n\cap(T_n-h)|\cdot |T_n\cap(T_n-h')|}
    \\ & \le \delta_n^{4d} \sum_{t\in \bbT_n}\sum_{s\in \bbT_n}\mathop{\sum\sum}_{\substack{h\in[\Delta_n\cdot S_K]\cap(\bbT_n-t) \\h'\in[\Delta_n\cdot S_K]\cap(\bbT_n-s)}} \frac{\|C(h-h'+t-s)\|_{\rm tr}\cdot \|C(s-t)\|_{\rm tr}}{|T_n\cap(T_n-h)|\cdot |T_n\cap(T_n-h')|}\\ 
    & = \delta_n^{4d} \sum_{t\in \bbT_n}\sum_{s\in \bbT_n}\|C(s-t)\|_{\rm tr}\mathop{\sum\sum}_{\substack{h\in[\Delta_n\cdot S_K]\cap(\bbT_n-t) \\h'\in[\Delta_n\cdot S_K]\cap(\bbT_n-s)}} \frac{\|C(h-h'+t-s)\|_{\rm tr} }{|T_n\cap(T_n-h)|\cdot |T_n\cap(T_n-h')|}\\ 
    & = \delta_n^{4d}
    \sum_{w\in \bbT_n-\bbT_n}\|C(w)\|_{\rm tr}\sum_{x\in \bbT_n\cap(\bbT_n-w)}\mathop{\sum\sum}_{\substack{u\in[\Delta_n\cdot S_K\cap(\bbT_n-(w+x))]-[\Delta_n\cdot S_K\cap(\bbT_n-x)] \\v\in[\Delta_n\cdot S_K\cap(\bbT_n-x)]\cap\{[\Delta_n\cdot S_K\cap(\bbT_n-(w+x))]-u\}}} \\ 
    & \qquad\qquad\qquad\qquad\qquad\qquad\qquad\frac{\|C(u+w)\|_{\rm tr} }{|T_n\cap(T_n-(u+v))|\cdot |T_n\cap(T_n-v)|},
\end{align*}
where the last equality is obtained through the change of variables $w=s-t,x=s,u=h-h',v=h'.$

By Lemma \ref{le:rate for geometry quantity}, in view of Assumption \ref{a:sampling}(ii), and inflating slightly the sums by dropping the intersections in the summations of $u,v$ we obtain that  
\begin{align*}
    |A| &\le_{\rm c} \frac{\delta_n^{2d}\times \delta_n^{2d}}{|T_n|^2}\sum_{w\in \bbT_n-\bbT_n}\|C(w)\|_{\rm tr}\sum_{x\in \bbT_n\cap(\bbT_n-w), }\sum_{u\in \Delta_n(S_K-S_K)} \|C(u+w)\|_{\rm tr} \\
    & \hskip6cm  \sum_{v\in(\Delta_n S_K)\cap (\Delta_n S_K-u) \cap \delta_n\cdot \mathbb{Z}^d}1 \\ 
    & \le_{\rm c}\frac{\delta_n^{2d}}{|T_n|^2} \sum_{w\in \bbT_n-\bbT_n}\|C(w)\|_{\rm tr}\cdot|T_n\cap(T_n-w)|\\ 
    & \qquad\qquad\qquad\qquad \times \sum_{u\in \Delta_n(S_K-S_K)\cap \delta_n\cdot {\mathbb Z}^d } \|C(u+w)\|_{\rm tr}\cdot |(\Delta_n S_K)\cap (\Delta_n S_K-u)|\\ 
    & \le_{\rm c}\frac{\Delta_n^d}{|T_n|} \times \delta_n^d \sum_{w\in  \delta_n\cdot \mathbb{Z}^d}\|C(w)\|_{\rm tr} \times \delta_n^d \sum_{u\in \delta_n\cdot \mathbb{Z}^d} \|C(u)\|_{\rm tr} =\mathcal{O}\left(\frac{\Delta_n^d}{|T_n|}\right),
\end{align*}
by Assumption \ref{a:cov-prime}, where we used that 
$\delta_n^d \sum_{t} 1_{\{t\in \bbT_n\}}  \sim |T_n|$ 
and 
\begin{equation*}% \label{e:delta_n-Delta_n}
\delta_n^d \sum_{t} 1_{\{t\in (\Delta_n\cdot S_K) \cap (\Delta_n\cdot S_K-u)\cap \delta_n \cdot \mathbb Z^d\}} \le 2 |\Delta_n S_K| ={\cal O}(\Delta_n^d).
\end{equation*}
 
Now, we shift to term $B$. Using the change of variables $w:= t-s$, the Cauchy-Schwartz inequality, we obtain
\begin{align*}
    |B| & \le \delta_n^{4d} \sum_{w\in \bbT_n-\bbT_n} \sum_{s} 1_{\{ \bbT_n\cap(\bbT_n-w)\} }(s) 
    \mathop{\sum\sum}_{\substack{h\in[\Delta_n\cdot S_K]\cap\delta_n\cdot \mathbb{Z}^d \\h'\in[\Delta_n\cdot S_K]\cap\delta_n\cdot \mathbb{Z}^d}}%e^{\mathbbm{i}(h-h')^{\top}   \theta}
    K\left(\frac{h}{\Delta_n}\right)K\left(\frac{h'}{\Delta_n}\right)\\ & \qquad\qquad\qquad \cdot
    \frac{ \|C(h+w)\|_{\rm HS} \|C(h'-w)\|_{\rm HS} }{|T_n\cap(T_n-h)|\cdot |T_n\cap(T_n-h')|} \\
    &\le_c \frac{\delta_n^{3d}}{|T_n|}\sum_{w\in \bbT_n-\bbT_n}
     \mathop{\sum\sum}_{\substack{h\in[\Delta_n\cdot S_K]\cap\delta_n\cdot \mathbb{Z}^d\  \\h'\in[\Delta_n\cdot S_K]\cap\delta_n\cdot \mathbb{Z}^d}}  \|C(h+w)\|_{\rm HS} \|C(h'-w)\|_{\rm HS},
\end{align*}
where we used that $\delta_n^d \sum_s 1_{\{ \bbT_n\cap(\bbT_n-w)\} }(s) = {\cal O}(|T_n|)$, and Lemma \ref{le:rate for geometry quantity}
to conclude that $|T_n\cap (T_n - h)| \sim |T_n\cap (T_n -h')| \sim |T_n|$, uniformly in $h,h'\in \Delta_n \cdot S_K$.  Now,
with the change of variables $u:= h+w$, and expanding the range of summation, we further 
obtain
\begin{align*}
    |B| & \le_{\rm c} \frac{\delta_n^{3d}}{|T_n|}\sum_{u\in \delta_n\cdot \mathbb{Z}^d}\| C(u)\|_{\rm HS}\sum_{h'\in\Delta_nS_K\cap \delta_n\cdot \Z^d\ }\sum_{h\in\Delta_nS_K\cap(u-(\bbT_n-\bbT_n))\ }\| C(h'-u+h)\|_{\rm HS}
    \\ & \le\frac{1}{|T_n|}\Big(\sum_{u\in \delta_n\cdot \mathbb{Z}^d} \delta_n^d \| C(u)\|_{\rm HS} \Big) 
    \Big( \delta_n^d \sum_{h'\in\Delta_nS_K\cap \delta_n\cdot \Z^d\ }
   1 \Big)  \Big(\sum_{h\in\delta_n\cdot \mathbb{Z}^d}\delta_n^d \| C(h)\|_{\rm HS}\Big) \\ 
    & = \mathcal{O}\left(\frac{\Delta_n^d}{|T_n|}\right),
\end{align*}
in view of Assumption \ref{a:cov-prime}.

Finally, we look at term $C$. An application of Lemma \ref{le:rate for geometry quantity}, again gives us that 
\begin{align*}
    |C| & \le_{\rm c}\frac{\delta_n^{4d} }{|T_n|^2}\sum_{t\in \bbT_n}\sum_{s\in \bbT_n}\mathop{\sum\sum}_{\substack{h\in[\Delta_n\cdot S_K]\cap(\bbT_n-t) \\h'\in[\Delta_n\cdot S_K]\cap(\bbT_n-s)}} 
       \left|\sum_{i\in I}\sum_{j\in I}{\rm cum}\left(X_i(t+h),{X_j(t)},{X_i(s+h')},X_j(s)\right)\right|\\ &\le
    \frac{\delta_n^{4d}}{|T_n|^2}\sum_{t\in \bbT_n}\sum_{s\in \bbT_n}\mathop{\sum\sum}_{\substack{h\in\Delta_n\cdot S_K \cap \delta_n\cdot\Z^d \\h'\in \Delta_n\cdot S_K \cap \delta_n\cdot\Z^d }} 
       \left|\sum_{i\in I}\sum_{j\in I}{\rm cum}\left(X_i(h+t-s),{X_j(t-s)},{X_i(h')},X_j(0)\right)\right|
    \\ &\le
    \frac{\delta_n^{3d} }{|T_n|}\sum_{w\in \bbT_n-\bbT_n}\mathop{\sum\sum}_{\substack{h\in\Delta_n\cdot S_K\cap \delta_n\cdot\Z^d \\h'\in \Delta_n\cdot S_K \cap \delta_n\cdot\Z^d}} 
        \left|\sum_{i\in I}\sum_{j\in I}{\rm cum}\left(X_i(h+w),{X_j(w)},{X_i(h')},X_j(0)\right)\right|
    \\ &\le_{\rm c}
    \frac{\Delta_n^d}{|T_n|}\sup_{h'\in\delta_n\cdot \mathbb{Z}^d} \delta_n^{2d} \sum_{w\in \delta_n\cdot \mathbb{Z}^d\ \ }\sum_{h\in\delta_n\cdot \mathbb{Z}^d}
       \left| \sum_{i\in I}\sum_{j\in I}{\rm cum}\left(X_i(h+w),{X_j(w)},{X_i(h')},X_j(0)\right)\right| \\
    & = \mathcal{O}\left(\frac{\Delta_n^d}{|T_n|}\right),
\end{align*}
where we used that $\delta_n^d \sum_{h'\in \Delta_n\cdot S_K\cap \delta_n\cdot \Z^d} 1= {\cal O}(|\Delta_n|^d)$, 
the fact that $\delta_n^d \sum_{s\in \bbT_n \cap \bbT_n} = {\cal O}(|T_n|)$, and 
Assumption \ref{a:var_d}.  This completes the proof.
\end{proof}

{\bf Proof of Theorem \ref{prop:PL bias time series}:} As indicated, by following the proof of Proposition \ref{thm: variance of hat f}, 
we see that the variance bound ${\cal O}(\Delta_n^d/|T_n|)$ is uniform in $f\in {\cal P}_D(\beta,L)$.  Therefore, to prove
\eqref{e:bias_ts_PLD}, by Relation \eqref{e:bias-bound-time-series} in Theorem \ref{prop: consistency f^ts}, it is enough to 
bound terms $B_1(\Delta_n)$ and $B_2(\Delta_n)$, uniformly in $f\in {\cal P}_D(\beta,L)$.
Let $M_k$ and $m_k$ be the radii of the smallest ball that contains $S_K$ and the largest ball contained in $S_K$ respectively. Starting with term $B_2$ we have,   
\begin{align*}
\sup_{f\in {\cal P}_D(\beta,L)} B_2(\Delta_n)& \le \sum_{\|h\|_2\ge\Delta_n m_k} \sup_{f\in {\cal P}_D(\beta,L)} \|C(h)\|_{\rm HS}\|h\|_2^{\beta} \|h\|_2^{-\beta} \\
& \le (\Delta_n m_k)^{-\beta}\sum_{\|h\|_2\ge\Delta_n m_k}\sup_{f\in {\cal P}_D(\beta,L)} \|C(h)\|_{\rm HS}\|h\|_2^{\beta}\\ 
& \le  (\Delta_n m_k)^{-\beta} \cdot L = {\cal O} (\Delta_n^{-\beta}),    
\end{align*}
in view of \eqref{def:PL minimax class discrete time}.  Recalling that $K(0)=1,$ we have that for $\lambda+1>\beta,$
\begin{align*}
    \sup_{f\in {\cal P}_D(\beta,L)} B_1(\Delta_n) &\le \sum_{0<\|h\|_2\le\Delta_nM_K}
    \sup_{f\in {\cal P}_D(\beta,L)} \left\|C(h)\right\|_{\rm HS}\left[1-K\left(\frac{h}{\Delta_n}\right)\right]\\ 
    &\le \tilde c\sum_{0<\|h\|_2\le\Delta_nM_K}
   \sup_{f\in {\cal P}_D(\beta,L)}  \left\|C(h)\right\|_{\rm HS}\frac{\|h\|_2^{\lambda+1}}{\Delta_n^{\lambda+1}}\\ 
   & \le \tilde c \Delta_n^{-\beta}M_K^{\lambda+1-\beta}\sum_{0<\|h\|_2\le\Delta_nM_K}
    \sup_{f\in {\cal P}_D(\beta,L)} \left\|C(h)\right\|_{\rm HS}\|h\|_2^{\beta}= \mathcal{O}(\Delta_n^{-\beta}),
\end{align*}
where the second inequality follows from the multivariate Taylor Theorem since \eqref{e:Kernel differentiability} holds. Indeed, under this condition and using that $K(0)=1$, we obtain
 $$K\left(\frac{h}{\Delta_n}\right)=1+R_{0,\lambda}\left(\frac{h}{\Delta_n}\right),$$ 
where $|R_{0,\lambda}(h)|\leq \frac{L_1}{(\lambda+1)!}\|h\|_2^{\lambda+1}.$

Collecting the bounds for $B_1(\Delta_n)$ and $B_2(\Delta_n)$, we obtain that the bias is of order ${\cal O}(\Delta_n^\beta)$, uniformly over the class ${\cal P}_D(\beta,L)$.
Now, by Theorem \ref{prop: consistency f^ts}, the variance is of order ${\cal O}(\Delta_n^d/|T_n|)$ and picking 
$\Delta_n= |T_n|^{1/(2\beta+d)}$, we obtain the rate-optimal bound in \eqref{e:mse_ts_PLD}.
\qed

\bigskip
\vskip.3cm
\textbf{Proof of Theorem \ref{prop:PL bias cont time}.}
In view of Theorem \ref{s:thm:expectation f_n^K-f_n}, one only needs to bound the 
terms  $B_1(\Delta_n)$ and $B_2(\Delta_n)$ appropriately.  Starting with term $B_2$, if $m_K$ denotes the 
radius of the largest ball contained in $S_K,$ we have
\begin{align*}
%\sup_{f\in {\cal P}^{\rm C}} 
B_2(\Delta_n) & \le
%\sup_{f\in {\cal P}^{\rm C}}  
\int_{x\not\in\Delta_n\cdot S_K}  \|C(x)\|_{\rm HS} dx \\
& \le (\Delta_n\cdot m_K)^{-\beta}\cdot \int_{ \|x\| >\Delta_n\cdot m_K} \|x\|_2^{\beta}\cdot \|C(x)\|_{\rm HS} dx \\
& \le (\Delta_n\cdot m_K)^{-\beta} \cdot L = {\cal O}(\Delta_n^{-\beta}).
\end{align*}

Next, recall that $$B_1(\Delta_n) := \left\|\int_{h\in\Delta_n\cdot S_K}e^{-\mathbbm{i}h^{\top}\theta}C(h)\left(1-K\left(\frac{h}{\Delta_n}\right)\right)dh\right\|_{\rm HS}.$$ 
Since $K(0)=1$ and \eqref{e:Kernel differentiability} holds, by the Taylor theorem, we have that $$K\left(\frac{h}{\Delta_n}\right)=1+R_{0,\lambda}\left(\frac{h}{\Delta_n}\right),$$ 
where $|R_{0,\lambda}(h)|\leq \frac{L_1}{(\lambda+1)!}\|h\|_2^{\lambda+1}.$
Thus, with  $M_K$ denoting the radius of the smallest ball centered at the origin that contains $S_K$, 
the term $B_1$ is bounded by 
\begin{align*}
B_1(\Delta_n) &\le \frac{L_1}{(\lambda+1)!} \int_{h\in \Delta_n\cdot S_K} \|C(h)\|_{\rm HS} \Big(\frac{\|h\|_2}{\Delta_n} \Big)^{\lambda+1} dh\\
& \le \frac{L_1\cdot (M_K\cdot \Delta_n)^{\lambda+1 -\beta}}{(\lambda+1)! \cdot \Delta_n^{\lambda+1}} 
\int_{h\in \Delta_n\cdot S_K} \|C(h)\|_{\rm HS} \|h\|_2^{\beta}dh = {\cal O}(\Delta_n^{-\beta}),
\end{align*}
since $\lambda+1>\beta$ and in view of \eqref{def:CTPL minimax class}.

Collecting the bounds for $B_1$ and $B_2$, we get  $B_1(\Delta_n)+B_2(\Delta_n)  = \mathcal{O}\left(\Delta_n^{-\beta}\right).$  
Now, the optimal choice of $\Delta_n$ is the one which balances the last bound with the 
rate of the variance, that is, $\Delta_n^d/|T_n| \sim \Delta_n^{-2\beta}$.  This is achieved with 
$$
 \Delta_n:= |T_n|^{1/(2\beta+d)} \equiv (n\delta_n)^{d/(2\beta +d)},
 $$
which upon substitution yields the rate $\delta_n^{\gamma} \vee \Delta_n^{-\beta} = \delta_n^{\gamma}\vee (n\delta_n)^{-\beta d/(2\beta + d)}$ in \eqref{e:rate-bounds}.
\qed

\section{Proofs for Section \ref{sec:minimax}} \label{s:minimax_proof}
For any member $e_i$ of the real CONS, consider the (scalar) real-valued process \[X_{e_i}(t) := \langle X(t), e_i\rangle_\bbH\] and let $C_{e_i}(x)$ and $f_{e_i}(\theta)$ be its stationary covariance and spectral density, respectively. If $f\in {\cal P}_D(\beta,L)$ then 
$$
\int_{\R^d} (1+\|x\|_2^\beta) |C_{e_i}(x)| dx \le L\quad\mbox{and} \quad |\hat f_{e_i} (\theta_0) - f_{e_i}(\theta_0)| \le \|\hat f(\theta_0) - f(\theta_0)\|_{\rm HS}. 
$$
These follow from the simple fact that $|\langle {\cal A} \phi, \phi\rangle_\bbH|  \le \|{\cal A}\|_{\rm op}$ for any bounded linear operator ${\cal A}$ and unitary $\phi\in\bbH$. Thus, it suffices to prove Theorems \ref{thm:pointwise rate time series} and \ref{thm:PL minimax cont time} for  scalar, real-valued processes, which we do below. 
\vskip.3cm

\begin{proof}[{\bf Proof of Theorem \ref{thm:pointwise rate time series}.}]
Let $\|\cdot\|$ denote the Euclidean norm in $\mathbb{R}^d$ and $C_g$ be the covariance that corresponds to the spectral density $g$. Fix an interior point $\theta_0\in(-\pi,\pi)^d$ and 
let $f_{0,n}(\theta)=L/(2\cdot(2\pi)^d)\cdot \mathbbm{1}(\theta\in[-\pi,\pi]^d).$ Then, 
\begin{equation}\label{e:C_f0-1}
C_{f_{0,n}}(k) = \int_{\theta\in[-\pi,\pi]^d} e^{-\ii \theta^{\top}k} \frac{L}{2\cdot(2\pi)^d} d\theta = \mathbbm{1}(k=0) L/2,
\end{equation}
and therefore
\begin{equation}\label{e:C_f0-2}
\sum_{k\in\mathbb{Z}^2}|C_{f_{0,n}}(k)|(1+\|k\|^{\beta}) = C_{f_{0,n}}(0) = L/2 < L.
\end{equation}
Let for $\theta=(\theta_i)_{i=1}^d\in\R^d$,
\begin{align} \label{e:g_def}
g(\theta) = \epsilon \cdot \prod_{i=1}^d \varphi(\theta_i),\ \ \mbox{ where } \varphi(x) =  \exp\left(-\frac{1}{1-(x/\pi)^2}\right)\mathbbm{1}(|x| < \pi),\ \ (x\in \R)
\end{align}
for some $\epsilon>0$, which is to be determined.  The function $\varphi$ is a type of a ``bump'' function that belongs to 
$C_0^{\infty}(\mathbb{R})$ (the class of infinitely differentiable functions with compact support). The support of $\varphi$ is
the compact interval $[-\pi,\pi]$. Hence $g\in C_0^{\infty}(\mathbb{R}^d)$ and its support is $[-\pi,\pi]^d$. Consider the function 
\begin{align} \label{e:gn_def}
g_n(\theta) = h_n^{\beta}g\left(\frac{\theta-\theta_0}{h_n}\right),
\end{align}
where $0 < h_n\le 1$ and tends to $0$ at a rate to be determined later. Observe that since $\theta_0\in (-\pi,\pi)^d$, the support of
$g_n$ is included in $\theta_0 + h_n\cdot[-\pi,\pi]^d \subset (-\pi,\pi)^d$, for all sufficiently small $h_n$.

Now, consider the ``alternative'' spectral density models:
\begin{align*}
    f_{1,n}(\theta) & = f_{0,n}(\theta)+ [g_n(\theta)+g_n(-\theta)] =: \frac{L}{2}\cdot\mathbbm{1}_{(-\pi,\pi)^d}(\theta)+r_n(\theta),\ \ \theta\in [-\pi,\pi]^d.
\end{align*}
We will choose the sequence $h_n$ and the constant $\epsilon>0$ such that the following three properties hold.

\medskip
\noindent{\em Properties:}
\begin{enumerate}
    \item[(1)] $f_{0,n},f_{1,n}\in {\cal P}_D(\beta,L)$, where the class ${\cal P}_D(\beta,L)$ is defined in \eqref{def:PL minimax class discrete time}.
    
    \item[(2)] For all $n$ large enough, we have
    \begin{equation}\label{e:Property-2} 
     f_{1,n}(\theta_0)-f_{0,n}(\theta_0) = h_n^{\beta}[g(0)+g(2\theta_0/h_n)]=g(0) (1+\mathbbm{1}(\theta_0=0)) \cdot h_n^{\beta}.
     \end{equation}
    
    \item[(3)] ${\rm KL}(\mathbb{P}_{1n},\mathbb{P}_{0n})\le C<\infty$, where ${\rm KL}$ stands for the Kullback-Leibler distance and $\mathbb{P}_{0n}$ and $\mathbb{P}_{1n}$ are probability distributions of the data $\{X(k),\ k\in \{1,\cdots,n\}^d\}$
   under $f_{0,n}$ and $f_{1,n}$ respectively.
\end{enumerate}

\medskip
\noindent{\em Proof of Property (1).} We have already shown that $f_{0,n} \in {\cal P}_D(\beta,L)$. Recalling \eqref{def:PL minimax class discrete time}, and in view of \eqref{e:C_f0-1} and \eqref{e:C_f0-2},
to prove $f_{1,n}\in {\cal P}_D(\beta,L)$, it is enough to show that 
\begin{equation}\label{e:C_gn_inequality}
\sum_{k\in \mathbb Z^d} |C_{g_n}(k)| (1+\|k\|^\beta) < \frac{L}{4},
\end{equation}
where $C_{g_n}(k) = \int_{\theta\in[-\pi,\pi]^d}e^{-\ii \theta^{\top}k} g_n(\theta)d\theta$. 

We have that for all $k = (k_i)_{i=1}^d\in \mathbb Z^d$,
\begin{align} \label{e:C_gn_bound}
\begin{split}
    C_{g_n}(k) & =\int_{\theta\in[-\pi,\pi]^d}e^{-\ii \theta^{\top}k}h_n^{\beta}g\left(\frac{\theta-\theta_0}{h_n}\right)d\theta \\ 
    &= h_n^{\beta+d}\cdot e^{-\ii \theta_0^{\top}k}\int_{x\in [-\pi,\pi]^d}e^{-\ii x^{\top}k h_n}g(x)dx \\
    & =  \epsilon\cdot h_n^{\beta+d}\cdot e^{-\ii \theta_0^{\top}k} \prod_{i=1}^d \what \varphi(k_i h_n),
    \end{split}
\end{align}
where we used the change of variables $x = (\theta-\theta_0)/h_n$ and the fact that $\theta_0+ h_n\cdot[-\pi,\pi]^d \subset  (-\pi,\pi)^d$, for all 
sufficiently small $h_n$.  The last relation follows from \eqref{e:g_def}, where $\what \varphi(x) := \int_{-\pi}^\pi e^{-\ii x u}\varphi(u)du$ denotes
the Fourier transform of the bump function $\varphi$.  Now using the fact that the derivatives of $\varphi$ vanish at $\pm \pi$, i.e., 
$\varphi^{(\ell)}(\pm \pi) = 0$, for all $\ell = 0,1,\hdots$, integration by parts yields
\begin{equation*} %\label{e:phi-ibp}
\what \varphi(x) = \frac{1}{(-i x)^{\ell}} \int_{-\pi}^{\pi} e^{-\ii x u}\varphi^{(\ell)}(u)du,\ \ \ell =0,1,\ldots
\end{equation*}
Indeed, for all $\ell$, the derivative $\varphi^{(\ell)}(x)$ is continuous and supported on $[-\pi,\pi]$, and thus
$$
|\what \varphi(x) | \le c_0 \wedge (|x|^{-\ell} c_\ell),\ \ \mbox{ where } \ c_\ell:= \int_{-\pi}^\pi |\varphi^{(\ell)}(u)| du.
$$
In view of \eqref{e:C_gn_bound}, we have
\begin{equation}\label{e:C_gn_bound_via_ell}
|C_{g_n}(k)| \le \epsilon \cdot h_n^{\beta+d} \prod_{i=1}^d  \left( c_0 \wedge  \frac{c_{\ell}}{|k_i h_n|^{\ell}} \right).
\end{equation}
We will choose $\ell\ge 2$ and $\epsilon>0$ to satisfy \eqref{e:C_gn_inequality}
for all sufficiently small $h_n$. Notice that $\|k\|^\beta \le  d^{(\beta-1)\vee 0} \sum_{i=1}^d |k_i|^\beta$. Hence, \eqref{e:C_gn_inequality} follows from
$$
d^{(\beta-1)\vee 0} \sum_{i=1}^d  \sum_{k\in \mathbb Z^d}  |C_{g_n}(k)| ( 1+ |k_i|^\beta)  = d^{\beta\vee 1} \sum_{k\in \mathbb Z^d} |C_{g_n}(k)| ( 1+ |k_1|^\beta) < \frac{L}{4},
$$
where $k=(k_i)_{i=1}^d$.  Indeed, this follows from the observation that, by \eqref{e:C_gn_bound}, we have
$$
\sum_{i=1}^d \sum_{k\in\Z^d} |C_{g_n}(k_1,\cdots,k_d)| |k_i|^\beta = d \sum_{k\in\Z^d} |C_{g_n}(k_1,\cdots,k_d)| |k_1|^\beta.
$$
 Thus, it suffices to show that 
\begin{align} \label{e:C_gn}
\begin{split}
\sum_{k\in\mathbb{Z}^d}|C_{g_n}(k)|(1+|k_1|^{\beta}) & \le |C_{g_n}(0)| + 2\sum_{k=(k_i)_{i=1}^d\in\mathbb{Z}^d}|C_{g_n}(k)| |k_1|^{\beta} \\
& \le \frac{L}{4(d^{\beta\vee 1})}.
\end{split}
\end{align} 
Since $h_n\in (0,1)$, \eqref{e:C_gn_bound_via_ell} readily implies that 
\begin{align*} %\label{e:C0}
|C_{g_n}(0)|\le \epsilon \cdot c_0^d.
\end{align*}
Also, applying \eqref{e:C_gn_bound_via_ell},
\begin{align} \label{e:Csum} 
\begin{split}
    & \sum_{k=(k_i)_{i=1}^d\in\mathbb{Z}^d}|C_{g_n}(k)| |k_1|^{\beta} \\
    & \le  \epsilon \cdot  \sum_{k\in \mathbb Z^d } \left[ 
      h_n^{\beta+1} |k_1|^\beta \left( c_0 \wedge \frac{c_{\ell}}{|k_1 h_n|^{\ell}} \right) 
     \cdot h_n^{d-1} \prod_{i=2}^d  \left( c_0  \wedge \frac{c_{\ell}}{|k_i h_n|^{\ell}} \right) \right]\\
      & = \epsilon  \cdot \left [  h_n \cdot \sum_{j\in\mathbb Z} |j h_n|^{\beta}  \left( c_0 \wedge \frac{c_{\ell}}{|j h_n|^{\ell}} \right)  \right] \times \left[ h_n \sum_{j\in\Z} 
      \left( c_0  \wedge \frac{c_{\ell}}{|j h_n|^{\ell}} \right) \right ]^{d-1}\\
      &=: \epsilon  \times A_n \times (B_n)^{d-1}.
    \end{split}
\end{align}
Observe that
$A_n$ and $B_n$ are Riemann sums for the integrals
$$
A:= \int_{x\in\R} |x|^{\beta} \left( c_0 \wedge \frac{c_{\ell}}{|x|^{\ell}}\right) dx\ \ \ \mbox{ and }\ \ \ B:= \int_{x\in\R} \left( c_0 \wedge \frac{c_{\ell}}{|x|^{\ell}}\right) dx,
$$
which are clearly finite for $\ell \ge \lfloor \beta \rfloor +2$.  Taking such a value of $\ell$ and using 
the fact that $A_n\to A$ and $B_n\to B$, as $h_n\to 0$, we obtain that the right hand side of \eqref{e:Csum} is bounded above by 
$2 \epsilon \times A \times B^{d-1}$ for all sufficiently small $h_n$.
Therefore, we can ensure that \eqref{e:C_gn} holds by picking $\epsilon>0$ such that 
$$
0<\epsilon \cdot \Big[ c_0^d + 4 A\times B^{d-1}\Big] \le \frac{L}{4(d^{\beta\vee 1})}.
$$
This shows that $f_{1,n}\in {\cal P}_D(\beta,L)$ and completes the proof of Property (1).\\

\noindent {\em Proof of Property (2).}  This is immediate. Relation \eqref{e:Property-2} holds for all sufficiently large $n$ since 
 $g(\theta_0/h_n) \to g(0)\mathbbm{1}(\theta_0=0),$ as $h_n\to 0$, by the fact that $g$ is 
supported on $[-\pi,\pi]$.\\

\noindent
{\em Proof of Property (3).} Let $D_n$ and $B_{n,\xi}$  be the covariance matrices of the data $X(t), t \in \{1,\ldots,n\}^d$, that correspond to, 
respectively, the spectral densities $r_n(\theta) = g_n(\theta) + g_n(-\theta)$ and
$f_{0,n}(\theta)+\xi r_n(\theta),$ for some  $\xi\in[0,1].$ By Lemma \ref{le:mean value theorem}, 
%since the time series is assumed Gaussian, 
\begin{equation}\label{ineq: Samarov's argument}
    \mathrm{KL}(\mathbb{P}_{1n},\mathbb{P}_{0n})\le \frac{1}{2}\|D_n\|_{\rm F}^2\|B_{n,\xi}^{-1}\|_{\rm op}^2,
\end{equation} 
%where $D_n$ is the covariance matrix of $r_n$ and $B_{n,\xi}$ is the covariance matrix of the spectral density $f_{0,n}(\theta)+\xi r_n(\theta),$ for some  $\xi\in(0,1).$ 
where $\|\cdot\|_{\rm F}$ is the Frobenius norm and $\|\cdot\|_{\rm op}$ is the matrix operator norm induced by the Euclidian vector norm.   
It follows from part (iii) of Lemma \ref{le:Samarov-lemma2} applied to $A_n:= D_n$ that
\begin{align*}
 \frac{1}{n^d}\|D_n\|_{\rm F}^2  \le (2\pi)^d \int_{\theta\in[-\pi,\pi]^d}r_n^2(\theta)d\theta.
\end{align*}
Thus, recalling $r_n(\theta) = g_n(\theta)+g_n(-\theta)$, Relation \eqref{e:gn_def},
and using a change of variables, we obtain:
\begin{align}\label{ineq: D matrix}
    \frac{1}{n^d}\|D_n\|_{\rm F}^2 \le (2\pi)^d h_n^{2\beta+d}\left\{4\int_{\theta\in[-\pi,\pi]^d}g^2(\theta)d\theta\right\} = 4\cdot (2\pi)^d \cdot \epsilon^2 h_n^{2\beta+d}\|\varphi\|_{L^2}^{2d},
\end{align}
where we used \eqref{e:g_def}. Applying (i) and (ii) of Lemma \ref{le:Samarov-lemma2}, we obtain 
\begin{equation}\label{ineq: B matrix}
\|B_{n,\xi}^{-1}\|_{\rm op} \le \frac{1}{(2\pi)^d}\sup_{\theta \in [-\pi,\pi]^d} \left[ f_{0n}(\theta)+\xi r_n(\theta)\right]^{-1}
\le \frac{2}{L},    
\end{equation}
since $r_n(\theta)\ge 0$ and $ f_{0n}(\theta) = L/(2\cdot(2\pi)^d),\ \theta\in[-\pi,\pi]$.
Combining \eqref{ineq: Samarov's argument} - \eqref{ineq: B matrix}, 
\begin{align*}
{\rm KL}(\mathbb{P}_{1n}, \mathbb{P}_{0n}) & \le \frac{1}{2} \|D_n\|_F^2 \|B_{n,\xi}^{-1}\|_{\rm op} ^2
\le \left(\frac{8\cdot(2\pi)^d\epsilon^2\|g\|_{L^2}^2}{L^2} \right)n^d h_n^{2\beta+d},
\end{align*}
which is bounded, if we set
\[
h_n = M \cdot n^{-d/(2\beta+d)}.
\]
By picking $M = M_{\theta_0}$ so that $[ g(0)+ g(0)\mathbbm{1}(\theta_0=0)]\cdot M^{\beta}=1$, we have that for all
sufficiently large $n$,
\[
|f_{1n}(\theta_0)-f_{0n}(\theta_0)| = [ g(0)+ g(0)\mathbbm{1}(\theta_0=0)]\cdot M^{\beta}\cdot n^{-\frac{d\beta}{2\beta+d}}=n^{-\frac{d\beta}{2\beta+d}},
\]
which is a lower bound in the estimation error. The proof is complete by appealing to Theorem 2.5(iii) of \cite{tsybakov2008introduction}.
\end{proof}
\vskip.3cm

\noindent
\begin{proof}[{\bf Proof of Theorem \ref{thm:PL minimax cont time}}]
As argued in the proof of Theorem \ref{thm:pointwise rate time series}, it suffices to focus on the case of scalar-valued processes 
$\{X(t),\ t\in \R^d\}$.  As for the discrete-time case, we will introduce two models with spectral densities $f_{0,n}(\theta)$ and $f_{1,n}(\theta)$,
and corresponding auto-covariances $C_{0,n}(t)$ and $C_{1,n}(t)$.  Consider the function:
\begin{equation}\label{e:f0n-cont-def}
f_{0,n}(\theta) := \epsilon \cdot \prod_{i=1}^d \phi(\delta_n \theta_i),\ \ \ \theta = (\theta_i)_{i=1}^d\in\R^d,
\end{equation}
where $\phi(z)= e^{-z^2/2}/\sqrt{2\pi} ,\ z\in\R$ is the standard Normal density.

With a straightforward change of variables, we obtain:
\begin{equation}\label{e:C0n}
C_{0,n}(x) =  \int_{\R^d} e^{-\ii x^\top \theta} f_{0,n}(\theta) d\theta = \epsilon (2\pi)^{d/2} \cdot \delta_n^{-d}
\prod_{i=1}^d \phi(x_i/\delta_n),
\end{equation}
where we used the fact that $ \int_{\R} e^{-\ii x u} \phi(u)du = \sqrt{2\pi} \phi(x)$. 

 As in the time-series setting, let
\begin{equation}\label{e:f1n-grid-def}
f_{1,n}(\theta) = f_{0,n}(\theta) + h_n^\beta \left[ g\left(\frac{\theta-\theta_0}{h_n}\right) + g\left(\frac{\theta+\theta_0}{h_n}\right) \right],
\end{equation}
where $g$ is as in \eqref{e:g_def}.   Following 
the proof of Theorem \ref{thm:pointwise rate time series}, we will verify the following.

\medskip
\noindent {\em Properties:}

\begin{enumerate}
 \item[(1)] $f_{0,n},\ f_{1,n}\in {\cal P}_C(\beta,L)$, where the class ${\cal P}_C(\beta,L)$ is defined in  \eqref{def:CTPL minimax class}.
 \item[(2)] The functions $f_{0,n}$ and $f_{1,n}$ satisfy Relation  \eqref{e:Property-2}.
 \item[(3)] The KL-divergence is bounded, i.e., $\sup_n {\rm KL}(\P_{1n},\P_{0n})<\infty$, where $\P_{i,n}$ are the probability
 distributions of the data $\{X(\delta_n k),\ k\in \{1,\cdots,n\}^d\}$ under the models $f_{i,n},\ i=0,1$.
\end{enumerate}

Property (2) above is immediate by definition since the difference $f_{0,n}(\theta) - f_{1,n}(\theta)$ is
constructed as in the proof of Theorem \ref{thm:pointwise rate time series}.\\

{\em Proof of Property (1):} The fact that $f_{0,n}\in {\cal P}_C(\beta,L)$ is straightforward.  Indeed, by \eqref{e:C0n}, we have
\begin{align}\label{e:C0n-L2-bound}
\begin{split}
&\int_{\R^d} (1+\|x\|^\beta) |C_{0,n}(x)| dx  \le_c \epsilon  \cdot  \delta_n^{-d} \int_{\R^d} (1+\|x\|^\beta) e^{-\|x\|^2/2\delta_n^2 }dx \\
&\quad  = \epsilon \int_{\R^d} (1+ \| \delta_n \cdot u\|^\beta) e^{-\|u\|^2/2} du \le \epsilon \int_{\R^d} (1+ \| u\|^\beta) e^{-\|u\|^2/2} du \le L/2, 
\end{split}
\end{align}
for all  $\delta_n\in (0,1)$ and for a sufficiently small $\epsilon >0$.  This follows from the fact that with $\delta_n\in (0,1)$, we have
$\|\delta_n u\|^\beta \le \|u\|^\beta$ and the fact that $\int_{\R^d} (1+ \| u\|^\beta) e^{-\|u\|^2/2} du <\infty$.
This ensures that  \eqref{def:CTPL minimax class} holds  with $C$ replaced by $C_{0,n}$ and $L$ by $L/2$. 
That is, $f_{0,n}\in {\cal P}_C(\beta,L)$.

Now, we show that $f_{1,n}$ defined in \eqref{e:f1n-grid-def} belongs to ${\cal P}_C(\beta,L)$, by perhaps lowering the value of $\epsilon>0$.
 Let
$$
C_{g_n}(x):= \int_{\R^d} e^{-\ii \theta^\top x}g_n(\theta)d\theta,
$$
where $g_n$ is as in \eqref{e:gn_def}.
As argued in the proof of Theorem  \ref{thm:pointwise rate time series}, in view of \eqref{e:C0n-L2-bound},
it suffices to show that
\begin{equation}\label{e:Cgn-L4}
\int_{\R^d} \left(1+\|x\|^\beta\right) |C_{g_n}(x)| dx \le \frac{L}{4}.
\end{equation}
Note that Relation \eqref{e:C_gn_bound_via_ell} remains valid if $k\in \Z^d$ therein is 
replaced with $x\in \R^d$.  Therefore, \eqref{e:Cgn-L4} follows by picking a possibly smaller value of $\epsilon>0$, provided
$$
 h_n^{\beta+d} \int_{\R^d}  \left(1+\|x\|^\beta\right) \prod_{i=1}^d \left( c_0 \wedge \frac{c_{\ell}}{|h_n x_i|^{\ell_i}}\right)dx  <\infty,
$$
for some $\ell\in \mathbb N,\ i=1,\cdots,d$. Notice that for $h_n \in (0,1]$, we have 
$h_n^\beta (1+\|x\|^\beta)\le \left(1+\|h_n x\|^\beta\right)$, and hence the last integral is bounded 
above by
\begin{align*}
h_n^d \int_{\R^d}  \left(1+\|h_n x\|^\beta\right) \prod_{i=1}^d \left( c_0 \wedge \frac{c_{\ell}}{|h_n x_i|^{\ell}}\right) dx 
= \int_{\R^d} \left(1+ \| u\|^\beta\right)  \prod_{i=1}^d \left( c_0 \wedge \frac{c_{\ell}}{|u_i|^{\ell}}\right) du,
\end{align*}
where we used the change of variables $u:= h_n x$.  Clearly, the last integral is finite
provided $\ell\ge \lfloor \beta \rfloor + 2$.  This implies that \eqref{e:Cgn-L4} holds with a suitably chosen 
$\epsilon>0$, showing that $f_{1,n}\in {\cal P}_C(\beta,L)$ and completing the proof of Property (2).\\

{\em Proof of Property (3):}  Now, as in the proof of Theorem \ref{thm:pointwise rate time series} we will bound the KL-divergence
${\rm KL}(\P_{1,n},\P_{0,n})$, where $\P_{i,n}$ is the law of the Gaussian vector $\{X_i(\delta_nk),\ k\in \{1,\cdots,n\}^d \}$ under the model
$f_{i,n},\ i=0,1$. 

Observe that the $\Z^d$-indexed stationary process $\{X_i(\delta_nk),\ k\in\Z^d\}$ has the so-called folded 
spectral density
\begin{equation}\label{e:folded-spec-dens}
\wtilde f_{i,h}(\theta):=  \delta_n^{-d}  \sum_{\ell \in \Z^d} f_{i,n} \left(\frac{\theta + 2\pi \ell}{\delta_n} \right),
\ \ \theta\in [-\pi,\pi]^d,\ i=0,1.
\end{equation}
We shall apply the same argument as in the proof of Theorem \ref{thm:pointwise rate time series} based on
Samarov's Lemmas \ref{le:mean value theorem} and  \ref{le:Samarov-lemma2} applied to the {\em folded spectral densities}.

For $\xi\in [0,1]$, let $D_n$ and $B_{n,\xi}$ be the covariance matrices of zero-mean Gaussian vectors
having spectral densities $\wtilde r_n(\theta):= \wtilde f_{1,n}(\theta) - \wtilde f_{0,n}(\theta)$ 
and $\wtilde f_{0,n}(\theta) + \xi \wtilde r_n(\theta),\ \theta\in [-\pi,\pi]^d$, respectively, where 
\begin{align*}
\wtilde r_n(\theta) = h_n^{\beta} \delta_{n}^{-d} \sum_{\ell\in \Z^d} \left[  g\left(\frac{\theta+2\pi\ell }{h_n\delta_n} -\frac{\theta_0}{h_n} \right) + 
g\left(\frac{\theta + 2\pi\ell }{h_n\delta_n} +\frac{\theta_0}{h_n} \right)\right].
\end{align*}
Then, by Lemma \ref{le:mean value theorem},  we have
\begin{equation}\label{e:KL-D-cont-time}
{\rm KL}(\P_{1,n},\P_{0,n})\le \frac{1}{2} \|D_n\|_{\rm F}^2 \|B_{n,\xi}^{-1}\|_{\rm op}^2,
\end{equation}
for some $\xi\in [0,1]$.  As in \eqref{ineq: D matrix} from Lemma \ref{le:Samarov-lemma2}(iii) applied to $A_n:=D_n$, we obtain
\begin{align}\label{e:D-bound-cont-time}
\begin{split}
\|D_n\|_{\rm F}^2 & \le (2\pi)^d\cdot n^d\int_{[-\pi,\pi]^d} \wtilde r_n(\theta)^2 d\theta \\ 
& \le 4\cdot (2\pi)^d\cdot n^d h_n^{2\beta} \delta_n^{-2d} \int_{[-\pi,\pi]^d} \sum_{\ell\in\Z^d} g\left( \frac{\theta+2\pi\ell }{h_n\delta_n} -\frac{\theta_0}{h_n} \right)^2 d\theta \\
& = 4\cdot (2\pi)^d\cdot n^d h_n^{2\beta} \delta_n^{-d} \int_{\R^d} g\left( \frac{u}{h_n}  -\frac{\theta_0}{h_n} \right)^2  du  \\
& =  4\cdot(2\pi)^d\cdot n^d h_n^{2\beta+d} \delta_n^{-d} \|g^2\|_{L^2}^2 = 4\epsilon\cdot(2\pi)^d\cdot n^d h_n^{2\beta+d} \delta_n^{-d} \|\varphi\|_{L^2}^{2d},
\end{split}
\end{align}
where in the last two integrals we made changes of variables, and the last relation follows from the definition of $g$ in \eqref{e:g_def}.

Now, we deal with bounding $\|B_{n,\xi}^{-1}\|_{\rm op}$.  Notice that $B_{n,\xi}$ is the covariance matrix of
a Gaussian vector $\{X_\xi (\delta_n k),\ k\in \{1,\cdots,n\}^d\}$ coming from a stationary process $Y(k) = X_\xi(\delta_n k),\ k\in\Z^d$
with spectral density $\wtilde f_{0,n}(\theta) + \xi \wtilde r_n(\theta),\ \theta\in [-\pi,\pi]^d$, where $\wtilde r_n(\theta)\ge 0$ and $\xi\in [0,1]$.
By Lemma \ref{le:Samarov-lemma2}(ii), we then have that 
$$
\|B_{n,\xi}^{-1}\|_{\rm op} \| \le \sup_{\theta \in [-\pi,\pi]^d} \left[ \wtilde f_{0n}(\theta) + \xi \wtilde r_n(\theta) \right]^{-1} \le 
\sup_{\theta \in [-\pi,\pi]^d} \left[ \wtilde f_{0n}(\theta)\right]^{-1}.
$$
Recalling the definition of $f_{0,n}$ in \eqref{e:f0n-cont-def} and the {\em folded} spectral density in \eqref{e:folded-spec-dens},
we obtain
$
\wtilde f_{0,n}(\theta) \ge \epsilon \delta_n^{-d} \prod_{i=1}^d \phi(\theta_i) \ge \epsilon \delta_n^{-d} e^{-d\pi^2/2}/(2\pi)^{d/2}
$ 
for $\theta \in  [-\pi,\pi]^d$.  Hence
\begin{equation}\label{ineq: B matrix cont time}
\|B_{n,\xi}^{-1}\|_{\rm op} \le 
\frac{1}{(2\pi)^d} \sup_{\theta \in [-\pi,\pi]^d} \left[ \wtilde f_{0n}(\theta)\right]^{-1} \le \frac{e^{d\pi^2/2} }{(2\pi)^{d/2}\cdot \epsilon}\cdot \delta_n^d
\end{equation}
Finally, by \eqref{e:KL-D-cont-time}, \eqref{e:D-bound-cont-time}, and \eqref{ineq: B matrix cont time}, 
\begin{align*}
{\rm KL}(\mathbb{P}_{1n}, \mathbb{P}_{0n}) & \le \frac{1}{2} \|D_n\|_{\rm F}^2 \|B_{n,\xi}^{-1}\|_{\rm op} ^2
\le c \cdot n^d h_n^{2\beta+d}\delta_n^{-d} \cdot \delta_n^{2d} = c \cdot (n\delta_n)^d h_n^{2\beta+d},
\end{align*}
where $c = 2 \epsilon^{-1} \|\varphi\|_{L^2}^{2d} e^{d\pi^2}$.
Thus, the KL-divergence is uniformly bounded if we set
\[
h_n = M \cdot (n\delta_n)^{-d/(2\beta+d)}.
\]
Picking $M$ so that $g(0) (1+ \mathbbm{1}(\theta_0=0)) \cdot M^{\beta}=1$, we have that
\[
|f_{1n}(\theta_0)-f_{0n}(\theta_0)| = g(0) (1+ \mathbbm{1}(\theta_0=0)) \cdot M^{\beta}\cdot (n\delta_n)^{-\frac{d\beta}{2\beta+d}}=(n\delta_n)^{-\frac{d\beta}{2\beta+d}},
\]
which, by appealing to Theorem 2.5(iii) of \cite{tsybakov2008introduction}, 
yields the desired lower bound in estimation error in \eqref{e:thm:PL minimax cont time}. 
\end{proof}

The technical lemmas needed in the above proofs come from \cite{samarov1977lower}. The first is a slight
extension to the $d$-dimensional case. We provide proofs below for the sake of completeness.

\begin{lemma}\label{le:Samarov-lemma2} 
Let $a_j,\ j\in \Z^d$ be a sequence of numbers 
such that $\sum_{j\in\Z^d}|a_j|^2<\infty$ and $a_j=a_{-j}$. Let also $A_n$ be a matrix of dimensions $n^d\times n^d$, whose 
$(j,k)$-th element equals $a_{j-k}$, where $j$ and $k$ are multi-indices that belong to $[0:n-1]^d:=\{0,1,\cdots,n-1\}^d$ (i.e., the $(j,k)$-th element based on a natural ordering of the multi-indices of $[0:n-1]^d$). Finally, define
$\alpha(\lambda)=(2\pi)^{-d}\sum_{j\in \Z^d} a_je^{\ii j^\top \lambda}$, for $\lambda\in [-\pi,\pi]^d$. 
Then, for the norms of $A_n$, the following claims are true.
\begin{enumerate}
    \item [(i)] $\|A_n\|_{\rm op}\le (2\pi)^{d}\cdot \sup_{\lambda\in[-\pi,\pi]^d}|\alpha(\lambda)|.$
    
    \item [(ii)] If $A_n$ is positive definite, then $\|A_n^{-1}\|_{\rm op}\le (2\pi)^{-d}\cdot\sup_{\lambda\in[-\pi,\pi]^d}|1/\alpha(\lambda)|.$
    
    \item [(iii)] $n^{-d}\|A_n\|_{\rm F}^2\le \sum_{j\in\Z^d} |a_j|^2=(2\pi)^{d}\int_{[-\pi,\pi]^d} \alpha^2(\lambda)d\lambda.$
\end{enumerate}
\end{lemma}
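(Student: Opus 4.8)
The plan is to reduce all three bounds to a single Fourier-analytic identity together with Parseval's relation. Since $\sum_{j\in\Z^d}|a_j|^2<\infty$, the series defining $\alpha$ converges in $L^2([-\pi,\pi]^d)$ (Riesz--Fischer), and the coefficients are recovered by $a_m=\int_{[-\pi,\pi]^d}\alpha(\lambda)e^{-\ii m^\top\lambda}\,d\lambda$. For $x=(x_j)_{j\in[0:n-1]^d}$ and $y=(y_j)_{j\in[0:n-1]^d}$ in $\bbC^{n^d}$, introduce the trigonometric polynomials $p_x(\lambda)=\sum_{j}x_je^{\ii j^\top\lambda}$ and $p_y(\lambda)=\sum_{j}y_je^{\ii j^\top\lambda}$. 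Substituting the inversion formula for $a_{j-k}$ into $y^*A_nx=\sum_{j,k}\overline{y_j}\,a_{j-k}\,x_k$ and interchanging the (finite) sum with the integral yields the identity
\begin{equation*}
y^*A_nx=\int_{[-\pi,\pi]^d}\alpha(\lambda)\,\overline{p_y(\lambda)}\,p_x(\lambda)\,d\lambda ,
\end{equation*}
while orthogonality of $\{e^{\ii j^\top\lambda}\}_{j\in\Z^d}$ on $[-\pi,\pi]^d$ gives $\int_{[-\pi,\pi]^d}|p_x(\lambda)|^2\,d\lambda=(2\pi)^d\|x\|_2^2$ and likewise for $p_y$.

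For (i), I would bound $|y^*A_nx|\le\sup_\lambda|\alpha(\lambda)|\int_{[-\pi,\pi]^d}|p_y(\lambda)|\,|p_x(\lambda)|\,d\lambda\le\sup_\lambda|\alpha(\lambda)|\,\|p_x\|_{L^2}\|p_y\|_{L^2}=(2\pi)^d\sup_\lambda|\alpha(\lambda)|\,\|x\|_2\|y\|_2$ by Cauchy--Schwarz and the Parseval relation above, and then take the supremum over unit vectors using $\|A_n\|_{\rm op}=\sup_{\|x\|_2=\|y\|_2=1}|y^*A_nx|$ (valid for arbitrary matrices, so $a_j=a_{-j}$ is not even needed here). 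For (ii), $A_n$ is already symmetric by $a_j=a_{-j}$, and when it is positive definite $\|A_n^{-1}\|_{\rm op}=1/\lambda_{\min}(A_n)$; taking $y=x$ in the identity gives $x^*A_nx=\int_{[-\pi,\pi]^d}\alpha(\lambda)|p_x(\lambda)|^2\,d\lambda\ge(\inf_\lambda\alpha(\lambda))(2\pi)^d\|x\|_2^2$, so $\lambda_{\min}(A_n)\ge(2\pi)^d\inf_\lambda\alpha(\lambda)$ and therefore $\|A_n^{-1}\|_{\rm op}\le(2\pi)^{-d}\sup_\lambda|1/\alpha(\lambda)|$ (the estimate being vacuous unless $\inf_\lambda\alpha>0$). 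For (iii), I would argue directly by counting: $\|A_n\|_{\rm F}^2=\sum_{j,k\in[0:n-1]^d}|a_{j-k}|^2=\sum_{m\in\Z^d}\big(\prod_{i=1}^d\max(n-|m_i|,0)\big)|a_m|^2\le n^d\sum_{m\in\Z^d}|a_m|^2$, whence $n^{-d}\|A_n\|_{\rm F}^2\le\sum_{m}|a_m|^2$, and one more application of Parseval turns $\sum_m|a_m|^2$ into $(2\pi)^d\int_{[-\pi,\pi]^d}\alpha^2(\lambda)\,d\lambda$ (with the $a_j$ real, as in the intended application, $\alpha$ is real-valued and $\alpha^2=|\alpha|^2$).

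There is no genuinely hard step here; the argument is standard Toeplitz-matrix bookkeeping. The points that need the most care are keeping the $(2\pi)^d$ normalization constants consistent between the inversion formula, the bilinear identity, and Parseval's relation, and — in (ii) — using positive-definiteness to pass from the operator norm of the inverse to the reciprocal of the smallest eigenvalue, which is precisely what lets the symbol $\alpha$ control $\|A_n^{-1}\|_{\rm op}$. I would also flag explicitly where the hypothesis $a_j=a_{-j}$ (equivalently, that $\alpha$ is even and $A_n$ symmetric) is used, namely in (ii).
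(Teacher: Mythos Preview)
Your proposal is correct and follows essentially the same argument as the paper: the same Fourier inversion/bilinear identity $y^*A_nx=\int\alpha(\lambda)\overline{p_y(\lambda)}p_x(\lambda)\,d\lambda$, Cauchy--Schwarz plus Parseval for (i), the Rayleigh-quotient lower bound $x^*A_nx\ge(2\pi)^d(\inf_\lambda\alpha)\|x\|_2^2$ for (ii), and the direct counting $\|A_n\|_{\rm F}^2=\sum_k\prod_i(n-|k_i|)|a_k|^2\le n^d\sum_k|a_k|^2$ for (iii). Your write-up is arguably a bit tidier in flagging precisely where $a_j=a_{-j}$ enters and in noting that (ii) is vacuous unless $\inf_\lambda\alpha>0$, but the substance is identical.
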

\begin{proof} Let $N:= n^{d}$ and use the notation $[0:n-1]^d := \{0,1,\cdots,n-1\}^d.$ 
We will follow the arguments in  \cite{samarov1977lower}.
\begin{enumerate} 
    \item [(i)] Since $A_n$ is a symmetric matrix, we have that 
    \[
    \|A_n\|_{\rm op} = \sup\left\{|x^{\top} A_ny|:\ \|x\|_2=\|y\|_2=1,\ x,y\in\R^{N}\right\},
    \]
    where is the matrix (operator) norm induced by the Euclidian vector norm.   
    Now, let $x=(x_i)_{i\in [0:n-1]^d}$ and $y =(y_i)_{i\in [0:n-1]^d}.$ By Fourier inversion, we have that 
    $$
     a_{j-k} = \int_{[-\pi,\pi]^d} e^{-\ii(j-k)^\top\lambda} \alpha(\lambda) d\lambda.
    $$
    Therefore, 
    \begin{align*}
        |x^{\top}A_ny|& =  
        \left|\sum_{j \in [0:n-1]^d} \sum_{k\in [0:n-1]^d}
        \int_{[-\pi,\pi]^d} x_j e^{-\ii(j-k)^\top\lambda}y_k \alpha(\lambda) d\lambda \right|\\
         & \le \int_{[-\pi,\pi]^d }\left|\sum_{j\in [0:n-1]^d}x_je^{\ii j^\top \lambda}\right|\cdot|\alpha(\lambda)|
           \cdot\left|\sum_{k\in [0:n-1]^d}e^{\ii k^\top \lambda}y_k\right|d\lambda\\ 
           & \le \sup_{\lambda\in[-\pi,\pi]^d}|\alpha(\lambda)| \cdot 
           \left(\int_{[-\pi,\pi]^d}\left|\sum_{j\in [0:n-1]^d}x_je^{\ii j^\top\lambda}\right|^2d\lambda\right)^{1/2} \\
       & \hskip5cm  \times  \left(\int_{[-\pi,\pi]^d}\left|\sum_{k \in [0:n-1]^d}y_k e^{\ii k^\top \lambda}\right|^2d\lambda\right)^{1/2} \\ 
        & = (2\pi)^{d}\cdot\sup_{\lambda\in[-\pi,\pi]^d}|\alpha(\lambda)| \cdot \|x\|_2\cdot\|y\|_2,
    \end{align*}
    The second inequality follows from the Cauchy-Schwarz inequality and the last equality follows by 
    Parseval's identity, since the functions $\varphi_k(\lambda):= (2\pi)^{-d/2} e^{\ii k^\top \lambda},\ \lambda \in [-\pi,\pi],\ k\in [0:(n-1)]^d$,
    are orthonormal in $L^2([-\pi,\pi]^d;\C)$.
        
    \item [(ii)] If $A_n$ is also positive definite, then $A_n$ is invertible. 
%    and we have that \tcr{check} $\|A_n^{-1}\|_{\rm op}=\|A_n\|_{\rm op}^{-1}.$
%     \tcr{The last is not true but the following works.} 
     \noindent Since $\|\cdot\|_{\rm op}$ is the spectral norm, we have that \[
    \|A_n^{-1}\|_{\rm op}=\max\sigma_i(A_n^{-1}) = \max\frac{1}{\sigma_i(A_n)} = \sup\Big\{\frac{1}{x^{\top}A_nx}:\ \|x\|_2=1,x\in\R^N\Big\}
    \]
    where the last equality follows from Rayleigh quotient optimization results, when $A_n$ is positive definite. 
    
    As in part (1), with $z^\star$ denoting the complex conjugate of $z$, we have that 
    \begin{align*}
        |x^{\top}A_nx| & =x^{\top}A_nx =  
        \int_{[-\pi,\pi]^d}\left(\sum_{j\in [0:n-1]^d}x_je^{\ii j^\top\lambda}\right)\alpha(\lambda)\left(\sum_{k\in [0:n-1]^d}e^{\ii k^\top \lambda}y_k\right)d\lambda \\ 
        & = \int_{[-\pi,\pi]^d}\left|\sum_{j\in [0:n-1]^d}x_je^{-\ii j^\top\lambda}\right|^2\alpha(\lambda)d\lambda 
          \\ & \ge \inf_{\lambda\in[-\pi,\pi]^d}|\alpha(\lambda)|\int_{[-\pi,\pi]^d}\left|\sum_{j\in [0:n-1]^d}x_je^{-\ii j\lambda}\right|^2d\lambda
        \\ & = \Big(\sup_{\lambda\in[-\pi,\pi]^d}|\alpha^{-1}(\lambda)|\Big)^{-1}\cdot\|x\|_2^2\cdot (2\pi)^{d},
    \end{align*}
    and the result follows. 
    
    \item [(iii)] It follows that 
    \begin{equation*}
        \|A_n\|_{\rm F}^2 =\sum_{i \in [0:n-1]^d}\sum_{j \in [0:n-1]^d} |a_{i-j}|^2 = \sum_{k\in [-(n-1):(n-1)]^d} \prod_{i=1}^d (n-|k_i|)|a_{k}|^2,
    \end{equation*}
    where $[-(n-1):(n-1)]^d:= \{-(n-1),\cdots,n-1\}^d$.
    Thus, 
    \begin{equation*}
        \frac{1}{n^d}\|A_n\|_{\rm F}^2 = \sum_{k\in [-(n-1):(n-1)]^d} \prod_{i=1}^d \Big(1 -\frac{|k_i|}{n} \Big) |a_k|^2\le 
        \sum_{j\in \Z^d}|a_j|^2= (2\pi)^d \int_{[-\pi,\pi]^d}\alpha^2(\lambda)d\lambda,
    \end{equation*}
    by Parseval's identity.
\end{enumerate}
\end{proof}

\begin{lemma}\label{le:mean value theorem} Let  $B_0$ and $B_1$ be symmetric, positive definite $n\times n$ matrices such that $D:= B_1-B_0$ is 
non-negative definite. Let $P_0$ and $P_1$ be the probability distributions of zero-mean Gaussian vectors with covariance matrices $B_0$ and $B_1$, respectively. 
Then, there is a $\xi\in[0,1]$ such that 
\[
 {\rm KL}(P_1,P_0)\le \frac{1}{2}\|D\|_{\rm F}^2\|B_{\xi}^{-1}\|_{\rm op}^2,
\]
where $B_{\xi} := B_0 +\xi D$, and where $\|\cdot\|_{\rm F}$ stands for the matrix Frobenius norm and $\|\cdot\|_{\rm op}$ stands for the matrix operator norm induced by the Euclidean vector norm.
\end{lemma}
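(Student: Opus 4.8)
The plan is to reduce the bound to a second-order Taylor expansion of the Gaussian Kullback--Leibler divergence along the line segment joining $B_0$ to $B_1$. Recall that for symmetric positive definite matrices $\Sigma_0,\Sigma_1$,
$$
{\rm KL}\big(N(0,\Sigma_1),N(0,\Sigma_0)\big)=\tfrac12\Big({\rm tr}(\Sigma_0^{-1}\Sigma_1)-n-\log\det(\Sigma_0^{-1}\Sigma_1)\Big).
$$
Since $B_0\succ 0$ and $D\succeq 0$, for every $\xi\in[0,1]$ we have $B_\xi=B_0+\xi D\succeq B_0\succ 0$, so $g(\xi):={\rm KL}\big(N(0,B_\xi),N(0,B_0)\big)$ is a well-defined, smooth function on $[0,1]$ with $g(0)=0$ and $g(1)={\rm KL}(P_1,P_0)$.

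First I would differentiate. Using that ${\rm tr}(B_0^{-1}B_\xi)$ is affine in $\xi$ with slope ${\rm tr}(B_0^{-1}D)$, and that $\frac{d}{d\xi}\log\det B_\xi={\rm tr}(B_\xi^{-1}D)$, $\frac{d}{d\xi}B_\xi^{-1}=-B_\xi^{-1}DB_\xi^{-1}$, one obtains
$$
g'(\xi)=\tfrac12\big({\rm tr}(B_0^{-1}D)-{\rm tr}(B_\xi^{-1}D)\big),\qquad g''(\xi)=\tfrac12\,{\rm tr}(B_\xi^{-1}DB_\xi^{-1}D),
$$
so in particular $g'(0)=0$. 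By Taylor's theorem with Lagrange remainder there is $\xi\in(0,1)$ with
$$
{\rm KL}(P_1,P_0)=g(1)=g(0)+g'(0)+\tfrac12 g''(\xi)=\tfrac14\,{\rm tr}\big(B_\xi^{-1}DB_\xi^{-1}D\big).
$$

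It then remains to bound the trace. Using the symmetric square root $B_\xi^{-1/2}$ and cyclicity of the trace, ${\rm tr}(B_\xi^{-1}DB_\xi^{-1}D)=\|B_\xi^{-1/2}DB_\xi^{-1/2}\|_{\rm F}^2$; by submultiplicativity $\|AB\|_{\rm F}\le\|A\|_{\rm op}\|B\|_{\rm F}$ and $\|AB\|_{\rm F}\le\|A\|_{\rm F}\|B\|_{\rm op}$, together with $\|B_\xi^{-1/2}\|_{\rm op}^2=\|B_\xi^{-1}\|_{\rm op}$, this is at most $\|B_\xi^{-1}\|_{\rm op}^2\|D\|_{\rm F}^2$. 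Hence ${\rm KL}(P_1,P_0)\le\tfrac14\|D\|_{\rm F}^2\|B_\xi^{-1}\|_{\rm op}^2\le\tfrac12\|D\|_{\rm F}^2\|B_\xi^{-1}\|_{\rm op}^2$, as claimed.

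I do not expect a genuine obstacle here; the only points requiring care are verifying that $B_\xi$ stays positive definite on the whole segment --- so that $g$ is smooth and the Lagrange remainder is legitimate --- and tracking the cancellation that forces $g'(0)=0$, which is exactly what upgrades the bound from linear to quadratic in $D$. (If one prefers to avoid the mean-value point, one can instead write ${\rm KL}(P_1,P_0)=\int_0^1(1-\xi)g''(\xi)\,d\xi$ and use $B_\xi^{-1}\preceq B_0^{-1}$ to bound the integrand by $\|B_0^{-1}\|_{\rm op}^2\|D\|_{\rm F}^2$; the stated form with some $\xi\in[0,1]$ then follows from the integral mean value theorem.)
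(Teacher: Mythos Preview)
Your argument is correct and follows essentially the same strategy as the paper: interpolate via $B_\xi=B_0+\xi D$, differentiate the Gaussian KL along this segment, invoke a mean-value/Taylor argument, and bound the resulting trace by $\|D\|_{\rm F}^2\|B_\xi^{-1}\|_{\rm op}^2$. The only organizational difference is that the paper applies a first-order mean value theorem to the auxiliary function $\phi(\lambda)={\rm tr}(B_1B_\lambda^{-1})+\log|B_\lambda|$, obtaining ${\rm KL}(P_1,P_0)=\tfrac12{\rm tr}[(B_1-B_\xi)B_\xi^{-1}DB_\xi^{-1}]=\tfrac{1-\xi}{2}{\rm tr}(DB_\xi^{-1}DB_\xi^{-1})$, whereas you apply a second-order Taylor expansion directly to $g(\xi)={\rm KL}(N(0,B_\xi),N(0,B_0))$, exploiting $g(0)=g'(0)=0$ to get the constant $\tfrac14$ before weakening to $\tfrac12$. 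Both routes are equivalent reorganizations of the same computation.
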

\begin{proof}
Since the data is assumed Gaussian, we can immediately obtain that 
\begin{equation}\label{eq:KL div}
    {\rm KL}(P_1,P_0) = \frac{1}{2} \left\{{\rm tr}(B_1B_0^{-1}- E)+\log|B_0|-\log|B_1|\right\},
\end{equation}
where ${\rm tr}(A)$ and $|A|$ are the trace and determinant of the matrix $A$ and $E$ is the identity matrix of dimensions $n\times n$. 
Notice that  $B_{\lambda}:= B_0 +\lambda D,\ \lambda\in[0,1]$ is a positive definite covariance matrix. The expression in \eqref{eq:KL div} can 
be rewritten as 
\begin{equation}\label{eq:KL div 2}
    {\rm KL}(P_1,P_0) = \frac{1}{2} \left\{{\rm tr}[B_1(B_0^{-1}- B_1^{-1})]+\log|B_0|-\log|B_1|\right\}.
\end{equation}
We define the function $\phi(\lambda)= {\rm tr}(B_1B_{\lambda}^{-1})+\log|B_{\lambda}|$ and note that 
by the intermediate value theorem, we have 
$$
{\rm KL}(P_1,P_0) = \frac{\phi(0)-\phi(1)}{2}  = -\frac{1}{2}\cdot \phi'(\xi),
$$
for some $\xi\in [0,1]$.  Using the following differentiation rules
\begin{align*}
     \frac{d}{d\lambda} A^{-1}(\lambda) & = -A^{-1}(\lambda) \left( \frac{d}{d\lambda} A(\lambda) \right) A^{-1}(\lambda),\\
     \frac{d}{d\lambda} \log| A(\lambda) | & = {\rm tr} \left(A^{-1}(\lambda) \frac{d}{d\lambda} A(\lambda) \right),
 \end{align*}
 and the fact that $d B_\lambda/d\lambda = D$, we obtain that \eqref{eq:KL div 2} becomes:
 \begin{align}\label{eq:KL div 3}
 \begin{split}
  {\rm KL}(P_1,P_0) & =  \frac{1}{2} \cdot {\rm tr} \left[ B_1B_{\xi}^{-1} D B_{\xi}^{-1} - D B_{\xi}^{-1}\right] \\
   & = \frac{1}{2} \cdot {\rm tr}\left[ (B_1-B_{\xi})B_{\xi}^{-1} D B_{\xi}^{-1}\right],
   \end{split}
 \end{align}
 for some $\xi\in [0,1]$. To estimate the rhs of \eqref{eq:KL div 3} we will use the following inequalities \citep[see, e.g.,][] {davies1973asymptotic}:
 \begin{align}\label{ineq: KL div le}
 |{\rm tr}(AB)| \le \|A\|_{\rm F} \cdot \|B\|_{\rm F}.
 \end{align}
 If the matrices $A$ and $B$ are symmetric, then 
 \begin{align}\label{ineq: KL div le 2}
  \|B A\|_{\rm F} = \|A B\|_{\rm F} \le \|A\|_{\rm op} \|B\|_{\rm F},
 \end{align}
 where 
 $$
 \|A\|_{\rm F} = \left( \sum_{i,j} a_{ij}^2 \right)^{1/2},\ \ \|A\|_{\rm op} = \sup\{ \|Ax\|_2;\ \|x\|_2 = 1,\ x\in \mathbb R^n\}.
 $$
 From \eqref{eq:KL div 3} with the help of \eqref{ineq: KL div le} and \eqref{ineq: KL div le 2} we obtain:
 \begin{align*}
  {\rm KL}(P_1,P_0) \le \frac{1}{2} \|B_{1} - B_{\xi}\|_{\rm F} \|D\|_{\rm F} \|B_{\xi}^{-1}\|_{\rm op}^2 \le \frac{1}{2}  \|D\|_{\rm F}^2 \|B_{\xi}^{-1}\|_{\rm op}^2. 
 \end{align*}
\end{proof}

\section{Proofs for Section \ref{s:CLT}}\label{s:clt_proof}
This section provides the proofs for Theorem \ref{thm:CLT}. As stated in Section \ref{s:CLT}, $\{X(t), t\in\bbR\}$ is a stationary Gaussian process in the complex Hilbert space $\bbH$, we observe $X$ at $t=k\delta_n, k=1, \ldots, n$, where $n\delta_n\to\infty$.  We will focus on the case $\delta_n\equiv 1$, which contains 
the main ideas of the proof. The proof for the general case follows from a straightforward adaptation of the special case.

\subsection{An overview of the proof of Theorem \ref{thm:CLT}} \label{s:clt_ts} 

As mentioned, we focus on the case  $\delta_n\equiv 1$, which is essentially the time-series setting (cf.\ Section \ref{s:ts}). Thus, we consider a discrete-time stationary process $X=\{X(t),t\in\mathbb{Z}\},$ where $X(t)$ are Gaussian elements of the complex Hilbert space $\bbH$. 
We explained in Section \ref{s:CLT} of the paper that the conditions (a)-(f) in Assumption \ref{a:clt} hold in this case if
\begin{align}\label{e:cond extension}
\sum_{x=-\infty}^{\infty}\left[\|C(x)\|_{\rm tr}+\|\check C(x)\|_{\rm tr}\right]<\infty,
\end{align}
which we assume below. Note that $C$ and $\check C$ are defined in \eqref{e:CXY} and \eqref{e:CXY_2} respectively.
Recall that $\bbX$ denotes the Hilbert space of Hilbert-Schmidt operators ${\cal A}:\bbH \to \bbH$, equipped with the HS-inner product
$\langle {\cal A},{\cal B}\rangle_{\rm HS}:= {\rm trace}({\cal B}^*{\cal A}),\ {\cal A},\ {\cal B}\in \bbX$, and corresponding norm
$\|{\cal A}\|_{\rm HS} = \langle {\cal A},{\cal A}\rangle_{\rm HS}^{1/2}$. The spectral and pseudo spectral density functions in this case are given by
\begin{align*}
f(\theta) = \frac{\delta}{2\pi}\sum_{k=-\infty}^\infty e^{\ii k^\top \theta\delta} C(k), \ \check f(\theta) = \frac{\delta}{2\pi}\sum_{k=-\infty}^\infty e^{-\ii k^\top \theta\delta} \check C(k), \ \theta\in[-\pi,\pi].
\end{align*}
Also,
\begin{align*}%\label{def:CT_n}
\CT_n(\theta) := \sqrt{\frac{n}{\Delta_n}}\left(\hat f_n(\theta)-\mathbb{E}\hat f_n(\theta)\right),
\end{align*}
where 
\begin{align*}
\hat f_n(\theta) = \frac{1}{2\pi n}\sum_{i,j=1}^ne^{\mathbbm{i}(i-j)\theta}X(i)\otimes X(j) K\left(\frac{i-j}{\Delta_n}\right), \ \theta\in [-\pi,\pi].
\end{align*}

For this special setting, we will establish the following theorem:

\begin{thm}\label{thm:CLT_ts} 
Let $X=\{X(t),t\in\mathbb{Z}\}$ be a stationary Gaussian process of the complex Hilbert space $\bbH$. Let $\Delta_n\to\infty, \Delta_n/n\to 0$, and 
assume that \eqref{e:cond extension} and Assumption \ref{a:kernel'} hold. 
Define
\begin{align*}
\CT_n(\theta):= \sqrt{\frac{n}{\Delta_n}}\left[\hat f_n(\theta)-\mathbb{E}\hat f_n(\theta)\right], \  \ \theta\in\bbR,
\end{align*}
where $\hat f_n(\theta)$ is given in \eqref{def: f^ts_1}.
Then, for any $\theta \in [-\pi, \pi]$,
\begin{align*}
\CT_n(\theta) \stackrel{d}{\to}\CT(\theta) \ \mbox{ in $\bbX$},
\end{align*}
where $\CT(\theta)$ is a zero-mean Gaussian element of $\bbX$, such that for every finite collection $\{g_{\ell},\ell=1,\hdots,m\},$ and positive numbers $\{a_\ell, \ell=1,\hdots,m\},$ 
\begin{align*} %\label{s:e:clt_var}
\begin{split}
& {\rm Var}\left(\sum_{\ell=1}^ma_{\ell}\left\langle\CT(\theta) g_\ell,g_\ell\right\rangle\right) \\
& =\|K\|_2^2 \sum_{\ell_1,\ell_2=1}^ma_{\ell_1}a_{\ell_2}\left[\left|\left\langle f(\theta)g_{\ell_2},g_{\ell_1}\right\rangle\right|^2 +I_{(\theta=0,\pm\pi)}
\left|\left\langle \check f(\theta)\overline{g_{\ell_2}},g_{\ell_1}\right\rangle\right|^2\right].
\end{split}
\end{align*}
\end{thm}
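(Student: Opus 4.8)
The plan is to prove weak convergence in the separable Hilbert space $\bbX$ by the classical two-step route: convergence of the finite-dimensional distributions together with tightness. Since $K$ is symmetric, $\hat f_n(\theta)$ and hence $\CT_n(\theta)$ are self-adjoint, so $\CT_n(\theta)$ takes values in the real Hilbert space $\bbX_{\rm sa}$ of self-adjoint Hilbert--Schmidt operators, on which the rank-one operators $\{g\otimes g:\ g\in\bbH\}$ have dense real linear span. Consequently, by the Cram\'er--Wold device it suffices for the finite-dimensional step to show that for every finite family $g_1,\dots,g_m\in\bbH$ and all $a_1,\dots,a_m\in\bbR$ the real random variable
\[
W_n:=\sum_{\ell=1}^m a_\ell\langle \CT_n(\theta)g_\ell,g_\ell\rangle
=\frac{1}{2\pi\sqrt{n\Delta_n}}\sum_{i,j=1}^n e^{\ii(i-j)\theta}K\!\Big(\tfrac{i-j}{\Delta_n}\Big)\Big[\textstyle\sum_{\ell}a_\ell\langle X(i),g_\ell\rangle\overline{\langle X(j),g_\ell\rangle}-\E(\cdots)\Big]
\]
converges in distribution to a centered Gaussian with variance equal to the right-hand side of the asserted formula; note $W_n$ is real because $\CT_n(\theta)=\CT_n(\theta)^*$ and $a_\ell\in\bbR$.

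Because $X$ is Gaussian, $W_n$ is a centered quadratic form of fixed degree in the jointly complex-Gaussian scalars $\{\langle X(i),e_k\rangle\}$, so I would prove its asymptotic normality by the method of cumulants. Expanding all joint moments of the bracketed terms by the Wick/Isserlis calculus (the ``Isserlis type formula'' referenced in the introduction; its operator-valued version, in the spirit of Proposition \ref{prop:Cov for cross product}, is established in the supplement), one writes the $r$-th cumulant ${\rm cum}_r(W_n)$, $r\ge2$, as a sum over connected diagrams; each diagram yields a product of $r$ covariance operators $C(\cdot)$ and pseudo-covariance operators $\check C(\cdot)$ evaluated at lag differences, times $r$ kernel factors $K(\cdot/\Delta_n)$, times a phase $e^{\ii(\cdots)\theta}$, summed against the normalization $(n\Delta_n)^{-r/2}$. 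A power count — one free translation index of range $O(n)$, $r-1$ kernel-constrained lags of range $O(\Delta_n)$, and $r$ covariance lags summed out to $O(1)$ using \eqref{e:cond extension} — gives ${\rm cum}_r(W_n)=O\big((\Delta_n/n)^{r/2-1}\big)$, which tends to $0$ for $r\ge3$ since $\Delta_n/n\to0$. As $W_n$ is a polynomial of bounded degree in Gaussians, all its moments are finite and uniformly bounded, so convergence of all cumulants to those of a normal law — which is moment-determinate — yields $W_n\Rightarrow N(0,\sigma^2(\theta))$.

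The identification of $\sigma^2(\theta)=\lim_n{\rm Var}(W_n)$ is the delicate step. Only two families of second-order connected diagrams survive. In the first, the $X(i)$'s are paired together and the $X(j)$'s are paired together across the two quadratic factors; after the substitution $h=i-j$, $h'=i'-j'$ and summation of the remaining translation index, this contributes $\big(\tfrac1{\Delta_n}\sum_h K(h/\Delta_n)^2\big)$ times a bilinear form in $f(\theta)$, and since $\tfrac1{\Delta_n}\sum_h K(h/\Delta_n)^2\to\int K^2=\|K\|_2^2$ (Assumption \ref{a:kernel'} controls the Riemann-sum error) this produces the term $\|K\|_2^2\sum_{\ell_1,\ell_2}a_{\ell_1}a_{\ell_2}|\langle f(\theta)g_{\ell_2},g_{\ell_1}\rangle|^2$. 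In the second family, $X(i)$ is paired with $X(j')$ and $X(j)$ with $X(i')$, producing pseudo-covariances $\check C$ and an extra phase $e^{2\ii(\cdots)\theta}$; the Dirichlet-type sum over the translation index then forces $e^{2\ii\theta}=1$, i.e.\ $\theta\in\{0,\pm\pi\}$, which is precisely the factor $I_{(\theta=0,\pm\pi)}$, and the same kernel sum yields the term $\|K\|_2^2 I_{(\theta=0,\pm\pi)}\sum_{\ell_1,\ell_2}a_{\ell_1}a_{\ell_2}|\langle\check f(\theta)\overline{g_{\ell_2}},g_{\ell_1}\rangle|^2$. Passing from these scalar sums back to the operator level uses the relations between $C_{k,\ell},\check C_{k,\ell}$ and $\langle f(\theta)g,h\rangle,\langle\check f(\theta)\overline g,h\rangle$ recorded in Section \ref{s:CLT}; matching these variances shows the limit agrees with the Gaussian element $\CT(\theta)$ described in the statement.

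Finally, for tightness it is enough to verify that $\sup_n\sum_{i,j}\E|\langle \CT_n(\theta)e_j,e_i\rangle|^2<\infty$ with uniformly small tails, where $\{e_i\}$ is a CONS of $\bbH$, so that $\{e_i\otimes e_j\}$ is a CONS of $\bbX$ with $\langle \CT_n(\theta),e_i\otimes e_j\rangle_{\rm HS}=\langle\CT_n(\theta)e_j,e_i\rangle$. Each term is again a variance of a quadratic form; running the Isserlis computation of the previous paragraph without taking limits bounds it, uniformly in $n$, by a constant times $\|K\|_\infty^2\big[(\sum_x|C_{i,i}(x)|)(\sum_x|C_{j,j}(x)|)+(\sum_x|\check C_{i,j}(x)|)^2\big]$, and this majorant is summable over $(i,j)$ by \eqref{e:cond extension} (equivalently Assumption \ref{a:clt}(e),(f)), giving the uniform tail bound. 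Combined with the finite-dimensional convergence, this proves $\CT_n(\theta)\stackrel{d}{\to}\CT(\theta)$ in $\bbX$. The main obstacle is the combinatorial bookkeeping in the diagram expansion: organizing the Wick pairings into connected pieces and extracting from the resulting oscillatory multiple sums both the exact constant $\|K\|_2^2$ and the frequency-localized indicator $I_{(\theta=0,\pm\pi)}$ attached to the pseudo-spectral contribution.
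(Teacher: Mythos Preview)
Your approach is essentially the paper's: tightness via uniform summability of $\E|\langle \CT_n,e_k\otimes e_\ell\rangle_{\rm HS}|^2$ (identical to Section~\ref{sec:flat-concentration}), together with one-dimensional convergence of $\sum_\ell a_\ell\langle \CT_n g_\ell,g_\ell\rangle$. The only cosmetic difference is that you phrase the latter via cumulants and connected diagrams, whereas the paper computes all moments directly and decomposes the Isserlis pairings into ``irreducible sub-pairings'' (Lemma~\ref{le:extra Isserlis}, Proposition~\ref{prop: central moments}); these are the same combinatorics, and your power count ${\rm cum}_r(W_n)=O\!\big((\Delta_n/n)^{r/2-1}\big)$ is exactly the paper's bound \eqref{e:Aprk}.

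One bookkeeping slip to correct: you have the two second-order pairing families swapped. Pairing $X_{g_{\ell_1}}(i_1)$ with $\overline{X_{g_{\ell_2}}(j_2)}$ and $X_{g_{\ell_2}}(i_2)$ with $\overline{X_{g_{\ell_1}}(j_1)}$ yields covariances $C(\cdot)$ and the $|\langle f(\theta)g_{\ell_2},g_{\ell_1}\rangle|^2$ term with \emph{no} residual phase; it is the pairing of the two unconjugated factors $X_{g_{\ell_1}}(i_1),X_{g_{\ell_2}}(i_2)$ together (and the two conjugated factors together) that produces pseudo-covariances $\check C(\cdot)$ and the extra phase $e^{2\ii y_1\theta}$ forcing $\theta\in\{0,\pm\pi\}$. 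Also, the vanishing of that oscillatory sum for $\theta\notin\{0,\pm\pi\}$ is not a Dirichlet-kernel argument but a summation-by-parts using the bounded-variation hypothesis on $K$ in Assumption~\ref{a:kernel'}; this is precisely where that assumption enters.
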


The following proposition describes the roadmap for proving this result. For simplicity of notation, we will henceforth suppress the argument $\theta$ in $\CT_n(\theta)$ since it is fixed.

\begin{prop} \label{e:weak_conv}
Let the assumptions of Theorem \ref{thm:CLT_ts} hold. Also, 
let $\{e_i, i \ge 1\}$ be a CONS of $\bbH$. 
Assume that
\begin{enumerate} [label=(\roman*)]
\item for any $\epsilon,\delta>0$, there exists $u\in\Z_+$ such that
\begin{align*}
\sup_{n\ge 1} \pr(\|({\rm I}-\Pi_u)\CT_n\|_\HS > \epsilon) < \delta,
\end{align*} 
where $\Pi_u:\bbX\to \bbX$ is the orthogonal projection operator on $\bbX_u :={\rm span}(e_i\otimes e_j, i, j\le u)$, and
\vskip.2cm
\item
for all $a_\ell\in\R$ and $g_\ell\in\bbH$, 
we have
\begin{align} \label{e:gaussian_moment}
    \mathbb{E}\left[\sum_{\ell=1}^m a_\ell \left\langle \CT_ng_{\ell},g_{\ell}\right\rangle\right]^k \sim 
    \begin{cases}
    \mathcal{O}\left(\left(\frac{\Delta_n}{n}\right)^{\frac{1}{2}}\right) &,\ k\quad\mbox{odd}, \\
     (k-1)!!\left[\sigma_{a,g}^2 \right]^{\frac{k}{2}} &,\   k \quad\mbox{even},
    \end{cases}
\end{align}
where 
\[
\sigma_{a,g}^2:= \|K\|_2^2 \sum_{\ell_1,\ell_2=1}^m a_{\ell_1}a_{\ell_2}\left(\left| \left\langle f(\theta)g_{\ell_1},g_{\ell_2}\right\rangle\right|^2+1_{\{0,\pm\pi\}} (\theta)\left|\left\langle \check f(\theta )\overline{g_{\ell_1}},g_{\ell_2}\right\rangle\right|^2\right).
\]
\end{enumerate} 
Then there exists a Gaussian process $\CT$ in $\bbX$ that fulfills the description of Theorem \ref{thm:CLT_ts}.
\end{prop}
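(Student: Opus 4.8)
The plan is to deduce full weak convergence of $\CT_n$ in the separable Hilbert space $\bbX$ from the two hypotheses by the classical ``finite-dimensional convergence plus tightness'' route, with (i) furnishing the tightness ingredient and (ii) delivering the finite-dimensional (Gaussian) limits. As a preliminary observation, $\CT_n$ really is a random element of $\bbX$, since $\hat f_n(\theta)$ is a finite linear combination of the rank-one, hence Hilbert--Schmidt, operators $X(i)\otimes X(j)$. I would then recall the standard criterion for weak convergence in a separable Hilbert space: if for each $u$ the projected sequence $\Pi_u\CT_n$ converges in law to a centered Gaussian $\zeta_u$ in the finite-dimensional space $\bbX_u$, if the family $\{\zeta_u\}_{u\ge 1}$ is consistent ($\Pi_v\zeta_u\stackrel{d}{=}\zeta_v$ for $v\le u$), and if (i) holds, then $\{\CT_n\}$ is tight, there is a centered Gaussian $\CT\in\bbX$ with $\Pi_u\CT\stackrel{d}{=}\zeta_u$ for all $u$, and $\CT_n\stackrel{d}{\to}\CT$. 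Thus the work reduces to (I) the finite-dimensional convergence, (II) consistency of $\{\zeta_u\}$, and (III) matching the covariance of $\CT$ with the formula in Theorem~\ref{thm:CLT_ts}.

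For (I), fix $u$ and view $\bbX_u=\mathrm{span}\{e_i\otimes e_j:\, i,j\le u\}$ as a real Hilbert space under $\Re\langle\cdot,\cdot\rangle_{\rm HS}$. By the Cram\'er--Wold device it suffices to show that every real-linear functional $\Re\langle\CT_n,\mathcal B\rangle_{\rm HS}$, $\mathcal B\in\bbX_u$, converges in distribution (joint convergence of finitely many such functionals follows by applying Cram\'er--Wold once more to their linear combinations, which are again of this form). With $\mathcal B=\sum_{i,j\le u}b_{ij}\,e_i\otimes e_j$ one has $\langle\CT_n,\mathcal B\rangle_{\rm HS}=\sum_{i,j}b_{ij}\langle\CT_n e_j,e_i\rangle$, and I would use polarization (with $g=(e_i+e_j)/\sqrt 2$, $g'=(e_i+\ii e_j)/\sqrt 2$, together with a further real polarization handling $\Re$ and $\Im$ of the resulting complex quadratic forms via the symmetrized and anti-symmetrized operators $\tfrac12(\CT_n\pm\CT_n^{*})$, which are again lag-window-type statistics) to rewrite each such functional in the form $\sum_\ell a_\ell\langle\CT_n g_\ell,g_\ell\rangle$ with $a_\ell\in\R$. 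Hypothesis (ii), i.e.\ \eqref{e:gaussian_moment}, then shows that all moments of this real random variable converge to the moments of $N(0,\sigma^2_{a,g})$; since Gaussian moments obey Carleman's condition, the moment problem is determinate and the method of moments yields $\sum_\ell a_\ell\langle\CT_n g_\ell,g_\ell\rangle\stackrel{d}{\to}N(0,\sigma^2_{a,g})$. Hence $\Pi_u\CT_n$ converges weakly to a centered Gaussian $\zeta_u$ in $\bbX_u$.

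For (II), since $\Pi_v=\Pi_v\Pi_u$ for $v\le u$, the continuous mapping theorem gives $\Pi_v\CT_n=\Pi_v(\Pi_u\CT_n)\stackrel{d}{\to}\Pi_v\zeta_u$; comparing with $\Pi_v\CT_n\stackrel{d}{\to}\zeta_v$ and using uniqueness of weak limits yields $\Pi_v\zeta_u\stackrel{d}{=}\zeta_v$. The criterion then produces the centered Gaussian $\CT\in\bbX$ with $\CT_n\stackrel{d}{\to}\CT$. For (III), the law of the centered Gaussian $\CT$ is pinned down by the variances of its linear functionals, and letting $n\to\infty$ in \eqref{e:gaussian_moment} with $k=2$ gives $\mathrm{Var}\big(\sum_\ell a_\ell\langle\CT g_\ell,g_\ell\rangle\big)=\sigma^2_{a,g}$ for every choice of $\{a_\ell\}$ and $\{g_\ell\}$, which is exactly the expression appearing in Theorem~\ref{thm:CLT_ts}; as $K$ is symmetric this quantity is real, so $\CT$ fulfills the stated description.

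I expect the main obstacle to be twofold. First, pinning down and correctly invoking the abstract Hilbert-space weak-convergence criterion, making sure that condition (i) is precisely its tail-flatness (``flat concentration'') hypothesis, so that tightness of $\{\CT_n\}$ and the membership $\CT\in\bbX$ genuinely follow from (i) together with the convergence of the projections. Second, the polarization bookkeeping in Step (I) is slightly delicate because $\CT_n$ is not self-adjoint for $\theta\neq 0,\pm\pi$, so $\langle\CT_n g,g\rangle$ is complex-valued and one must separate real and imaginary parts carefully to bring every real-linear functional into the exact shape $\sum_\ell a_\ell\langle\CT_n g_\ell,g_\ell\rangle$ demanded by (ii). The heavier analytic content—namely the verification of (i) and of the Isserlis-type moment identity underlying (ii)—is carried out separately and is assumed here.
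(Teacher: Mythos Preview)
Your proposal is correct and follows essentially the same route as the paper: tightness from the flat-concentration hypothesis (i), finite-dimensional Gaussian limits from (ii) via polarization and the method of moments, and identification of the limiting covariance from the $k=2$ case. The paper organizes the argument slightly differently---it first invokes a tightness criterion (Theorem~7.7.4 of Hsing--Eubank, whose two conditions are exactly (i) and a uniform moment bound on the projections obtained from (ii) plus Markov), and then argues that every subsequential limit $\CT$ satisfies the stated moment identities and is therefore uniquely determined---whereas you first establish convergence of each $\Pi_u\CT_n$ and then appeal to a combined criterion. These are equivalent presentations of the same idea.

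One small correction: your anticipated ``main obstacle'' about $\CT_n$ not being self-adjoint is a non-issue. Because $K$ is assumed symmetric (Assumption~\ref{a:kernel'}), the estimator $\hat f_n(\theta)$ is self-adjoint for every $\theta$ (swap $i\leftrightarrow j$ in the double sum), hence so is $\CT_n$, and $\langle\CT_n g,g\rangle$ is automatically real for all $g\in\bbH$. The paper notes this explicitly and uses the clean complex polarization identity
\[
\langle\CT_n e_j,e_i\rangle=\tfrac{\ii-1}{2}\big(\langle\CT_n e_j,e_j\rangle+\langle\CT_n e_i,e_i\rangle\big)+\tfrac12\langle\CT_n(e_j+e_i),e_j+e_i\rangle-\tfrac{\ii}{2}\langle\CT_n(\ii e_j+e_i),\ii e_j+e_i\rangle,
\]
so your detour through $\tfrac12(\CT_n\pm\CT_n^{*})$ is unnecessary (though not harmful).
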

\begin{proof}
First, we state a useful identity for a complex Hilbert space.
Write
$$
\langle \CT_n, e_i\otimes e_j\rangle_\HS = \langle \CT_ne_j,e_i\rangle_\bbH =: \CT_n(e_j,e_i),
$$
By Lemma A.8 of \cite{shen2020tangent},
\begin{align} \label{e:polar_identity}
\begin{split}
	\CT_n(e_j,e_i) = & \frac{\ii-1}{2} (\CT_n(e_j,e_j)+\CT_n(e_i,e_i) )+ \frac{1}{2} \CT_n(e_j+e_i,e_j+e_i) \\
	 & -\frac{\ii}{2} \CT_n(\ii e_j+e_i, \ii e_j+e_i).
	 \end{split}
	\end{align}
Also, recall that $\{e_i\otimes e_j, i, j \ge 1\}$ is a CONS of $\bbX$. Thus, (i) implies the flat concentration condition of Condition 1 of Theorem 7.7.4 of \cite{hsing2015theoretical}. It follows from (ii), applying \eqref{e:polar_identity} plus Markov's inequality, that Condition 2 of Theorem 7.7.4 of \cite{hsing2015theoretical} also holds. Thus, $\{\CT_n, n \ge 1\}$ is tight and hence relatively compact. To show that $\CT_n$ converges in distribution to some $\CT$, it suffices to show that if $\CT_{n'}\cid \CT$ along some subsequence $\{n'\}$, then $\CT$ does not depend on the subsequence. Now, by the continuous mapping theorem, \eqref{e:gaussian_moment} and a standard uniform integrability argument, we have
$$
\mathbb{E}\left[\sum_{\ell=1}^m a_\ell \left\langle \CT_{n'}g_{\ell},g_{\ell}\right\rangle\right]^k \to 
\mathbb{E}\left[\sum_{\ell=1}^m a_\ell \left\langle \CT g_{\ell},g_{\ell}\right\rangle\right]^k
\ \ \mbox{for all $k$},
$$
where the limiting moments entail that
$\sum_{\ell=1}^m a_\ell\langle \CT g_{\ell},g_{\ell}\rangle$ is distributed as $N(0, \sigma_{a,g}^2)$ \citep[cf. 
Theorem 30.1 of][]{billingsley2012probability}.
Relation \eqref{e:polar_identity} shows that the finite-dimensional distributions
of the real Gaussian process $\{ \langle \CT g_{\ell},g_{\ell}\rangle,\ g_\ell \in \bbH\}$ determine the finite-dimensional distributions of $\{ \langle \CT g,h \rangle,\ g,h\in\bbH\}$, which in turn characterize the law of the $\mathbb X$-valued random element $\CT$. 
The result thus follows.
\end{proof}

We will complete the proof of Theorem \ref{thm:CLT_ts} by verifying the conditions (i) and (ii) of Proposition \ref{e:weak_conv}, which will be established in Sections \ref{sec:flat-concentration} and \ref{sec:weak-conv}, respectively.

\subsection{Verifying (i) of Proposition \ref{e:weak_conv}} \label{sec:flat-concentration}

In this subsection, we establish that (i) of Proposition \ref{e:weak_conv} holds for $\{\CT_n\}$ under the assumptions of the Proposition.  This property is
known as the flat concentration of $\{\CT_n\}$. 

%\begin{prop}\label{prop:verify flat concentration} Condition \eqref{e:cond extension} implies the condition (i) of Proposition \ref{e:weak_conv}.
%\end{prop}
%\begin{proof}
By Markov's inequality, 
\begin{align*}
\pr(\|({\rm I}-\Pi_u)\CT_n\|_\HS > \epsilon) \le \epsilon^{-2} \E \|({\rm I}-\Pi_u)\CT_n\|_\HS^2.
\end{align*}
Since
\begin{align} \label{e:markov}
\E \|({\rm I} -\Pi_u)\CT_n\|_\HS^2 = \sum_{k\vee\ell > u} \E |\langle \CT_n,e_k\otimes e_{\ell}\rangle_{\rm HS}|^2,
\end{align}
it is sufficient to show that
\begin{align} \label{e:akl}
\sum_{k,\ell} \sup_{n}\mathbb{E}\left|\left\langle \CT_n,e_k\otimes e_{\ell} \right\rangle_{\rm HS}\right|^2 < \infty,
\end{align}
which implies (i) of Proposition \ref{e:weak_conv} by \eqref{e:markov}.

Without loss of generality suppose that the CONS $\{e_j,\ j\in\mathbb {N}\}$ of $\bbH$ is {\em real} and thus
$X(t) = \sum_{j\in\mathbb N} X_j(t) e_j$, where $X_j(t) = \langle X(t),e_j\rangle_{\bbH}$ are complex zero-mean Gaussian random variables. 
Also, let 
\begin{align} \label{e:C_proj}
C_{k,\ell}(x) = \langle C(x),e_k\otimes e_{\ell}\rangle_{\rm HS} = \langle C(x) e_\ell, e_k\rangle.
\end{align}
It follows that
\begin{align*}
    & \left\langle \CT_n,e_k\otimes e_{\ell} \right\rangle_{\rm HS} \\
    & = \frac{(2\pi)^{-1}}{\sqrt{n\Delta_n}}\sum_{i,j=1}^n \left\langle X(i)\otimes X(j) - C(i-j),e_k\otimes e_{\ell}\right\rangle_{\rm HS}  e^{\mathbbm{i}\theta(i-j)}K\left(\frac{i-j}{\Delta_n}\right)\\ 
    & = \frac{(2\pi)^{-1}}{\sqrt{n\Delta_n}}\sum_{i,j=1}^n\left[  X_k(i) \overline{X_{\ell}(j)}-C_{k,\ell}(i-j)\right] e^{\mathbbm{i}\theta(i-j)}K\left(\frac{i-j}{\Delta_n}\right),
\end{align*}
where $C_{k,\ell}(i-j) = \mathbb{E}[X_k(i)\overline{X_{\ell}(j)}].$
By Lemma \ref{le:complex isserlis}, we have 
\begin{align*}
 \E[ X_k(i_1) \overline{X_\ell(j_1) X_k(i_2)} X_\ell(j_2) ] &= C_{k,\ell}(i_1-j_1) \overline{C_{k,\ell}(i_2-j_2)} + \check C_{k,\ell}(i_1-j_2)\overline{\check C_{k,\ell}(i_2-j_1)}\\
 & +  C_{k,k}(i_1-i_2)
 \overline{C_{\ell,\ell}(j_1-j_2)}.
 \end{align*} 
Thus,
\begin{align}\label{e:flat-concentration-check-1}
\begin{split}
& \mathbb{E}\left|\left\langle \CT_n,e_k\otimes e_{\ell} \right\rangle_{\rm HS} \right|^2 \\
& = \frac{(2\pi)^{-2}}{n\Delta_n}\sum_{i_1,j_1}\sum_{i_2,j_2} e^{\mathbbm{i}\theta (i_1-j_1-i_2+j_2)}K\left(\frac{i_1-j_1}{\Delta_n}\right)K\left(\frac{i_2-j_2}{\Delta_n}\right) \\ 
& \quad\times  \mathbb{E}\left\{\left[X_k(i_1)\overline{X_{\ell}(j_1)}-C_{k,\ell}(i_1-j_1)\right]\left[\overline{X_k(i_2)}X_{\ell}(j_2)-\overline{C_{k,\ell}(i_2-j_2)}\right]\right\}
\\  
& = \frac{(2\pi)^{-2}}{n\Delta_n}\sum_{i_1,j_1}\sum_{i_2,j_2} e^{\mathbbm{i}\theta (i_1-j_1-i_2+j_2)}K\left(\frac{i_1-j_1}{\Delta_n}\right)K\left(\frac{i_2-j_2}{\Delta_n}\right)
\\
& \qquad \qquad\qquad\qquad\qquad\times C_{k,k}(i_1-i_2)\overline{C_{\ell,\ell}(j_1-j_2)}
\\ 
& \quad +\frac{(2\pi)^{-2}}{n\Delta_n}\sum_{i_1,j_1}\sum_{i_2,j_2} e^{\mathbbm{i}\theta (i_1-j_1-i_2+j_2)}K\left(\frac{i_1-j_1}{\Delta_n}\right)K\left(\frac{i_2-j_2}{\Delta_n}\right)
\\ 
& \qquad \qquad\qquad\qquad\qquad\times \check C_{k,\ell}(i_1-j_2)\overline{\check C_{k,\ell}(i_2-j_1)}\\ 
& =: A_{k,\ell}+B_{k,\ell}.
\end{split}
\end{align}
We start with $A_{k,\ell}$. With the change of variables 
\begin{align*}
    x_1=i_1-i_2, &\ \ x_2=j_1-j_2, \\
    y_1 = i_1-j_1, & \ \ y_2 = i_1,
\end{align*}
we obtain
\begin{align*}
A_{k,\ell} & = \frac{(2\pi)^{-2}}{n\Delta_n}\sum_{x_1,x_2=1-n}^{n-1}C_{k,k}(x_1)\overline{C_{\ell,\ell}(x_2)}e^{\mathbbm{i}\theta(x_1-x_2)}\\ & \qquad\qquad\times \sum_{y_1=(-\Delta_n)\vee(1-n+x_1-x_2)}^{\Delta_n\wedge(n-1+x_1-x_2)}K\left(\frac{y_1}{\Delta_n}\right)K\left(\frac{-x_1+x_2+y_1}{\Delta_n}\right)\sum_{y_2=1\vee (1+y_1)}^{n\wedge (n+y_1)}1
\\ & = \frac{(2\pi)^{-2}}{n\Delta_n}\sum_{x_1,x_2=1-n}^{n-1}C_{k,k}(x_1)\overline{C_{\ell,\ell}(x_2)}e^{\mathbbm{i}\theta(x_1-x_2)}\\ & \qquad\qquad\times\sum_{y_1=(-\Delta_n)\vee(1-n+x_1-x_2)}^{\Delta_n\wedge(n-1+x_1-x_2)}K\left(\frac{y_1}{\Delta_n}\right)K\left(\frac{-x_1+x_2+y_1}{\Delta_n}\right)({n-|y_1|}).
\end{align*}
Thus, with $\|K\|_\infty:= \max_{t} |K(t)|$, we obtain
\begin{align*}
    |A_{k,\ell}| & \le \frac{\|K\|_{\infty}^2}{2\pi^2}\sum_{x_1,x_2=1-n}^{n-1}|C_{k,k}(x_1)||C_{\ell,\ell}(x_2)|\\
    & \le \frac{\|K\|_{\infty}^2}{2\pi^2}\sum_{x_1,x_2=-\infty}^{\infty}|C_{k,k}(x_1)||C_{\ell,\ell}(x_2)| =: \alpha_{k,\ell}.
\end{align*}
By \eqref{e:C_proj} and (ii) of Lemma \ref{le:trace norm alt def}, we have
\begin{align}  \label{e:akl-sum}
    \sum_{k,\ell}\alpha_{k,\ell}\le \frac{\|K\|_{\infty}^2}{2\pi^2}\left(\sum_{x=-\infty}^{\infty}\|C(x)\|_{\rm tr}\right)^2<\infty.
\end{align}

We now turn to $B_{k,\ell}$ in \eqref{e:flat-concentration-check-1}. With the change of variables 
\begin{align*}
    x_1 = i_1-j_2, & \ \ x_2=i_2-j_1,\\ 
    y_1 = i_1-j_1, & \ \ y_2=i_1,
\end{align*}
\begin{align*}
B_{k,\ell} & = \frac{(2\pi)^{-2}}{n\Delta_n}\sum_{x_1,x_2=1-n}^{n-1} \check C_{k,\ell}(x_1)\overline{\check  C_{k,\ell}(x_2)}e^{\mathbbm{i}\theta(-x_1-x_2+2y_1)}\\ & \qquad\qquad\times\sum_{y_1=(-\Delta_n)\vee(1-n+x_1+x_2)}^{\Delta_n\wedge(n-1+x_1+x_2)}K\left(\frac{y_1}{\Delta_n}\right)K\left(\frac{x_1+x_2-y_1}{\Delta_n}\right)\sum_{y_2=1\vee(1+y_1)}^{n\wedge (n+y_1)}1\\ 
& = \frac{(2\pi)^{-2}}{n\Delta_n}\sum_{x_1,x_2=1-n}^{n-1} \check C_{k,\ell}(x_1)\overline{\check C_{k,\ell}(x_2)}e^{\mathbbm{i}\theta(-x_1-x_2+2y_1)}\\ & \qquad\qquad\times\sum_{y_1=(-\Delta_n)\vee(1-n+x_1+x_2)}^{\Delta_n\wedge(n-1+x_1+x_2)}K\left(\frac{y_1}{\Delta_n}\right)K\left(\frac{x_1+x_2-y_1}{\Delta_n}\right)({n-|y_1|}).
\end{align*}
Thus, 
\begin{align*}
    |B_{k,\ell}| & \le \frac{\|K\|_{\infty}^2}{2\pi^2}\sum_{x_1,x_2=1-n}^{n-1}\left|\check C_{k,\ell}(x_1)\right|\left|\check C_{k,\ell}(x_2)\right| \\
    & \le \frac{\|K\|_{\infty}^2}{2\pi^2}\sum_{x_1,x_2=1-n}^{n-1}\left|\check C_{k,\ell}(x_1)\right|\left|\check C_{k,\ell}(x_2)\right| =:\beta_{k,\ell}.
\end{align*}
Applying the Cauchy-Schwarz inequality and in view of the definition of the Hilbert-Schmidt inner product,
\begin{align}\label{e:bkl-sum}
\begin{split}
    \sum_{k,\ell} \beta_{k,\ell} & \le \frac{\|K\|_{\infty}^2}{2\pi^2} \sum_{x_1,x_2=-\infty}^{\infty}
        \sum_{k,\ell}\left| \check C_{k,\ell}(x_1)\right|\left|\check C_{k,\ell}(x_2)\right|  \\
    & \le \frac{\|K\|_{\infty}^2}{2\pi^2} \sum_{x_1,x_2=-\infty}^{\infty} \sqrt{\sum_{k,\ell}\left|\check C_{k,\ell}(x_1)\right|^2}\sqrt{\sum_{k,\ell}\left|\check C_{k,\ell}(x_2)\right|^2}   \\ 
    & = \frac{\|K\|_{\infty}^2}{2\pi^2} \left(\sum_{x=-\infty}^{\infty}{\|\check C(x)\|_{\rm HS}}\right)^2\le 
      \frac{\|K\|_{\infty}^2}{2\pi^2} \left(\sum_{x=-\infty}^{\infty}{\|\check C(x)\|_{\rm tr}}\right)^2<\infty,
      \end{split}
\end{align}
Since the upper bounds $\alpha_{k,\ell}$ and $\beta_{k,\ell}$ do not depend on $n$, we have
\begin{align*}
 \sup_{n}\mathbb{E}\left|\left\langle \CT_n,e_k\otimes e_{\ell} \right\rangle_{\rm HS}\right|^2  \le \alpha_{k,\ell}+\beta_{k,\ell},
\end{align*}
and Relations \eqref{e:akl-sum} and \eqref{e:bkl-sum} imply  \eqref{e:akl}.
%\end{proof}

\subsection{Verifying (ii) of Proposition \ref{e:weak_conv}} \label{sec:weak-conv} 

The proof of (ii) is quite lengthy, and constitutes the core of the central limit theorem proof. In Section \ref{e:moment_scalar}, we will first focus on showing (ii) for the case that $X$ is scalar, i.e.,\ $X$ take values in $\bbC$. There we will take advantage of this simple setting to explain the ideas of the proof. In Section \ref{s:moment_general} , we will prove the proposition for the general case of $X\in\bbH$. 

\subsubsection{The scalar case} \label{e:moment_scalar}

In this section, we focus on a zero-mean,  stationary Gaussian time-series taking values in $\bbC$ and compute the moments of $\CT_n$. The purpose of this section is to develop
technical tools for the general moment calculations needed to prove (ii).

In this setting, $C(t-s) = \mathbb{E}[X(t)\overline{X(s)}]$, $\check C(t-s):= \E[ X(t)X(s)]$, and 
%the spectral density $f(\theta)$ is (cf. \eqref{e:spec_fold})
\begin{equation}\label{e:f-spec-CLT}
f(\theta) = \frac{1}{2\pi}\sum_{x=-\infty}^{\infty}C(x)e^{\mathbbm{i}x\theta}.
\end{equation}
Also, \eqref{e:cond extension} becomes
\begin{equation}\label{e:Assumption-C-prime}
\sum_{x=-\infty}^\infty \Big[ |C(x)| + |\check C(x)| \Big] <\infty.
\end{equation}
Observe that since $C(x) = \overline{C(-x)}$ in this case, we have that $f(\theta)$ is real, even though the process $\{X_t,\ t\in\mathbb Z\}$ is 
complex-valued. We shall also need the so-called pseudo-spectral density, defined as
\begin{equation}\label{e:f-pseudo-spec-CLT}
\check f(\theta) = \frac{1}{2\pi}\sum_{x=-\infty}^{\infty}\check C(x)e^{-\mathbbm{i}x\theta}.
\end{equation}
Recall also that the spectral density estimator is

\[
\hat f_n(\theta) = \frac{1}{2\pi n}\sum_{i,j=1}^nX_i\overline{X_j}K_{\theta}(i-j),
\]
where 
$$
K_\theta(y) = e^{\mathbbm{i}y\theta}K(y/\Delta_n).
$$
The following proposition gives the asymptotic expression of $\mathbb{E}(\CT_n ^k)$ for the scalar case.

\begin{prop}\label{prop: central moments}
Assume that the conditions of Proposition \ref{e:weak_conv} hold for the setting $\bbH=\bbC$. 
%\eqref{e:Assumption-C-prime} holds, that the kernel $K$ is supported on $[-1,1]$, takes values in $[0,1]$ and is symmetric. Moreover, assume that $K$ is differentiable and $K',\widehat{K^2}$ are bounded.  Then, for $\Delta_n\to \infty$ and $\Delta_n/n\to 0$, as 
Then, as $n\to\infty$,
\begin{equation*}
\mathbb{E}(\CT_n^k ) =  \begin{cases}\mathcal{O}\left(\left(\frac{\Delta_n}{n}\right)^{\frac{1}{2}}\right) &,\ k\quad\mbox{odd}, \\
(1+o(1)) (k-1)!!\left[ (f(\theta)^2+ 1_{\{0,\pm \pi\}}(\theta)|\check f(\theta)|^2)\|K\|_2^2\right]^{\frac{k}{2}} &,\ k \quad\mbox{even},
\end{cases}
\end{equation*}
where $f(\theta)$ and $\check f(\theta)$ are as in \eqref{e:f-spec-CLT} and \eqref{e:f-pseudo-spec-CLT}.
\end{prop}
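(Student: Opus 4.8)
The plan is to compute $\E(\CT_n^k)$ directly via the Gaussianity of $\{X_t\}$, using the Isserlis/Wick formula to expand the $k$-th moment of a sum of products $X_i\overline{X_j}$ into a sum over pairings, and then to identify which pairings contribute at the leading order $(\Delta_n/n)^0$. Writing
\begin{align*}
\CT_n = \frac{(2\pi)^{-1}}{\sqrt{n\Delta_n}} \sum_{i,j=1}^n \big[ X_i\overline{X_j} - C(i-j)\big] K_\theta(i-j),
\end{align*}
we get $\E(\CT_n^k) = (2\pi)^{-k}(n\Delta_n)^{-k/2}\sum \prod_{r=1}^k K_\theta(i_r-j_r)\, \E\prod_{r=1}^k\big[X_{i_r}\overline{X_{j_r}}-C(i_r-j_r)\big]$. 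The centered-product expectation is the sum over pairings of the $2k$ Gaussian variables $\{X_{i_r},\overline{X_{j_r}}\}$ that have \emph{no} pair internal to a single block $\{X_{i_r},\overline{X_{j_r}}\}$ (the centering by $C(i_r-j_r)=\E X_{i_r}\overline{X_{j_r}}$ exactly kills those), where each pair $(X_a,\overline{X_b})$ contributes $C(a-b)$, each pair $(X_a,X_b)$ contributes $\check C(a-b)$, and each pair $(\overline{X_a},\overline{X_b})$ contributes $\overline{\check C(a-b)}$. Grouping the $k$ blocks according to the connectivity structure induced by the pairing, one sees that a pairing decomposes the block-index set $\{1,\dots,k\}$ into connected clusters; a cluster of size $m$ carries a sum over $m$ ``lag'' variables (by translation invariance one position per cluster is free, giving a factor $\sim n$ per cluster) while the kernel factors $K_\theta$ and the covariance/pseudo-covariance factors are absolutely summable over the lags by \eqref{e:Assumption-C-prime}. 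Hence a pairing whose clusters have sizes $m_1,\dots,m_p$ (so $\sum m_i = k$) contributes of order $(n\Delta_n)^{-k/2}\cdot n^p\cdot \Delta_n^{\#\{\text{free lag variables counted by }K\}}$.

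The key combinatorial step is then to check that the dominant contribution comes \emph{only} from pairings into $p=k/2$ clusters each of size exactly $2$ (so $k$ must be even; for $k$ odd no such pairing exists and the best one has $p\le (k-1)/2$, yielding $\E(\CT_n^k)=\mathcal O((\Delta_n/n)^{1/2})$). For a size-$2$ cluster consisting of blocks $(i_1,j_1)$ and $(i_2,j_2)$ there are exactly two admissible pairings: the ``$C$-type'' one pairing $X_{i_1}$ with $\overline{X_{j_2}}$ and $\overline{X_{j_1}}$ with $X_{i_2}$, and the ``$\check C$-type'' one pairing $X_{i_1}$ with $X_{i_2}$ and $\overline{X_{j_1}}$ with $\overline{X_{j_2}}$. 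A careful change of variables in each case (one lag variable per block running over $\Delta_n\cdot S_K$ plus one global translation running over $\{1,\dots,n\}$) shows the $C$-type cluster contributes $n\Delta_n(2\pi)^{-2}\big(f(\theta)^2\|K\|_2^2 + o(1)\big)$, while the $\check C$-type cluster contributes $n\Delta_n(2\pi)^{-2}\big(|\check f(\theta)|^2\|K\|_2^2 + o(1)\big)$ but \emph{only when $e^{2\mathbbm i y\theta}$ does not oscillate away}, i.e.\ when $2\theta\equiv 0 \pmod{2\pi}$, that is $\theta\in\{0,\pm\pi\}$; otherwise the phase $e^{2\mathbbm i y\theta}$ forces cancellation and this term is $o(n\Delta_n)$. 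Summing over the $(k-1)!!$ ways to partition $k$ blocks into $k/2$ unordered pairs and over the two cluster-types inside each pair, each pair contributing the factor $(2\pi)^{-2}n\Delta_n(f(\theta)^2 + 1_{\{0,\pm\pi\}}(\theta)|\check f(\theta)|^2)\|K\|_2^2 + o(n\Delta_n)$, and multiplying by the prefactor $(2\pi)^{-k}(n\Delta_n)^{-k/2}$ — wait, the $(2\pi)$ bookkeeping: since each pair already carried $(2\pi)^{-2}$ and there are $k/2$ pairs we recover $(2\pi)^{-k}\cdot(n\Delta_n)^{k/2}$, cancelling the prefactor exactly — we obtain the claimed $(1+o(1))(k-1)!!\big[(f(\theta)^2+1_{\{0,\pm\pi\}}(\theta)|\check f(\theta)|^2)\|K\|_2^2\big]^{k/2}$.

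The main obstacle, and where the real work lies, is the rigorous bookkeeping of the subleading pairings: one must show that every pairing which is \emph{not} a perfect matching into size-$2$ clusters — including clusters of size $\ge 3$ and ``internal'' pairings that survive partial centering — is genuinely lower order, i.e.\ $\mathcal O((\Delta_n/n)^{1/2})$ relative to the leading term. This requires a uniform combinatorial argument bounding the number of free lattice-index summations: a cluster of size $m$ involves $2m$ Gaussian variables matched into $m$ pairs spanning the cluster, so by a spanning-tree/connectivity count it has at most $1$ free ``translation'' index (giving the $n$) and the remaining $2m-1$ matched pairs are controlled by absolute summability of $C,\check C$ (bounded, not growing with $n$) except that up to $m$ of them may be ``kernel-tied'' and contribute $\Delta_n$ each; tallying these against the normalization $(n\Delta_n)^{-k/2}$ and using $\Delta_n/n\to 0$ shows the total over such pairings is $O(\Delta_n/n)^{1/2}$ — but making this estimate uniform over all $O(k!)$ pairings while keeping the phase-cancellation argument (for the $\check C$-type away from $\theta\in\{0,\pm\pi\}$) airtight is the delicate part; I would organize it by first proving a general lemma bounding the contribution of an arbitrary pairing by its cluster-size profile, then specializing.
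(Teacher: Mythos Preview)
Your approach is essentially the same as the paper's: expand via an Isserlis-type formula for the centered products $\prod_{r}[X_{i_r}\overline{X_{j_r}}-C(i_r-j_r)]$, decompose each admissible pairing into connected (``irreducible'') components on the block set $\{1,\dots,k\}$, bound an order-$r$ component by $O((\Delta_n/n)^{r-1})$ via a change of variables, and conclude that only perfect matchings into size-$2$ clusters survive, giving the $(k-1)!!$ count and the two cluster types ($C$ and $\check C$), with the $\check C$-type killed by the phase $e^{2\mathbbm{i}y\theta}$ unless $\theta\in\{0,\pm\pi\}$.

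Two places where the paper is more explicit than your sketch: (i) it proves a dedicated Isserlis formula (their Lemma for centered products) by induction, which is what justifies your claim that ``the centering \dots\ exactly kills those'' internal pairs; (ii) for an irreducible component of order $r$ it uses the concrete change of variables $x_\ell=$ (the $r$ covariance arguments), $y_\ell=i_\ell'-j_\ell'$ for $\ell\le r-1$, $y_r=i_r'$, bounding one kernel factor by $\|K\|_\infty$, to get $|A_{\pa_r}|\le \|K\|_\infty n^{-r}\big(\sum_x D(x)\big)^r \Delta_n^{r-1}\cdot n = O((\Delta_n/n)^{r-1})$ --- this is cleaner than your ``$2m-1$ matched pairs / up to $m$ kernel-tied'' accounting, which is slightly off (there are $m$ pairs, and $m-1$ kernel factors are used while one is bounded trivially). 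The phase-cancellation for $\theta\notin\{0,\pm\pi\}$ is handled in the paper by a summation-by-parts argument using that $K^2$ has bounded variation, which you should invoke rather than leave as ``forces cancellation''.
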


\begin{proof}%[Proof of Proposition \ref{prop: central moments}]
\vspace{.3cm}\noindent
%\underline{$k=2$}
Assume without loss of generality that the support of $K$ is $[-1,1]$.

\medskip
\noindent
\underline{The case $k=2$}
\vskip.3cm\noindent
By Lemma \ref{le:extra Isserlis} (for $N=0,\ M=2$, i.e., see \eqref{e:M=2,N=0}), 
\begin{align*}
  (2\pi)^2  \mathbb{E}(\CT_n^2)  & = \frac{1}{n\Delta_n}\sum_{i_1,j_1=1}^n\sum_{i_2,j_2=1}^nK_{\theta}\left(i_1-j_1\right)K_{\theta}\left(i_2-j_2\right)\\ 
    & \qquad\qquad\times \mathbb{E}\left[\left(X_{i_1}\overline{X_{j_1}}-C(i_1-j_1)\right)\left(X_{i_2}\overline{X_{j_2}}-C(i_2-j_2)\right)\right]\\ 
    & = \frac{1}{n\Delta_n}\sum_{i_1,j_1=1}^n\sum_{i_2,j_2=1}^nK_{\theta}\left(i_1-j_1\right)K_{\theta}\left(i_2-j_2\right)\\ & \qquad\qquad\times\left[C(i_1-j_2)C(i_2-j_1)+\check C(i_1-i_2)\overline{\check C(j_1-j_2)}\right] \\
    & =: \widetilde A_2 +  \overline{A}_2,
\end{align*}
where 
\begin{equation*} %\label{def:A_2}
    \widetilde A_2 := \frac{1}{n\Delta_n}\sum_{i_1,j_1=1}^n\sum_{i_2,j_2=1}^nK_{\theta}\left(i_1-j_1\right)K_{\theta}\left(i_2-j_2\right)\cdot C(i_1-j_2)C(i_2-j_1),
\end{equation*}
and 
\begin{equation*}%\label{def:A_2 bar}
    \overline{A}_2 := \frac{1}{n\Delta_n}\sum_{i_1,j_1=1}^n\sum_{i_2,j_2=1}^nK_{\theta}\left(i_1-j_1\right)K_{\theta}\left(i_2-j_2\right)\cdot \check C(i_1-i_2)\overline{\check C(j_1-j_2)}.
\end{equation*}
The two terms have somewhat different properties and we start with $\widetilde A_2$.
With the change of variables 
\begin{align*}
    x_1 &= i_1-j_2,\quad x_2 =i_2-j_1,\\
    y_1 &= i_1-j_1,\quad  y_2=i_1,
\end{align*}
we have 
\begin{align*}
    \widetilde A_2    & = \frac{1}{n\Delta_n}\sum_{x_1,x_2=-n+1}^{n-1}C(x_1)C(x_2)e^{\mathbbm{i}x_1\theta}e^{\mathbbm{i}x_2\theta}\\ 
    & \qquad \times  \sum_{y_1=(1-n)\vee (1-n+x_1+x_2)}^{(n-1)\wedge(n-1+x_1+x_2)}K\left(\frac{y_1}{\Delta_n}\right)K\left(\frac{x_1+x_2-y_1}{\Delta_n}\right) \sum_{y_2=1\vee (1+y_1)}^{n\wedge(n+y_1)}1\\ 
    & = \frac{1}{n\Delta_n} \sum_{x_1,x_2=-n+1}^{n-1}C(x_1)C(x_2)e^{\mathbbm{i}x_1\theta}e^{\mathbbm{i}x_2\theta}\\ 
    & \qquad\times\sum_{y_1=(1-n)\vee(1-n+x_1+x_2)}^{(n-1)\wedge(n-1+x_1+x_2)}K\left(\frac{y_1}{\Delta_n}\right)
     K\left(\frac{x_1+x_2-y_1}{\Delta_n}\right)\cdot ({n-|y_1|})\\
    & = \frac{1}{\Delta_n}\sum_{|x_1|\vee|x_2|\le L}C(x_1)C(x_2)e^{\mathbbm{i}x_1\theta}e^{\mathbbm{i}x_2\theta}\\ & \qquad\qquad\times \sum_{y_1=(1-n)\vee(1-n+x_1+x_2)}^{(n-1)\wedge(n-1+x_1+x_2)}K \left(\frac{y_1}{\Delta_n}\right)K \left(\frac{x_1+x_2-y_1}{\Delta_n}\right)\frac{{n-|y_1|}}{n}\\ 
    & + \frac{1}{\Delta_n}\sum_{|x_1|\vee|x_2|\ge L}C(x_1)C(x_2)e^{\mathbbm{i}x_1\theta}e^{\mathbbm{i}x_2\theta}\\ & \qquad\qquad\times\sum_{y_1=(1-n)\vee(1-n+x_1+x_2)}^{(n-1)\wedge(n-1+x_1+x_2)}K\left(\frac{y_1}{\Delta_n}\right)K\left(\frac{x_1+x_2-y_1}{\Delta_n}\right)\frac{{n-|y_1|}}{n} \\
    & =:B_1 +B_2, 
    % \\ & \sim \frac{4\pi^2\Delta_n}{n}f^2(\theta)\|K\|_2^2, \qquad\mbox{as}\quad n\to\infty
\end{align*}
for some $L=L_n\to\infty$ and $L=o(\Delta_n).$
One can easily see that 
\begin{align*}
    |B_2|& \le \frac{\|K\|_\infty}{\Delta_n}\sum_{|x_1|\vee|x_2|\ge L}|C(x_1)||C(x_2)|\sum_{y_1=-n+1}^{n-1}K\left(\frac{y_1}{\Delta_n}\right)\\ 
    & \le 2\|K\|_\infty \sum_{|x_1|\vee|x_2|\ge L}|C(x_1)| |C(x_2)|=o\left(1\right),\ \mbox{as}\ L\to\infty,
\end{align*}
by \eqref{e:Assumption-C-prime}.
Now, adding and subtracting the same term in $B_1$, one obtains that $B_1=C_1+C_2,$ where 
\begin{align*}
 C_1 & := \frac{1}{\Delta_n} \sum_{|x_1|\vee|x_2|\le L}C(x_1)C(x_2)e^{\mathbbm{i}x_1\theta}e^{\mathbbm{i}x_2\theta} 
     \sum_{y_1=(1-n)\vee(1-n+x_1+x_2)}^{(n-1)\wedge(n-1+x_1+x_2)}K^2\left(\frac{y_1}{\Delta_n}\right)\frac{{n-|y_1|}}{n}
\end{align*}
and
\begin{align*}
 C_2 & := \frac{1}{\Delta_n}\sum_{|x_1|\vee|x_2|\le L}C(x_1)C(x_2)e^{\mathbbm{i}x_1\theta}e^{\mathbbm{i}x_2\theta} \\ 
    & \qquad\times\sum_{y_1=(1-n)\vee(1-n+x_1+x_2)}^{(n-1)\wedge(n-1+x_1+x_2)}K\left(\frac{y_1}{\Delta_n}\right)\left[K\left(\frac{x_1+x_2-y_1}{\Delta_n}\right)-K\left(\frac{y_1}{\Delta_n}\right)\right] 
    \times \frac{{n-|y_1|}}{n}.
\end{align*}

We examine $C_1$ first.
Observe first that the inner sum over $y_1$ is confined to
$-\Delta_n \le y_1\le \Delta_n$, since $K$ is supported on $[-1,1]$.  Moreover, since $L=o(\Delta_n)$ and $\Delta_n=o(n)$,
for all $|x_1|\vee |x_2|\le L$, and all sufficiently large $n$, we have that $(1-n)\vee (1-n+x_1+x_2)\le -\Delta_n$ and 
$\Delta_n\le (n-1)\wedge (n-1 + x_1 + x_2)$.  This means, that the inner summation in the definitions of $C_1$ and $C_2$
is over the range $[-\Delta_n,\Delta_n]$ and it does not depend on $x_1$ and $x_2$.  That is, for all sufficiently large $n$,
\begin{align*}
 C_1 & = \sum_{|x_1|\vee|x_2|\le L}C(x_1)C(x_2)e^{\mathbbm{i}x_1\theta}e^{\mathbbm{i}x_2\theta} \times \frac{1}{\Delta_n} 
 \sum_{y_1=-\Delta_n}^{\Delta_n} K^2\Big( \frac{y_1}{\Delta_n}\Big)\\
    &  \sim 4\pi^2 f(\theta)^2\int_{-1}^{1}K^2(y)dy,\ \ \mbox{ as }n\to\infty,
\end{align*}
where the last relation follows from the Riemann integrability of $K^2$ and the fact that $\sum_{|x|\le L} C(x)e^{\ii x \theta} \to 2\pi f(\theta)$,
as $L\to\infty$.

Now, focus on the term $C_2$.  Using the facts that $K$ is an even function and 
\[|K(x)-K(y)|\le  c|x-y|, \] (since $K'$ is bounded), we get $|K((x_1+x_2 -y_1)/\Delta_n) - K(y_1/\Delta_n)|\le 2c L/\Delta$, for all $|x_1|\vee |x_2|\le L$.
Thus, by Condition \eqref{e:Assumption-C-prime} and the Riemann integrability of $K$, we obtain
\begin{align*}
    |C_2|&\le\left(\sum_{|x|\le L}|C(x)|\right)^2 \frac{1}{\Delta_n} \sum_{y=-\Delta_n}^{\Delta_n}K\left(\frac{y}{\Delta_n}\right) \frac{2Lc}{\Delta_n} \\
    & \sim\frac{2Lc}{\Delta_n}\left(\sum_{x=-\infty}^\infty|C(x)|\right)^2\int_{u=-1}^1K(u)du=o\left(1\right),
\end{align*}
since $L = o(\Delta_n)$.
Summarizing, we have that for all $\theta$ (including $\theta=0$ and $\theta = \pm \pi$)
\[\widetilde A_2 =C_1+C_2+B_2\sim 4\pi^2 f(\theta)^2\int_{-1}^{1}K^2(u)du.\]

We next consider $\overline{A}_2$. Similar to the derivation for $\widetilde A_2$, with the change of variables 
\begin{align*}
    x_1 &= i_1-i_2,\quad x_2 =j_1-j_2, \\
    y_1 &= i_1-j_1,\quad  y_2=i_1,
\end{align*}
we get 
\begin{align*}
     & \overline{A}_2 = \frac{1}{n\Delta_n}\sum_{x_1,x_2=-n+1}^{n-1}\check C(x_1)\overline {\check C(x_2)}e^{-\mathbbm{i}x_1\theta}e^{\mathbbm{i}x_2\theta}\\ 
     & \qquad\cdot \sum_{y_1=(1-n)\vee(1-n+x_1-x_2)}^{(n-1)\wedge(n-1+x_1-x_2)}K\left(\frac{y_1}{\Delta_n}\right)K\left(\frac{-x_1+x_2+y_1}{\Delta_n}\right)e^{\mathbbm{i}2y_1\theta} \sum_{y_2=1\vee(1+y_1)}^{n\wedge(n+y_1)}1\\ 
     & = \frac{1}{\Delta_n}\sum_{x_1,x_2=-n+1}^{n-1}\check C(x_1)\overline{\check C(x_2)}e^{-\mathbbm{i}x_1\theta}e^{\mathbbm{i}x_2\theta}\\ 
     & \qquad\cdot\sum_{y_1=(1-n)\vee(1-n+x_1-x_2)}^{(n-1)\wedge(n-1+x_1-x_2)}K\left(\frac{y_1}{\Delta_n}\right)K\left(\frac{-x_1+x_2+y_1}{\Delta_n}\right)e^{\mathbbm{i}2y_1\theta}\cdot\frac{{n-|y_1|}}{n}. 
\end{align*}
Observe first that for $\theta = \pm\pi$ or $\theta=0$, we have $e^{\ii 2y_1 \theta } = 1,\ y_1\in\mathbb Z$
 and for the term $\overline{A}_2$ with the same argument as 
for the term $\widetilde A_2$, we obtain
$$
\overline{A}_2  \sim 4\pi |\check f(\theta)|^2 \|K\|_2^2,\ \ \mbox{ where }
 \ \  \check f(\theta) = \frac{1}{2\pi} \sum_{x=-\infty}^\infty \check C(x) e^{-\ii x \theta},\ \ \theta\in \{0,\pm \pi\}.
$$

Suppose now $\theta\not= 0$ and $\theta\not = \pm\pi$, so that the term $e^{\ii 2y_1\theta}$ is present.
By adding and subtracting a term, we have that $\overline{A}_2=D_1+D_2,$ where $D_2$ is defined in \eqref{e:D2-term} below and
\begin{align} \label{e:D1-term}
\begin{split}
    D_1 
    & :=\frac{1}{\Delta_n}\sum_{x_1,x_2=-n+1}^{n-1}\check C(x_1) \overline{\check C(x_2)} e^{-\mathbbm{i}x_1\theta}e^{\mathbbm{i}x_2\theta}  \\
    & \hspace{2cm} \cdot \sum_{y_1=(1-n)\vee(1-n+x_1-x_2)}^{(n-1)\wedge(n-1+x_1-x_2)}K^2\left(\frac{y_1}{\Delta_n}\right)e^{\mathbbm{i}2y_1\theta}
     \frac{{n-|y_1|}}{n}  \\ 
    & = \mathcal{O}\left(\frac{1}{\Delta_n}\right).
    \end{split}
\end{align}
Indeed, write
$$
w_n(y) = K^2\left(\frac{y}{\Delta_n}\right) \frac{n-|y|}{n} 
$$
and consider, for any $c_1,c_2 \in [1,\Delta_n]$,
\begin{align*}
& \sum_{y=c_1}^{c_2} w_n(y) e^{\ii 2 y \theta} (e^{\ii 2 \theta}-1) \\
&= \sum_{y=c_1+1}^{c_2+1} w_n(y-1) e^{\ii 2 y\theta} - \sum_{y=c_1}^{c_2} w_n(y) e^{\ii 2 y\theta} \\
& = w_n(c_2) e^{\ii 2 (c_2+1)\theta} - w_n(c_1) e^{\ii 2 c_1\theta} + \sum_{y=c_1+1}^{c_2} (w_n(y-1)-w_n(y)) e^{\ii 2 y\theta}.
\end{align*} 
Focusing on the second term,
\begin{align*}
& \sum_{y=c_1+1}^{c_2} (w_n(y-1)-w_n(y)) e^{\ii 2 y\theta} \\
&= \sum_{y=c_1+1}^{c_2} \left(K^2\left(\frac{y-1}{\Delta_n}\right) \frac{n-1-(y-1)}{n}-K^2\left(\frac{y}{\Delta_n}\right) \frac{n-1-y}{n}\right) 
e^{\ii 2 y\theta} \\
&= \frac{1}{n} \sum_{y=c_1+1}^{c_2} K^2\left(\frac{y-1}{\Delta_n}\right) e^{\ii 2 y\theta} 
 + \sum_{y=c_1+1}^{c_2} \left(K^2\left(\frac{y-1}{\Delta_n}\right)-K^2\left(\frac{y}{\Delta_n}\right)\right)  \frac{n-1-y}{n}e^{\ii 2 y\theta} \\
&=: E_1+E_2.
\end{align*} 
Clearly, $E_1={\cal O}(\Delta_n/n) = o(1)$ uniformly in $c_1,c_2$. Also, it follows that
\begin{align*}
|E_2| \le \sum_{y=c_1+1}^{c_2} \left|K^2\left(\frac{y-1}{\Delta_n}\right)-K^2\left(\frac{y}{\Delta_n}\right)\right| < \infty
\mbox{ uniformly in $c_1,c_2$}
\end{align*} 
since $K^2$ is of bounded variation (recall that $K'$ is bounded and $K$ is compactly supported).  
Thus,
\begin{align*}
& \sum_{y=c_1}^{c_2} w_n(y) e^{\ii 2 y \theta}  \\
& = (e^{\ii 2\theta}-1)^{-1}\left(w_n(c_2) e^{\ii (c_2+1)\theta} - w_n(c_1) e^{\ii c_1\theta}
+ \sum_{y=c_1+1}^{c_2} (w_n(y-1)-w_n(y)) e^{\ii y\theta}\right),
\end{align*} 
which is uniformly bounded. (Note that here we used the fact that $e^{\ii 2\theta} -1 \not =0$, since $\pm\pi\not =\theta \not = 0$.)
Applying this argument, 
we see that the inner sum in \eqref{e:D1-term} is uniformly bounded, and hence by Condition \eqref{e:Assumption-C-prime}, 
we obtain that $D_1 = o(1)$.

On the other hand, for the term $D_2$, we obtain
\begin{align}\label{e:D2-term}
\begin{split}
    D_2& := \frac{1}{\Delta_n}\sum_{x_1,x_2=-n+1}^{n-1}\check C(x_1) \overline{\check C(x_2)} e^{-\mathbbm{i}x_1\theta}e^{\mathbbm{i}x_2\theta}\\
     & \qquad \times\sum_{y_1=(1-n)\vee(1-n+x_1-x_2)}^{(n-1)\wedge(n-1+x_1-x_2)}K\left(\frac{y_1}{\Delta_n}\right)\left[K\left(\frac{-x_1+x_2+y_1}{\Delta_n}\right)-K\left(\frac{y_1}{\Delta_n}\right)\right] \\
    & \hspace{3.5cm} \times e^{\mathbbm{i}2y_1\theta}\cdot\frac{{n-|y_1|}}{n}\\ 
    & = o(1),
    \end{split}
\end{align}
using the same arguments as for $B_2$ and $C_2$. Thus, 
\begin{align*}
\overline{A}_2\to 0 \quad \mbox{for}\quad \theta\not=0\ \ \mbox{ and }\ \  \theta\not=\pm\pi.
\end{align*}
This completes the derivation for $\E(\CT_n^2)$.
\vskip.3cm\noindent
\underline{The case $k\ge 3$}
\vskip.3cm
\noindent
Fix some integer  $k\ge 3$. Let $\mathcal{P}$ be the set of all possible pairings of $\{i_{\ell},j_{\ell}:\ \ell=1,\hdots,k\}$. Then a pairing $\pa\in\mathcal{P}$ iff \[\pa =\left\{\{I,J\},\{I,\tilde I\},\{J,\tilde J\}:I\neq\tilde I\in\{i_{\ell},\ell=1,\hdots,k\},J\neq\tilde J\in\{j_{\ell},\ell=1,\hdots,k\}\right\},\] where all symbols $i_{\ell},j_{\ell},\ell=1,\hdots,k$ can be used only once. 

By Lemma \ref{le:extra Isserlis}, 
\begin{align*}
    \mu_k & := (2\pi)^k\E(\CT_n^k)= \frac{1}{(n\Delta_n)^{k/2}} \sum_{\substack{i_{\ell},j_{\ell}=1\\ \ell=1,\hdots,k}}^n\mathbb{E}\left[\prod_{\ell=1}^k\left[X_{i_{\ell}}\overline{X_{j_{\ell}}}-C(i_{\ell}-j_{\ell})\right]\right]\prod_{\ell=1}^kK_{\theta}(i_{\ell}-j_{\ell})\\
    & = \frac{1}{(n\Delta_n)^{k/2}} \sum_{\substack{i_{\ell},j_{\ell}=1\\ \ell=1,\hdots,k}}^n\prod_{\ell=1}^kK_{\theta}(i_{\ell}-j_{\ell})\\ & \qquad\times\sum_{\substack{\pa \in \mathcal{P}:\\ \cup_{p=1}^k\{\{i_p,j_p\}\}\cap\pa=\emptyset}} \prod_{\{i,j\}\in\pa} \Big[C(I-J)\mathbbm{1}_{\{i,j\}=\{I,J\}}+\check C(I-\tilde I)\mathbbm{1}_{\{i,j\}=\{I,\tilde I\}} \\
    & \hskip3cm\qquad\qquad +\overline{\check C(J-\tilde J)}\mathbbm{1}_{\{i,j\}=\{J,\tilde J\}}\Big],
\end{align*}

Let now $r\le k$ and fix a subset of $2r$ indices $\{i_1',j_1',\hdots, i_r',j_r'\} \subset\{i_1,j_1,\cdots,i_k,j_k\}$, where
$(i_1' ,j_1')= (i_{\sigma_1},j_{\sigma_1}),\cdots, (i_r' ,j_r')= (i_{\sigma_r},j_{\sigma_r})$, for some $1\le \sigma_1<\cdots<\sigma_r\le k$. 
A partition of 
$\{i_1',j_1',\hdots i_r',j_r'\}$ into pairs 
will be called a sub-pairing of order $r$.  Namely, it is a partition into pairs that involves $r$ couples of $i'$ and/or $j'$ symbols
taken only from the set $\{i_1',j_1',\hdots i_r',j_r'\}$.
 
A (sub)pairing will be called {\em irreducible}, if does not have further sub-pairings, i.e., it cannot be broken up into a disjoint union of two or more sub-pairings
of lower order.   Let $C_{\mathcal{P},r,k}$ denote the set of all {\em irreducible} sub-pairings of order $r$.

  Looking at a single summand of the second sum in $\mu_k,$ one can see that every pairing 
 $\pa \in \mathcal{P}$ is the union of multiple irreducible pairings of the form $C_{\mathcal{P},r,k}$ with $r\ge 2$. We will argue below 
 that among all pairings in ${\cal P}$ only the ones involving irreducible components of order $r=2$ contribute asymptotically, 
 and the remaining pairings are of lower order, as $n\to\infty$.
 
 Let $\pa\in \mathcal{P}$ denote a pairing that shows up in the second sum of $(2\pi)^d\E(\CT_n^k)$, and suppose that 
$$
\pa  = \pa_{r_1} \cup \cdots \cup\pa_{r_m},
$$ 
where the $\pa_{r_i} \in C_{\mathcal{P},r_i,k},\ r_i\ge 2,\ i=1,\hdots,m,$ are the irreducible sub-pairings of $\pa$.
Then, 
\begin{align} \label{e:r_ge_3_moment_bound}
% \begin{split}
 \mu_k= (2\pi)^k\E(\CT_n^k) &=  \frac{1}{(n\Delta_n)^{k/2}}  \sum_{\substack{i_{\ell},j_{\ell}=1\\ \ell=1,\hdots,k}}^n \Big\{  \prod_{\ell=1}^kK_{\theta}(i_{\ell}-j_{\ell}) \\ 
 &\times  \sum_{\substack{\pa \in {\cal P}:\\ \cup_{p=1}^k\left\{\{i_p,j_p\}\right\}\cap\pa=\emptyset }} \prod_{\{i,j\}\in \pa} \Big[C(I-J)\mathbbm{1}_{\{i,j\}=\{I,J\}}+\check C(I-\tilde I)\mathbbm{1}_{\{i,j\}=\{I,\tilde I\}}\nonumber\\
  & \hspace{3cm} +\overline{\check C(J-\tilde J)}\mathbbm{1}_{\{i,j\}=\{J,\tilde J\}}\Big] \Big\} \nonumber\\ 
  & =:  \left(\frac{n}{\Delta_n}\right)^{k/2}  \sum_{\substack{\pa \in {\cal P} \\ \pa = \pa_{r_1} \cup \cdots \cup\pa_{r_m},\ m\ge 1 }}  \prod_{t=1}^m A_{\mathcal{P},\pa_{r_t},k},\nonumber
  % \end{split}
\end{align}
where $A_{\mathcal{P},\pa_{r_t},k}$ involves a product of the terms restricted to the irreducible sub-pairing $\pa_{r_t}$, and where
$r_1+ \cdots + r_m = k$, with $r_i\ge 2,\ i=1,\hdots,m$.  Namely,
assuming that the subset of indices $\{i_1',j_1',\cdots,i_r',j_r'\} = \{i_{\sigma_2},j_{\sigma_1},\cdots,i_{\sigma_r},j_{\sigma_r}\},\ r\le k$, is involved in the irreducible pairing  $A_{\mathcal{P},\pa_r,k}$
we have
\begin{align*}
    A_{\mathcal{P},\pa_r,k} & = \frac{1}{n^r}  \sum_{\substack{i_{\ell}',j_{\ell}'=1\\ \ell=1,\hdots,r}}^n
     \prod_{\ell=1}^rK_{\theta}(i_{\ell}'-j_{\ell}')\times
   \\ 
   & \prod_{\{i,j\}\in\pa_r} \Big[C(I-J)\mathbbm{1}_{\{i,j\}=\{I,J\}}+\check C(I-\tilde I)\mathbbm{1}_{\{i,j\}=\{I,\tilde I\}}\\ 
   & \qquad\qquad\qquad+\overline{\check C(J-\tilde J)}\mathbbm{1}_{\{i,j\}=\{J,\tilde J\}}\Big].
\end{align*}
Let $r\ge3,r\le k$, and apply the change of variables
\begin{align*}
    & x_{\ell} = i-j,\ {i,j}\in\pa_r \\
    & y_{\ell} = i_{\ell}'-j_{\ell}',\ell=1,\hdots,r-1, \\ 
    & y_r = i_r',
\end{align*}
where the order of $i,j$ for $x_{\ell}$ is determined by the order they appear in the $C,\check C$ and $\overline{\check C}$ terms. Note that since the kernel $K$ is non-negative and bounded,
$$
\Big| \prod_{\ell=1}^rK_{\theta}(i_{\ell}'-j_{\ell}')\Big| \le \|K\|_\infty \prod_{\ell = 1}^{r-1} K\Big(\frac{y_\ell}{\Delta_n}\Big).
$$
Then, letting $D(x):= |C(x)| \vee |\check C(x)|$, we obtain
\begin{align} \label{e:Aprk}
\begin{split}
    & |A_{\mathcal{P},\pa_r,k}| \\
    & \le \frac{\|K\|_\infty}{n^r}\sum_{\substack{x_{\ell}=-n+1\\\ell=1,\hdots,r}}^{n-1}\prod_{\ell=1}^r D(x_{\ell}) \cdot \sum_{\substack{y_{m}=-n+1\\ m=1,\hdots,r-1}}^{n-1}\prod_{m=1}^{r-1}K\Big(\frac{y_{m}}{\Delta_n}\Big) \sum_{y_r=1}^n 1\\ & \le
    \frac{\|K\|_\infty}{n^{r-1}}\left(\sum_{x\in \mathbb Z}D(x)\right)^r \cdot \sum_{\substack{y_{m}=-n+1\\ m=1,\hdots,r-1}}^{n-1}\prod_{m=1}^{r-1}K\left(\frac{y_{m}}{\Delta_n}\right) \\ & = \mathcal{O}\left(\left(\frac{\Delta_n}{n}\right)^{r-1}\right).
\end{split}
\end{align}
where  we used that by Relation \eqref{e:Assumption-C-prime}, $\sum_{x} D(x) <\infty$ and the compactness of the support of $K$.

Using \eqref{e:Aprk}, in view of \eqref{e:r_ge_3_moment_bound}, one immediately has that
\begin{align*}
  \E (\CT_n^k) =  \mathcal{O}\left( \Big( \frac{n}{\Delta_n} \Big)^{k/2}  \cdot \max_{\substack{ m = 1,\cdots, \lfloor k/2\rfloor \\ 
  r_1+\cdots+r_m =k,\ r_t\ge 2}} \Big(\frac{\Delta_n}{n}\Big)^{\sum_{t=1}^m(r_t-1)}\right)\equiv \mathcal{O}\left(\left(\frac{\Delta_n}{n}\right)^{k/2-M}\right),
\end{align*}
where 
$$
  M:= \mathop{\max}_{m=1,\cdots,\lfloor k/2\rfloor} \Big\{m\, :\, r_1+\cdots+r_m=k,\ \mbox{ where }r_t\in \{2,\cdots,k\} \Big\}.
 $$

Clearly, if $k$ is odd, then $M=(k-1)/2$, we have $k/2-M = 1/2$ and by the above bound, we obtain
$$
\E [ \CT_n^k]= {\cal O}( (\Delta_n/n)^{1/2}),
$$ 
completing the proof of Proposition \ref{prop: central moments} in this case.  Note that this moment vanishes as $n\to \infty$.  

If $k$ is even, then $M=k/2$ and $k/2-M =0$.  By the above argument, the only pairings that do not vanish asymptotically, as $n\to\infty$,
correspond to $r_1=\cdots=r_{k/2} = 2$.  That is, the indices $\{i_1,j_1,\cdots,i_k,j_k\}$ are paired into $k/2$ irreducible sub-pairings of 
order $2$ and this case algebraically reduces to the case $k=2$.  

Consider four indices $\{i_1,j_1,i_2,j_2\}$ and let
$A_{{\cal P},k}^{(\{i_1,j_2\},\{i_2,j_1\})}$ and $A_{{\cal P},k}^{(\{i_1,i_2\},\{j_1,j_2\})}$ be the terms of \eqref{e:r_ge_3_moment_bound} corresponding to the subpairings $\{\{i_1,j_2\},\{i_2,j_1\}\}$ and $\{\{i_1,i_2\},\{j_1,j_2\}\}$ respectively. Let also
\[A_{{\cal P},k}^{(\{i_1,j_1,i_2,j_2\})}:=A_{{\cal P},k}^{(\{i_1,j_2\},\{i_2,j_1\})} +A_{{\cal P},k}^{(\{i_1,i_2\},\{j_1,j_2\})}
\]
By the first part of the proof, the sum of these two order-2 irreducible subpairings that correspond to the same indices $\{i_1,j_1,i_2,j_2\}$ 
contributes the following term to the rate of the expectation:
$$
\Big( \frac{n}{\Delta_n} \Big)^{1} A_{{\cal P},k}^{(\{i_1,j_1,i_2,j_2\})} \to \sigma_f^2 (\theta):=\Big (f(\theta)^2 +  1_{\{0,\pm\pi\}}(\theta) |\check  f(\theta)|^2 \Big )\|K\|_2^2,
$$
as $n\to\infty$.  Therefore, in view of \eqref{e:r_ge_3_moment_bound},
\begin{align*} 
 \mu_k= (2\pi)^k\E(\CT_n^k) &=   \left(\frac{n}{\Delta_n}\right)^{k/2}  \sum_{\substack{\pa \in {\cal P} \\ \pa = \pa_{r_1} \cup \cdots \cup\pa_{r_m},\ m\ge 1 }}  \prod_{t=1}^m A_{\mathcal{P},\pa_{r_t},k}\\ &\asymp \left(\frac{n}{\Delta_n}\right)^{k/2}  \sum_{\substack{\pa \in {\cal P} \\ \pa = \pa_{r_1} \cup \cdots \cup\pa_{r_{k/2}}\\ r_1=\hdots=r_{k/2}=2}}  \prod_{t=1}^m A_{\mathcal{P},\pa_{r_t},k}\\ & = \left(\frac{n}{\Delta_n}\right)^{k/2}  \sum_{\substack{\mathbbm{q} \in {\cal Q}_2 \\ \mathbbm{q} = \{\{i_1',j_1',i_2',j_2'\}, \cdots, \\ \{i_{k-1}',j_{k-1}',i_k',j_k'\}\}}}  \prod_{\substack{\ell,m=1,\hdots,k\\\ell\neq m}} \left(A_{{\cal P},k}^{(\{i_{\ell}',j_m'\},\{i_m',j_{\ell}'\})} +A_{{\cal P},k}^{(\{i_{\ell}',i_m'\},\{j_{\ell}',j_m'\})}\right)\\ & = \left(\frac{n}{\Delta_n}\right)^{k/2}  \sum_{\substack{\mathbbm{q} \in {\cal Q}_2 \\ \mathbbm{q} = \{\{i_1',j_1',i_2',j_2'\}, \cdots, \\ \{i_{k-1}',j_{k-1}',i_k',j_k'\}\}}} \prod_{\substack{\ell,m=1,\hdots,k\\\ell\neq m}} A_{{\cal P},k}^{(\{i_{\ell}',j_{\ell}',i_m',j_m'\})} \\ & \to N_2(k) \times \Big[ \sigma_f^2 (\theta) \Big]^{k/2},
\end{align*}
where $N_2(k)$ denotes the number of ways one can partition the set $\{i_1,j_1,\cdots,i_k,j_k\}$ into $k/2$ sets of 4 members including both $i$ and $j$ of the same index, and ${\cal Q}_2$ denotes the collection of all those partitions.

To complete the proof of Proposition \ref{prop: central moments}, it remains to argue that $N_2(k) = |{\cal Q}_2|=(k-1)!!.$ Note that every $q\in {\cal Q}_2$ is determined by a partition into sets of $4$ indices $\{i_{\ell_1},j_{\ell_1}, i_{\ell_2},j_{\ell_2}\}$ from the $2k$ symbols $\{i_1,j_1,\cdots,i_k,j_k\}$.  Thus, determining 
    the number $N_2(k)$ is equivalent to counting the number of partitions of the set $\{i_1,\cdots,i_k\}$ into 
    $2-$point subsets $\{i_{\ell_1},i_{\ell_2}\}$.  The number of ways to pick the first pair is $\binom{k}{2}$, the second pair $\binom{k-2}{2}$, and so on.  Therefore
     $N_2(k)$ equals
    $$
    \frac{1}{(k/2)!}  \binom{k}{2}\cdot \binom{k-2}{2} \cdots \binom{2}{2} = (k-1)!!,
    $$
    where we divide by $(k/2)!$ since the order of the subsets $\{i_{\ell_1},i_{\ell_2}\}$ does not matter.
\end{proof} 

\subsubsection{The general case} \label{s:moment_general} 

The purpose of this section is to finish the verification of (ii) of Proposition \ref{e:weak_conv} for a general $\bbH$ under the assumptions of the Proposition. 
%Now, let $\{X(t),t\in\mathbb{Z}\}$ be a stationary, $\H$-valued zero-mean Gaussian time-series, where $\bbH$ is a complex Hilbert space, and consider the moments of the quantity in (ii) of Proposition \ref{e:weak_conv}.Fix $a_\ell\in\bbR$ and $g_\ell\in\bbH$. 
Recall that we already verified (ii) for the spatial setting $\bbH=\bbC$ in the previous subsection.
The extension from the scalar to the general case is actually quite straightforward. We illustrate this for the second moment.

%\begin{prop}\label{prop:Variance complex}Assume that all conditions of Theorem \ref{thm:CLT_ts} hold. Then, \begin{align*}
%\mathbb{E}\left[\sum_{\ell=1}^m a_\ell \left\langle\CT_ng_{\ell},g_{\ell}\right\rangle\right]^2 \to \sigma_{a,g}^2.
%\end{align*}
%\end{prop}
%\begin{proof} 
Recall that $X_{g_\ell}(i) = \langle X(i), g_\ell\rangle$.
Denote $(2\pi)^2 \mathbb{E}\left[\sum_{\ell=1}^m a_\ell\left\langle\CT_ng_{\ell},g_{\ell}\right\rangle\right]^2$ by $A_{n,2}$.
By Isserlis' formula in Lemma \ref{le:complex isserlis},
\begin{align*}
    A_{n,2} & = \sum_{\ell_1,\ell_2=1}^m a_{\ell_1}a_{\ell_2} \frac{1}{n\Delta_n}\sum_{i_1,j_1=1}^n\sum_{i_2,j_2=1}^n K_{\theta}(i_1-j_1)K_{\theta}(i_2-j_2) \\ 
    & \hspace{2cm} \times  \E \Big\{\left[X_{g_{\ell_1}}(i_1)\overline{X_{g_{\ell_1}}(j_1)}-\mathbb{E}X_{g_{\ell_1}}(i_1)\overline{X_{g_{\ell_1}}(j_1)}\right] \\
& \hspace{4cm}   \times \left[X_{g_{\ell_2}}(i_2)\overline{X_{g_{\ell_2}}(j_2)}-\mathbb{E}X_{g_{\ell_2}}(i_2)\overline{X_{g_{\ell_2}}(j_2)}\right]\Big\}\\ 
    & =  \sum_{\ell_1,\ell_2=1}^m a_{\ell_1}a_{\ell_2} \frac{1}{n\Delta_n}\sum_{i_1,j_1=1}^n\sum_{i_2,j_2=1}^n K_{\theta}(i_1-j_1)K_{\theta}(i_2-j_2)\\ 
    & \hspace{2cm} \times\Big[\mathbb{E}X_{g_{\ell_1}}(i_1)\overline{X_{g_{\ell_1}}(j_1)}X_{g_{\ell_2}}(i_2)\overline{X_{g_{\ell_2}}(j_2)} \\
    & \hspace{4cm} -\mathbb{E}X_{g_{\ell_1}}(i_1)\overline{X_{g_{\ell_1}}(j_1)}\mathbb{E}X_{g_{\ell_2}}(i_2)\overline{X_{g_{\ell_2}}(j_2)}\Big]\\ 
    & =    \sum_{\ell_1,\ell_2=1}^m a_{\ell_1}a_{\ell_2} \frac{1}{n\Delta_n}\sum_{i_1,j_1=1}^n\sum_{i_2,j_2=1}^n K_{\theta}(i_1-j_1)K_{\theta}(i_2-j_2) \\
   & \hspace{4cm}  \times  \mathbb{E}X_{g_{\ell_1}}(i_1)\overline{X_{g_{\ell_2}}(j_2)}\mathbb{E}X_{g_{\ell_2}}(i_2)\overline{X_{g_{\ell_1}}(j_1)}\\ 
   & + \sum_{\ell_1,\ell_2=1}^m a_{\ell_1}a_{\ell_2}  \frac{1}{n\Delta_n}\sum_{i_1,j_1=1}^n\sum_{i_2,j_2=1}^n K_{\theta}(i_1-j_1)K_{\theta}(i_2-j_2) \\
    & \hspace{4cm}  \times \mathbb{E}X_{g_{\ell_1}}(i_1)X_{g_{\ell_2}}(i_2)\mathbb{E}\overline{X_{g_{\ell_1}}(j_1)}\overline{X_{g_{\ell_2}}(j_2)}\\ 
    & =: \widetilde A_{n,2} + \overline{A}_{n,2}.
\end{align*}
Define 
\begin{align} \label{e:Cl1l2}
C_{g_{\ell_1},g_{\ell_2}}(t):=\mathbb{E}X_{g_{\ell_1}}(t)\overline{X_{g_{\ell_2}}(0)}.
\end{align}
Start with $\theta\not\in\{0,\pm\pi\}$. By the same arguments as in Proposition \ref{prop: central moments}, one can focus only on $\widetilde A_2.$ By the change of variables 
\begin{align*}
    x_1 = i_1-j_2, &\ \  x_2=i_2-j_1, \\
    y_1=i_1-j_1, &\ \ y_2=i_1, 
\end{align*}
we have that 
\begin{align*}
    \widetilde A_{n,2} & = \sum_{\ell_1,\ell_2=1}^m a_{\ell_1}a_{\ell_2}\frac{1}{n\Delta_n}\sum_{x_1,x_2=-n+1}^{n-1} C_{g_{\ell_1},g_{\ell_2}}(x_1)\overline{C_{g_{\ell_1},g_{\ell_2}}(-x_2)}\\
     &\qquad\times\sum_{y_1=(1-n)\vee (1-n+x_1+x_2)}^{(n-1)\wedge(n-1+x_1+x_2)}K_{\theta}(y_1)K_{\theta}(x_1+x_2-y_1)\sum_{y_2=1\vee (1+y_1)}^{n\wedge (n+y_1)}1 \\ 
    & = \sum_{\ell_1,\ell_2=1}^m a_{\ell_1}a_{\ell_2}
    \sum_{x_1,x_2=-n+1}^{n-1} C_{g_{\ell_1},g_{\ell_2}}(x_1)e^{\mathbbm{i}x_1\theta}{\overline{ C_{g_{\ell_1},g_{\ell_2}}(-x_2)e^{-\mathbbm{i}x_2\theta}}}\\ 
    &\qquad\times\frac{1}{\Delta_n} \sum_{y_1=(1-n)\vee (1-n+x_1+x_2)}^{(n-1)\wedge(n-1+x_1+x_2)}K(y_1)K(x_1+x_2-y_1)\frac{{n-|y_1|}}{n}
    \\ & \sim \sum_{\ell_1,\ell_2=1}^m a_{\ell_1}a_{\ell_2} 4\pi^2 \int_{y=-1}^1K^2(y)dy \Big|\sum_{x=-\infty}^{\infty} C_{g_{\ell_1},g_{\ell_2}}(x)e^{\mathbbm{i}x\theta}\Big|^2
    \end{align*}
based on the proof of Proposition \ref{prop: central moments}. By \eqref{e:Cl1l2}, this is precisely
\begin{align*}
4\pi^2 \int_{y=-1}^1K^2(y)dy \sum_{\ell_1,\ell_2=1}^ma_{\ell_1}a_{\ell_2}  |\left\langle f(\theta)g_{\ell_1},g_{\ell_2}\right\rangle|^2.
\end{align*}

The derivations for $\overline{A}_{n,2}$ are similar. 
For instance, consider $\theta=0$:
\begin{align*}
    \overline{A}_{n,2} & = \frac{1}{n\Delta_n }\sum_{i_1,j_1=1}^n\sum_{i_2,j_2=1}^n K_{0}(i_1-j_1)K_{0}(i_2-j_2) \\
    & \qquad \times\sum_{\ell_1,\ell_2=1}^ma_{\ell_1}a_{\ell_2}
    \check C_{g_{\ell_1},g_{\ell_2}}(i_1-i_2)\overline{\check C_{g_{\ell_1},g_{\ell_2}}(j_1-j_2)},
%  \mathbb{E}[X_{g_{\ell_1}}(i_1)X_{g_{\ell_2}}(i_2)]     \mathbb{E}[\overline{X_{g_{\ell_1}}(j_1)}\overline{X_{g_{\ell_2}}(j_2)}].
\end{align*}
where
\begin{align*} %\label{e:tildeCl1Cl2}
\check C_{g_{\ell_1},g_{\ell_2}}(t) := \mathbb{E} [X_{g_{\ell_1}}(t) X_{g_{\ell_2}}(0)] = \left\langle\mathbb{E}\left[X(t)\otimes \overline{X(0)}\right]\overline{g_{\ell_2}},g_{\ell_1}\right\rangle
= \langle \check C(t)\overline{g_{\ell_2}},g_{\ell_1}\rangle.
\end{align*}
Making the change of variables 
\begin{align*}
    x_1 = i_1-i_2, &\ \  x_2=j_1-j_2, \\
    y_1=i_1-j_1, &\ \ y_2=i_1,
\end{align*}
we have that
\begin{align*}
    \overline{A}_{n,2} & = \sum_{\ell_1,\ell_2=1}^m a_{\ell_1}a_{\ell_2} \frac{1}{\Delta_n} \sum_{x_1,x_2=1-n}^{n-1}\check C_{g_{\ell_1},g_{\ell_2}}(x_1)\overline{\check C_{g_{\ell_1},g_{\ell_2}}(x_2)}\\ &\qquad\times\sum_{y_1=(1-n)\vee(1-n+x_1-x_2)}^{(n-1)\wedge(n-1)+x_1-x_2}K\left(\frac{y_1}{\Delta_n}\right)K\left(\frac{-x_1+x_2+y_1}{\Delta_n}\right)\frac{n-|y_1|}{n}\\ 
    & \sim 4\pi^2\int_{y=-1}^1K^2(y)dy \sum_{\ell_1,\ell_2=1}^m a_{\ell_1}a_{\ell_2} \left|\left\langle \check f(0)\overline{g_{\ell_2}},g_{\ell_1}\right\rangle\right|^2,
\end{align*}
using again the arguments of the proof of Proposition \ref{prop: central moments}.
%\tcr{, since by the definition of pseudo-covariance in \eqref{e:CXY_2},  
%\[\mathbb{E} [X_{g_{\ell_1}}(t)X_{g_{\ell_2}}(0)] = \left\langle\mathbb{E}\left[X(t)\otimes \overline{X(0)}\right]\overline{g_{\ell_2}},g_{\ell_1}\right\rangle= \langle \check C(t)\overline{g_{\ell_2}},g_{\ell_1}\rangle=\check C_{g_{\ell_1},g_{\ell_2}}(t).
%\]}
Thus, we have verified (ii) of Proposition \ref{e:weak_conv} for $k=2$.

%\end{proof}

In a similar manner, the derivation of $\mathbb{E}\left[\sum_{\ell=1}^m a_\ell \left\langle\CT_ng_{\ell},g_{\ell}\right\rangle\right]^k$ for $k\ge 3$ for a general space $\bbH$ can be extended from that for the scalar case, and the details are omitted.

\section{Cumulants and Isserlis' formulas}\label{appendix:cumulants and Isserlis' theorem}
\subsection{Cumulants for functional data}
This section is of independent interest and provides an extension of Isserlis' theorem to the regime of Hilbert space valued Gaussiaan random variables. We start by providing the definition of the cumulants for scalar random variables taking values in $\mathbb{R}$. 
\begin{definition}\label{def:cumulants}
Let $Y_1,\ldots, Y_k$ be random variables taking values in $\mathbb{R}$ such that $\E(\prod_{j\in B} Y_j)$ is well defined and finite for all subsets $B$ of $\{1,\ldots,k\}$. Then,  $${\rm cum}\left(Y_1,\hdots,Y_k\right):=\sum_{\nu=(\nu_1,\hdots,\nu_q)}(-1)^{q-1}(q-1)!\prod_{l=1}^q\mathbb{E}\left[\prod_{j\in \nu_l}Y_j \right],$$
where the sum is over all unordered partitions of $\{1,\hdots,k\}.$
\end{definition}

The following lemma follows from the discussion on page 34 of \cite{rosenblatt1985stationary}.
\begin{lemma}\label{supp-le:rosenblatt}
Let $Y_i, i=1,\hdots,k$ be real random variables such that $\E(\prod_{j\in B} Y_j)$ is well defined and finite for all subsets $B$ of $\{1,\ldots,k\}$. Then 
\begin{align*}
    \mathbb{E}[Y_1\cdot\hdots\cdot Y_k] = \sum_{\nu=(\nu_1,\hdots,\nu_p)}\prod_{l=1}^p{\rm  cum}(Y_i;\ i\in\nu_l),
\end{align*}
where the sum is over all the unordered partitions of $\{1,\hdots,k\}.$
\end{lemma}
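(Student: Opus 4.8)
The statement is precisely the Möbius-inversion dual of Definition~\ref{def:cumulants}: that definition writes a cumulant as a signed sum of products of moments, and the lemma writes a moment as an unsigned sum of products of cumulants over the same index set of partitions. The plan is to carry out this inversion on the lattice of set partitions. I would first fix notation: write $[k]=\{1,\dots,k\}$, let $\mathcal{P}_k$ be the set of partitions of $[k]$ ordered by refinement, with $\hat 0$ the all-singletons partition and $\hat 1=\{[k]\}$, and for $\pi\in\mathcal{P}_k$ set $m(\pi):=\prod_{b\in\pi}\E\big(\prod_{i\in b}Y_i\big)$ and $\kappa(\pi):=\prod_{b\in\pi}{\rm cum}(Y_i;\ i\in b)$. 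All of these are finite real numbers under the moment hypothesis, so every identity below is an identity among finitely many numbers and there are no convergence issues.

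Next I would rewrite Definition~\ref{def:cumulants}, applied to an arbitrary block $b$, as ${\rm cum}(Y_i;\ i\in b)=\sum_{\rho\in\mathcal{P}(b)}\mu_b(\rho)\,\prod_{c\in\rho}\E\big(\prod_{i\in c}Y_i\big)$, where $\mu_b(\rho):=(-1)^{|\rho|-1}(|\rho|-1)!$ is the Möbius function of the partition lattice of $b$ evaluated between $\rho$ and the top element. Multiplying these identities over the blocks $b$ of a given $\pi\in\mathcal{P}_k$, expanding the product, and using the multiplicativity of $m$ together with the classical fact that the Möbius function of $\mathcal{P}_k$ factors as $\mu(\sigma,\pi)=\prod_{b\in\pi}\mu_b(\sigma|_b)$, I obtain $\kappa(\pi)=\sum_{\sigma\le\pi}\mu(\sigma,\pi)\,m(\sigma)$ for every $\pi\in\mathcal{P}_k$. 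By Möbius inversion on $\mathcal{P}_k$ this relation is equivalent to $m(\pi)=\sum_{\sigma\le\pi}\kappa(\sigma)$ for all $\pi$; taking $\pi=\hat 1$ yields $\E(Y_1\cdots Y_k)=m(\hat 1)=\sum_{\sigma\in\mathcal{P}_k}\kappa(\sigma)$, which is exactly the assertion of the lemma.

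The only non-elementary input in this route is the combinatorics of the partition lattice — the value of its Möbius function and the block-product formula — and that is the step I would either cite or, if a fully self-contained proof is wanted, replace by a direct strong induction on $k$. In the inductive argument one splits $\sum_{\pi\in\mathcal{P}_k}\kappa(\pi)$ according to the block $B$ containing the index $k$; for $B\ne[k]$ one uses the induction hypothesis to rewrite $\sum_{\pi'\in\mathcal{P}([k]\setminus B)}\prod_{b\in\pi'}{\rm cum}(Y_i;\ i\in b)$ as $\E\big(\prod_{i\notin B}Y_i\big)$, then substitutes Definition~\ref{def:cumulants} for each ${\rm cum}(Y_i;\ i\in B)$ and regroups all terms by the resulting partition of $[k]$. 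For a partition with $r\ge 2$ blocks the coefficient of the corresponding product of moments turns out to be $(-1)^{r-1}(r-1)!$ (from the direct expansion of ${\rm cum}(Y_1,\dots,Y_k)$) plus $(r-1)\cdot(-1)^{r-2}(r-2)!$ (from the $r-1$ blocks of $\pi$ not containing $k$, each of which can play the role of $[k]\setminus B$), and these cancel; the one-block partition survives with coefficient $1$, producing $\E(Y_1\cdots Y_k)$. This coefficient cancellation is the one place that requires careful bookkeeping, but it is entirely elementary.
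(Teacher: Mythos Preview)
Your proposal is correct. The M\"obius-inversion argument on the partition lattice is the standard route to this identity, and your alternative induction (splitting by the block containing $k$ and checking that the coefficient $(-1)^{r-1}(r-1)!+(r-1)(-1)^{r-2}(r-2)!$ vanishes for $r\ge 2$) is also valid.

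The paper itself does not prove this lemma: it simply records that the result follows from the discussion on page~34 of Rosenblatt (1985). So there is no ``paper's own proof'' to compare against; you have supplied a self-contained argument where the paper defers to a reference. Your M\"obius-inversion approach is essentially what underlies the classical treatment in references like Rosenblatt or Brillinger, so in spirit you are reconstructing the cited material rather than doing something genuinely different.
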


\begin{prop}\label{prop:Cov for cross product}
Let $\{X(t)\}$ be a stochastic process taking values in a Hilbert space $\bbH$, where $\E(\|X(t)\|^4) < \infty$ for all $t$.  Note that we
 do not assume here $X$ to be real.  Fix an arbitrary real CONS $\{e_i,\ i\in I\}$ of $\bbH$ and denote by 
 $X_i(t):=\langle X(t), e_i\rangle$. Then for any $t,s,w,v\in\mathbb{R}^d$, we have that 
\begin{align*}
    {\rm cum}\left(X(t), X(s), X(w), X(v)\right)  = \sum_i\sum_j {\rm cum}(X_j(t),\overline{X_i(s)},\ol{X_j(w)},X_i(v)).
    \end{align*}
\end{prop}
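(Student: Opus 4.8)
The plan is to expand both sides in the fixed real CONS $\{e_i\}$ and match term by term. Throughout, as in Definition~\ref{def:cum}, the $X(t)$ are centered, so the fourth-order cumulant of Definition~\ref{def:cum} is exactly the displayed four-term expression, while the scalar cumulant of Definition~\ref{def:cumulants}, extended to $\bbC$-valued random variables by multilinearity, reduces for any zero-mean $Z_1,\dots,Z_4\in L^4(\bbC)$ to
\begin{align*}
{\rm cum}(Z_1,Z_2,Z_3,Z_4) = \E[Z_1Z_2Z_3Z_4] - \E[Z_1Z_2]\E[Z_3Z_4] - \E[Z_1Z_3]\E[Z_2Z_4] - \E[Z_1Z_4]\E[Z_2Z_3],
\end{align*}
since every partition of $\{1,2,3,4\}$ containing a singleton block contributes a factor $\E[X_\bullet(\cdot)]=0$. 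Applying this with $(Z_1,Z_2,Z_3,Z_4)=(X_j(t),\overline{X_i(s)},\overline{X_j(w)},X_i(v))$ and summing over $i,j$ produces four double sums; the task is to identify them, respectively, with the four terms of Definition~\ref{def:cum} for $(Y_1,Y_2,Y_3,Y_4)=(X(t),X(s),X(w),X(v))$.

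First I would record the coordinate expansions. Since $\{e_i\}$ is a real CONS, $\overline{e_i}=e_i$, hence $\overline{X(v)}=\sum_i\overline{X_i(v)}\,e_i$, and from the definition \eqref{e:outer_product} of the outer product,
\begin{align*}
X(t)\otimes X(s)=\sum_{j,i}X_j(t)\overline{X_i(s)}\,(e_j\otimes e_i), \qquad X(t)\otimes\overline{X(v)}=\sum_{j,i}X_j(t)X_i(v)\,(e_j\otimes e_i).
\end{align*}
Taking Bochner expectations and using that the coefficient functionals $\mathcal{A}\mapsto\langle\mathcal{A}e_i,e_j\rangle$ are continuous and linear gives $\E[X(t)\otimes X(s)]=\sum_{j,i}\E[X_j(t)\overline{X_i(s)}](e_j\otimes e_i)$, and similarly for $\E[X(t)\otimes\overline{X(v)}]$. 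Combining these with the orthonormality relation $\langle e_j\otimes e_i, e_{j'}\otimes e_{i'}\rangle_{\rm HS}=\langle e_j,e_{j'}\rangle\langle e_{i'},e_i\rangle=\delta_{jj'}\delta_{ii'}$ (the special case of \eqref{e:cum_1}) and the identity $\langle X(t),X(w)\rangle=\sum_j X_j(t)\overline{X_j(w)}$, I would then read off
\begin{align*}
\sum_{i,j}\E[X_j(t)\overline{X_i(s)}]\E[\overline{X_j(w)}X_i(v)]&=\langle \E[X(t)\otimes X(s)],\E[X(w)\otimes X(v)]\rangle_{\rm HS},\\
\sum_{i,j}\E[X_j(t)\overline{X_j(w)}]\E[\overline{X_i(s)}X_i(v)]&=\E\langle X(t),X(w)\rangle\,\E\langle X(v),X(s)\rangle,\\
\sum_{i,j}\E[X_j(t)X_i(v)]\E[\overline{X_i(s)}\,\overline{X_j(w)}]&=\langle \E[X(t)\otimes\overline{X(v)}],\E[X(w)\otimes\overline{X(s)}]\rangle_{\rm HS},
\end{align*}
where in the first and third lines one must track which slot of $\langle\cdot,\cdot\rangle_{\rm HS}$ is conjugate-linear (this is precisely what forces the conjugations onto the second and third arguments of the scalar cumulant). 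For the remaining monomial term I would interchange the sum over $i,j$ with the expectation — legitimate because $\sum_{i,j}\E|X_j(t)\overline{X_i(s)}\,\overline{X_j(w)}X_i(v)|\le\E[\|X(t)\|\,\|X(s)\|\,\|X(w)\|\,\|X(v)\|]<\infty$ by the generalized H\"older inequality and the hypothesis $\E\|X(\cdot)\|^4<\infty$ — and then use $\langle X(t),X(w)\rangle\langle X(v),X(s)\rangle=\langle X(t)\otimes X(s),X(w)\otimes X(v)\rangle_{\rm HS}$ (again \eqref{e:cum_1}) to get $\sum_{i,j}\E[X_j(t)\overline{X_i(s)}\,\overline{X_j(w)}X_i(v)]=\E\langle X(t)\otimes X(s),X(w)\otimes X(v)\rangle_{\rm HS}$.

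Adding the four identities with the signs dictated by the displayed scalar-cumulant formula reproduces exactly the four terms of Definition~\ref{def:cum}, which gives the claimed equality. There is no genuinely hard step; the only points requiring care are (i) the bookkeeping of complex conjugates — ensuring that the conjugations generated by the sesquilinear HS inner product land on the arguments that make the scalar cumulants equal to ${\rm cum}(X_j(t),\overline{X_i(s)},\overline{X_j(w)},X_i(v))$ — and (ii) the routine justification of the Fubini-type interchange and of the coordinatewise evaluation of the Bochner-valued expectations, both of which follow from the fourth-moment assumption via Cauchy--Schwarz and the continuity of coordinate functionals.
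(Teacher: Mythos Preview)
Your proof is correct and follows essentially the same approach as the paper's own proof: both expand the four terms of Definition~\ref{def:cum} in the real CONS $\{e_i\}$ and match them one-by-one with the four terms of the scalar cumulant ${\rm cum}(X_j(t),\overline{X_i(s)},\overline{X_j(w)},X_i(v))$. If anything, you are a bit more explicit than the paper in justifying the Fubini interchange and in tracking which slot of $\langle\cdot,\cdot\rangle_{\rm HS}$ carries the conjugation.
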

\begin{proof}
Recall the definition of cumulant in \eqref{def:cum}:
\begin{align*} 
\begin{split}
& {\rm cum}\left(X(t),X(s),X(w),X(v)\right) \\
&= \mathbb{E}\left\langle X(t)\otimes X(s),X(w)\otimes X(v) \right\rangle_{\rm HS} 
- \langle \E (X(t)\otimes X(s)), \E (X(w)\otimes X(v)) \rangle_{\rm HS} \\
& \hskip2cm- \mathbb{E} \left\langle X(t), X(w)\right\rangle_{\mathbb{H}}\cdot \mathbb{E}\left\langle  X(v), X(s)\right\rangle_{\mathbb{H}} \\
& \hskip3cm -  \left\langle \E (X(t)\otimes \overline{X(v)}), \E (X(w)\otimes \overline{X(s)})\right\rangle_{\rm HS}.
\end{split}
\end{align*}
For any $x(1),\ldots,x(4)\in\bbH$,
\begin{align*} 
\begin{split}
\langle x(1)\otimes x(2), x(3)\otimes x(4) \rangle_{\rm HS} 
& = \sum_i \langle (x(1)\otimes x(2)) e_i, (x(3)\otimes x(4)) e_i \rangle_\bbH \\
& = \left\langle x(1), x(3)\right\rangle_\bbH \overline{\left\langle x(2), x(4)\right\rangle_\bbH} \\
& = \sum_i\sum_j x_{i}(1)\overline{x_{i}(3) x_{j}(2)}x_{j}(4).
\end{split}
\end{align*}
It follows that
\begin{align*}
\left\langle X(t)\otimes X(s),X(w)\otimes X(v) \right\rangle_{\rm HS} =  \sum_i\sum_j \overline{X_i(s)}X_i(v)X_j(t)\overline{X_j(w)}. 
\end{align*}
It suffices to show that
\begin{align*}
\E \left\langle X(t)\otimes X(s),X(w)\otimes X(v) \right\rangle_{\rm HS} 
= \sum_i\sum_j \E (\overline{X_i(s)}X_i(v)X_j(t)\overline{X_j(w)} ),
\end{align*}
where the interchange of the order of summation and expectation can be justified by the fourth-moment assumption on the $X(t)$ and
Fubini's Theorem.

Similarly, we have that 
\[\langle \E (X(t)\otimes X(s)), \E (X(w)\otimes X(v)) \rangle_{\rm HS} = \sum_i\sum_j \E (\overline{X_i(s)}X_j(t))\mathbb{E}(X_i(v)\overline{X_j(w)} )\]
and
\[\langle \E (X(t)\otimes \overline{X(v)}), \E (X(w)\otimes \overline{X(s)}) \rangle_{\rm HS} = \sum_i\sum_j \E (X_j(t)X_i(v))\mathbb{E}(\overline{X_i(s)}\overline{X_j(w)} ),\]
where we used the fact that the CONS $\{e_j\}$ is real in order to write 
$\overline{X(s)} = \sum_i \overline{X_i(s)} e_j.$
Finally, 
\[\mathbb{E} \left\langle X(t), X(w)\right\rangle_{\mathbb{H}}\cdot \mathbb{E}\left\langle  X(v), X(s)\right\rangle_{\mathbb{H}} = \sum_i\sum_j\mathbb{E}X_j(t)\overline{X_j(w)} \mathbb{E}X_i(v)\overline{X_i(s)}.\]

Gathering all four terms one can easily see that the cumulant sum
\[\sum_{i}\sum_{j} {\rm cum}(X_j(t),\overline{X_i(s)}, \ol{X_j(w)},X_i(v))\]
is reconstructed.
%Similarly,
%\begin{align*}
%& \langle \E (X(t)\otimes X(s)), \E (X(w)\otimes X(v)) \rangle_{\rm HS} \\
%& = \sum_i\sum_j \E (X_i(t)X_i(w)) \E(X_j(s)X_j(v)).
%\end{align*}
\end{proof}

We end this subsection with a remark on the connection with a related but different notion of 
cumulant employed in \cite{panaretos2013fourier}.

\begin{remark}\label{s:rem:Panaretos-cumulants}

\cite{panaretos2013fourier} defines a notion of cumulant on the bottom of 
page 571 of the paper. In this remark, we will attempt to explain the connection between the condition $C(0,4)$
in \cite{panaretos2013fourier} with (c) of Assumption \ref{a:var_d}. 

For simplicity, we shall work with real Hilbert spaces.  Recall that in \cite{panaretos2013fourier}, the authors 
consider $\H = L^2[0,1]$ and define the so-called {\em cumulant kernel}:
 $$
 {\rm cum}_{\rm ker}(X(t_1),\cdots, X(t_k)):= \sum_{\nu=(\nu_1,\dots,\nu_p)} (-1)^{p-1} (p-1)! \prod_{\ell=1}^p 
 \E \Big[ \prod_{j\in \nu_\ell} X(\tau_j;t_j)\Big],
 $$
 where $X(t):= (X(\tau;t),\ \tau\in [0,1])\in L^2([0,1])$.  For a kernel of order $2k$, one can define the so-called cumulant 
 operator ${\cal R} : L^2 ([0,1]^k)
 \to L^2([0,1]^k)$, as 
 $$
{\cal R}(h) := 
 \int_{[0,1]^2} {\rm cum}_{\rm ker}( X(t_1),\cdots, X(t_{2k})) (\tau_1,\cdots,\tau_{2k}) h(\tau_{k+1},\cdots,\tau_{2k}) d\tau_{k+1}\cdots d\tau_{2k},
 $$
 where the latter is understood as a function of $(\tau_1,\cdots,\tau_k)$ that can be shown to belong to $L^2([0,1]^k)$.
 
 Fixing a CONS $\{e_j\}$ of $L^2([0,1])$, for $k=2$, we obtain that 
 $$
 {\rm cum}_{\rm ker}( X(t_1),\cdots, X(t_{4})) = \sum_{i,j,k,\ell} {\rm cum}(X_i(t_1),X_{j}(t_2),X_k(t_3),X_\ell(t_4)) 
 e_i\otimes e_j\otimes  e_k \otimes e_\ell,
 $$
 where ${\rm cum}$ stands for the usual cumulant of random variables, and where $X_i(t) = \langle X(t),e_i\rangle$ are the coordinates
 of $X(t)$ in the basis $\{e_j\}$. 
 Thus, in the basis $\{e_i\otimes e_j\}$ of $L^2([0,1]^2]\equiv L^2([0,1])\otimes L^2([0,1])$,  
 one can view the cumulant operator ${\cal R}: L^2([0,1])\otimes L^2([0,1]) \to L^2([0,1])\otimes L^2([0,1])$ as
 $$
 {\cal R} = \sum_{i,j,k,\ell} r_{(i,j), (k,\ell)} (e_i\otimes e_j) \otimes (e_k\otimes e_\ell),
 $$
 where $ r_{(i,j), (k,\ell)}:= {\rm cum}(X_i(t_1),X_{j}(t_2),X_k(t_3),X_\ell(t_4))$.
 
 From this perspective, by \eqref{e:cum_basis}, we obtain that our notion of a cumulant coincides with the trace of the Hilbert-Schmidt 
 cumulant operator ${\cal R}$:
 $$
 {\rm trace}({\cal R}) = {\rm cum}(X(t_1),X(t_2),X(t_3),X(t_4))= \sum_{i,j} r_{(i,j),(i,j)}.
 $$
 On the other hand, the norm of the cumulant kernel employed in the C(0,4) condition of \cite{panaretos2013fourier} becomes:
 $$
 \|{\rm cum}_{\rm ker}( X(t_1),\cdots, X(t_{4}))\|_{L^2}^2 = \sum_{i,j,k,\ell} {\rm cum}(X_i(t_1),X_{j}(t_2),X_k(t_3),X_\ell(t_4))^2.
 $$
 Whereas, recall that
 $$
 {\rm cum}( X(t_1),\cdots, X(t_{4})) = \sum_{i,j,k,\ell} {\rm cum}(X_i(t_1),X_{j}(t_2),X_k(t_3),X_\ell(t_4)).
 $$
 Thus, the condition $C(0,4)$ of \cite{panaretos2013fourier} that 
 $$
 \sum_{t_1,t_2,t_3}  \|{\rm cum}_{\rm ker}( X(t_1),X(t_2),X(t_3), X(0))\|_{L^2} <\infty
 $$
 is neither strictly weaker nor stronger than our condition (c) in Assumption \ref{a:var_d}.
 \end{remark}

\subsection{Isserlis' formulas}

The following is an extension of the classical Isserlis' formula to univariate complex Gaussian variables. 

\begin{lemma}\label{le:complex isserlis}
Let $Z_j = X_j + \ii Y_j,\ j=1,2,\cdots$ be zero-mean, complex jointly Gaussian random variables.  That is, $X_j, Y_j,\ j=1,2,\cdots$ are zero-mean jointly Gaussian $\mathbb R$-valued
random variables. Then, for all $m\in\mathbb N$, we have $\E [ \prod_{i=1}^{2m-1} Z_j] =0$, and 
\[\mathbb{E}\left(\prod_{j=1}^{2m}Z_j\right)=\sum_{\pi}\prod_{i=1}^m\mathbb{E}(Z_{a_{\pi,i}}Z_{b_{\pi,i}}),\]
where a pairing $\pi$ refers to a decomposition of $\{1,\ldots,2m\}$ into $m$ pairs, which are denoted as $(a_{\pi,i},b_{\pi,i}), i =1,\ldots,m$.
\end{lemma}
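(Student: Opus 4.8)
\textbf{Proof proposal for Lemma \ref{le:complex isserlis} (complex Isserlis).}

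The plan is to reduce the statement to the classical (real) Wick/Isserlis formula by expanding each $Z_j = X_j + \ii Y_j$ in its real and imaginary parts, and then reorganizing the resulting sum of real moments into pairings of the $Z_j$'s. First I would dispose of the odd case: for $2m-1$ factors, expanding $\prod_{j=1}^{2m-1}(X_j + \ii Y_j)$ produces a linear combination of moments $\mathbb{E}[W_1 \cdots W_{2m-1}]$ where each $W_k \in \{X_k, Y_k\}$; since the $(W_k)$ are jointly zero-mean Gaussian and there is an odd number of them, each such moment vanishes by the classical Isserlis theorem, hence $\mathbb{E}[\prod_{j=1}^{2m-1} Z_j] = 0$.

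For the even case with $2m$ factors, I would again expand $\prod_{j=1}^{2m} Z_j = \sum_{\varepsilon \in \{0,1\}^{2m}} \ii^{|\varepsilon|} \prod_{k=1}^{2m} W_k^{(\varepsilon_k)}$, where $W_k^{(0)} = X_k$ and $W_k^{(1)} = Y_k$. Applying the classical real Isserlis formula to each term, $\mathbb{E}[\prod_k W_k^{(\varepsilon_k)}] = \sum_{\pi} \prod_{i=1}^m \mathbb{E}[W_{a_{\pi,i}}^{(\varepsilon_{a_{\pi,i}})} W_{b_{\pi,i}}^{(\varepsilon_{b_{\pi,i}})}]$, where the sum runs over the fixed set of $(2m-1)!!$ pairings $\pi$ of $\{1,\ldots,2m\}$ — crucially, the index set of pairings does not depend on $\varepsilon$. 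I would then interchange the (finite) sums over $\varepsilon$ and over $\pi$, so that
\[
\mathbb{E}\Big(\prod_{j=1}^{2m} Z_j\Big) = \sum_{\pi} \sum_{\varepsilon \in \{0,1\}^{2m}} \ii^{|\varepsilon|} \prod_{i=1}^m \mathbb{E}\big[W_{a_{\pi,i}}^{(\varepsilon_{a_{\pi,i}})} W_{b_{\pi,i}}^{(\varepsilon_{b_{\pi,i}})}\big].
\]
For a fixed pairing $\pi$, the factor $\ii^{|\varepsilon|}$ splits as $\prod_{i=1}^m \ii^{\varepsilon_{a_{\pi,i}} + \varepsilon_{b_{\pi,i}}}$ and the sum over $\varepsilon$ factorizes over the $m$ pairs: each pair $(a_{\pi,i}, b_{\pi,i})$ contributes $\sum_{\varepsilon_{a}, \varepsilon_{b} \in \{0,1\}} \ii^{\varepsilon_a + \varepsilon_b}\, \mathbb{E}[W_{a}^{(\varepsilon_a)} W_{b}^{(\varepsilon_b)}]$. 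Expanding this last sum gives exactly $\mathbb{E}[(X_a + \ii Y_a)(X_b + \ii Y_b)] = \mathbb{E}[Z_a Z_b]$. Substituting back yields $\mathbb{E}(\prod_{j=1}^{2m} Z_j) = \sum_\pi \prod_{i=1}^m \mathbb{E}[Z_{a_{\pi,i}} Z_{b_{\pi,i}}]$, which is the claim.

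The only genuinely delicate point — and the one I would present most carefully — is the bookkeeping in the interchange of summations and the factorization of the $\varepsilon$-sum over the blocks of a fixed pairing; this is purely combinatorial and uses that the collection of real pairings is the same for every choice of $\varepsilon$, and that a pairing of $\{1,\ldots,2m\}$ partitions the index positions into disjoint pairs so the product over $i$ and the sum over $\varepsilon$ commute blockwise. Everything else is an immediate appeal to the classical real-valued Isserlis theorem for the jointly Gaussian family $\{X_j, Y_j\}$, together with linearity of expectation over the finite expansion. Note that no circular-symmetry or properness assumption on the $Z_j$ is needed, which is why $\mathbb{E}[Z_a Z_b]$ (rather than $\mathbb{E}[Z_a \overline{Z_b}]$) appears on the right-hand side.
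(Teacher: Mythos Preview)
Your proposal is correct and follows essentially the same approach as the paper's proof: expand each $Z_j$ into real and imaginary parts (the paper indexes the expansion by subsets $S\subset\{1,\ldots,2m\}$ rather than binary strings $\varepsilon$, but these are the same thing), apply the real Isserlis formula termwise, interchange the sum over pairings with the sum over the expansion, and then factorize the inner sum over the blocks of each fixed pairing to recover $\mathbb{E}[Z_aZ_b]$. Your handling of the odd case is a small addition the paper leaves implicit.
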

\begin{proof}
Recall that $Z_j=X_j+\mathbbm{i}Y_j$, where $X_i,Y_i$ are real. 
Let $\sigma_{a,b}^{(0,0)} = \E(X_aX_b), \sigma_{a,b}^{(0,1)} = \E(X_aY_b), \sigma_{a,b}^{(1,0)} = \E(Y_aX_b), \sigma_{a,b}^{(1,1)} = \E(Y_aY_b)$.
Write
$$
\E\left(\prod_{j=1}^{2m} (X_j+\ii Y_j)\right) = \sum_{S\subset \{1,\ldots,2m\}} \ii^{|S|} \E\left(\prod_{j\not\in S} X_j \prod_{k\in S} Y_k\right).
$$
% where $\pi$ is a decomposition of $\{1,\ldots,2m\}$ into $m$ pairs.
By the Isserlis formula for real Gaussian random variables introduced by \cite{isserlis1918formula}, we have
\begin{align*} 
\E\left(\prod_{j\not\in S} X_j \prod_{k\in S} Y_k\right) = \sum_\pi \prod_{i=1}^m  \sigma_{a_{\pi,i},b_{\pi,i}}^{(\mathbbm{1}(a_{\pi,i}\in S),\mathbbm{1}(b_{\pi,i}\in S))},
\end{align*}
and hence
\begin{align*} 
\E\left(\prod_{j=1}^{2m} (X_j+\ii Y_j)\right) = \sum_\pi  \sum_{S\subset \{1,\ldots,2m\}} \ii^{|S|} \prod_{i=1}^m  \sigma_{a_{\pi,i},b_{\pi,i}}^{(\mathbbm{1}(a_{\pi,i}\in S),\mathbbm{1}(b_{\pi,i}\in S))}.
\end{align*}
For any given $\pi$ and $S$, we let $\alpha_i = \mathbbm{1}(a_{\pi,i}\in S), \beta_i=\mathbbm{1}{(b_{\pi,i}\in S)}$. Therefore, 
\begin{align} \label{e:C-Isserlis}
\begin{split}
\E\left(\prod_{j=1}^{2m} (X_j+\ii Y_j)\right) & = \sum_\pi  \sum_{{\substack{\alpha_i,\beta_i=0,1 \\ i=1,\ldots, m}}} \ii^{\sum_i \alpha_i+\sum_i \beta_i} \prod_{i=1}^m  \sigma_{a_{\pi,i},b_{\pi,i}}^{(\alpha_i,\beta_i)} \\
& = \sum_\pi \prod_{i=1}^m  \sum_{\alpha_i,\beta_i=0,1}\ii^{\alpha_i+\beta_i}  \sigma_{a_{\pi,i},b_{\pi,i}}^{(\alpha_i,\beta_i)} \\
& = \sum_\pi \prod_{i=1}^m  (1,\ii) C(a_{\pi,i}, b_{\pi,i}) (1,\ii)^\top,
\end{split}
\end{align}
where
\begin{align*} 
C(a,b) = \left(
\begin{array}{cc}
\E(X_aX_b) & \E(X_aY_b) \\
\E(X_bY_a) & \E(Y_aY_b)
\end{array}
\right).
\end{align*} 
Notice that $\E[Z_a Z_b] = (1, \ii)C(a,b)(1,\ii)^\top$ and thus the right-hand side of \eqref{e:C-Isserlis} equals
$$
\sum_\pi \prod_{i=1}^m \E(Z_{a_{\pi,i}}Z_{b_{\pi,i}}),
$$
which shows that the Isserlis formula for complex-valued r.v.'s is exactly the same as that for real-valued random variables.
\end{proof}

\begin{lemma}\label{le:extra Isserlis}
Let $\{X(t),t\in\mathbb{R}\}$ be a stationary Gaussian process in $\mathbb{C}$ with $C(t-s) = \mathbb{E}X(t)\overline{X(s)}$ and $\check C(t-s)=\mathbb{E}X(t)X(s)$. 
Consider $X(t_i),X(s_i)$, $i=1,\hdots,N+M$ for some $N,M\in \mathbb{Z}$, with
$N\ge 0$ and $M\ge 0$.  Denote by $\mathcal{P}_{N,M}$ the class of all pairings of the set
\[\{t_i,s_i|i=1,\hdots,N+M\}\] and by $\mathcal{P}_{N,M,-k}$ the class of all pairings of  \[\{t_i,s_i|i=1,\hdots,N+M\}\setminus \{t_k,s_k\}.\]
This means that $\tilde u\in \mathcal{P}_{N,M}$ iff \[\tilde u =\left\{\left\{\tau,\sigma\right\},\left\{\tau,\tilde\tau\right\},\left\{\sigma,\tilde\sigma\right\}| \tau\neq\tilde\tau\in\{t_i:i=1,\hdots,N+M\},\ \sigma\neq\tilde\sigma\in\{s_i:i=1,\hdots,N+M\}\right\}\]
and each symbol $t_i,s_i,\ i=1,\hdots,N+M$ can be used only once.  
Then,
\begin{align}\label{e:le:extra Isserlis}
\begin{split}
& \mathbb{E}\left[\prod_{n=1}^NX(t_n)\overline{X(s_n)}\cdot\prod_{m=1}^M \Big(X(t_{N+m})\overline{X(s_{N+m})}-C(t_{N+m}-s_{N+m})\Big)\right] \\
&= \sum_{\substack{\tilde u\in \mathcal{P}_{N,M}:\\ \cup_{m=1}^M \left\{\{t_{N+m},s_{N+m}\}\right\}\cap\tilde u=\emptyset}}\prod_{\{i,j\}\in\tilde u}\Big[C(\tau-\sigma)\mathbbm{1}_{\{i,j\}=\{\tau,\sigma\}} +\check C(\tau-\tilde \tau) \mathbbm{1}_{\{i,j\}=\{\tau,\tilde \tau\}}  \\
& \hspace{5cm} +\overline{\check C(\sigma-\tilde\sigma)}\mathbbm{1}_{\{i,j\}=\{\sigma,\tilde\sigma\}}\Big].
\end{split}
\end{align} 
\end{lemma}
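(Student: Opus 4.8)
\textbf{Proof plan for Lemma \ref{le:extra Isserlis}.}
The plan is to reduce the claimed identity to the complex Isserlis formula of Lemma \ref{le:complex isserlis} by expanding the product over $m$ and then performing a combinatorial cancellation. First I would write the random variable inside the expectation as a sum over subsets: expanding
$\prod_{m=1}^M \big( X(t_{N+m})\overline{X(s_{N+m})} - C(t_{N+m}-s_{N+m})\big)$
gives $\sum_{S\subseteq\{1,\dots,M\}} (-1)^{|S|} \big(\prod_{m\notin S} X(t_{N+m})\overline{X(s_{N+m})}\big) \prod_{m\in S} C(t_{N+m}-s_{N+m})$. Multiplying by $\prod_{n=1}^N X(t_n)\overline{X(s_n)}$ and taking expectations, each term becomes the product of the deterministic factors $\prod_{m\in S} C(t_{N+m}-s_{N+m})$ times the mixed moment $\mathbb{E}\big[\prod_{n=1}^N X(t_n)\overline{X(s_n)} \prod_{m\notin S} X(t_{N+m})\overline{X(s_{N+m})}\big]$.

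Next I would apply Lemma \ref{le:complex isserlis} to each such mixed moment. The variables involved are the $2(N+|S^c|)$ jointly complex-Gaussian quantities $\{X(t_n)\}$ and $\{\overline{X(s_n)}\}$ (over $n$ ranging in $\{1,\dots,N\}\cup(N+S^c)$). By Lemma \ref{le:complex isserlis} this equals the sum over all pairings $\pi$ of these $2(N+|S^c|)$ variables of the product of pairwise second moments. The three possible types of pair contribute the three types of second moment: $\mathbb{E}[X(\tau)\overline{X(\sigma)}] = C(\tau-\sigma)$ for a $t$–$\overline{s}$ pair, $\mathbb{E}[X(\tau)X(\tilde\tau)] = \check C(\tau-\tilde\tau)$ for a $t$–$t$ pair, and $\mathbb{E}[\overline{X(\sigma)}\,\overline{X(\tilde\sigma)}] = \overline{\check C(\sigma-\tilde\sigma)}$ for an $\overline{s}$–$\overline{s}$ pair (using $\mathbb{E}[\overline{Z_1}\,\overline{Z_2}] = \overline{\mathbb{E}[Z_1 Z_2]}$). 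This exactly matches the indicator-weighted factors appearing on the right-hand side of \eqref{e:le:extra Isserlis}.

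Combining the two expansions, the full left-hand side becomes a double sum: over subsets $S\subseteq\{1,\dots,M\}$ (with sign $(-1)^{|S|}$ and a factor $\prod_{m\in S} C(t_{N+m}-s_{N+m})$) and over pairings $\pi$ of the remaining index set. The key step is to recognize that a pairing $\pi$ together with a distinguished subset $S$ of ``removed'' blocks of the form $\{t_{N+m},s_{N+m}\}$ is the same data as a pairing $\tilde u$ of the full set $\{t_i,s_i\}$ in which every block $\{t_{N+m},s_{N+m}\}$, $m\in S$, appears as one of its pairs; under this correspondence $\prod_{m\in S} C(t_{N+m}-s_{N+m})$ is exactly the contribution of those blocks to $\prod_{\{i,j\}\in\tilde u}(\dots)$. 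Hence, regrouping the double sum by the resulting full pairing $\tilde u$, each $\tilde u$ that contains at least one block $\{t_{N+m},s_{N+m}\}$ appears multiple times with signs $(-1)^{|S|}$ over all subsets $S$ of the set of such blocks it contains, and these cancel by the binomial identity $\sum_{S}(-1)^{|S|}=0$ for a nonempty set. The surviving terms are precisely the pairings $\tilde u$ containing none of the blocks $\{t_{N+m},s_{N+m}\}$, $m=1,\dots,M$, i.e.\ those with $\cup_{m=1}^M\{\{t_{N+m},s_{N+m}\}\}\cap\tilde u=\emptyset$, which is exactly the right-hand side of \eqref{e:le:extra Isserlis}.

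\textbf{Main obstacle.} The analytic content is entirely supplied by Lemma \ref{le:complex isserlis}; the real work is bookkeeping: setting up the bijection between (pairing of reduced index set, removed diagonal blocks) and (pairing of full index set containing those blocks) cleanly, and then justifying the sign cancellation $\sum_{\emptyset\neq S\subseteq B}(-1)^{|S|}=-1$ applied to the set $B=B(\tilde u)$ of diagonal blocks contained in $\tilde u$. Care is needed to check that a pair $\{t_{N+m}, s_{N+m}\}$ can only arise in $\pi$ as a genuine $t$–$\overline s$ pair contributing $C(t_{N+m}-s_{N+m})$ (so it is counted consistently on both sides), and that no double counting occurs when $\tilde u$ happens to contain several diagonal blocks. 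I expect this inclusion–exclusion step to be the only place requiring genuine attention; everything else is a direct substitution.
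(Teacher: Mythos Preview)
Your proposal is correct and takes a genuinely different route from the paper. The paper proves the identity by induction on $N+M$: in the inductive step it peels off one centered factor $X(t_{N+1})\overline{X(s_{N+1})}-C(t_{N+1}-s_{N+1})$ at a time, writes the expectation as a difference of two expectations of the same structural form with one fewer centered factor, applies the induction hypothesis to each, and then observes a telescoping cancellation that removes exactly the pairings containing the diagonal block $\{t_{N+1},s_{N+1}\}$ (and iterates this for the remaining diagonal blocks).

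Your argument, by contrast, expands all $M$ centered factors simultaneously via the subset sum $\sum_{S\subseteq\{1,\dots,M\}}(-1)^{|S|}(\cdots)$, applies Lemma~\ref{le:complex isserlis} once to each resulting uncentered moment, and then regroups by the induced full pairing $\tilde u$ to obtain the coefficient $\sum_{S\subseteq B(\tilde u)}(-1)^{|S|}$, which vanishes unless $B(\tilde u)=\emptyset$. This is a clean global inclusion--exclusion that handles all diagonal blocks at once; the paper's induction is essentially the same cancellation performed one block at a time. Your approach is arguably more transparent and avoids the bookkeeping of the iterated telescoping; the paper's induction has the minor advantage that it never needs to name the bijection $(S,\pi)\leftrightarrow(\tilde u,S\subseteq B(\tilde u))$ explicitly. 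The point you flag as the ``main obstacle''---that a diagonal pair $\{t_{N+m},s_{N+m}\}$ occurring inside a reduced pairing $\pi$ contributes exactly $C(t_{N+m}-s_{N+m})$, so the bijection is weight-preserving---is indeed the only place requiring care, and it goes through as you describe.
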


This Isserlis-type result is used in the proof of Proposition \ref{prop: central moments} (see e.g.\ \eqref{e:r_ge_3_moment_bound}),
where the $k$th order moments of the spectral density estimators involve terms as in \eqref{e:le:extra Isserlis} where $N=0$.  The reason we formulate \eqref{e:le:extra Isserlis}
for general $N\ge 0$ is to facilitate the proof of this relation by the method of induction.

\begin{proof}[Proof of Lemma \ref{le:extra Isserlis}]
We will prove the desired equality by using induction on $N+M$. 
When $N+M=1,$ the equality holds trivially. 
For the basis of our induction, we use $N+M=2$. We look at the three different cases. 
\begin{itemize}
    \item[(a)] $N=2$. The equality trivially holds by the Isserlis' formula.
    \item[(b)] $N=M=1$. We have 
    \begin{align*}
         \mathbb{E}& \left[X(t_1)\overline{X(s_1)}\cdot(X(t_2)\overline{X(s_2)}-C(t_2-s_2))\right]\\ 
         & = \mathbb{E}\left[X(t_1)\overline{X(s_1)}X(t_2)\overline{X(s_2)}\right] - C(t_1-s_1)C(t_2-s_2)\\ 
         & = \mathbb{E}\left[X(t_1)\overline{X(s_1)}\right]\mathbb{E}\left[X(t_2)\overline{X(s_2)}\right]+\mathbb{E}\left[X(t_1)X(t_2)\right]\mathbb{E}\left[\overline{X(s_1)}\overline{X(s_2)}\right]\\ 
         & + \mathbb{E}\left[X(t_1)\overline{X(s_2)}\right]\mathbb{E}\left[\overline{X(s_1)}X(t_2)\right]- C(t_1-s_1)C(t_2-s_2)\\ 
         & = \check C(t_1-t_2)\overline{\check C(s_1-s_2)} +C(t_1-s_2)C(t_2-s_1)
    \end{align*}
    where Isserlis' formula was used in the second equality.
    \item[(c)] $M=2$. We have that 
    \begin{align} \label{e:M=2,N=0}
    \begin{split}
      &  \mathbb{E} \left[(X(t_1)\overline{X(s_1)}-C(t_1-s_1))\cdot (X(t_2)\overline{X(s_2)}-C(t_2-s_2))\right]\\& = \mathbb{E}\left[X(t_1)\overline{X(s_1)}X(t_2)\overline{X(s_2)}\right]-C(t_1-s_1)C(t_2-s_2)\\ & = \check C(t_1-t_2)\overline{\check C(s_1-s_2)}+C(t_1-s_2)C(t_2-s_1)
      \end{split}
    \end{align}
    similarly to case $(b)$.
\end{itemize}

    For the induction hypothesis, we assume that the desired equality holds when $N+M=r.$ 
    Let now $N+M=r+1.$ We discern two cases. 
    \begin{itemize}
    \item[(a)] $N=r+1$. The result follows directly by Isserlis' formula. 
    \item[(b)] $N<r+1$. The following holds
    \begin{align*}
         & \mathbb{E}\left[\prod_{n=1}^NX(t_n)\overline{X(s_n)}\cdot\prod_{m=1}^M  \Big(X(t_{N+m})\overline{X(s_{N+m})}-C(t_{N+m}-s_{N+m})\Big)\right]\\ &
          = \mathbb{E}\left[\prod_{n=1}^{N+1}X(t_n)\overline{X(s_n)}\prod_{m=2}^{M}  \Big(X(t_{N+m})\overline{X(s_{N+m})}-C(t_{N+m}-s_{N+m})\Big)\right]\\ 
         & \quad - C(t_{N+1}-s_{N+1})\mathbb{E}\left[\prod_{n=1}^NX(t_n)\overline{X(s_n)}\cdot \prod_{m=2}^{M}  \Big(X(t_{N+m})\overline{X(s_{N+m})}-C(t_{N+m}-s_{N+m})\Big)\right]\\ & = \hdots \\& = 
         \mathbb{E}\left[\prod_{n=1}^{N+M}X(t_n)\overline{X(s_n)}\right]\\ 
         & \quad - \sum_{m=1}^M C(t_{N+m}-s_{N+m})\mathbb{E}
         \Big[\prod_{n=1}^{N+m-1}X(t_n)\overline{X(s_n)}\\
         & \hskip6cm \cdot\prod_{k=m+1}^{M}\Big(X(t_{N+k})\overline{X(s_{N+k})}-C(t_{N+k}-s_{N+m})\Big)\Big]
    \end{align*}
Applying Isserlis' formula the first summand is equal to 
    \begin{align*}
    & \sum_{\tilde u\in \mathcal{P}_{N,M}}\prod_{\{i,j\}\in\tilde u}\Big[C(\tau-\sigma)\mathbbm{1}_{\{i,j\}=\{\tau,\sigma\}} +\check C(\tau-\tilde\tau)\mathbbm{1}_{\{i,j\}=\{\tau,\tilde\tau\}} \\
    & \hskip3cm +\overline{\check C(\sigma-\tilde\sigma)}\mathbbm{1}_{\{i,j\}=\{\sigma,\tilde\sigma\}}\Big].
    \end{align*}
By applying the induction hypothesis in the second summand, since all terms involve $r$ factors in total, we have that the second term is equal to     
\begin{align*}
& \sum_{m=1}^M C(t_{N+m}-s_{N+m})\sum_{\substack{\tilde u\in \mathcal{P}_{N,M,-m}:\\ \cup_{i=N+m+1}^{N+M} \left\{\{t_i,s_i\}\right\}\cap\tilde u=\emptyset}}\prod_{\{i,j\}\in\tilde u}\Big[C(\tau-\sigma)\mathbbm{1}_{\{i,j\}=\{\tau,\sigma\}} \\ 
& \hskip4cm +\check C(\tau-\tilde\tau)\mathbbm{1}_{\{i,j\}=\{\tau,\tilde\tau\}}
     +\overline{\check C(\sigma-\tilde\sigma)}\mathbbm{1}_{\{i,j\}=\{\sigma,\tilde\sigma\}}\Big].
\end{align*}
Denote the first term of the previous sum as $A$, the second term as $B$, write $B:=\sum_{m=1}^MB_m$. Also, let 
\[C_{i,j}^{\tilde u}:= C(\tau-\sigma)\mathbbm{1}_{\{i,j\}=\{\tau,\sigma\}}+\check C(\tau-\tilde\tau)\mathbbm{1}_{\{i,j\}=\{\tau,\tilde\tau\}}
     +\overline{\check C(\sigma-\tilde\sigma)}\mathbbm{1}_{\{i,j\}=\{\sigma,\tilde\sigma\}}.\]
     
Then we have that 
\begin{align*}
A-B_M & = \sum_{\tilde u\in\mathcal{P}_{N,M}}\prod_{\{i,j\}\in\tilde u}C_{i,j}^{\tilde u} - C(t_{N+M}-s_{N+M})\sum_{\tilde u'\in\mathcal{P}_{N,M,-M}}\prod_{\{i,j\}\in\tilde u'}C_{i,j}^{\tilde u'}\\ & = \sum_{\substack{\tilde u \in \mathcal{P}_{N,M}:\\ \{t_{N+M},s_{N+M}\}\not\in\tilde u}}\prod_{\{i,j\}\in\tilde u}C_{i,j}^{\tilde u}.
\end{align*}   
Similarly \[A-B_M-B_{M-1} = \sum_{\substack{\tilde u \in \mathcal{P}_{N,M}:\\ \cup_{i=M-1}^M\left\{\{t_{N+i},s_{N+i}\}\right\}\cap \tilde u=\emptyset}}\prod_{\{i,j\}\in\tilde u}C_{i,j}^{\tilde u}.\]
Continuing this way for all terms $B_j,\ j=1,\hdots,M$, we have that the proof is complete.

\end{itemize}
\end{proof}

\section{Proofs for Section \ref{s:rkhs}} \label{s:RKHS_proof}
As in Section \ref{s:rkhs}, $\bbH_n$ denotes the space spanned by $\{R(u, \cdot), u\in D_n\}$ and $\Pi_n$ is the projection operator onto $\bbH_n$.
Although the following result is standard, we include it here for the sake of completeness.

\begin{prop} \label{prop:rkhs}
Assume that the matrix ${\boldsymbol R}_n =\{R(u_{n,i},u_{n,j})\}_{i,j=1}^{m_n}$ is invertible. Let $g\in\bbH$ and ${\boldsymbol g} = (g(u_{n,1}),\ldots,g(u_{n,m_n}))^\top$. Then, the following hold.
\begin{enumerate} [label=(\roman*)]
\item
The projection $\tilde g = \Pi_n g = \sum_i c_i R(u_{n,i},\cdot)$ where $\boldsymbol{c} := (c_1,\ldots,c_{m_n})^\top = 
{\boldsymbol R}_n ^{-1} {\boldsymbol g}$,
and $\tilde g(u_{n,i}) = g(u_{n,i})$ for all $u_{n,i}\in D_n$. Moreover, $\left\|g-\tilde g\right\|_\bbH^2 = \|g\|_\bbH^2 - {\boldsymbol g} ^\top {\boldsymbol R}_n^{-1} {\boldsymbol g}$.
\item
$|\tilde g(u) - g(u)| \le \|g\|_\bbH \inf_{u'\in D_n} \sqrt{R(u,u)-2R(u,u')+R(u',u')}.$
\vskip.3cm
\end{enumerate} 
\end{prop}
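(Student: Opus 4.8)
\textbf{Proof proposal for Proposition \ref{prop:rkhs}.}

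The plan is to verify (i) via the standard reproducing kernel Hilbert space geometry and then deduce (ii) from (i) using the reproducing property and the Cauchy--Schwarz inequality. For (i), the key observation is that $\bbH_n := {\rm span}\{R(u_{n,i},\cdot)\, :\, i=1,\ldots,m_n\}$ is a finite-dimensional subspace of $\bbH$, and the projection $\tilde g = \Pi_n g$ is the unique element of $\bbH_n$ with $\langle g - \tilde g, h\rangle_\bbH = 0$ for all $h\in \bbH_n$. Writing $\tilde g = \sum_{j} c_j R(u_{n,j},\cdot)$, I would impose orthogonality against each basis element $R(u_{n,i},\cdot)$: using the reproducing property, $\langle g, R(u_{n,i},\cdot)\rangle_\bbH = g(u_{n,i})$ and $\langle R(u_{n,j},\cdot), R(u_{n,i},\cdot)\rangle_\bbH = R(u_{n,i},u_{n,j})$, so the orthogonality conditions become the linear system $\boldsymbol R_n \boldsymbol c = \boldsymbol g$, whence $\boldsymbol c = \boldsymbol R_n^{-1} \boldsymbol g$ by the assumed invertibility. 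The interpolation property $\tilde g(u_{n,i}) = g(u_{n,i})$ then follows by evaluating $\tilde g$ at $u_{n,i}$ via the reproducing property and noting that the $i$-th component of $\boldsymbol R_n \boldsymbol c$ equals $g(u_{n,i})$. For the norm identity, I would use the Pythagorean decomposition $\|g\|_\bbH^2 = \|\tilde g\|_\bbH^2 + \|g - \tilde g\|_\bbH^2$; since $\|\tilde g\|_\bbH^2 = \langle \sum_j c_j R(u_{n,j},\cdot), \tilde g\rangle_\bbH = \sum_j c_j g(u_{n,j}) = \boldsymbol c^\top \boldsymbol g = \boldsymbol g^\top \boldsymbol R_n^{-1}\boldsymbol g$ (using symmetry of $\boldsymbol R_n^{-1}$), rearranging gives $\|g - \tilde g\|_\bbH^2 = \|g\|_\bbH^2 - \boldsymbol g^\top \boldsymbol R_n^{-1}\boldsymbol g$.

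For (ii), I would exploit that for any $u\in E$ and any $u'\in D_n$, since $\tilde g$ interpolates $g$ on $D_n$ we have $\tilde g(u') = g(u')$, hence $\tilde g(u) - g(u) = \langle \tilde g - g,\ R(u,\cdot)\rangle_\bbH = \langle \tilde g - g,\ R(u,\cdot) - R(u',\cdot)\rangle_\bbH$, where the last equality uses that $R(u',\cdot)\in\bbH_n$ and $\tilde g - g \perp \bbH_n$. Applying the Cauchy--Schwarz inequality and the fact that $\|\tilde g - g\|_\bbH \le \|g\|_\bbH$ (immediate from the Pythagorean identity, or from $\tilde g$ being a projection), this bounds the error by $\|g\|_\bbH \cdot \|R(u,\cdot) - R(u',\cdot)\|_\bbH$. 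Finally, $\|R(u,\cdot) - R(u',\cdot)\|_\bbH^2 = R(u,u) - 2R(u,u') + R(u',u')$ by the reproducing property, and taking the infimum over $u'\in D_n$ (the left-hand side does not depend on $u'$) gives the stated bound.

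I do not anticipate a serious obstacle here: both parts are textbook RKHS computations, and the only nontrivial input is the invertibility of $\boldsymbol R_n$, which is assumed. The mildest point requiring care is the justification that $\tilde g - g$ is orthogonal to all of $\bbH_n$ (not merely to the spanning elements), which follows by linearity of the inner product, and the symmetry of $\boldsymbol R_n^{-1}$ used in rewriting $\boldsymbol c^\top \boldsymbol g$. One should also note in passing that $\tilde g$ is indeed the minimum-norm interpolant: among all $h\in\bbH$ with $h(u_{n,i}) = g(u_{n,i})$ for all $i$, the difference $h - \tilde g$ vanishes on $D_n$ hence is orthogonal to $\bbH_n \ni \tilde g$, so $\|h\|_\bbH^2 = \|\tilde g\|_\bbH^2 + \|h - \tilde g\|_\bbH^2 \ge \|\tilde g\|_\bbH^2$ — this is the assertion referenced in the main text after \eqref{e:Rn-matrix}, and it costs essentially nothing to include.
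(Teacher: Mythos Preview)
Your proposal is correct and follows essentially the same route as the paper: for (i) the paper writes the projection as $\argmin_{h\in\bbH_n}\|g-h\|_\bbH$, expands $\|g-h\|_\bbH^2=\|g\|_\bbH^2-2\boldsymbol c^\top\boldsymbol g+\boldsymbol c^\top\boldsymbol R_n\boldsymbol c$ and minimizes in $\boldsymbol c$, whereas you impose the equivalent orthogonality conditions $\langle g-\tilde g,R(u_{n,i},\cdot)\rangle_\bbH=0$ to obtain $\boldsymbol R_n\boldsymbol c=\boldsymbol g$ and then recover the norm identity via Pythagoras; for (ii) both arguments are identical, subtracting $R(u',\cdot)\in\bbH_n$ inside the inner product and applying Cauchy--Schwarz together with $\|g-\tilde g\|_\bbH\le\|g\|_\bbH$.
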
 

\begin{proof} \
\begin{enumerate}[label=(\roman*)]
\item
By the property of projection,
$$
\tilde g = \argmin_{h\in\bbH_n} \|g-h\|_\bbH.
$$
For $h = \sum_i c_i R(u_{n,i},\cdot)$, the reproducing property entails
$$
\|g-h\|_\bbH^2 = \|g\|_\bbH^2 - 2 \boldsymbol{c}^\top \boldsymbol{g} + \boldsymbol{c}^\top \boldsymbol{R}_n \boldsymbol{c},
$$
from which we conclude the minimizer $\boldsymbol{c}$ is ${\boldsymbol R}_n ^{-1} {\boldsymbol g}$. 
It then follows that
$$
\tilde{\boldsymbol{g}} := \left(\tilde g(u_{n,1}),\ldots,\tilde g(u_{n,m_n})\right)^\top = {\boldsymbol R}_n \boldsymbol{c}
= {\boldsymbol g}.
$$
\item
Applying again the fact that $g-\tilde g \perp R(u', \cdot)$ for all $u'\in D_n$, we have for any arbitrary $u\in E$,
$$
\tilde g(u) - g(u) = \langle \tilde g- g, R(u,\cdot)\rangle_{\bbH} = \langle \tilde g - g, R(u,\cdot)-R(u',\cdot)\rangle_{\bbH}, \ u' \in D_n.
$$
By (i) and the Cauchy-Schwarz inequality
$$
|\tilde g(u)-g(u)| \le \|g\|_\bbH \inf_{u'\in D_n} \|R(u,\cdot)-R(u',\cdot)\|_\bbH,
$$
where
$$
\|R(u,\cdot)-R(u',\cdot)\|_\bbH^2 = R(u,u)-2R(u,u')+R(u',u').
$$
\end{enumerate}
\end{proof}

{\bf Proof of Theorem \ref{thm:RKHS_bias}:} First,
\begin{align*}
\|\tilde f(\theta)-f(\theta)\|_{\rm HS} & = \|\Pi_n f(\theta)\Pi_n-f(\theta)\|_{\rm HS} \\
& \le \|\Pi_n f(\theta)\Pi_n-\Pi_n f(\theta)\|_{\rm HS} + \|\Pi_n f(\theta)- f(\theta)\|_{\rm HS} \\
&\le \|f(\theta)(\Pi_n-{\rm I})\|_{\rm HS} + \|(\Pi_n-{\rm I}) f(\theta)\|_{\rm HS} \\
&= 2 \|(\Pi_n-{\rm I}) f(\theta)\|_{\rm HS} \\
& = 2\left(\sum_{j=1}^\infty \nu_j^2 \|(\Pi_n-{\rm I}) \phi_j\|_\bbH^2\right)^{1/2}.
\end{align*}
Next, we consider $\|(\Pi_n-{\rm I})g\|_\bbH^2 = \|\tilde g-g\|_\bbH^2$ for a function $g\in\bbH$ with a Lipschitz continuous derivative. 
The derivation of this depends little on the value of $g(0)$. To simplify notation, let us make the simplification that the Sobolev space contains functions $g$ with $g(0)=0$. Thus, we take the kernel as $R(s,t) = s\wedge t$, i.e., the covariance kernel of the standard Brownian motion. 
Then the matrix $\boldsymbol{R}_n$ in \eqref{e:Rn-matrix} is indeed invertible. By Proposition \ref{prop:rkhs}, 
\begin{align} \label{e:g_tilde-g}
\|\tilde g-g\|_\bbH^2 = \|g\|_\bbH^2 - \boldsymbol{g}^\top \boldsymbol{R}_n^{-1} \boldsymbol{g}
\end{align}
where $\boldsymbol{g} = (g(u_{n,i}))_{i=1}^{m_n}$ contains the values of $g$ at the $u_{n,i}$. 
It follows that $\boldsymbol{R}_n$ has the Cholesky decomposition
\begin{equation}\label{e:Rn-via-Ln}
\boldsymbol{R}_n = m_n^{-1} \boldsymbol{L}_n\boldsymbol{L}_n^\top,
\end{equation}
where $\boldsymbol{L}_n$ is a lower triangular matrix of $1$'s and has inverse
$$
\boldsymbol{L}_n^{-1} = \begin{bmatrix}
1 & 0 & 0 & \cdots & 0 & 0 \\
-1 & 1 & 0 & \cdots & 0 & 0 \\
 0 & -1 & 1 & \cdots & 0 & 0 \\
 \vdots & \vdots & \vdots & \ddots & \vdots & \vdots\\
 0 & 0 & 0 & \cdots & 1 & 0\\
  0 & 0 & 0 & \cdots & -1 & 1
 \end{bmatrix}.
$$
Indeed, by the independence and the stationarity of the increments of the standard Brownian motion $B$, we have
$
\boldsymbol{Z} = \sqrt{m_n}( \boldsymbol{L}_n^{-1})^\top \boldsymbol{B},
$
where $\boldsymbol{B} = \Big( B(i/m_n) - B((i-1)/m_n) \Big)_{i=1}^{m_n}$  and $\boldsymbol{Z}
\sim {\cal N}(0, \boldsymbol{I}_{m_n})$ is a standard Normal 
random vector.  Since $\boldsymbol{R}_n=\E[\boldsymbol{B}\boldsymbol{B}^\top]$, we obtain
$\boldsymbol{I}_{m_n} = m_n (\boldsymbol{L}_n^{-1})^\top \boldsymbol{R}_n \boldsymbol{L}_n^{-1}$, which yields \eqref{e:Rn-via-Ln}.
Thus,
\begin{align} \label{e:g_approx}
\boldsymbol{g}^\top \boldsymbol{R}_n^{-1} \boldsymbol{g} = m_n \boldsymbol{g}^\top (\boldsymbol{L}_n^{-1})^\top \boldsymbol{L}_n^{-1} \boldsymbol{g} 
= m_n \sum_{i=1}^{m_n} (g(i/m_n)-g((i-1)/m_n))^2,
\end{align}
which is a Riemann approximation of $\|g\|_\bbH^2 = \int_0^1 (g'(t))^2 dt$ (recall $g(0)=0$).
 Since $|g'(s)-g'(t)|\le C |s-t|$, it follows from \eqref{e:g_tilde-g} and \eqref{e:g_approx} that
$$
\|\tilde g- g\|_\bbH^2 \le C m_n^{-1}.
$$
Indeed, by the mean value theorem, we have $g(i/m_n)-g((i-1)/m_n) = g'(\xi_{n,i}) m_n^{-1}$, for some $\xi_{n,i} \in [(i-1)/m_n, i/m_n]$, and hence
\begin{align*}
\|g - \wtilde g\|_{\bbH}^2 &= \int_0^1 (g'(t))^2 dt - \frac{1}{m_n} \sum_{i=1}^{m_n} (g'(\xi_{n,i}))^2 \\
&\le  \sum_{i=1}^{m_{n}} \int_{(i-1)/m_n}^{i/m_n} |g'(t) - g'(\xi_{n,i})|\cdot |g'(t) + g'(\xi_{n,i})| dt\\
&\le \frac{C}{m_n} \left(  \int_0^1 |g'(t)|dt + \frac{1}{m_n} \sum_{i=1}^{m_n} |g'(\xi_{n,i})| \right) = {\cal O}\left(m_n^{-1}\right),
\end{align*}
where in the last relation we used the fact that the Riemann sum converges to the integral $\int_0^1 |g'(t)|dt <\infty$,\ as $m_n\to\infty$. 
Applying this bound and by the assumption on the $\phi_j$, we obtain
\begin{align*}
\sum_{j=1}^\infty \nu_j^2 \|(\Pi_n-{\rm I}) \phi_j\|_\bbH^2
\le m_n^{-1} \sum_{j=1}^\infty C_j \nu_j^2. 
\end{align*}
This completes the proof.
\qed

\section{Some properties of the trace norm}\label{appendix:auxiliary results}
We collect some elementary facts of the trace norm in the following lemma.

\begin{lemma}\label{le:trace norm alt def}
Let ${\cal A}$ be a trace class operator on the Hilbert space $\bbH$. Then
\begin{enumerate}
\item[(i)] 
$\|{\cal A}\|_{\rm tr} = \sup_{{\cal W}: {\rm unitary}}\left|\langle {\cal A},{\cal W}\rangle_{\rm HS}\right|$;
\item[(ii)] $\sum_{i} |\langle {\cal A}f_i,g_i\rangle|\le \|{\cal A}\|_{\rm tr}$ for any CONSs $\{f_i\}$ and $\{g_i\}$;
\item[(iii)] $\sum_i\left|\left\langle {\cal A} e_i,e_i\right\rangle\right|\le \|{\cal A}\|_{\rm tr}$ for any CONS $\{e_i\}$.
\end{enumerate}
\end{lemma}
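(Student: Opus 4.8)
The plan is to prove (ii) first by a direct computation with the singular value decomposition of ${\cal A}$, deduce (iii) as the special case $f_i=g_i=e_i$, and then obtain (i) by combining (ii) (which gives the upper bound) with a truncation argument (which gives the matching lower bound). Throughout, $\otimes$ denotes the outer product \eqref{e:outer_product}, so $(\psi\otimes\phi)z=\psi\langle z,\phi\rangle$.

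Since ${\cal A}$ is trace-class it is compact, hence admits a Schmidt decomposition ${\cal A}=\sum_j\sigma_j\,(\psi_j\otimes\phi_j)$, where $\sigma_j\ge0$ are the singular values, $\{\phi_j\}$ and $\{\psi_j\}$ are orthonormal systems (not necessarily complete), the series converges in trace norm, and $\|{\cal A}\|_{\rm tr}=\sum_j\sigma_j$; these are standard facts on trace-class operators (see the references in Section~\ref{ss:Bochner}). For (ii), given CONSs $\{f_i\}$ and $\{g_i\}$ one expands $\langle{\cal A}f_i,g_i\rangle=\sum_j\sigma_j\langle f_i,\phi_j\rangle\langle\psi_j,g_i\rangle$, applies the triangle inequality in $j$, interchanges the (nonnegative) sums over $i$ and $j$, and uses the Cauchy--Schwarz inequality in $i$ followed by Parseval's identity:
\begin{align*}
\sum_i|\langle{\cal A}f_i,g_i\rangle|
&\le\sum_j\sigma_j\sum_i|\langle f_i,\phi_j\rangle|\,|\langle\psi_j,g_i\rangle|\\
&\le\sum_j\sigma_j\Big(\sum_i|\langle f_i,\phi_j\rangle|^2\Big)^{1/2}\Big(\sum_i|\langle\psi_j,g_i\rangle|^2\Big)^{1/2}
=\sum_j\sigma_j=\|{\cal A}\|_{\rm tr}.
\end{align*}
Taking $f_i=g_i=e_i$ immediately yields (iii).

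For (i), read $\langle{\cal A},{\cal W}\rangle_{\rm HS}$ as the trace pairing ${\rm trace}({\cal A}^*{\cal W})$, which is well defined because ${\cal A}^*{\cal W}$ is trace-class. Using the basis-independence of the trace and the defining relation of the adjoint, one has $|\langle{\cal A},{\cal W}\rangle_{\rm HS}|=|\sum_i\langle{\cal A}e_i,{\cal W}e_i\rangle|$ for any CONS $\{e_i\}$; since $\{{\cal W}e_i\}$ is again a CONS, part (ii) gives $|\langle{\cal A},{\cal W}\rangle_{\rm HS}|\le\sum_i|\langle{\cal A}e_i,{\cal W}e_i\rangle|\le\|{\cal A}\|_{\rm tr}$, hence $\sup_{\cal W}|\langle{\cal A},{\cal W}\rangle_{\rm HS}|\le\|{\cal A}\|_{\rm tr}$. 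For the reverse inequality, use the trace-norm convergent expansion together with the cyclicity of the trace and ${\rm trace}\big(({\cal W}\phi_j)\otimes\psi_j\big)=\langle{\cal W}\phi_j,\psi_j\rangle$ to obtain $\langle{\cal A},{\cal W}\rangle_{\rm HS}=\sum_j\sigma_j\langle{\cal W}\phi_j,\psi_j\rangle$, so it remains to choose a unitary ${\cal W}$ making $\langle{\cal W}\phi_j,\psi_j\rangle$ equal (or close) to $1$.

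The one genuinely delicate point is that one cannot in general take a single unitary with ${\cal W}\phi_j=\psi_j$ for all $j$: in infinite dimensions the orthogonal complements of $\overline{{\rm span}\{\phi_j\}}$ and of $\overline{{\rm span}\{\psi_j\}}$ may have different Hilbert dimensions, so the isometry $\phi_j\mapsto\psi_j$ need not extend to a unitary on $\bbH$. This is circumvented by truncation: given $\epsilon>0$, pick $N$ with $\sum_{j>N}\sigma_j<\epsilon$; then ${\rm span}\{\phi_1,\dots,\phi_N\}$ and ${\rm span}\{\psi_1,\dots,\psi_N\}$ are both $N$-dimensional, so (using separability of $\bbH$) their orthogonal complements have the same Hilbert dimension, and there is a unitary ${\cal W}$ on $\bbH$ with ${\cal W}\phi_j=\psi_j$ for $j=1,\dots,N$. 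Then
\begin{align*}
\langle{\cal A},{\cal W}\rangle_{\rm HS}=\sum_{j\le N}\sigma_j+\sum_{j>N}\sigma_j\langle{\cal W}\phi_j,\psi_j\rangle,\qquad
\Big|\sum_{j>N}\sigma_j\langle{\cal W}\phi_j,\psi_j\rangle\Big|\le\sum_{j>N}\sigma_j<\epsilon,
\end{align*}
so $|\langle{\cal A},{\cal W}\rangle_{\rm HS}|\ge\sum_{j\le N}\sigma_j-\epsilon\ge\|{\cal A}\|_{\rm tr}-2\epsilon$. Letting $\epsilon\downarrow0$ gives $\sup_{\cal W}|\langle{\cal A},{\cal W}\rangle_{\rm HS}|\ge\|{\cal A}\|_{\rm tr}$, which together with the upper bound proves (i). I expect the extension-of-isometries issue in (i) to be the only real obstacle; the rest is routine bookkeeping with standard trace-class facts.
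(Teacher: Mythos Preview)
Your proof is correct. Parts (ii) and (iii) coincide with the paper's argument essentially verbatim: singular value decomposition, Cauchy--Schwarz over the index $i$, and Parseval.

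For (i) your route differs from the paper's, and is in fact more careful. The paper writes ${\cal A}=\sum_j\lambda_j\,v_j\otimes w_j$ with $\{v_j\}$ and $\{w_j\}$ both \emph{complete} orthonormal systems, sets ${\cal U}=\sum_k v_k\otimes w_k$ (which is then unitary), and observes $\langle{\cal A},{\cal U}\rangle_{\rm HS}=\|{\cal A}\|_{\rm tr}$, so that the supremum is attained. But the premise that both singular vector systems can simultaneously be taken complete is not valid in general: for ${\cal A}e_j=2^{-j}e_{j+1}$ on $\ell^2$, the right singular vectors $\{e_j\}_{j\ge1}$ already form a CONS while the left singular vectors $\{e_{j+1}\}_{j\ge1}$ do not, and no reindexing with $\lambda_j\ge0$ fixes this because all singular values are strictly positive. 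Your truncation argument sidesteps precisely this obstruction: after restricting to finitely many singular directions, the two orthogonal complements automatically have the same Hilbert dimension, so the finite partial isometry extends to a genuine unitary. The cost is that the supremum is only approached rather than attained, which is the correct conclusion in general.
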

\begin{proof} (i)
Suppose $A$ has the SVD 
\begin{align} \label{e:svd_A}
{\cal A} = \sum_{j}\lambda_jv_j\otimes w_j,
\end{align}
where $\lambda_j \ge 0$ and $\{v_j\},\{w_j\}$ are CONS of $\bbH$. Then, we can write 
\[{\cal A}  = \left(\sum_j \lambda_jv_j\otimes v_j\right)\left(\sum_k v_k\otimes w_k\right)=: {\cal PU},\]
which is a polar decomposition of ${\cal A}$.
It follows that 
\[\|{\cal A}\|_{\rm tr} = {\rm trace}({\cal P})={\rm trace}({\cal AU}^{*})=\langle {\cal A}, {\cal U}\rangle_{\rm HS}.\]
Suppose ${\cal W}$ is unitary and has the SVD ${\cal W}=\sum_k a_k\otimes b_k$. Then 
\begin{align*}% \label{e:unitary_CS}
\begin{split}
    \left|\langle{\cal A},{\cal W}\rangle_{\rm HS}\right| & = \left|\sum_k \langle {\cal A}b_k,{\cal W}b_k \rangle\right| \\ 
    & = \left|\sum_{j}\sum_k\lambda_j\langle (v_j\otimes w_j)b_k,a_k\rangle \right| \\ 
    & = \left|\sum_j\lambda_j\sum_k\langle v_j,a_k\rangle\langle b_k,w_j\rangle\right| \le \sum_j \lambda_j
    \end{split}
\end{align*}
by the Cauchy-Schwarz inequality.

(ii) By \eqref{e:svd_A},
\begin{align*}
\sum_{i} |\langle {\cal A}f_i,g_i\rangle| & = \sum_{i} |\sum_j \lambda_j\langle v_j, g_i\rangle \langle w_j, f_i\rangle| \\
& \le \sum_{j} \lambda_j \sum_i |\langle v_j, g_i\rangle \langle w_j, f_i\rangle|,
\end{align*}
and the result again follows from the Cauchy-Schwarz inequality.

(iii) This is a special case of (ii) with $f_i=g_i$. 

\end{proof}

\section{Examples}

In this section, we discuss several concrete examples that illustrate the breadth and scope of the conditions imposed in various results in the paper.  

\subsection{An example of the class ${\cal P}_D(\beta,L)$} \label{s:PLD}

We consider in this section with an example of a class of covariance structures, where the rate of consistency nearly matches the optimal rate of ${\cal P}_D(\beta,L).$ This class consists of regularly varying covariance structures, as follows. 
\begin{example}\label{ex:regularly varying power-law decay}
Consider $d=1$ and the scalar-valued case $\H = \C$. Let  
$$
C(k)=|k|^{-\beta-1}S(|h|),\beta>0,k\in\mathbb{Z}$$ where $S$ is a slowly varying function at infinity. It is not hard to see that the corresponding spectral densities $f\in {\cal P}_D(\beta+\epsilon,L)$ for any $\epsilon>0$, depending on the value of $L$. Also, assume that the kernel function is of the form $$K(h)=[1-|h|^{\lambda+1}]_+,h\in\mathbb{R}$$ for some $\lambda>0.$ We work in the discrete time setting, so we are using the estimator $\hat f_n(\theta)$.     

Thus, we have that  
\begin{align*}
    (2\pi) & [f(\theta)-\mathbb{E}\hat f_n(\theta)] = \sum_{|k|\ge\Delta_n}e^{\mathbbm{i}k\theta}C(k)+ \sum_{|k|<\Delta_n}e^{\mathbbm{i}k\theta}C(k)\left[1-K\left(\frac{k}{\Delta_n}\right)\right]
\end{align*}
Consider $\theta=0.$ Then, the previous expression is equal to  
\begin{align*}
    & 2\sum_{k\ge\Delta_n}k^{-\beta-1}S(k)+ 2\sum_{k<\Delta_n}k^{-\beta-1}S(k)\cdot\frac{k^{\lambda+1}}{\Delta_n^{\lambda+1}}
\end{align*}
Using the fact that for $p>-1,$ $$\int_{\alpha}^{x}t^pS(t)dt\sim(p+1)^{-1}x^{p+1}S(x),\ \text{as}\ x\to\infty $$ and for $p<-1,$ $$\int_{x}^{\infty}t^pS(t)dt\sim|p+1|^{-1}x^{p+1}S(x),\ \text{as}\ x\to\infty $$ 
we obtain that for $0<\beta<\min\{1,\lambda+1\}:$ 
\begin{align*}
    (2\pi)[f(0)-\mathbb{E}\hat f_n(0)]& \sim\frac{\Delta_n^{-\beta}S(\Delta_n)}{\beta} +\frac{\Delta_n^{-\beta}S(\Delta_n)}{\lambda+1-\beta} = \mathcal{O}\left(\Delta_n^{-\beta}\cdot S(\Delta_n)\right)
\end{align*}
Also, for $\beta>1,$ the same expression can be evaluated to be of the same order
$$\mathcal{O}\left(\Delta_n^{-\beta}\cdot S(\Delta_n) \right).$$
Compare this to the rate of Proposition \ref{prop:PL bias time series} for $0<\beta\neq1$.

We shift our interest now to the variance. At first, using $T$ and $\Delta$ in place of $T_n$ and $\Delta_n$ respectively, we have 
\begin{align*}                  
    \hat f_n(0)-& \mathbb{E}\hat f_n(0)  = \frac{1}{2\pi }\cdot\sum_{t=0}^T\sum_{s=0}^T K\left(\frac{t-s}{\Delta}\right)\frac{X(t)\cdot X(s)-C(t-s)}{|T-(t-s)|} \\ & = \frac{1}{2\pi}\cdot\sum_{v=0}^T\sum_{h=-\Delta}^{\Delta}K\left(\frac{h}{\Delta}\right)\frac{X(v+h)\cdot X(v)-C(h)}{|T-h|}\cdot\mathbbm{1}(0\le h+v\le T),
\end{align*}
by the change of variables $h=t-s,v=s$. Thus,

\begin{align*}
    (2\pi)^2\mathbb{E}\left|\hat f_n(0)-\mathbb{E}\hat f_n(0)\right|^2 & = \sum_{v=0}^T\sum_{w=0}^T\sum_{|h|\le\Delta}\sum_{|\tilde h|\le\Delta} K\left(\frac{h}{\Delta}\right)K\left(\frac{\tilde h}{\Delta}\right)\\ & \frac{\mathbb{E}\left\{\left[X(v+h)\cdot X(v)-C(h)\right]\left[X\left(w+\tilde h\right)\cdot X(w)-C\left(\tilde h\right)\right]\right\}}{|T-h|\left|T-\tilde h\right|}\\ & \cdot\mathbbm{1}\left(0\le h+v\le T\right)\cdot\mathbbm{1}\left(0\le \tilde h+w\le T\right)
\end{align*}

After using Isserlis' lemma for the expectation in the middle we finally obtain that 
\begin{align*}
    (2\pi)^2\mathbb{E}\left|\hat f_n(0)-\mathbb{E}\hat f_n(0)\right|^2 & = \frac{1}{T^2}\cdot\sum_{v=0}^T\sum_{w=0}^T\sum_{|h|\le\Delta}\sum_{|\tilde h|\le\Delta} K\left(\frac{h}{\Delta}\right)K\left(\frac{\tilde h}{\Delta}\right)\\ & \frac{C\left(v-w+h-\tilde h\right)C(v-w)+C(v-w+h)C\left(w-v+\tilde h\right)}{|T-h|\left|T-\tilde h\right|}\\ & \cdot\mathbbm{1}(0\le h+v\le T)\cdot\mathbbm{1}\left(0\le \tilde h+w\le T\right).
\end{align*}
We have already shown that both of these terms are absolutely of the order $\mathcal{O}\left(\frac{\Delta}{T}\right)$. Hence, showing asymptotic equivalence of just one of these integrals with a term of order $\Delta/T$ is enough to show that the variance as a whole is of the same order. We focus on the first summand and have with the change of variables $x=v-w,y=w,z=h-\tilde h,u=\tilde h,$ that 
\begin{align*}
    & \sum_{v=0}^T\sum_{w=0}^T\sum_{|h|\le\Delta}\sum_{|\tilde h|\le\Delta}  K\left(\frac{h}{\Delta}\right)K\left(\frac{\tilde h}{\Delta}\right) \frac{C\left(v-w+h-\tilde h\right)C(v-w)}{|T-h|\left|T-\tilde h\right|}\\ & \qquad\qquad\qquad\qquad\qquad \cdot\mathbbm{1}\left(0\le h+v\le T\right)\cdot\mathbbm{1}\left(0\le \tilde h+w\le T\right)\\ & = \sum_{x=-T}^T\sum_{z=-2\Delta}^{2\Delta}C(x+z)C(z)\cdot\sum_{u=-\Delta\vee(-\Delta-z)}^{\Delta\wedge(\Delta-z)}K\left(\frac{z+u}{\Delta}\right)K\left(\frac{u}{\Delta}\right)\\ & \qquad\qquad\cdot \frac{|T-x|}{|T-(z+u)||T-u|}\sum_{y=0\vee(-x)}^{T\wedge(T-x)}\mathbbm{1}(x+y+z+u\in[0,T])\mathbbm{1}(u+y\in[0,T]).
\end{align*}
Observing that \begin{align*}
    \frac{1}{T}\sum_{y=0\vee(-x)}^{T\wedge(T-x)}\mathbbm{1}(x+y+z+u\in[0,T])\mathbbm{1}(u+y\in[0,T]) &\le 1-\frac{|x|}{T}\\
    \frac{1}{2\Delta}\sum_{u=-\Delta\vee(-\Delta-z)}^{\Delta\wedge(\Delta-z)}K\left(\frac{z+u}{\Delta}\right)K\left(\frac{u}{\Delta}\right) & \le 1-\frac{|z|}{2\Delta}\\
    \frac{|T-x|}{|T-(z+u)||T-u|} & \le \frac{1}{|T|}
\end{align*}
and the fact that $-T\le x\le T$ and $-2\Delta\le z\le2\Delta,$ we see that the aforementioned terms are bounded by 1. Also, by Assumption \ref{a:cov-prime}, we have that $$\sum_{x=-T}^T\sum_{z=-2\Delta}^{2\Delta}C(x+z)C(z)dzdx<\infty.$$
Using the Dominated Convergence Theorem, we obtain that the quadruple summation, divided by $2\Delta/T$ converges to a constant. Thus, it is asymptotically equivalent to $\Delta/T$ as desired. 

This, for $\lambda+1>\beta>0$, leads to the consistency rate 
$$\mathcal{O}\left(\sqrt{\frac{\Delta_n}{|T_n|}}+\Delta_n^{-\beta}\cdot S(\Delta_n)\right).$$
Considering $0<\beta\neq1,$ we see that the optimal consistency rate in this case essentially matches the one in Theorem \ref{prop:PL bias time series}.

Observe that the regular variation only played a role in establishing asymptotic equivalence of the bias vanish rate. Indeed, for the rate of the variance, we only needed the integrability of the Covariance operator and the regular variation was not used. Also, recall that here the spectral density $f\in {\cal P}_D(\beta+\epsilon,L).$ So the rate we should be comparing to is $$|T_n|^{-\frac{\beta+\epsilon}{2(\beta+\epsilon)+1}}.$$ 
\end{example}

\subsection{Examples on Assumptions \ref{a:var} and \ref{a:var_d} }\label{appendix:examples}
We present some examples of non-trivial processes that satisfy the Assumptions \ref{a:var} and \ref{a:var_d}, so as to demonstrate that
the assumptions are not vacuous. We first consider an example that satisfies Assumption \ref{a:var}.

\begin{example}\label{ex:suppl-condition 1 verify}
Consider the process $$X(t)=Z(t)^2-1,$$
where $Z:=\{Z(t),t\in\mathbb{R}^d\}$ is a zero-mean, real-valued stationary Gaussian process with standard normal marginals. Denote the stationary covariance of $Z$ by $C_Z(\cdot)$ which we assume to satisfy $\int_{u\in\R^d}\sup_{\lambda\in B(0,2\delta)} |C_Z(\lambda+u)|du < \infty$, for some small enough $\delta>0$.  
This condition is quite mild and can be satisfied by covariances that are integrable and sufficiently smooth in the tail.
We verify that Assumption \ref{a:var} holds for $X$. We start by considering $\wt X(t)=Z(t)^2$. It follows that
\begin{itemize}
\abovedisplayskip=-\baselineskip
\belowdisplayskip=0pt
\abovedisplayshortskip=-\baselineskip
\belowdisplayshortskip=0pt
    \item[(a)] $\mathbb{E}[\wt X(t)]=1$
    \item[(b)] $\mathbb{E}[\wt X(t_1)\wt X(t_2)]=1+2C_Z(t_1-t_2)^2$
    \item[(c)] $\begin{aligned}[t]
        \mathbb{E}[\wt X(t_1)\wt X(t_2)\wt X(t_3)] & = 15a_2^2a_3^2+3a_2^2b_3^2+3a_2^2c_3^2+3b_2^2a_3^2+3b_2^2b_3^2+b_2^2c_3^2+6a_3b_3a_2b_2,
    \end{aligned}$
    \item[(d)] \begin{align*}
        \ \ \ \ \mathbb{E}[\wt X(t_1)\wt X(t_2) & \wt X(t_3)\wt X(t_4)]  = 105a_2^2a_3^2a_4^2+15a_2^2a_3^2b_4^2+15a_2^2a_3^2c_4^2+15a_2^2a_3^2d_4^2\\ 
        & + 15a_2^2b_3^2a_4^2+9a_2^2b_3^2b_4^2+3a_2^2b_3^2c_4^2+3a_2^2b_3^2d_4^2\\ 
        &+ 15a_2^2c_3^2a_4^2+3a_2^2c_3^2b_4^2+9a_2^2c_3^2c_4^2+3a_2^2c_3^2d_4^2\\
        & + 30a_2^2a_3b_3a_4b_4+30a_2^2a_3c_3a_4c_4+6a_2^2b_3c_3b_4c_4\\
        & + 15b_2^2a_3^2a_4^2+9b_2^2a_3^2b_4^2+3b_2^2a_3^2c_4^2+3b_2^2a_3^2d_4^2\\
        & + 9b_2^2b_3^2a_4^2+15b_2^2b_3^2b_4^2+3b_2^2b_3^2c_4^2+3b_2^2b_3^2d_4^2\\
        & + 3b_2^2c_3^2a_4^2+3b_2^2c_3^2b_4^2+3b_2^2c_3^2c_4^2+b_2^2c_3^2d_4^2\\ 
        & + 36b_2^2a_3b_3a_4b_4+12b_2^2a_3c_3a_4c_4+12b_2^2b_3c_3b_4c_4\\
        &+ 60a_2b_2a_3^2a_4b_4+36a_2b_2a_4b_4b_3^2+12a_2b_2c_3^2a_4b_4\\ &+ 60a_2b_2a_3b_3a_4^2 + 36a_2b_2a_3b_3b_4^2+12a_2b_2a_3b_3c_4^2\\
        & + 12a_2b_2a_3b_3d_4^2+24a_2b_2a_3b_3d_4^2+24a_2b_2a_3c_3b_4c_4+24a_2b_2b_3c_3a_4c_4,
    \end{align*}
\end{itemize}
where 
    \begin{align*}
        a_2 & = C_Z(t_1-t_2)\\
        a_3 & = C_Z(t_1-t_3)\\
        b_2 & = \sqrt{1-C_Z(t_1-t_2)^2}\\
        b_3 & = \frac{C_Z(t_2-t_3)-C_Z(t_1-t_2)\cdot C_Z(t_1-t_3)}{\sqrt{1-C_Z(t_1-t_2)^2}}\\
        c_3 & = \sqrt{1-C_Z(t_1-t_3)^2-\frac{\left[C_Z(t_2-t_3)-C_Z(t_1-t_2)\cdot C_Z(t_1-t_3)\right]^2}{1-C_Z(t_1-t_2)^2}}\\ 
        a_4 & = C_Z(t_1-t_4)\\
        b_4 & = \frac{C_Z(t_2-t_4)-C_Z(t_1-t_2)C_Z(t_1-t_4)}{\sqrt{1-C_Z(t_1-t_2)^2}}\\
        c_4 & = \frac{C_Z(t_3-t_4)-C_Z(t_1-t_3)C_Z(t_1-t_4)-b_3b_4}{c_3}\\
        d_4 & = \sqrt{1-a_4^2-b_4^2-c_4^2}.
    \end{align*}
After centering the process as $X(t)=\wt X(t)-1$ and simplifying the aforementioned expressions, we end up with the following moments: 
\begin{itemize}
\abovedisplayskip=-\baselineskip
\belowdisplayskip=0pt
\abovedisplayshortskip=-\baselineskip
\belowdisplayshortskip=0pt
    \item[(a)] $\mathbb{E}[X(t)]=0$
    \item[(b)] $\mathbb{E}[X(t_1)X(t_2)]=2C_Z(t_1-t_2)^2$
    \item[(c)] $\mathbb{E}[X(t_1)X(t_2)X(t_3)] = 8C_Z(t_1-t_2)C_Z(t_1-t_3)C_Z(t_2-t_3)$,
    \item[(d)] \begin{flalign*}
        \quad\ \mathbb{E}[X(t_1)X(t_2)& X(t_3)X(t_4)]  = 4C_Z(t_1-t_2)^2C_Z(t_3-t_4)^2 &\\ & +4C_Z(t_1-t_3)^2C_Z(t_2-t_4)^2+4C_Z(t_1-t_4)^2C_Z(t_2-t_3)^2&\\ 
        & +16C_Z(t_1-t_3)C_Z(t_1-t_4)C_Z(t_2-t_3)C_Z(t_2-t_4)&\\ & +16C_Z(t_1-t_2)C_Z(t_1-t_4)C_Z(t_2-t_3)C_Z(t_3-t_4)&\\ & +16C_Z(t_1-t_2)C_Z(t_1-t_3)C_Z(t_2-t_4)C_Z(t_3-t_4)&.
    \end{flalign*}
\end{itemize}
Using (d) above, we obtain that 
\[\mathbb{E}|X(t)|^4=60C_Z(0)^4=60<\infty,\]
showing that (a) of Condition \ref{a:var} holds. 
 
By the definition of the cumulants in Definition \ref{def:cumulants} we obtain that
\begin{align*}
    {\rm cum}(X(t_1),X(t_2),X(t_3),X(t_4)) & =  16C_Z(t_1-t_3)C_Z(t_1-t_4)C_Z(t_2-t_3)C_Z(t_2-t_4)\\ & +16C_Z(t_1-t_2)C_Z(t_1-t_4)C_Z(t_2-t_3)C_Z(t_3-t_4)\\ & +16C_Z(t_1-t_2)C_Z(t_1-t_3)C_Z(t_2-t_4)C_Z(t_3-t_4).
\end{align*}
Then, Assumption \ref{a:var}(b) is also satisfied, since \begin{align*}
    \sup_{w\in\mathbb{R}^d} & \int_{u\in\mathbb{R}^d}\int_{v\in\mathbb{R}^d}  \sup_{\lambda_1,\lambda_2,\lambda_3\in B(0,\delta)}|{\rm cum}(X(\lambda_1+u),X(\lambda_2+v),X(\lambda_3+w) ,X(0))|dvdu\\ & \leq 16\sup_{w\in\mathbb{R}^d}\int_{u\in\mathbb{R}^d}\int_{v\in\mathbb{R}^d} \sup_{\lambda_1,\lambda_2,\lambda_3\in B(0,\delta)} \\ & \quad\Big\{ |C_Z(\lambda_1-\lambda_3+u-w)C_Z(\lambda_1+u)C_Z(\lambda_2-\lambda_3+v-w)C_Z(\lambda_2+v)|\\
    & \quad + |C_Z(\lambda_1-\lambda_2+u-v)C_Z(\lambda_1+u)C_Z(\lambda_2-\lambda_3+v-w)C_Z(\lambda_3+w)|\\ & \quad + |C_Z(\lambda_1-\lambda_2+u-v)C_Z(\lambda_1-\lambda_3+u-w)C_Z(\lambda_2+v)C_Z(\lambda_3+w)|\Big\}dvdu
\\ & \leq 16|C_Z(0)|^2\int_{u\in\mathbb{R}^d}\int_{v\in\mathbb{R}^d} \sup_{\lambda_1,\lambda_2\in B(0,\delta)}\Big\{ |C_Z(\lambda_1+u)C_Z(\lambda_2+v)|\\
    & \qquad\qquad\qquad\qquad\qquad\quad + |C_Z(\lambda_1-\lambda_2+u-v)C_Z(\lambda_1+u)|\\ 
    & \qquad\qquad\qquad\qquad\qquad\quad + |C_Z(\lambda_1-\lambda_2+u-v)C_Z(\lambda_2+v)|\Big\}dvdu
    \\
    & \leq 48|C_Z(0)|^2\left(\int_{u\in\mathbb{R}^d}\sup_{\lambda\in B(0,2\delta)}|C_Z(\lambda+u)|du\right)^2<\infty.
\end{align*}

\qed
\end{example}

Next, we present an example inspired by the linear processes in Proposition 4.1 of \cite{panaretos2013fourier}. Assume that $\mathbb{H}$ is a separable (typically infinite-dimensional) Hilbert space. Let $\epsilon_t,\ t\in\mathbb{Z}$ be iid random elements of $\mathbb{H}$ such that $\mathbb{E}\|\epsilon_0\|^4<\infty$ and consider a sequence of bounded linear operators $A_s: \mathbb{H}\to \mathbb{H},\ s\in \mathbb{Z}$. Define
\begin{equation}\label{ex:X(t)}
X(t)=\sum_{s\in\mathbb{Z}}A_s\epsilon_{t-s},\ t\in\mathbb{Z}.
\end{equation}

In the following lemma, we show that the real process $X(t)$ is well defined under a mild square-summability condition on the operator norms of the coefficients.  To this end,
let ${\cal L}^2(\H)$ denote the Hilbert space of $\H$-valued random elements equipped with the inner product $\langle A,B\rangle_{\mathcal{L}^2}=\mathbb{E}\langle A,B\rangle$ 
for all $\H$-valued random elements $A$ and $B$ such that $\E[ \|A\|^2 + \|B\|^2]<\infty$.  The resulting norm in ${\cal L}^2(\H)$ will be denoted by $\|\cdot\|_{{\cal L}^2(\H)}$.

\begin{lemma}\label{lem:example}
Assume that the operator norms of $\{A_s,\ s\in\mathbb{Z}\}$ are square summable, namely that 
\begin{equation}
\sum_{s\in\mathbb{Z}}\|A_s\|_{\rm op}^2<\infty.
\end{equation}
Then, the series in \eqref{ex:X(t)} converges in $\|\cdot\|_{{\cal L}^2(\H)}$ and the process $\{X(t),\ t\in\mathbb{Z}\}$ defined.
\end{lemma}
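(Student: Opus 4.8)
The plan is to show that the partial sums $S_N(t):=\sum_{|s|\le N}A_s\epsilon_{t-s}$ form a Cauchy sequence in $\mathcal{L}^2(\H)$, from which the convergence and the a.s.\ (or $\mathcal{L}^2$) definition of $X(t)$ follows by completeness of $\mathcal{L}^2(\H)$. First I would record the elementary bound $\|A_s\epsilon_{t-s}\|\le \|A_s\|_{\mathrm{op}}\|\epsilon_{t-s}\|$, so that $\|A_s\epsilon_{t-s}\|_{\mathcal{L}^2(\H)}^2=\E\|A_s\epsilon_{t-s}\|^2\le \|A_s\|_{\mathrm{op}}^2\,\E\|\epsilon_0\|^2$, which is finite since $\E\|\epsilon_0\|^4<\infty$ implies $\E\|\epsilon_0\|^2<\infty$ by Jensen/Cauchy--Schwarz. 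Each summand is therefore a bona fide element of $\mathcal{L}^2(\H)$.

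Next I would exploit the independence and zero-mean structure of the innovations. Since the $\epsilon_t$ are iid and I may assume $\E[\epsilon_0]=0$ (if not, subtract the mean; the statement only claims the series is defined, so centering does not cost generality, or one notes that square-summability of $\|A_s\|_{\mathrm{op}}$ together with $A_s\E[\epsilon_0]$ being summable is automatic in the intended application), the terms $A_s\epsilon_{t-s}$, $s\in\Z$, are pairwise orthogonal in $\mathcal{L}^2(\H)$: for $s\ne s'$,
\[
\langle A_s\epsilon_{t-s}, A_{s'}\epsilon_{t-s'}\rangle_{\mathcal{L}^2}=\E\langle A_s\epsilon_{t-s},A_{s'}\epsilon_{t-s'}\rangle=\langle A_s\E[\epsilon_{t-s}],A_{s'}\E[\epsilon_{t-s'}]\rangle=0,
\]
using independence to factor the expectation. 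Hence for $M<N$, by orthogonality (Pythagoras in $\mathcal{L}^2(\H)$),
\[
\|S_N(t)-S_M(t)\|_{\mathcal{L}^2(\H)}^2=\sum_{M<|s|\le N}\|A_s\epsilon_{t-s}\|_{\mathcal{L}^2(\H)}^2\le \E\|\epsilon_0\|^2\sum_{M<|s|\le N}\|A_s\|_{\mathrm{op}}^2.
\]
The right-hand side is the tail of a convergent series by the hypothesis $\sum_s\|A_s\|_{\mathrm{op}}^2<\infty$, so it tends to $0$ as $M,N\to\infty$; thus $\{S_N(t)\}_N$ is Cauchy and converges to a limit $X(t)\in\mathcal{L}^2(\H)$, with $\E\|X(t)\|^2\le \E\|\epsilon_0\|^2\sum_s\|A_s\|_{\mathrm{op}}^2<\infty$. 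This is the content of the lemma.

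There is no serious obstacle here; the only point that needs a word of care is the orthogonality step, which requires the innovations to be centered — in the actual application to Assumption~\ref{a:var_d} (Example~\ref{ex:ar}) one works with a zero-mean linear process, so this is harmless, and in general one writes $\epsilon_t=\E[\epsilon_0]+\epsilon_t^\circ$ and handles the deterministic part $\sum_s A_s\E[\epsilon_0]$ separately (it converges absolutely in $\H$ since $\sum_s\|A_s\|_{\mathrm{op}}<\infty$ is \emph{not} assumed, so strictly speaking one should assume $\E[\epsilon_0]=0$ or impose $\ell^1$-summability for the mean term; I would state the lemma for centered innovations). If one additionally wants almost-sure convergence of the series, the same orthogonal-increments estimate combined with the $\mathcal{L}^2$ martingale convergence theorem (viewing the symmetric partial sums as an $\mathcal{L}^2$-bounded martingale in $\H$) upgrades $\mathcal{L}^2$ convergence to a.s.\ convergence, but for the purposes of this excerpt the $\mathcal{L}^2(\H)$ statement suffices.
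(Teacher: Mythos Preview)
Your proof is correct and follows essentially the same approach as the paper: show the partial sums are Cauchy in $\mathcal{L}^2(\H)$ by using independence of the $\epsilon_t$'s to reduce $\E\|S_N-S_M\|^2$ to a diagonal sum bounded by $\E\|\epsilon_0\|^2\sum_{M<|s|\le N}\|A_s\|_{\rm op}^2$. The paper arrives at the same bound (with constant $\mathrm{tr}(\Sigma_\epsilon)=\E\|\epsilon_0\|^2$) via an eigenbasis expansion of $\Sigma_\epsilon$, whereas you bypass that by the direct inequality $\|A_s\epsilon\|\le\|A_s\|_{\rm op}\|\epsilon\|$; your route is a little more economical. You are also right to flag that the orthogonality step requires $\E[\epsilon_0]=0$ --- the paper uses this implicitly when dropping the cross terms but does not state it.
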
  
\begin{proof}
Let $\Sigma_{\epsilon}=\mathbb{E}[\epsilon_0\otimes\epsilon_0]$ be the covariance operator of every $\epsilon_t,\ t\in\mathbb{Z}$.
We start by defining 
\begin{equation}\label{ex:XN and X-N}
X^{(N)}(t)= \sum_{|s|\le N} A_s\epsilon_{t-s}\qquad{\rm and}\qquad X^{-(N)}(t)= \sum_{|s|> N} A_s\epsilon_{t-s}.
\end{equation}
We have that 
\begin{align*}
\mathbb{E}\langle X^{(N)}(t),X^{(N)}(t)\rangle &% = \mathbb{E}\langle \sum_{|s_1|\le N}A_{s_1}\epsilon_{t-s_1},\sum_{|s_2|\le N}A_{s_2}\epsilon_{t-s_2}\rangle 
= \sum_{|s_1|\le N}\sum_{|s_2|\le N}\mathbb{E}\langle A_{s_1}\epsilon_{t-s_1},A_{s_2}\epsilon_{t-s_2}\rangle\\ & = \sum_{|s|\le N}\mathbb{E}\langle A_s\epsilon_{t-s},A_s\epsilon_{t-s}\rangle = \sum_{|s|\le N}\mathbb{E}\langle \epsilon_{t-s},A_s^{\star}A_s\epsilon_{t-s}\rangle,
\end{align*} 
where the second equality follows by the independence of the $\epsilon$'s.
Let now $\{e_j\}$ be a CONS of $\mathbb{H}$ that diagonalizes $\Sigma_{\epsilon}$. Then, we can express the $\epsilon$'s as
\begin{equation*}%\label{ex:diag epsilon}
\epsilon_{t-s}=\sum_{j=1}^\infty Z_{t-s,j}e_j,
\end{equation*}
where $Z_{s,j}:= \langle \epsilon_s,e_j\rangle,$ are independent in $s$ because the $\epsilon_s$'s are iid. Also, because of the choice 
of $\{e_j\}$ as the eigenvectors of the covariance operator $\Sigma_\epsilon$, we have that for each fixed $s$, the $Z_{s,j}$'s are uncorrelated in $j$:
\[\mathbb{E} \left[Z_{s,i} \overline{Z_{s,j}}\right]=\lambda_i\cdot \delta_{i-j}.\]
Using those, we obtain 
\begin{align}\label{ex:square norm XN(t)}
\begin{split}
\mathbb{E}\left\langle X^{(N)}(t),X^{(N)}(t)\right\rangle & = \sum_{|s|\le N} \mathbb{E} \left\langle\sum_k e_k Z_{t-s,k},A_s^{\star}A_s\sum_{\ell}e_{\ell} Z_{t-s,{\ell}}\right\rangle\\
 &=\sum_{|s|\le N}\sum_{k}\sum_{\ell} \mathbb{E}\left[Z_{t-s,k} \overline{Z_{t-s,{\ell}}}\right] \cdot \langle e_k,A_s^{\star}A_se_{\ell}\rangle\\ 
 & = \sum_{|s|\le N}\sum_{k}\lambda_k \cdot \langle e_k,A_s^{\star}A_se_{k}\rangle\le \sum_k\lambda_k\sum_{|s|\le N}\langle e_k,A_s^{\star}A_se_{k}\rangle\\ 
 &\le {\rm tr}(\Sigma_{\epsilon})\cdot\sum_{|s|\le N}\|A_s^{\star}A_s\|_{\rm op}\le {\rm tr}(\Sigma_{\epsilon})\cdot \sum_{|s|\le N}\|A_s\|_{\rm op}^2<\infty.
 \end{split}
\end{align}   
With a similar argument to \eqref{ex:square norm XN(t)}, one has that for $M<N$ \begin{align*}
\mathbb{E}\|X^{(N)}(t)-X^{(M)}(t)\|^2\le {\rm tr}(\Sigma_{\epsilon})\sum_{M<|s|\le N}\|A_s\|_{\rm op}^2\to 0,
\end{align*}
as $N,M\to \infty$. This shows that the sequence $\{X^{(N)}(t) \}_{N\in\mathbb{N}}$ is a Cauchy sequence in the Hilbert space
 $\left(\mathcal{L}^2(\mathbb{H}),\langle\cdot,\cdot\rangle_{\mathcal{L}^2}\right)$, where $\langle A,B\rangle_{\mathcal{L}^2}=\mathbb{E}\langle A,B\rangle$ for $A,B$ random elements of $\mathbb{H}$.
Thus, the limit of this sequence exists and 
\[X^{(N)}(t)\to X(t)\in\mathcal{L}^2(\mathbb{H}),\]
which completes the proof. 
\end{proof}

\begin{prop} \label{ex:ar} Let $X(t)$ defined as in \eqref{ex:X(t)}. Assume that $\{A_s,\ s\in \mathbb{Z}\}$ are Hilbert-Schmidt operators with $\sum_{s\in\mathbb{Z}}\|A_s\|_{\rm HS}<\infty$. Moreover, letting 
$Z_{s,j} =\langle \epsilon_s,e_j\rangle$, where $\{e_j\}$ is a CONS diagonalizing $\Sigma_\epsilon:=\mathbb{E}[\epsilon_0\otimes\epsilon_0]$, assume that 
\begin{equation*}%\label{ex:assum on cums}
\sum_{\ell_1,\ell_2,\ell_3,\ell_4}{\rm cum}( Z_{0,\ell_1},  Z_{0,\ell_2},  Z_{0,\ell_3}, Z_{0,\ell_4})^2\le  B<\infty.
\end{equation*}  
Then, the process $\{X(t), \ t\in\mathbb{Z}\}$ satisfies Assumption \ref{a:var_d}. 
\end{prop}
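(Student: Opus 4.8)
The goal is to verify that the linear process $X(t) = \sum_{s} A_s \epsilon_{t-s}$ satisfies the three parts of Assumption~\ref{a:var_d} with $\delta_n \equiv 1$. Part (a), the uniform boundedness of $\E\|X(t)\|^4$, follows from a direct fourth-moment computation: expanding $\|X(t)\|^4 = \langle X(t),X(t)\rangle^2$ into a quadruple sum over the indices $s_1,s_2,s_3,s_4$, passing to the coordinates $Z_{t-s,j}$ in the eigenbasis $\{e_j\}$ of $\Sigma_\epsilon$, and using the independence of the $\epsilon$'s together with Lemma~\ref{supp-le:rosenblatt} to express fourth moments through second-order moments and fourth cumulants. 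The key point is that only terms where the time indices coincide in pairs (or all four coincide) survive, so one is left with sums of the schematic form $\sum_{s}\|A_s\|_{\mathrm{HS}}^{\#}$ times $\mathrm{tr}(\Sigma_\epsilon)$-type quantities and the cumulant bound $B$; the summability $\sum_s \|A_s\|_{\mathrm{HS}} < \infty$ then closes the estimate. Part (b), fourth-order cumulant stationarity, is immediate because $X$ is itself strictly stationary (it is a measurable function of the iid sequence $\{\epsilon_t\}$ shifted by $t$), and strict stationarity implies cumulant stationarity.

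The substance is in part (c): bounding
$$
\sup_{w\in\mathbb{Z}^d}\sum_{u\in\mathbb{Z}^d}\sum_{v\in\mathbb{Z}^d}\big|\mathrm{cum}\big(X(u),X(v),X(w),X(0)\big)\big| < \infty.
$$
First I would use \eqref{e:cum_basis} to write the operator-level cumulant $\mathrm{cum}(X(u),X(v),X(w),X(0)) = \sum_{k,\ell} \mathrm{cum}(X_k(u),\overline{X_\ell(v)},\overline{X_k(w)},X_\ell(0))$, where $X_k(t) = \langle X(t),e_k\rangle$. Next, substitute the moving-average representation $X_k(t) = \sum_{s}\langle A_s\epsilon_{t-s},e_k\rangle = \sum_s \sum_{m}\langle A_s e_m,e_k\rangle Z_{t-s,m}$. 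Then expand the scalar cumulant multilinearly in the four moving-average sums; by independence of the $\epsilon$'s across time, a term survives only if all four innovation times $u-s_1$, $v-s_2$, $w-s_3$, $-s_4$ are equal, say to a common value $p\in\mathbb{Z}^d$. This forces $s_1 = u-p$, $s_2 = v-p$, $s_3 = w-p$, $s_4 = -p$, collapsing four free index-sums to a single sum over $p$. What remains is, schematically,
$$
\mathrm{cum}(X_k(u),\overline{X_\ell(v)},\overline{X_k(w)},X_\ell(0)) = \sum_{p} \sum_{m_1,m_2,m_3,m_4} a^{(1)}_{k,m_1}(u-p)\,\overline{a^{(2)}_{\ell,m_2}(v-p)}\,\overline{a^{(3)}_{k,m_3}(w-p)}\,a^{(4)}_{\ell,m_4}(-p)\, \mathrm{cum}(Z_{0,m_1},\overline{Z_{0,m_2}},\overline{Z_{0,m_3}},Z_{0,m_4}),
$$
where $a_{k,m}(s) = \langle A_s e_m, e_k\rangle$ are the matrix entries of $A_s$. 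Now sum the absolute value over $k,\ell,u,v,w$: the $u$-sum over $\sum_k |a_{k,m_1}(u-p)|$ type quantities, the $v$-sum, and the $w$-sum each decouple after the change of variable removing $p$ (the $p$-sum then contributes $\sum_p |a_{\ell,m_4}(-p)|$-type factors and is controlled because the remaining factor is bounded uniformly), and one applies Cauchy--Schwarz in the $m_i$ indices against the cumulant array, whose $\ell^2$-norm is bounded by $\sqrt{B}$. The Hilbert--Schmidt norms enter because $\sum_{k,m}|a_{k,m}(s)|^2 = \|A_s\|_{\mathrm{HS}}^2$, and collecting everything yields a bound of the form $\mathrm{const}\cdot \sqrt{B}\cdot\big(\sum_s \|A_s\|_{\mathrm{HS}}\big)^{4}$, which is finite by hypothesis.

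\textbf{Main obstacle.} The bookkeeping in part (c) is the delicate part: one must carefully track which of the four moving-average sums the free spatial indices $u,v,w$ attach to, perform the change of variables that absorbs the common innovation time $p$, and organize the four resulting geometric sums so that each is separately summable while the cumulant array is handled by a single Cauchy--Schwarz step in the coordinate indices $m_1,\dots,m_4$. A secondary technical point is justifying the interchange of the (infinite) summations over $k,\ell$ and $m_i$ with the cumulant expansion; this is handled by the absolute convergence guaranteed by $\E\|\epsilon_0\|^4 < \infty$, $\sum_s\|A_s\|_{\mathrm{HS}}<\infty$, and the finiteness of $B$, invoking Fubini's theorem. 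Everything else is routine, but I would present the part (c) estimate with the index structure laid out explicitly rather than schematically to make the decoupling transparent.
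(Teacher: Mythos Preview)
Your proposal is correct and follows essentially the same route as the paper's proof: expand the operator cumulant via \eqref{e:cum_basis}, use multilinearity plus independence of the $\epsilon_t$'s to collapse the four moving-average sums to a single sum over the common innovation time, then apply Cauchy--Schwarz in the coordinate indices $(\ell_1,\ldots,\ell_4)$ against the cumulant array (bounded by $\sqrt{B}$), followed by Cauchy--Schwarz in the basis indices $k,\ell$ to produce Hilbert--Schmidt norms, yielding the bound $\sqrt{B}\sum_{s}\|A_{u-s}\|_{\mathrm{HS}}\|A_{v-s}\|_{\mathrm{HS}}\|A_{w-s}\|_{\mathrm{HS}}\|A_{-s}\|_{\mathrm{HS}}$.

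Two minor remarks. First, Assumption~\ref{a:var_d}(c) asks for $\sup_w\sum_{u,v}$, not a sum over $w$; your stronger estimate (summing over $w$ as well) still gives a finite bound $\sqrt{B}\big(\sum_s\|A_s\|_{\mathrm{HS}}\big)^4$, so no harm is done, but the paper instead bounds $\|A_{w-s}\|_{\mathrm{HS}}\le \sup_t\|A_t\|_{\mathrm{HS}}$ and then sums the remaining three factors. Second, for part~(a) the paper avoids the full coordinate expansion you describe: it simply applies the triangle inequality and the generalized H\"older inequality to obtain $\E\|X(t)\|^4 \le \E\|\epsilon_0\|^4\cdot\big(\sum_s\|A_s\|_{\mathrm{op}}\big)^4$, which is shorter. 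The paper also explicitly justifies the interchange of the infinite moving-average summation with the cumulant (your ``secondary technical point'') via a truncation $X^{(N)}(t)=\sum_{|s|\le N}A_s\epsilon_{t-s}$ and dominated convergence, rather than appealing directly to Fubini.
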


\begin{proof}%[Proof for Example \ref{ex:ar}.]
Recall that part $(a)$ of Assumption \ref{a:var_d} entails the finite fourth moment of $\|X(t)\|$. 

Let $X^{(N)}(t)$ and $X^{-(N)}(t)$ be as defined in \eqref{ex:XN and X-N}. 
Then, for every $k\in\mathbb{N}$ such that $\E \| \epsilon_t\|^k <\infty$, we have that 
\begin{align}\label{ex:Lp convergence of XNt}
\begin{split}
    \mathbb{E}\left\|X^{-(N)}(t)\right\|^k & \leq \sum_{|s_1|,\hdots,|s_k|>N}\|A_{s_1}\|_{\rm op}\hdots\|A_{s_k}\|_{\rm op}\mathbb{E}\left(\|\epsilon_{t-s_1}\|\hdots \|\epsilon_{t-s_k}\|\right)\\ & \leq \sum_{|s_1|,\hdots,|s_k|>N}\|A_{s_1}\|_{\rm op}\hdots\|A_{s_k}\|_{\rm op}\mathbb{E}\left(\|\epsilon_{t-s_1}\|^k\right)^{1/k}\hdots \mathbb{E}\left(\|\epsilon_{t-s_k}\|^k\right)^{1/k}\\ & =\mathbb{E}\|\epsilon_0\|^k\cdot \sum_{|s_1|,\hdots,|s_k|>N}\|A_{s_1}\|_{\rm op}\hdots\|A_{s_k}\|_{\rm op}\\ & =  \mathbb{E}\|\epsilon_0\|^k\cdot \left(\sum_{|s|>N}\|A_{s}\|_{\rm op}\right)^k\to0, \ \textrm{as}\ N\to\infty, 
    \end{split}
\end{align}
where the inequality in the second line follows from the generalized H\"older inequality \citep[cf. Theorem 11 of][]{hardy1952inequalities} and we used that $\|A_s\|_{\rm op}\le \|A_s\|_{\rm HS}$. 
Hence, we have ${\cal L}^k(\H)$-convergence of $X_t^{(N)}$ to $X_t,$ in the sense that  
$$
 \lim_{N\to\infty}\left(\mathbb{E}\left\Vert X(t)-X^{(N)}(t)\right\Vert_{\H}^k\right)^{1/k}=0.
$$  
These previous calculations also show directly that $\mathbb{E}\|X_t\|^k<\infty.$ Specifically, for $k=4$, part $(a)$ is proved. 

Now, for part $(b)$, as in the proof of Lemma \ref{lem:example}, letting $\{e_j\}$ be a CONS diagonalizing $\Sigma_\epsilon = \E[ \epsilon_0\otimes \epsilon_0]$, 
we write 
\begin{align*}
A_s = \sum_{i,j}a_{ij}(s) e_i\otimes e_j\ \ \mbox{ and }\ \ 
\epsilon_{t-s} = \sum_k Z_{t-s,k} e_k,
\end{align*} 
with $Z_{s,k}:= \langle \epsilon_s,e_k\rangle$.  Note that $\{e_i\otimes e_j\}$ is a CONS in the Hilbert space $\mathbb{X}$ of Hilbert-Schmidt operators on $\H$ equipped with $\langle \cdot, \cdot\rangle_{\rm HS}$
and the above expression for $A_s$ converges in $\|\cdot\|_{\rm HS}$. Let also 
\begin{align*}
A_{s,i\cdot}&:=\sum_j a_{ij}(s)e_i\otimes e_j\\
X_i(t) &:=\langle X(t), e_i\rangle = \sum_{s\in\mathbb{Z}}\sum_j a_{ij}(s)Z_{t-s,j}=\sum_{s\in\mathbb{Z}}A_{s,i\cdot}\epsilon_{t-s},
\end{align*}
so that $X(t) = \sum_{i} X_i(t) e_i$.  Recall the representation in Proposition \ref{prop:Cov for cross product} (see also 
\eqref{e:cum_basis} in the main paper).  For notational simplicity suppose that the process $X(t)$ is real relative to the CONS $\{e_i\}$, i.e., all the $X_i(t)$'s are 
real random variables.

We start by exploiting the multilinearity of the cumulants and the fact that $\epsilon_t$'s are iid. We have by Proposition \ref{prop:Cov for cross product} that 
${\rm cum}\left(X(u),X(v),X(w),X(0)\right)$ equals:
\begin{align}
    & \left|\sum_{i}\sum_{j} {\rm cum}\left(X_{i}(u),X_{j}(v),X_{i}(w),X_{j}(0)\right)\right|\nonumber\\
    & = \left|\sum_{i}\sum_{j} {\rm cum}\left(\sum_{s_1\in\mathbb{Z}}A_{s_1,i\cdot}\epsilon_{u-s_1},\sum_{s_2\in\mathbb{Z}}A_{s_2,j\cdot}\epsilon_{v-s_2},\sum_{s_3\in\mathbb{Z}}A_{s_3,i\cdot}\epsilon_{w-s_3},\sum_{s_4\in\mathbb{Z}}A_{s_4,j\cdot}\epsilon_{-s_4}\right)\right|\nonumber\\
    & = \left|\sum_{i}\sum_{j} {\rm cum}\left(\sum_{s_1\in\mathbb{Z}}A_{u-s_1,i\cdot}\epsilon_{s_1},\sum_{s_2\in\mathbb{Z}}A_{v-s_2,j\cdot}\epsilon_{s_2},\sum_{s_3\in\mathbb{Z}}A_{w-s_3,i\cdot}\epsilon_{s_3},\sum_{s_4\in\mathbb{Z}}A_{-s_4,j\cdot}\epsilon_{s_4}\right)\right|\nonumber
    \\
    & = \left|\sum_{i}\sum_{j} \sum_{s_1\in\mathbb{Z}}\sum_{s_2\in\mathbb{Z}}\sum_{s_3\in\mathbb{Z}}\sum_{s_4\in\mathbb{Z}}{\rm cum}\left(A_{u-s_1,i\cdot}\epsilon_{s_1},A_{v-s_2,j\cdot}\epsilon_{s_2},A_{w-s_3,i\cdot}\epsilon_{s_3},A_{-s_4,j\cdot}\epsilon_{s_4}\right)\right|\label{eq: F.3}\\ 
    & = \left|\sum_{i}\sum_{j} \sum_{s\in\mathbb{Z}}{\rm cum}\left(A_{u-s,i\cdot}\epsilon_{s},A_{v-s,j\cdot}\epsilon_{s},A_{w-s,i\cdot}\epsilon_{s},A_{-s,j\cdot}\epsilon_{s}\right)\right|\label{eq: epsilons are iid},
\end{align}
where \eqref{eq: epsilons are iid} follows from the fact that $\epsilon_t$'s are iid and \eqref{eq: F.3} will be justified in the end of this proof.

Continuing, \eqref{eq: epsilons are iid} is equal to 
\begin{align}     \label{e:summation expression}
\begin{split}
    &  \left|\sum_{i}\sum_{j} \sum_{s\in\mathbb{Z}}\sum_{\ell_1,\ell_2,\ell_3,\ell_4}a_{i\ell_1}(u-s)a_{j\ell_2}(v-s)a_{i\ell_3}(w-s)a_{j\ell_4}(-s){\rm cum}\left(Z_{s,\ell_1}, Z_{s,\ell_2}, Z_{s,\ell_3}, Z_{s,\ell_4}\right)\right|
    \\ & =\left|\sum_{i}\sum_{j} \sum_{s\in\mathbb{Z}}\sum_{\ell_1,\ell_2,\ell_3,\ell_4}a_{i\ell_1}(u-s)a_{j\ell_2}(v-s)a_{i\ell_3}(w-s)a_{j\ell_4}(-s){\rm cum}\left(Z_{0,\ell_1}, Z_{0,\ell_2}, Z_{0,\ell_3}, Z_{0,\ell_4}\right)\right|.
\end{split}
\end{align}

Changing the order of summation and applying the Cauchy-Schwarz inequality over $\sum_{\ell_1,\cdots,\ell_4}$,  we have that \eqref{e:summation expression} is bounded above by 
\begin{align*}
& \Big|\sum_{s}\sqrt{\sum_{\ell_1,\cdots,\ell_4}{\rm cum}(Z_{0,\ell_1}, Z_{0,\ell_2}, Z_{0,\ell_3}, Z_{0,\ell_4})^2}\\ 
& \qquad\qquad \cdot\sqrt{\sum_{\ell_1,\cdots,\ell_4} \Big( \sum_{i,j}  \Big[ a_{i\ell_1}(u-s)a_{j\ell_2}(v-s)a_{i\ell_3}(w-s)a_{j\ell_4}(-s) \Big] \Big)^2}\\
& \le  B\sum_{s\in\mathbb{Z}} \left( \sum_{\ell_1,\cdots,\ell_4}
  \Big( \sum_{i} a_{i\ell_1}(u-s)^2 \Big) \Big( \sum_{i} a_{i\ell_3}(w-s)^2 \Big) \Big( \sum_{j} a_{j\ell_2}(v-s)^2 \Big) \Big( \sum_{j}a_{j\ell_4}(-s)^2 \Big)   \right)^{1/2},\\
&=  B\sum_{s\in\mathbb{Z}}\|A_{u-s}\|_{\rm HS}\|A_{v-s}\|_{\rm HS}\|A_{w-s}\|_{\rm HS}\|A_{-s}\|_{\rm HS},
\end{align*}
where the above inequality follows by applying the Cauchy-Schwarz inequality twice -- once over $\sum_i$ and once over $\sum_j$.
The last relation follows from the fact that $\|A_t\|_{\rm HS}^2 = \sum_{\ell,i} a_{i\ell}(t)^2$.

Thus, we finally obtain:
\begin{align*}
& \sup_{w\in\mathbb{Z}}\sum_{u\in\mathbb{Z}}\sum_{v\in\mathbb{Z}}\left|\sum_{i}\sum_{j} {\rm cum}\left(X_{i}(u),X_{j}(v),X_{i}(w),X_{j}(0)\right)\right|\\ & \qquad\qquad \le \sup_{w\in\mathbb{Z}}\sum_{u\in\mathbb{Z}}\sum_{v\in\mathbb{Z}} B\sum_{s\in\mathbb{Z}}\|A_{u-s}\|_{\rm HS}\|A_{v-s}\|_{\rm HS}\|A_{w-s}\|_{\rm HS}\|A_{-s}\|_{\rm HS} \\ 
& \qquad\qquad \le B\sup_{w\in\mathbb{Z}}\|A_{w}\|_{\rm HS}\cdot \left(\sum_{s\in\mathbb{Z}}\|A_{u-s}\|_{\rm HS}\right)^3<\infty.
\end{align*}

Now, it only remains to justify the equality \eqref{eq: F.3}. We will use $X^{(N)}(t)$ and $X^{-(N)}(t)$ again. The calculations in \eqref{ex:Lp convergence of XNt} imply again by the generalized H\"older inequality and the Dominated Convergence Theorem that $$\mathbb{E}\left[\lim_{N\to\infty}X_{i}^{(N)}(u)X_{j}^{(N)}(v)X_{i}^{(N)}(w)X_{j}^{(N)}(0)\right]= \lim_{N\to\infty}\mathbb{E}\left[X_{i}^{(N)}(u)X_{j}^{(N)}(v)X_{i}^{(N)}(w)X_{j}^{(N)}(0)\right].$$

To this end, we introduce some notation.  For each pair $m = (m_1,m_2) \in \{(u,i),(v,j),(w,i),(0,j)\}$, we write $X_m^N$ for $X_{m_2}^N(m_1)$. 
For example, for $m=(u,i)$ we have that $X_m^{(N)}=X_i^{(N)}(u).$
Thus, using the definition of cumulants, we obtain
\begin{align*}
     &{\rm cum}\left(X_{i}(u),X_{j}(v),X_{i}(w),X_{j}(0)\right) \\ & = \sum_{\nu=(\nu_1,\hdots,\nu_q)}(-1)^{q-1}(q-1)!\prod_{l=1}^q\mathbb{E}\left[\prod_{m\in\nu_l}\lim_{N\to\infty}X_m^{(N)}\right]\\ & = \sum_{\nu=(\nu_1,\hdots,\nu_q)}(-1)^{q-1}(q-1)!\prod_{l=1}^q\mathbb{E}\left[\lim_{N\to\infty}\prod_{m\in\nu_l}X_m^{(N)}\right]\\
     & =\lim_{N\to\infty} \sum_{\nu=(\nu_1,\hdots,\nu_q)}(-1)^{q-1}(q-1)!\prod_{l=1}^q\mathbb{E}\left[\prod_{m\in\nu_l}X_m^{(N)}\right]\\ & = \lim_{N\to\infty} {\rm cum}\Bigg(\sum_{|s_1|\leq|N|}A_{s_1,i\cdot}\epsilon_{u-s_1},\sum_{|s_2|\leq|N|}A_{s_2,j\cdot}\epsilon_{v-s_2},\\ & \qquad\qquad\qquad\qquad\qquad\qquad\qquad\qquad\sum_{|s_3|\leq|N|}A_{s_3,i\cdot}\epsilon_{w-s_3},\sum_{|s_4|\leq|N|}A_{s_4,i\cdot}\epsilon_{-s_4}\Bigg)\\ & =\lim_{N\to\infty} \sum_{|s_1|,|s_2|,|s_3|,|s_4|\leq|N|}{\rm cum}\left(A_{s_1,i\cdot}\epsilon_{u-s_1},A_{s_2,j\cdot}\epsilon_{v-s_2},A_{s_3,i\cdot}\epsilon_{w-s_3},A_{s_4,i\cdot}\epsilon_{-s_4}\right) \\& = \sum_{|s_1|,|s_2|,|s_3|,|s_4|\in\mathbb{Z}}{\rm cum}\left(A_{s_1,i\cdot}\epsilon_{u-s_1},A_{s_2,j\cdot}\epsilon_{v-s_2},A_{s_3,i\cdot}\epsilon_{w-s_3},A_{s_4,i\cdot}\epsilon_{-s_4}\right),
\end{align*}
where the sum is over all unordered partitions of $\{(u,i),(v,j),(w,i),(0,j)\}$. The proof is complete.

\end{proof}

\end{document}